\numberwithin{equation}{chapter}
\numberwithin{figure}{chapter}
\numberwithin{table}{chapter}
\newtheorem{definition}{Definition}[chapter]
\newtheorem{lemma}[definition]{Lemma}
\newtheorem{proposition}[definition]{Proposition}
\newtheorem{theorem}[definition]{Theorem}
\newtheorem{corollary}[definition]{Corollary}
\newtheorem{example}[definition]{Example}
\newtheorem{remark}[definition]{Remark}
\DeclareMathOperator{\spn}{span}
\DeclareMathOperator{\indicator}{\chi}
\DeclareMathOperator{\supp}{supp}
\DeclareMathOperator{\interior}{int}
\newcommand{\B}{\mathbb{B}}
\newcommand{\I}{\mathcal{I}}
\newcommand{\N}{\mathbb{N}}
\newcommand{\Z}{\mathbb{Z}}
\newcommand{\R}{\mathbb{R}}
\newcommand{\C}{\mathbb{C}}
\newcommand{\ul}[1]{\underline{#1}}
\newcommand{\ol}[1]{\overline{#1}}
\newcommand{\noncopynumber}[1]{%
    \BeginAccSupp{method=escape,ActualText={}}
    #1
    \EndAccSupp{}
}
\begin{document}
\selectlanguage{german}
\frontmatter
\begin{titlepage}
	\raggedright
	\null
	\includegraphics[width=0.25\textwidth]{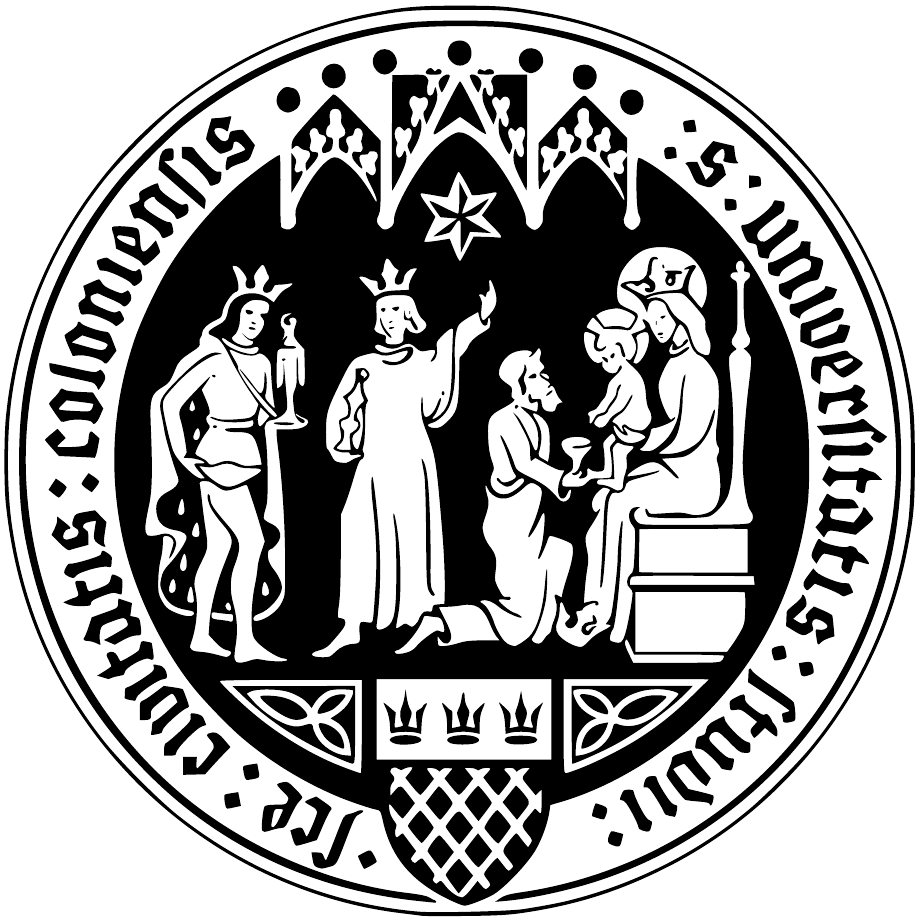}\par\vspace{1cm}
	{\scshape\LARGE Universität zu Köln\par}\vspace{0.3cm}
	{\Large Mathematisches Institut\\}\par\vspace{1.5cm}
	{\scshape\Large Masterarbeit\par}
	\vspace{1.5cm}
	{\huge\bfseries Modern Methods for Signal Analysis: Empirical Mode
	Decomposition Theory and Hybrid Operator-Based Methods Using B-Splines\par}
	\vspace{1.5cm}
	{\Large\itshape Laslo Hunhold\par}
	\vfill
	{\em Erstgutachterin:}\par
	Prof.~Dr.~Angela \textsc{Kunoth}\\[3mm]
	{\em Zweitgutachter:}\par
	Dr. Boqiang \textsc{Huang}
	\vfill
	{\large\printdate{2019-05-27}, überarbeitet am \printdate{2020-03-11}}
\end{titlepage}
\selectlanguage{british}
\chapter{Preface}
Signal analysis is as diverse as the data it tries to comprehend. The
empirical mode decomposition is no exception to this rule and the great
attention it has received over the years with numerous applications in
many fields has always been overshadowed by the introduction of more
and more increasingly powerful but also heuristical approaches.
\par
Since I have been first roughly introduced to the topic by Prof. Dr.
Angela \textsc{Kunoth}, who has published in the field and managed
to spark my interest, in 2015 I was always willing to further understand
and advance it. With other endeavours in the meantime I was given
the chance to work on this topic in the course of my master's thesis.
\par
First of all I would like to thank Prof. Dr. Angela \textsc{Kunoth} for
introducing me to and supporting and entrusting me with
this fascinating and complex topic. I would also like to thank Dr. Boqiang
\textsc{Huang} for his support with his deep insight as a researcher into
the field, his patience and the in-depth discussions.
Last but not least, I would like to thank my family for their unwavering
support and encouragement.
\vspace{0.5cm}
\par\noindent
\emph{Wesseling, Germany} \hfill Laslo Hunhold
\newline
\emph{May 2019}
\vfill
\begin{verse}
	When I heard the learn’d astronomer,\\
	When the proofs, the figures, were ranged in columns before me,\\
	When I was shown the charts and diagrams, to add, divide, and measure\\
	\vin them,\\
	When I sitting heard the astronomer where he lectured with much\\
	\vin applause in the lecture-room,\\
	How soon unaccountable I became tired and sick,\\
	Till rising and gliding out I wander’d off by myself,\\
	In the mystical moist night-air, and from time to time,\\
	Look’d up in perfect silence at the stars.
\end{verse}
\nopagebreak{%
	\raggedleft\footnotesize
	Walt \textsc{Whitman} (1819--1892) \hspace{1cm}
	\par
}
\vfill
\tableofcontents
\mainmatter
\chapter{Introduction}\label{ch:introduction}
This thesis examines the empirical mode decomposition (EMD), a method for
decomposing multicomponent signals, from a modern, both theoretical and 
practical, perspective. The motivation is to further formalize the concept
and develop new methods to approach it numerically.
\paragraph{Multicomponent Signal Decomposition}
A signal is a time- or space-dependent univariate function $s(t)$ that
carries information about the properties of a phenomenon in the form of
variations of an observed quantity over space or time. For instance,
time-varying signals can be financial or audio data, and space-varying
signals can be images or maps.
\par
Naturally, due to the complexity of reality, it is impossible to find a
quantity that only contains information about the phenomenon you are
interested in. Instead, it will contain information about multiple
phenomena simultaneously. To give an example, if you observe bat calls
using a powerful ultrasonic microphone outside at night, you will also
pick up a lot of environment noise (birds, wind, cars, airplanes,
et cetera) that is mixed with your bat calls. As humans
we are good at filtering out audible noise intuitively due to the anatomy
of our ears and function of our brains, which can for example be observed during a
conversation at an event with loud music or background noise.
The computer lacks such intuition. It is in our interest to
quantify this separation process so that it can be applied
to larger and more general problems automatically.
\par
One way to approach this is to consider the concept of \enquote{frequency},
the rate of change of an oscillation over time or space. The signal
is considered as a (weighted) sum of oscillations of different frequencies,
a so-called \enquote{multicomponent signal}, where each summand is called a
\enquote{component} (see Figure~\ref{fig:signal_example} for an example).
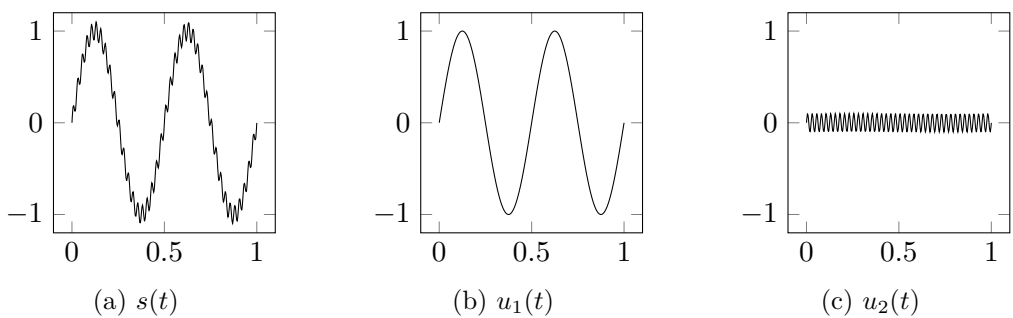
\begin{figure}[htbp]
	\centering
	\begin{subfigure}[c]{0.32\textwidth}
		\centering
		\begin{tikzpicture}
			\begin{axis}[
				name=plot1,
				height=4.5cm,width=4.5cm,
				ymin=-1.2,
				ymax=1.2,
				domain=0:1,
			]
				\addplot[mark=none,smooth,samples=200] (x,
					{sin(deg(4 * pi * x)) + 0.1 * sin(deg(80 * pi * x))});
			\end{axis}
		\end{tikzpicture}
		\subcaption{$s(t)$}
	\end{subfigure}
	\begin{subfigure}[c]{0.32\textwidth}
		\centering
		\begin{tikzpicture}
			\begin{axis}[
				name=plot1,
				height=4.5cm,width=4.5cm,
				ymin=-1.2,
				ymax=1.2,
				domain=0:1,
			]
				\addplot[mark=none,smooth,samples=200] (x,
					{sin(deg(4 * pi * x))});
			\end{axis}
		\end{tikzpicture}
		\subcaption{$u_1(t)$}
	\end{subfigure}
	\begin{subfigure}[c]{0.32\textwidth}
		\centering
		\begin{tikzpicture}
			\begin{axis}[
				name=plot1,
				height=4.5cm,width=4.5cm,
				ymin=-1.2,
				ymax=1.2,
				domain=0:1,
			]
				\addplot[mark=none,smooth,samples=200] (x,
					{0.1 * sin(deg(80 * pi * x))});
			\end{axis}
		\end{tikzpicture}
		\subcaption{$u_2(t)$}
	\end{subfigure}	
	\caption{%
		An example for a signal $s(t)$ that is an additive composite of
		a low-frequency oscillation $u_1(t)$ and a high-frequency oscillation
		$u_2(t)$.
	}
	\label{fig:signal_example}
\end{figure}
A single component does not necessarily correspond
to the phenomenon we are interested in but this decomposition into
components quantifies the signal and a subset of these components as a
whole can convey the information we need. This process is called
\enquote{signal decomposition}.
\par
Reconsidering our bat call example, we can easily discard all components
corresponding to frequency ranges that are above or below the frequency
ranges of bat calls. More problematic are the frequency ranges of the
bat calls themselves and how to decompose them usefully.
To approach this issue, we will as follows introduce the three most
popular signal decomposition methods. The last one, the
Empirical Mode Decomposition (EMD), will be the main focus of this thesis.
\subparagraph{\textsc{Fourier} Transform}
First proposed in 1822 by Jean-Baptiste-Joseph \textsc{Fourier}
(see \cite{f22}), the \emph{\textsc{Fourier} transform} is the most 
well-known method in this context. It is based on the observation
that for every $1$-periodic function $\Phi(t)$ (which means that for
all $t \in \R$ it holds $\Phi(t) = \Phi(t + 1)$) and $j \in \Z$ we can find
$c_j \in \C$ such that
\begin{equation}\label{eq:fourier-series}
	\Phi(t) = \sum_{j \in \Z} c_j \cdot \exp(2\pi ijt).
\end{equation}
This is due to the fact that, roughly spoken,
$\{ t \mapsto \exp(2\pi i j t) \mid j \in \Z \}$ is an orthonormal basis
of the \textsc{Hilbert} space (a real or complex vector space with an inner
product that is a complete metric space in regard to the norm induced by the inner
product) of square integrable $1$-periodic functions.
Equation~(\ref{eq:fourier-series}) is called the \enquote{\textsc{Fourier} series} of
$\Phi(t)$ and the coefficients $c_j$ are calculated as
\begin{equation}\label{eq:fourier-coefficients}
	c_j := \int_{-\frac{1}{2}}^{\frac{1}{2}} \Phi(t) \cdot \exp(-2\pi i j t)
	\, \mathrm{d}t.
\end{equation}
The mapping $j \mapsto c_j$ is called the \enquote{\textsc{Fourier} transform}
of $\Phi(t)$ and the parameter $j$ corresponds to the frequency. The higher
the $j$, the faster the $\exp(2\pi ijt)$ term oscillates over time $t$, resulting in
a higher-frequency oscillation. If we cover $j \in \Z$ we obtain a complete
coverage of low and high frequencies. In Figure~\ref{fig:fourier-series} you can
see an example of how a finite \textsc{Fourier} series composes a 1-periodic
function.
\par
We can immediately see that this series is a $c_j$-weighted sum
of oscillations $\exp(2\pi ijt)$, a property we desired based on the
observations in the previous paragraph on multicomponent signals.
\begin{figure}[htbp]
	\centering
	\begin{tikzpicture}
		\begin{axis}[
			x = {(4cm, -0.8cm)},
			y = {(1cm,  0.5cm)},
			z = {(0cm,  0.8cm)},
			ylabel=frequency,
			xlabel near ticks,
			ylabel near ticks,
			ylabel style={rotate=-90},
			axis x line=left,
			axis y line=left,
			axis z line=left,
			every outer y axis line/.append style={white},
			xtick={-0.5,0,0.5},
			ytick=\empty,
			ztick={-1,0,1},
			domain=-0.6:0.6,
			ymax=6,
			samples y=0,
			clip mode=individual,
		]
			\def\sinsum{0};
			\pgfplotsinvokeforeach{5,4,...,1}{
				\xdef\sinval{(1 / ((#1 - 0.5) * 2)) * sin(deg(2 * pi * ((#1 - 0.5) * 4) * x))};
				\addplot3[mark=none,smooth,samples=200,black!50] (x, #1, \sinval);
				\draw [black!10] (axis cs:-0.6,#1,0) -- (axis cs:0.6,#1,0);
				\draw [very thick] (axis cs:0.6, #1, 0) -- (axis cs:0.6, #1, {(1 / ((#1 - 0.5) * 2))});
				\xdef\sinsum{\sinsum + \sinval};
			}
			\addplot3[mark=none,smooth,samples=200,thick] (x, 0, \sinsum);
			\draw (axis cs:-0.6,0,0) -- (axis cs:0.6,0,0);
			\draw [->,>=stealth] (axis cs:0.6,0,0) -- (axis cs:0.6,6,0);
			\node at (axis cs: 0.7,0,-1) {$t$};
			\node at (axis cs: -0.8,0,1.2) {$\Phi(t)$};
		\end{axis}
	\end{tikzpicture}
    \caption{%
    	Visualization of the \textsc{Fourier} series composition of a $1$-periodic
    	function $\Phi(t)$ that is additively composed of $5$ oscillatory terms.
    }
    \label{fig:fourier-series}
\end{figure}
We have to note here, though, that $\Phi(t)$ is $1$-periodic, which a signal
$s(t)$ is not in general. To extend the \textsc{Fourier} transform to non-periodic
signals, we first note that for any $T > 0$ a $T$-periodic function $\tilde{\Phi}(t)$
can be transformed into a $1$-periodic function $\Phi(t)$ via
$\Phi(t) := \tilde{\Phi}(t / T)$. If we apply this transformation to the above
expression and let $T \to \infty$ we obtain the general \textsc{Fourier} expression
of a non-periodic signal $s(t)$ as
\begin{equation}\label{eq:fourier-expression}
	s(t) = \int_{-\infty}^{\infty} (\mathcal{F}s)(f) \cdot \exp(2\pi i ft) \, \mathrm{d}f
\end{equation}
with the \textsc{Fourier} transform
\begin{equation}\label{eq:fourier-transform}
	(\mathcal{F}s)(f) := \int_{-\infty}^{\infty} s(t) \cdot \exp(-2\pi i ft) \, \mathrm{d}t.
\end{equation}
The parameter $f$ of the \textsc{Fourier} transform $(\mathcal{F}s)(f)$ corresponds to a
continuous form of the $j$ we have seen earlier. During the \textsc{Fourier} transform,
each \enquote{frequency} $f$'s share is averaged over the entire interval the signal
is defined on. In turn, if the signal's frequency composition varies across this
timeframe the \textsc{Fourier}
transform is unable to reflect these changes, and we can say that it is only suitable
for \emph{stationary signals}, which are signals whose frequency compositions do not
change much over time.
\par
One may approach this problem by reducing the area the \textsc{Fourier} transform covers.
This is done by applying a so-called \enquote{window function} to the signal that zeros out
all of the signal except on a compact interval. The window function is varied by employing a
base window function (e.g.\ a \textsc{Gauss} function) that is \enquote{moved} to multiple
parts of the time domain until it has been fully covered and all sub-intervals are analyzed.
This method is called the Short-Time \textsc{Fourier} Transform (STFT).
The \textsc{Küpfmüller} uncertainty principle states that it is impossible to both clearly
localize a signal in both the time and frequency domain (see \cite[VII.47 (29a)]{kk00}).
This shows that the downside of the STFT is that with increasing tightness of the
time-interval that is studied the frequency becomes more and more uncertain.
\par
In summary, on the one hand, the classical \textsc{Fourier} transform localizes the signal
perfectly in the frequency domain, but has the worst possible time resolution. On the other
hand, applying window functions presents the limits of signal analysis and leads to
bad time-resolution for high frequencies, because the time-window is made arbitrarily small.
For further reading on \textsc{Fourier} analysis one may consult \cite{k88}.
\subparagraph{Wavelet Transform}
The \emph{wavelet transform} based on the groundwork by
Alfréd \textsc{Haar} in 1910 (see \cite{h10}) is closely related to
the STFT and makes use of by now so-called \enquote{wavelet} functions that
are \enquote{better-behaved} as window functions than the ones used for STFT.
\enquote{Better-behaved} in this context means
providing better frequency-resolution for shorter time-intervals and
better time-resolution for high frequency bands.
\par
The fundamental idea is to consider the \textsc{Hilbert} space (a
real or complex vector space with an inner product that is a complete
metric space in regard to the norm induced by the inner product)
$(\mathcal{L}^2(\R), \langle,\rangle)$ of square-integrable functions
with the standard inner product $\langle,\rangle \colon
\mathcal{L}^2(\R) \times \mathcal{L}^2(\R) \to \mathcal{L}^2(\R)$ defined as
\begin{equation}\label{eq:standard_inner_product}
	\langle f,g \rangle := \int_{-\infty}^\infty f(t) \ol{g(t)} \, \mathrm{d}t
\end{equation}
and find an orthonormal basis (which means that the inner product of two distinct
basis elements is zero and one for two same basis elements) for it.
In the context of the \textsc{Fourier} transform, we noted previously that the set
$\{ t \mapsto \exp(2\pi i jt) \mid j \in \Z \}$  was an orthonormal basis of
the Hilbert space of the $1$-periodic square-integrable
functions. However, our interest here is to find basis functions with compact support
(which means that they are zero everywhere except on a compact interval).
An additional limitation with wavelets, in terms of an orthonormal basis, is that we, just like previously with the window functions
for STFT, consider one basic function we in this context call
\enquote{mother wavelet} $\psi(t)$ that is moved and transformed across
the time interval to generate all other basis functions. The transformations are so-called
\enquote{dyadic translations} and \enquote{dilations} and are parametrized
by $j,k \in \Z$, yielding a family of functions defined as
\begin{equation}\label{eq:wavelet-basis}
	\psi_{j,k}(t) := 2^{\frac{j}{2}} \cdot \psi(2^j \cdot t - k).
\end{equation}
If the mother wavelet $\psi(t)$ can be used to construct a \textsc{Hilbert}
basis as described above, we call it an \emph{orthonormal wavelet}. Then we
can express any signal $s(t)$ with $(j,k) \in \Z^2$ and $c_{j,k} \in \R$ as
\begin{equation}\label{eq:wavelet-series}
	s(t) = \sum_{(j,k) \in \Z^2} c_{j,k} \cdot \psi_{j,k}(t)
\end{equation}
with the wavelet coefficients
\begin{equation}\label{eq:wavelet-coefficients}
	c_{j,k} = \langle s,\psi_{j,k} \rangle.
\end{equation}
The advantage of this separation becomes apparent when we consider
that the parameters $j$ and $k$ play special roles: The parameter $j$
corresponds to the frequency (dyadic dilation), whereas $k$ corresponds to the
location (dyadic translation). When we group the sum by frequency, we obtain
\begin{equation}\label{eq:wavelet-series-decomp}
	s(t) = \sum_{j \in \Z} \sum_{k \in \Z} c_{j,k} \cdot \psi_{j,k}(t)
	=:  \sum_{j \in \Z} g_j(t),
\end{equation}
effectively yielding a separation of the signal into functions
$g_j(t)$ reflecting the relative share of the frequency respective to
$j$ within the signal over time.
\par
When discussing the wavelet transform one has to observe that
the choice of the mother wavelet $\psi(t)$ is
neither canonical nor domain-specific. Additionally, the range of the
parameters $j$ and $k$ have to be determined in advance or adaptively,
making it necessary to apply some kind of preprocessing to the signal. The
dyadic translations (parametrized by $k$) impose the same grid-density
across the entire interval, as $\psi_{j,k}$ is linear in $k \in \Z$.
This is problematic when a small timeframe of the input signal has high oscillations
that might require a high local time resolution.
Still, as opposed to the \textsc{Fourier} transform, the wavelet
transform can be used for non-stationary signals (those that exhibit
changes of their frequency-composition over time) as well.
For further reading on the wavelet transform, one may
consult \cite{dau92} and \cite{m09}.
\subparagraph{Empirical Mode Decomposition}
The method focused on in this thesis is the
\emph{empirical mode decomposition} (EMD) proposed in 1998 by
\textsc{Huang} et aliī (see \cite{hsl+98}) that has gained lots of
attention since then. In contrast to the classic
\textsc{Fourier} and Wavelet transforms that depend on a predefined
finite subset of a \textsc{Hilbert} basis to match a certain frequency
range, the EMD is an iterative data-adaptive process. This means that
there needs to be no preprocessing and it adapts to the incoming data
as it analyzes it. In contrast to classical versions of the \textsc{Fourier}
and wavelet transforms, it also does not require the input data to be
regularly aligned on a grid.
\par
This method works as follows: The signal $s(t)$ is additively adaptively separated
into $S$ so-called \enquote{intrinsic mode functions} (IMFs) $u_i(t)$,
which each more or less correspond to the signal components laid out
earlier, and a residual $r_{S+1}$ that remains from the signal after
the $S$ extraction steps. The crucial difference compared to the previous methods is
that the IMFs are allowed to slowly vary in frequency and intensity over
time, whereas previously we had functions that were more or less fixed
in the frequency- and time-domains. A formal definition of IMFs is given
in \ref{sec:intrinsic_mode_functions}.
\par
Assuming we have determined the IMFs, we obtain the signal representation
\begin{equation}\label{eq:emd-representation}
	s(t) = \sum_{i=1}^{S} u_i(t) + r_{S+1}(t)
\end{equation}
and set requirements that are satisfied in the ideal case. Each IMF
shall have the form
\begin{equation*}
	u_i(t) = a_i(t) \cdot \cos(\phi_i(t)),
\end{equation*}
with a so-called \enquote{instantaneous amplitude} $a_i(t)$ and
\enquote{instantaneous phase} $\phi_i(t)$. The derivative $\phi'_i(t)$
of the instantaneous phase $\phi_i(t)$ is the frequency. Thus, this
form allows the IMF to be both variable in amplitude and frequency.
We require for all $t \in \R$
the natural conditions $a_i(t) \ge 0$ and $\phi'_i(t) > 0$. These are
necessary given negative amplitudes or frequencies are not physically meaningful.
We also want $a_i(t)$ and $\phi_i'(t)$ to be
slowly varying, which will be formally laid out later. The residual
shall at best be monotonic or have at most one local maximum or minimum, which
of course is dependent on how many extraction steps $S$ were taken.
\par
Separating a signal $s(t)$ into IMFs is called \enquote{sifting} (see
Figure~\ref{fig:sifting}). This is a multi-step-process, but each step
is more or less independent from the others. A single step extracts one
IMF from the signal, subtracts the IMF from the signal and returns the result
as the so-called \enquote{residual}, which is then again processed as a
new input signal in the next step. For this reason, we will, as follows and
in this thesis, almost exclusively focus on a single step of the sifting process.
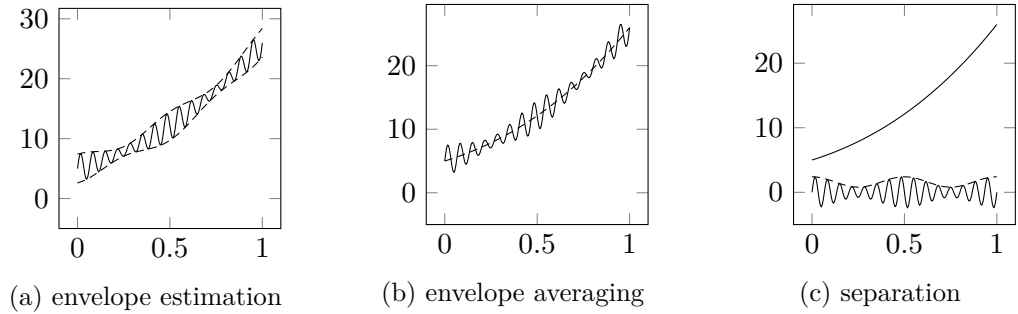
\begin{figure}[htbp]
	\centering
	\begin{subfigure}[c]{0.32\textwidth}
		\centering
		\begin{tikzpicture}
			\begin{axis}[
				name=plot1,
				height=4.5cm,width=4.5cm,
				ymin=-5,
			]
				\addplot [domain=0:1, samples=200]
					{2 + 3 * (1 + x)^3 +
					0.8 * (2 + cos(deg(4 * pi * x))) * sin(deg(30 * pi * x))};
				\addplot [domain=0:1, samples=200, densely dashed]
					{2 + 3 * (1 + x)^3 +
					0.8 * (2 + cos(deg(4 * pi * x))) * 1};
				\addplot [domain=0:1, samples=200, densely dashed]
					{2 + 3 * (1 + x)^3 +
					0.8 * (2 + cos(deg(4 * pi * x))) * (-1)};
			\end{axis}
		\end{tikzpicture}
		\subcaption{envelope estimation}
	\end{subfigure}
	\begin{subfigure}[c]{0.32\textwidth}
		\centering
		\begin{tikzpicture}
			\begin{axis}[
				name=plot1,
				height=4.5cm,width=4.5cm,
				ymin=-5,
			]
				\addplot [domain=0:1, samples=200, densely dashed]
					{2 + 3 * (1 + x)^3};
				\addplot [domain=0:1, samples=200]
					{2 + 3 * (1 + x)^3 +
					0.8 * (2 + cos(deg(4 * pi * x))) * sin(deg(30 * pi * x))};
			\end{axis}
		\end{tikzpicture}
		\subcaption{envelope averaging}
	\end{subfigure}
	\begin{subfigure}[c]{0.32\textwidth}
		\centering
		\begin{tikzpicture}
			\begin{axis}[
				name=plot1,
				height=4.5cm,width=4.5cm,
				ymin=-5,
			]
				\addplot [domain=0:1, samples=200]
					{2 + 3 * (1 + x)^3};
				\addplot [domain=0:1, samples=200]
					{0.8 * (2 + cos(deg(4 * pi * x))) * sin(deg(30 * pi * x))};
				\addplot [domain=0:1, samples=200, densely dashed]
					{0.8 * (2 + cos(deg(4 * pi * x))) * 1};
			\end{axis}
		\end{tikzpicture}
		\subcaption{separation}
	\end{subfigure}
	\caption{%
		Visualization of the EMD sifting process of a $1$-component
		signal.
	}
	\label{fig:sifting}
\end{figure}
The classic procedure for a sifting step laid out in \cite{hsl+98} is to
determine lower and upper
envelopes $\ul{a}(t)$ and $\ol{a}(t)$ of the signal $s(t)$, usually by
interpolating local maxima and minima.
The mean $\frac{1}{2} \cdot (\ol{a} + \ul{a}) =: r(t)$ is defined as the
residual $r(t)$ for the next decomposition step and the difference
$s(t) - r(t) =: u(t)$ between signal and residual is the desired intrinsic
mode function. As a side-result, one obtains the amplitude $a(t)$ of the
intrinsic mode function $u(t) := a(t) \cdot \cos(\phi(t))$ by the difference
$\ol{a}(t) - r(t) := a(t)$ between the upper envelope and the residual.
\par
The next (optional, depending on the application) step is to do a so-called \enquote{spectral analysis} of the
extracted IMFs, which means that for a given IMF $u(t)$ the instantaneous
amplitude $a(t)$ and phase/frequency $\phi(t)$/$\phi'(t)$ are extracted.
As the name implies, the EMD is an empirical method. Unfortunately, this
procedure has up to now a relatively weak theoretical footing
compared to the strong theory behind the \textsc{Fourier} and wavelet transforms.
\par
The big disadvantage of the EMD representation in
Equation~(\ref{eq:emd-representation}) is that it is not unique, making it
difficult to formulate theoretical assessments. Without providing more
conditions, the uniqueness guarantee is impossible to give. However,
the big strength of the EMD is that both the instantaneous amplitude and phase can
have arbitrary form within the bounds of physical meaningfulness and
slow variation in Equation~(\ref{eq:emd-representation}). This means that
a single IMF can \enquote{track} a subcomponent of a singal over the time-
and frequency-domain even if this subcomponent happens to change in intensity
or frequency and this rate of change falls within the previously set bounds.
\par
When considering the decomposition provided by the \textsc{Fourier}
transform in Equation~(\ref{eq:fourier-transform}), we see that it has a
constant \enquote{amplitude} $(\mathcal{F}(f))(f)$ for each oscillation
term $\exp(2\pi ift)$ belonging to the fixed \enquote{frequency} $f$,
given the \textsc{Fourier} transform
provides no time-resolution. The wavelet transform decomposition in
Equation~(\ref{eq:wavelet-series-decomp}) improves upon this problem by
having a decomposition into \enquote{frequency share} functions $g_k(t)$.
However, it is unable to reflect the condition when a signal component
leaves the frequency-band relating to $k$ without further post-processing
of some kind. The empirical mode decomposition with its flexible instantaneous
amplitude and phase for each intrinsic mode function is able to flexibly reflect
both changes in amplitude and frequency over time.
\paragraph{Operator-Based Signal Separation and Null-Space-Pursuit}
The operator-based signal separation (OSS) was proposed in 2008 by
\textsc{Peng} et aliī (see \cite{ph08} and \cite{ph10}) as an idea for
a more formal foundation for the empirical mode decomposition.
The basic concept is centered around the idea of an \enquote{adaptive operator},
which we will explain with an example as follows.
\par
Consider you have a function $h(t)$ and you only know that it is
of the form $h(t) := \cos(\phi(t))$. The function $\phi(t)$ is not
known and it is your goal to determine it.
Let us consider the first and second derivatives of $h(t)$.
We obtain with the chain rule that $h'(t) = -\phi'(t) \cdot \sin(\phi(t))$ and
$h''(t) = -\phi''(t) \cdot \sin(\phi(t)) - {(\phi'(t))}^2 \cdot \cos(\phi(t))$.
If we then define a differential operator $\mathcal{D}_{\tilde{\phi}}$ with respect
to the input function $h(t)$ as
\begin{equation}\label{eq:diffop-example}
	\left(\mathcal{D}_{\tilde{\phi}}h\right)(t) := h''(t) -
		\frac{\tilde{\phi}''(t)}{\tilde{\phi}'(t)} \cdot h'(t) +
		{(\tilde{\phi}'(t))}^2 \cdot h(t),
\end{equation}
it follows directly, because the terms cancel each other out, that
\begin{equation*}
	\mathcal{D}_{\phi}h \equiv 0.
\end{equation*}
The differential operator is defined in terms of the
parameter $\tilde{\phi}(t)$.
If we manage to choose it as $\phi(t)$, the \enquote{hidden} function
within the cosine-term of $h(t)$, the operator applied to $h(t)$ vanishes.
In other words, we can say that then $h(t)$ is in the \enquote{null-space} of the
operator. The search for the correct parameter to annihilate the operator applied
to $h(t)$ can consequently be called \enquote{null-space-pursuit}.
\par
When we reconsider the EMD signal representation from
Equation~(\ref{eq:emd-representation}), it becomes clear that this
operator-based approach can be
used to process IMFs $u(t)$ in some fashion. The IMFs are of the form
$u(t) := a(t) \cdot \cos(\phi(t))$ in regard to their instantaneous
amplitude $a(t)$ and phase $\phi(t)$, but both are not known.
For the purpose of spectral analysis, i.e.\ determining these factors,
we use the adaptive differential
operator that is parametrized by $\tilde{a}(t)$ and $\tilde{\phi}(t)$ and
annihilate the IMF $u(t)$ when the parameters are tuned to $a(t)$ and $\phi(t)$,
effectively yielding us the previously unknown instantaneous amplitude
and frequency.
\par
This however is not enough. To completely formally express the EMD, another important aspect is to
grasp the IMF extraction itself. In each step, we split the input
signal $s(t)$ into an IMF $u(t)$ and a
residual $r(t)$. A canonical extraction condition is to demand that we
extract as much as possible from the input signal, namely, that the residual
$r(t) = s(t) - u(t)$ is \enquote{minimal}, in a sense that is to be made precise.
\par
Combining both ideas, a single EMD extraction step for an input signal $s(t)$
can be expressed as a regularized optimization problem.
We consider the residual $r(t)$ and use a fitting differential operator
$\mathcal{D}_{(\tilde{a},\tilde{\phi})}$ (see \cite[Equation~(3)]{ph10}) with
some real parameter $\lambda > 0$. The function-minimization-terms are put into
norms so they yield a cost-function with values in $\R$ as
\begin{equation*}
	(r,a,\phi) = \arg\min_{\tilde{r},\tilde{a},\tilde{\phi}}
		\left\{ {\left\| \mathcal{D}_{(\tilde{a},\tilde{\phi})}
		(s-\tilde{r}) \right\|}_2^2 +
		\lambda \cdot {\| \tilde{r} \|}_2^2 \right\}.
\end{equation*}
This problem can be reformulated in terms of an IMF $u$ with
instantaneous amplitude $a$ and phase $\phi$ as
\begin{equation}\label{eq:oss-optimization}
	(u,a,\phi) = \arg\min_{\tilde{u},\tilde{a},\tilde{\phi}}
		\left\{ {\left\| \mathcal{D}_{(\tilde{a},\tilde{\phi})}
		\tilde{u} \right\|}_2^2 +
		\lambda \cdot {\| s - \tilde{u} \|}_2^2 \right\}.
\end{equation}
The first term in Equation~(\ref{eq:oss-optimization}) ensures that
the resulting function $u$ is an IMF and enables us to perform a
spectral analysis in terms of $a$ and $\phi$, because it strives
to annihilate the operator whose parameters we are tuning to.
The second term ensures, as previously
discussed, maximum extraction from the signal $s$, i.e.\ a minimal residual.
This is referred to in \cite{ph10} as the \enquote{greedy} approach.
Note that there are other ways to formulate an extraction condition other than
the minimization of $r(t) = s(t) - u(t)$ within a norm. If we assume
that our residual is reasonably smooth, the greedy approach is a valid approach
compared to other approaches considering higher-order differentiation
of the residual within the norm.
\par
The approach of the operator-based signal separation
with the null-space-pursuit provides an elegant formalization of the EMD,
combining both the sifting and spectral analysis into one optimization
problem. The previous difficulty that the separation of the signal
into an IMF and residual directly relies on the spectral analysis of
said IMF \enquote{in-situ} is elegantly solved by weaving the
spectral analysis in form of an adaptive operator into the extraction
process itself.
\par
An open question is the choice of such an adaptive operator and how
well it enforces the IMF conditions, given the one in \cite[Equation~(3)]{ph10}
is not unique. To give an example, let us
consider the differential operator $\frac{\partial^3}{\partial t^3}$
as an example for a differential operator to \enquote{match} (i.e.\ annihilate)
quadratic functions.
Quadratic functions are in its null-space, which means that it is
suitable for a null-space-pursuit to enforce quadratic functions.
However, linear and constant functions are also in its null-space
and will subsequently be also \enquote{matched}. The same problem,
though much harder to grasp, might be present for IMF-annihilating
adaptive operators.
\paragraph{EMD Optimization Problem}
Taking a step back from the formalized EMD by \cite{ph10} in
Equation~(\ref{eq:oss-optimization}), this thesis proposes to take
a new look at the EMD as a constrained optimization problem
for each step.
The author calls this the EMD optimization problem (EMDOP) and
investigates this in Chapter~\ref{ch:emd_analysis}.
It considers the operator-based method as a form of regularization
over the set of IMFs (see \ref{sec:intrinsic_mode_functions}) and generalizes it. The extraction condition is
that $r(t) = s(t) - u(t)$ shall be minimal (maximum extraction, minimal residual), yielding the
optimization problem
\begin{align*}
	\min_{u} \quad &
		{\| s - u \|}_2^2 \\
	\text{s.t.} \quad & u \text{\ IMF}.
\end{align*}
This optimization problem corresponds to one single step of the EMD
and is later generalized to arbitrary \enquote{cost functions} other
than ${\| s - u \|}_2^2$.
The EMDOP will be the main focus of the theoretical groundwork of this
thesis in Chapter~\ref{ch:emd_analysis}. It provides new results
for OSS/NSP and other similar regularization-based EMD-schemes.
As previously stated, the only path toward an EMD-algorithm yielding
unique results is to add more conditions to the extracted IMF.
One path is to add more regularization terms which has consecutively
been done in the analysis in \cite{ph08} and \cite{ph10}. Another way is
to add more constraints to the EMDOP. This thesis considers the latter
approach, as more regularization terms weaken the theoretical foundation
of the EMD method even more. Another reason for the latter approach is that regularization terms in the cost functions are in fact there
to enforce some kind of condition on the extracted IMF, so it only makes
sense to avoid this indirect route and directly state these conditions.
\paragraph{B-Splines}
Introduced by Isaac Jacob \textsc{Schoenberg} in 1946 (see \cite{sch46a}
and \cite{sch46b}), B-splines (\enquote{Basis-splines}) have become an
integral part of numerical analysis due to their very useful theoretical and
practical properties as basis functions for a space of piecewise polynomial
functions. In the course of this thesis, we will make use of these properties.
\par
The objects of interest in the presented signal analysis
methods are functions, not scalars. Looking at the EMD, for instance, we have
the functions describing the signal $s(t)$: IMF $u(t)$, residual $r(t)$,
instantaneous amplitude $a(t)$ and instantaneous phase $\phi(t)$.
Many publications, despite dealing with functions, express their algorithms
in terms of discrete samples. The author considers this to be a problem as
the process of fitting a function to samples opens up a new set of problems,
for instance over- or underfitting the data in some way. This problem
is discussed in \cite{di95} and, for reasons of scope due to the complexity
of sampling theory, is left out in this thesis.
\par
However, to explain this briefly,
when analyzing a signal you are mostly interested in a certain frequency band. Frequencies
above or below that are considered as \enquote{noise}. When taking a step back,
though, there is really no such thing as \enquote{noise}, given this \enquote{noise}
is just a signal with frequencies we are not interested in. A strictly sample-based
algorithm needs to be careful, given that samples can contain such oscillations
depending on the sampling rate. When working with continuous signals, we allow
such high oscillations but don't make ourselves dependent on the sampling rate.
\par
To avoid such problems with samples and discrete signals, B-splines are used to model smooth functions in a discretized (such that they are machine-representable) way in this thesis and all outside inputs considered to be functions rather than
samples.
\paragraph{Goal of this Thesis}
The goal of this thesis is to take a both theoretical
and practical look at the empirical mode decomposition.
We want to answer the question how to classify the previously introduced
OSS/NSP method (see Equation~(\ref{eq:oss-optimization})) within
the aforementioned strictly theoretical newly introduced
EMD model EMDOP. After theoretical assessments, the canonical objective
is to make use of the theoretical results to develop new EMD methods
with regard to sifting and spectral analysis by employing the OSS/NSP method.
\paragraph{Structure of this Thesis}
Following Chapter~\ref{ch:b-splines}, which
introduces B-splines, Chapters \ref{ch:emd_analysis}
and \ref{ch:oss} contain the main theoretical results of
this thesis.
Chapter~\ref{ch:emd_analysis} analyzes the empirical
mode decomposition by first formalizing the
aforementioned EMD optimization problem in Sections
\ref{sec:intrinsic_mode_functions},
\ref{sec:cost_functions} and \ref{sec:general_opt}
and proving it to be \textsc{Slater}-regular
in Section~\ref{sec:regularity}.
Chapter~\ref{ch:oss} motivates the foundation of the
operator-based signal-separation method and analyzes the
operator-based analysis of intrinsic mode functions.
\par
Using the results obtained in Chapters \ref{ch:emd_analysis}
and \ref{ch:oss}, Chapter \ref{ch:hobm} proposes
an EMD approach that is a hybrid of classic and
modern methods. In the course of this construction,
a new \enquote{iterative slope} envelope estimation algorithm
is motivated, presented and evaluated in
Section~\ref{sec:envelope_estimation}.
This yields the final hybrid operator-based EMD
method in Section~\ref{sec:emd_algo}.
As another coproduct, the \enquote{ETHOS} toolbox
is presented and documented in Section~\ref{sec:ethos_toolbox}.
\chapter{B-Splines}\label{ch:b-splines}
The central objects of interest in the empirical mode decomposition are
functions. Our interpolated signals, intrinsic mode functions and
instantaneous amplitudes and frequencies are all time-variant quantities
and, thus, a good model for them is of high importance.
\par
A priority we can note is that however we model functions, they should
be easy to represent numerically. Another key aspect of interest, as we
will make heavy use of it later, is the ability to evaluate the functions
and their derivatives easily and quickly. The approach of many
publications is to directly work with samples and use in-situ-approximated
derivatives. However, this makes it difficult to formalize the process
and distinguish between sampling errors and weaknesses in the process itself.
Thus, even though the classic EMD algorithm presented in \cite{hsl+98}
works with discrete datapoints, the main interest should be to strive to
understand why such heuristics work and when and not mix the problem with
one related to sampling theory. In other words: Oversampling should not
affect the result and continuous rather than discrete signals help us
mitigate this issue.
\par
In general the basis function approach is that one considers a finite
vector space of functions for which one can find a finite set of basis functions.
Weighted sums of these basis functions can then be used to represent any
function in this vector space.
If we take $\R^2$ as an example, there exist numerous possible choices
for bases, for instance the standard basis $\{ {(1,0)}^T, {(0,1)}^T \}$
or $\{ {(1,1)}^T, {(0,1)}^T \}$. Any element in $\R^2$ can be represented
with a weigthed sum of these basis vectors. For function spaces, which are
also vector spaces, one can also consider different choices of basis
functions accordingly.
\par
The choice of basis functions used in this thesis are B-splines, a basis
for the vector space of spline functions that has multiple useful
theoretical and numerical properties. B-splines were first
introduced by Isaac Jacob \textsc{Schoenberg} in 1946 (see \cite{sch46a} and
\cite{sch46b}) and the term is short for \enquote{basis splines}.
It had a big impact in many numerical fields since then.
This thesis is the first to explore the solution theory of the
empirical mode decomposition and
the operator-based methods using B-splines and generally makes
heavy use of them. This is why we introduce B-splines in this section
in such detail, but leave out some of the more technical proofs.
Before introducing B-splines, we naturally first have to define what a
spline function is. To do that, we first introduce the
\begin{definition}[Set of polynomials {\cite[I (1)]{db01}}]\label{def:set_of_polynomials}
	Let $k\in\N$. The \emph{set of polynomials} of order $k$
	is defined as
	\begin{equation*}
		\Pi_k := \left\{
			t \mapsto \sum_{i=0}^{k-1} a_i \cdot t^i
			\mathrel{\Bigg|}
			\left(a_0,\dots,a_{k-1}\right)
			\in
			 \R^{k-1} \times \R_{\neq 0}
		\right\}.
	\end{equation*}
\end{definition}
We distinguish between \enquote{degree}
and \enquote{order}. A linear polynomial with degree $1$
(largest $t$-exponent) has order $2$ (degrees of freedom),
a cubic polynomial with degree $3$ has order $4$. Now that
we have defined polynomials, we can formulate what spline functions
are.
\begin{definition}[Spline function space {\cite[VII (1)]{db01}}]\label{def:spline_function_space}
	Let $k,\ell \in \N$ with $k \le \ell - 1$ and
	$T := {\{ \tau_i \}}_{i = 0}^{\ell - 1}$
	with $\tau_0 < \dots < \tau_{\ell - 1}$.
	The \emph{spline function space}
	$\Sigma_{k,T}$ of order $k$ on $T$ is defined as
	\begin{equation*}
		\Sigma_{k,T} :=
		\left\{
			s \in \mathcal{C}^{k-2}([\tau_0,\tau_{\ell-1}])
			\mathrel{\Big|}
			\forall_{i\in\{ 0,\dots,\ell-2 \}} \colon
			\left. s \right\rvert_{[\tau_i,\tau_{i+1})} \in \Pi_k
		\right\}
	\end{equation*}
\end{definition}
As we can see, a spline function is a smooth function that
is piecewise-defined by polynomials. Analogous to the set
of polynomials the set of linear splines is
$\Sigma_{2,T}$ and the set of cubic splines is
$\Sigma_{4,T}$. The first step towards finding a basis for
$\Sigma_{k,T}$ is to determine the dimension, which we can
say is finite as the grid $T$ is finite.
\begin{proposition}[Spline function space dimension {\cite[IX (44)]{db01}}]
	Let $k,\ell,m,p \in \N$ with $k \le \ell - 1$, $p \in \{m,\dots,\ell-1\}$ and
	$T := {\{ \tau_i \}}_{i = 0}^{\ell - 1}$
	with $\tau_0 = \dots = \tau_m \le \dots \le \tau_p = \dots = \tau_{\ell - 1}$.
	$\Sigma_{k,T}$ is a real vector space with
	\begin{equation*}
		\dim(\Sigma_{k,T}) = k + \ell - 2.
	\end{equation*}
\end{proposition}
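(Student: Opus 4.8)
The plan is to obtain the dimension by the degrees-of-freedom-minus-constraints count for piecewise polynomials, made rigorous through the truncated-power basis of \cite[IX]{db01}. The feature absent from Definition~\ref{def:spline_function_space} is that $T$ is only non-decreasing, so coinciding knots may occur, and I would first fix how such coincidences act on the smoothness demand. Write $\zeta_0<\dots<\zeta_N$ for the distinct values in $T$, with multiplicities $\mu_0,\dots,\mu_N$ summing to $\ell$. The boundary blocks $\tau_0=\dots=\tau_m$ and $\tau_p=\dots=\tau_{\ell-1}$ identify $\zeta_0$ and $\zeta_N$ as the endpoints of $[\tau_0,\tau_{\ell-1}]$, while the interior values $\zeta_1,\dots,\zeta_{N-1}$ collect the remaining incidences. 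The dictionary I would invoke is that a value of multiplicity $\mu$ lowers the guaranteed smoothness there from $\mathcal{C}^{k-1}$ to $\mathcal{C}^{k-1-\mu}$, so that each additional coincidence relaxes precisely one derivative-matching condition; this is the mechanism by which repeated knots enlarge $\Sigma_{k,T}$.

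With this dictionary the count is short. Each of the $N$ intervals cut out by the distinct breakpoints carries a free element of $\Pi_k$, for $kN$ parameters in all, while an interior value $\zeta_j$ of multiplicity $\mu_j$ imposes $k-\mu_j$ conditions through membership in $\mathcal{C}^{k-1-\mu_j}$ and the two endpoints impose none. Hence
\begin{equation*}
	\dim(\Sigma_{k,T}) = kN - \sum_{j=1}^{N-1}\left(k-\mu_j\right) = k + \sum_{j=1}^{N-1}\mu_j,
\end{equation*}
so the dimension is $k$ plus the number of interior knots counted with multiplicity. Because the two endpoint values carry no matching condition while all other knots are interior, the interior multiplicities sum to $\ell-2$, giving the asserted $k+\ell-2$. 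That $\Sigma_{k,T}$ is a real vector space falls out along the way, since it is the solution set of the homogeneous linear piecewise-polynomial and derivative-matching conditions and is thus closed under addition and scalar multiplication.

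The real content, and the step I expect to be the main obstacle, is that the subtraction above loses nothing: the $\sum_{j=1}^{N-1}(k-\mu_j)$ conditions must be linearly independent, with none redundant. This is exactly what the truncated-power basis secures. I would take $1,t,\dots,t^{k-1}$ together with, for each interior $\zeta_j$ and each $r\in\{k-\mu_j,\dots,k-1\}$, the function $(t-\zeta_j)_+^{r}$; the dictionary above shows each such function lies in $\Sigma_{k,T}$, and their linear independence follows by sweeping $[\tau_0,\tau_{\ell-1}]$ from the left, where on the first subinterval a vanishing combination kills the polynomial coefficients and at each successive $\zeta_j$ the jump in the lowest surviving derivative kills the attached truncated-power coefficients. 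Counting these functions returns $k+\sum_{j=1}^{N-1}\mu_j$, matching the subtraction and confirming the dimension $k+\ell-2$.
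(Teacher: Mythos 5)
Your route is genuinely different from the paper's: the paper contents itself with the informal iterative degrees-of-freedom count (the first interval contributes $k$ parameters, each of the remaining $\ell-2$ intervals contributes $k$ parameters minus $k-1$ continuity conditions), silently assumes the knots distinct, and never addresses whether the continuity conditions are independent; you reduce to distinct breakpoints with multiplicities and try to certify the count with a truncated-power basis, which is the right instinct and more than the paper does. But there is a genuine gap at your decisive step. Exhibiting $k+\sum_{j=1}^{N-1}\mu_j$ linearly independent functions \emph{inside} $\Sigma_{k,T}$ only proves $\dim(\Sigma_{k,T}) \ge k+\sum_{j=1}^{N-1}\mu_j$ --- the same lower bound the subtraction already yields for free, since imposing $C:=\sum_{j=1}^{N-1}(k-\mu_j)$ linear conditions on a $kN$-dimensional space always gives a kernel of dimension at least $kN-C$. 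Elements of the kernel cannot witness that the defining functionals are independent, so your sentence that independence of the conditions ``is exactly what the truncated-power basis secures'' is backwards; what is missing is the upper bound. It is easy to supply with the tools you set up: by your own left-to-right sweep, the full family $1,t,\dots,t^{k-1}$ together with ${(t-\zeta_j)}_+^{d}$ for all interior $j$ and \emph{all} $d\in\{0,\dots,k-1\}$ is a basis ($kN$ functions) of the unconstrained piecewise-polynomial space; expanding $s\in\Sigma_{k,T}$ in it and observing that the jump of $s^{(d)}$ across $\zeta_j$ equals $d!$ times the coefficient of ${(t-\zeta_j)}_+^{d}$ shows the smoothness conditions annihilate precisely the coefficients with $d\le k-1-\mu_j$, so your restricted family spans. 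Equivalently, evaluating the jump functionals on the complementary truncated powers ${(t-\zeta_j)}_+^{d}$, $d\le k-1-\mu_j$ (which lie outside $\Sigma_{k,T}$), gives a triangular system of full rank $C$.

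A secondary slip: the closing identification $\sum_{j=1}^{N-1}\mu_j=\ell-2$ does not follow from ``the endpoints carry no matching condition''; it is exactly the assumption that the two endpoint values are simple knots. Under your own multiplicity dictionary, repeated end knots --- which the proposition's hypothesis $\tau_0=\dots=\tau_m$ with $m\ge 1$ literally permits --- give $\dim(\Sigma_{k,T}) = k+\ell-\mu_0-\mu_N \neq k+\ell-2$; for instance $k=2$, $T=\{0,0,1\}$ yields the two-dimensional space of linear polynomials on $[0,1]$, not dimension $3$. This is really a defect of the proposition's boilerplate hypothesis rather than of your argument (the paper's own proof tacitly works with strictly increasing knots, consistent with Definition~\ref{def:spline_function_space}), but having set out to handle coincidences you must either restrict the endpoint multiplicities to one or correct the formula accordingly. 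The vector-space part of your argument is fine and matches the paper's in substance.
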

\begin{proof}
	That $\Sigma_{k,T}$ is a real vector space follows
	directly from the fact that
	$\mathcal{C}^{k-2}([\tau_0,\tau_{\ell-1}])$
	and $\Pi_k$ are real vector spaces.
	\par
	We find the dimension by constructing an arbitrary
	$s \in \Sigma_{k,T}$. On the first
	piecewise interval $[\tau_0,\tau_1)$ we know that $s$
	is in $\Pi_k$, i.e.\ a polynomial of order $k$ and
	thus with $k$ degrees of freedom. We also have $k$ degrees of
	freedom in the subsequent piecewise interval $[\tau_1,\tau_2)$,
	but require that $s \in \mathcal{C}^{k-2}([\tau_0,\tau_{\ell-1}])$.
	We thus need to demand the $k - 1$ continuity conditions
	\begin{equation*}
		\forall_{i \in \{ 0,\dots,k-2 \}} \colon
		\left. s^{(i)} \right\rvert_{[\tau_0,\tau_1)}(\tau_1) =
		\left. s^{(i)} \right\rvert_{[\tau_1,\tau_2)}(\tau_1),
	\end{equation*}
	leaving $1$ degree of freedom for the interval $[\tau_1,\tau_2)$.
	This holds iteratively for all $\ell - 2$ intervals
	$[\tau_1,\tau_2),\dots,[\tau_{\ell-2},\tau_{\ell-1})$,
	yielding in total $k+\ell-2$ degrees of freedom corresponding
	to the dimension of $\Sigma_{k,T}$.
\end{proof}
Now that we've explored the set of spline functions a bit, we
know that a basis for this vector space needs to have $k+\ell-2$
elements. If we for a moment take a step back and think of an
iterative scheme to construct elements of $\Sigma_{k,T}$
starting with $\Sigma_{1,T}$ (piecewise constant splines), the underlying idea
is to start with piecewise constant
functions for $k=1$, namely indicator functions, and define higher order
splines recursively in such a way that we satisfy piecewise
continuity.
\begin{definition}[Indicator function]\label{def:indicator_function}
	Let $A \subseteq X$. The \emph{indicator function}
	$\indicator_A : X \to \{0,1\}$ is defined as
	\begin{equation*}
		\indicator_A(x) :=
		\begin{cases}
			1 & x \in A \\
			0 & x \notin A.
		\end{cases}
	\end{equation*}
\end{definition}
\begin{definition}[B-spline {\cite[IX (13)]{db01}}]\label{def:b-spline}
	Let $k,\ell,m,p \in \N$ with $k \le \ell - 1$, $p \in \{m,\dots,\ell-1\}$ and
	$T := {\{ \tau_i \}}_{i = 0}^{\ell - 1}$
	with $\tau_0 = \dots = \tau_m \le \dots \le \tau_p = \dots = \tau_{\ell - 1}$.
	The \emph{B-spline} of order $k$ in $\tau_i$
	with $i \in \{ 0,\dots,(\ell - 1) - k \}$
	is defined for $k = 1$ as
	\begin{equation*}
		B_{i,1,T}(t) :=
		\begin{cases}
			\indicator_{[\tau_i,\tau_{i+1})}(t) & i < (\ell - 1) - 1 \\
			\indicator_{[\tau_i,\tau_{i+1}]}(t) & i = (\ell - 1) - 1
		\end{cases}
	\end{equation*}
	and recursively for $k > 1$ as
	\begin{equation*}
		B_{i,k,T}(t) :=
		\frac{t-\tau_i}{\tau_{i+k-1} - \tau_{i}}
		B_{i,k-1,T}(t) +
		\frac{\tau_{i+k}-t}{\tau_{i+k}-\tau_{i+1}}
		B_{i+1,k-1,T}(t).
	\end{equation*}
\end{definition}
This recursive definition, also known as the
\textsc{de Boor}-\textsc{Cox}-\textsc{Mansfield} recursion
formula, not only gives shape to the
concept that has been discussed up until now, but also
provides a convenient way to efficiently evaluate
B-splines recursively, making it especially suitable
for numerical implementations.
\par
As a remark: When the knots $\tau_i$ and
$\tau_{i+1}$ coincide, it holds for the indicator function
$\indicator_{[\tau_i,\tau_{i+1})} \equiv 0$, meaning the respective summand in
the recursive formula of Definition~\ref{def:b-spline} disappears. Without this
knowledge, one might be tempted to assume that we are hitting a case of zero
divided by zero in its coefficient.
\begin{proposition}[B-spline properties]\label{prop:b-spline-properties}
	Let $k,\ell,m,p \in \N$ with $k \le \ell - 1$, $p \in \{m,\dots,\ell-1\}$ and
	$T := {\{ \tau_i \}}_{i = 0}^{\ell - 1}$
	with $\tau_0 = \dots = \tau_m \le \dots \le \tau_p = \dots = \tau_{\ell - 1}$.
	It holds that
	\begin{enumerate}
		\item{%
			$\supp(B_{i,k,T}) = [\tau_i, \tau_{i+k}]$,
		}
		\item{%
			$\forall_{t \in [\tau_0,\tau_{\ell-1}]} \colon B_{i,k,T}(t) \geq 0$,
		}
		\item{%
			$\forall_{i \in \{ 0,\dots,\ell-2 \}} \colon \left. B_{i,k,T} \right\rvert_{[\tau_i,\tau_{i+1}]} \in \Pi_k$,
		}
		\item{%
			$\forall_{i \in \{ 0,\dots,(\ell-1)-k \}} \colon
			B_{i,k,T} \in \mathcal{C}^{k-2}([\tau_0,\tau_{\ell-1}])$.
		}
	\end{enumerate}
\end{proposition}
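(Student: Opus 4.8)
The plan is to prove all four properties simultaneously by induction on the order $k$, since each is phrased directly in terms of the recursive structure of Definition~\ref{def:b-spline}. For the base case $k=1$, everything follows immediately from the definition of $B_{i,1,T}$ as an indicator function (Definition~\ref{def:indicator_function}): its support is $[\tau_i,\tau_{i+1}]=[\tau_i,\tau_{i+k}]$, it takes only the values $0$ and $1$ and is therefore non-negative, on $[\tau_i,\tau_{i+1}]$ it equals the constant polynomial $1\in\Pi_1$, and the requirement $\mathcal{C}^{k-2}=\mathcal{C}^{-1}$ imposes no continuity, so (4) is vacuous.

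For the inductive step I would assume the four properties for order $k-1$ and read off the statement for order $k$ from the recursion
\begin{equation*}
	B_{i,k,T}(t) = \frac{t-\tau_i}{\tau_{i+k-1}-\tau_i}B_{i,k-1,T}(t) + \frac{\tau_{i+k}-t}{\tau_{i+k}-\tau_{i+1}}B_{i+1,k-1,T}(t).
\end{equation*}
Properties (1)--(3) are then essentially bookkeeping. For (1), the inductive supports $[\tau_i,\tau_{i+k-1}]$ and $[\tau_{i+1},\tau_{i+k}]$ of the two summands union to $[\tau_i,\tau_{i+k}]$, while positivity of the linear coefficients together with the inductive positivity of the lower-order B-splines on the open interval $(\tau_i,\tau_{i+k})$ shows the support is not smaller. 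For (2), on the support of each summand the corresponding coefficient is non-negative, so the inductive non-negativity of $B_{i,k-1,T}$ and $B_{i+1,k-1,T}$ carries over to the sum. For (3), on $[\tau_i,\tau_{i+1}]$ each summand is a linear factor times a polynomial of order $k-1$ (degree $k-2$), hence a polynomial of degree $k-1$, i.e.\ an element of $\Pi_k$.

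The genuine obstacle is the smoothness claim (4). Multiplying the $\mathcal{C}^{k-3}$ functions $B_{i,k-1,T},B_{i+1,k-1,T}$ by smooth coefficients only yields $B_{i,k,T}\in\mathcal{C}^{k-3}$ directly; gaining the final order of regularity at the interior knots is the crux, since the naive argument loses it. The cleanest route is to first establish, by a separate induction on $k$ from the recursion, the B-spline derivative formula
\begin{equation*}
	\frac{\mathrm{d}}{\mathrm{d}t}B_{i,k,T}(t) = (k-1)\left(\frac{B_{i,k-1,T}(t)}{\tau_{i+k-1}-\tau_i} - \frac{B_{i+1,k-1,T}(t)}{\tau_{i+k}-\tau_{i+1}}\right),
\end{equation*}
which exhibits the derivative of an order-$k$ B-spline as a linear combination of order-$(k-1)$ B-splines. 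Iterating it $k-2$ times then expresses $B_{i,k,T}^{(k-2)}$ as a linear combination of order-$2$ B-splines; these are the piecewise-linear hat functions and hence manifestly continuous, so $B_{i,k,T}^{(k-2)}$ is continuous and $B_{i,k,T}\in\mathcal{C}^{k-2}$.

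I expect the derivation of this derivative formula --- verifying that the knot-dependent coefficients combine correctly when the recursion is differentiated and regrouped --- to be the main technical effort, whereas the remaining steps reduce to the support, sign and degree bookkeeping described above once the formula is in hand.
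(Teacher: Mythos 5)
Your proposal takes a genuinely different route from the paper: the paper does not prove this proposition at all, but simply defers to \cite[IX~(20)]{db01}. Your argument is instead self-contained, running entirely off the recursive Definition~\ref{def:b-spline}, and its key lemma --- the derivative formula --- is precisely the statement of the paper's subsequent proposition on B-spline derivatives (which is likewise only cited, to \cite[X~(8)]{db01}). What your approach buys is independence from the divided-difference and truncated-power machinery of the cited source, at the price of the regrouping computation you correctly identify as the main effort. The bookkeeping for (1)--(3) is sound, with one small remark: for the lower bound on the support in (1) you invoke positivity of the lower-order B-splines on the interior of their supports, which is slightly more than inductive property (2) gives you; combining inductive (1) and (2) yields that the sets where $B_{i,k-1,T}$ and $B_{i+1,k-1,T}$ are positive are dense in their supports, and that density suffices to conclude $\supp(B_{i,k,T}) = [\tau_i,\tau_{i+k}]$.

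There is, however, one point where the plan as stated is circular and needs repair. You propose to prove the derivative formula ``by a separate induction on $k$ from the recursion,'' i.e.\ by differentiating the recursion and regrouping. But differentiating the recursion at order $k$ requires $B_{i,k-1,T}$ and $B_{i+1,k-1,T}$ to be differentiable \emph{at the knots}, and that is exactly the regularity assertion (4) you are trying to establish; the problem is concrete already at $k=3$, where the order-$2$ summands are hat functions, not differentiable at their knots. The standard fix preserves your structure: first differentiate the recursion only on each open knot interval, where by (3) everything is polynomial and the manipulation is legitimate, obtaining the derivative formula in the piecewise sense; then observe that its right-hand side is continuous by the inductive hypothesis (4) at order $k-1$, while $B_{i,k,T}$ itself is continuous by the recursion, so the piecewise derivative of a continuous function extends continuously across the knots and a mean-value-theorem argument upgrades this to genuine differentiability with the stated derivative. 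Iterating this bootstrap (or carrying the derivative formula and the smoothness claim together in one joint induction) then yields $B_{i,k,T} \in \mathcal{C}^{k-2}([\tau_0,\tau_{\ell-1}])$ as you intend. With that adjustment the proof goes through.
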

\begin{proof}
	See \cite[IX (20)]{db01}.
\end{proof}
Another interesting property is that the evaluation of derivatives is also
recursive in nature, similar to the
\textsc{de Boor}-\textsc{Cox}-\textsc{Mansfield} recursion formula.
\begin{proposition}[B-spline derivatives]
	Let $k,\ell,m,p \in \N$ with $k \le \ell - 1$, $p \in \{m,\dots,\ell-1\}$ and
	$T := {\{ \tau_i \}}_{i = 0}^{\ell - 1}$
	with $\tau_0 = \dots = \tau_m \le \dots \le \tau_p = \dots = \tau_{\ell - 1}$.
	It holds that
	\begin{equation*}
		B'_{i,k,T}(t) = (k-1) \cdot
		\left(
			\frac{B_{i,k-1,T}(t)}{\tau_{i+k-1}-\tau_i} -
			\frac{B_{i+1,k-1,T}(t)}{\tau_{i+k}-\tau_{i+1}}
		\right).
	\end{equation*}
\end{proposition}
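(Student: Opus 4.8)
The plan is to prove the derivative formula by induction on the order $k$, mirroring the recursive structure of the \textsc{de Boor}--\textsc{Cox}--\textsc{Mansfield} definition in Definition~\ref{def:b-spline}. The natural base case is $k=2$, since the claimed formula involves $B_{i,k-1,T}$, and for $k=1$ the B-splines are indicator functions whose derivatives vanish on the interior of each knot interval. For $k=2$ the spline $B_{i,2,T}$ is an explicit piecewise-linear hat function built from two indicator functions via the recursion, so I would simply differentiate the closed form
\begin{equation*}
	B_{i,2,T}(t) = \frac{t-\tau_i}{\tau_{i+1}-\tau_i} B_{i,1,T}(t) + \frac{\tau_{i+2}-t}{\tau_{i+2}-\tau_{i+1}} B_{i+1,1,T}(t)
\end{equation*}
on the interior of each subinterval and check that the result matches $(k-1)=1$ times the difference $\frac{B_{i,1,T}}{\tau_{i+1}-\tau_i} - \frac{B_{i+1,1,T}}{\tau_{i+2}-\tau_{i+1}}$.

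For the inductive step I would assume the derivative formula holds for order $k-1$ and establish it for order $k$. Starting from the recursion
\begin{equation*}
	B_{i,k,T}(t) = \frac{t-\tau_i}{\tau_{i+k-1}-\tau_i} B_{i,k-1,T}(t) + \frac{\tau_{i+k}-t}{\tau_{i+k}-\tau_{i+1}} B_{i+1,k-1,T}(t),
\end{equation*}
I would differentiate using the product rule. Each of the two summands contributes a term from differentiating the linear coefficient (giving $\pm 1$ divided by the respective knot difference, times $B_{i,k-1,T}$ or $B_{i+1,k-1,T}$) and a term from differentiating the B-spline factor, into which I substitute the order-$(k-1)$ derivative formula from the induction hypothesis. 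The first pair of terms already assembles into exactly the claimed $(k-1)\cdot\bigl(\frac{B_{i,k-1,T}}{\tau_{i+k-1}-\tau_i} - \frac{B_{i+1,k-1,T}}{\tau_{i+k}-\tau_{i+1}}\bigr)$ after accounting for the factor, so the remaining work is to show that everything produced by the induction hypothesis cancels.

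The main obstacle, and the bulk of the routine calculation, is precisely this cancellation: substituting the order-$(k-1)$ formula produces four terms involving $B_{i,k-2,T}$, $B_{i+1,k-2,T}$ and $B_{i+2,k-2,T}$ with coefficients that are rational functions of $t$ and the knots. I expect the two inner contributions involving $B_{i+1,k-2,T}$ to combine and cancel against the cross-terms, and I would need to verify the algebraic identity relating the coefficients $\frac{t-\tau_i}{\tau_{i+k-1}-\tau_i}$ and $\frac{\tau_{i+k}-t}{\tau_{i+k}-\tau_{i+1}}$ against the order-$(k-1)$ knot differences. This is a careful but mechanical bookkeeping argument; the structural reason it works is that the derivative formula is itself compatible with the recursion, so the extra terms are forced to vanish. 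Throughout I would restrict attention to the interior of each knot interval so that the piecewise-polynomial pieces are genuinely differentiable, and invoke property~(4) of Proposition~\ref{prop:b-spline-properties} to guarantee that $B_{i,k,T}\in\mathcal{C}^{k-2}$ makes the derivative well-defined across knots for $k\ge 3$.
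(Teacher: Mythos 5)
Your proposed proof is correct, but it is worth noting that the paper does not actually prove this proposition at all: its ``proof'' is a citation to \cite[X~(8)]{db01}, where the result is derived in de Boor's framework of divided differences of truncated power functions. Your argument is therefore a genuinely different (and self-contained) route: induction on the order $k$ using only the \textsc{de Boor}--\textsc{Cox}--\textsc{Mansfield} recursion from Definition~\ref{def:b-spline}. This fits the present paper's setup better than the cited proof, since here B-splines are \emph{defined} by the recursion rather than by divided differences. Your sketch is sound at every step: the base case $k=2$ is a direct computation, and in the inductive step the crucial cancellation does occur --- after substituting the order-$(k-1)$ formula and expanding $B_{i,k-1,T}$ and $B_{i+1,k-1,T}$ by the recursion, the coefficients of $B_{i,k-2,T}$ and $B_{i+2,k-2,T}$ match termwise, while the two coefficients of $B_{i+1,k-2,T}$ agree because
\begin{equation*}
	\left(\frac{\tau_{i+k-1}-t}{\tau_{i+k-1}-\tau_i} + \frac{t-\tau_i}{\tau_{i+k-1}-\tau_i}\right)
	- \left(\frac{t-\tau_{i+1}}{\tau_{i+k}-\tau_{i+1}} + \frac{\tau_{i+k}-t}{\tau_{i+k}-\tau_{i+1}}\right) = 1 - 1 = 0,
\end{equation*}
which is exactly the algebraic identity you anticipated having to verify. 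Two small caveats you should make explicit in a full write-up: for $k=2$ the identity can only hold piecewise (away from the knots), since $B_{i,2,T}$ is merely continuous there; and the proposition as stated permits coincident knots, so the induction must carry along the convention (noted in the paper after Definition~\ref{def:b-spline}) that summands with vanishing denominators are dropped. Neither caveat affects the validity of your approach.
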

\begin{proof}
	See \cite[X (8)]{db01}.
\end{proof}
The obvious advantage is that we can not only efficiently evaluate B-splines
themselves for a given grid, we can also do that for their derivatives, making
it possible to work with derivatives in a way not possible with other means
of modelling functions numerically as easily and effectively. This is due to the
recursive nature of B-splines, their compact support, smoothness and positivity, as we'll
also see later in this thesis.
\par
Before we can speak of B-splines as a basis though, we need to solve a
remaining issue. Figure~\ref{fig:b-splines-1} shows all possible B-splines
for varying $k$ and indicates the problem: The number $\ell-k$ of B-splines on the grid decreases for
increasing $k$, even though we want to have $k+\ell-2$ basis functions, a
number that is supposed to increase for increasing $k$.
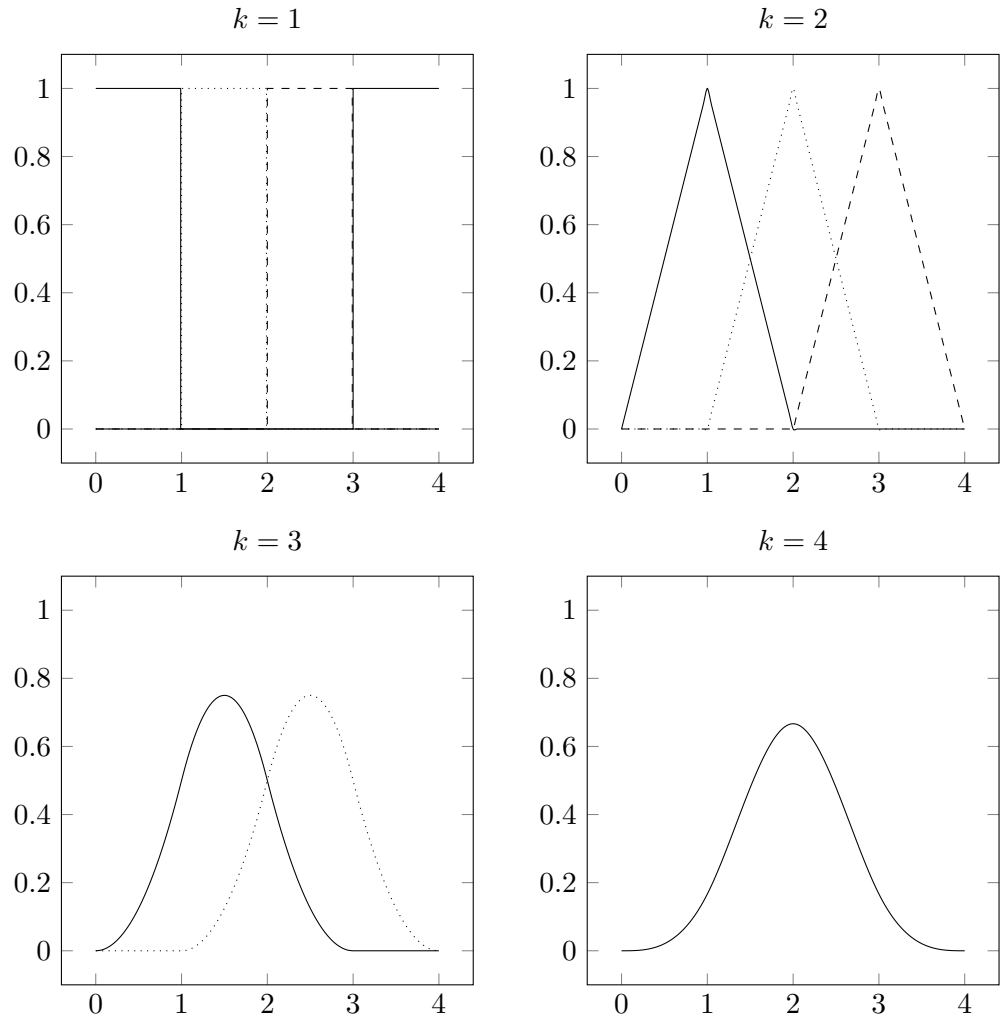
\begin{figure}[htbp]
	\centering
	\begin{tikzpicture}
		\begin{axis}	[
			name=plot1,
			height=7cm,width=7cm,
			title={$k=1$},
			ymin = -0.1,
			ymax = 1.1,
		]
		\addplot[const plot, no marks] coordinates {(0,1) (0.99,1) (0.99,0) (4,0)};
		\addplot[const plot, no marks, dotted] coordinates {(0,0) (1,0) (1,1) (1.99,1) (1.99,0) (4,0)};
		\addplot[const plot, no marks, dashed] coordinates {(0,0) (2,0) (2,1) (2.99,1) (2.99,0) (4,0)};
		\addplot[const plot, no marks] coordinates {(0,0) (3,0) (3,1) (4,1)};
		\end{axis}
		\begin{axis}	[
			name=plot2,
			height=7cm,
			width=7cm,
			title={$k=2$},
			at={($(plot1.east)+(1.5cm,0)$)},
			anchor=west,
			ymin = -0.1,
			ymax = 1.1,
		]
			\addplot [mark=none, smooth] table
				[x=t, y=B_1_2, col sep=comma]
				{examples/spline.data/spline-2.csv};
			\addplot [mark=none, smooth, dotted] table
				[x=t, y=B_2_2, col sep=comma]
				{examples/spline.data/spline-2.csv};
			\addplot [mark=none, smooth, dashed] table
				[x=t, y=B_3_2, col sep=comma]
				{examples/spline.data/spline-2.csv};
		\end{axis}
		\begin{axis}	[
			name=plot3,
			height=7cm,width=7cm,
			title={$k=3$},
			at={($(plot1.south)+(0,-1.5cm)$)},
			anchor=north,
			ymin = -0.1,
			ymax = 1.1,
		]
			\addplot [mark=none, smooth] table
				[x=t, y=B_2_3, col sep=comma]
				{examples/spline.data/spline-3.csv};
			\addplot [mark=none, smooth, dotted] table
				[x=t, y=B_3_3, col sep=comma]
				{examples/spline.data/spline-3.csv};
		\end{axis}
		\begin{axis}	[
			name=plot4,
			height=7cm,
			width=7cm,
			title={$k=4$},
			at={($(plot3.east)+(1.5cm,0)$)},
			anchor=west,
			ymin = -0.1,
			ymax = 1.1,
		]
			\addplot [mark=none, smooth] table
				[x=t, y=B_3_4, col sep=comma]
				{examples/spline.data/spline-4.csv};
		\end{axis}
	\end{tikzpicture}
	\caption{Plots of all $\ell - k$ B-splines $B_{i,k,T}(t)$ for all
	possible $k \le \ell - 1$ with
	the knot vector $T = \{0,1,2,3,4\}$ (hence $\ell = 5$)
	and $i\in\{ 0,\dots,(\ell-1) - k \}$ (from left to right).
	With each increase of $k$ the number of B-spline-functions is reduced by one.
	}
	\label{fig:b-splines-1}
\end{figure}
The solution is to just extend the knot vector in such a way that we conveniently
match the dimension of the spline function space, yielding the
\begin{definition}[Extended knot vector]\label{def:extended_knot_vector}
	Let $k,\ell \in \N$ with $k \le \ell - 1$ and
	$T := {\{ \tau_i \}}_{i = 0}^{\ell - 1}$
	with $\tau_0 < \dots < \tau_{\ell - 1}$.
	The extended knot vector $\Delta_k(T)$ is defined
	with $n := k + \ell - 2$ as
	\begin{equation*}
		\Delta_k(T) \ni: \delta_i :=
		\begin{cases}
			\tau_0 & i \in \{0,\dots,k-1\} \\
			\tau_{i-k+1} & i \in \{k,\dots,k+\ell-3\} = \{k,\dots,n-1\} \\
			\tau_{\ell-1} & i \in \{k+\ell-2,\dots,2\cdot k + \ell - 3\} = \{n,\dots,n+k-1\}.
		\end{cases}
	\end{equation*}
\end{definition}
Intuitively, we repeat the first and last knot $k$ times, and if we take a look
at the resulting plots in Figure~\ref{fig:b-splines-2} we see that the number of
B-splines on the extended grid matches the number of necessary basis functions
for the spline function space. Granted, this argument does not yet prove that these
B-splines based on the extended knot vector form a basis, but it should help
to understand the motivation behind this step before we do that in the following
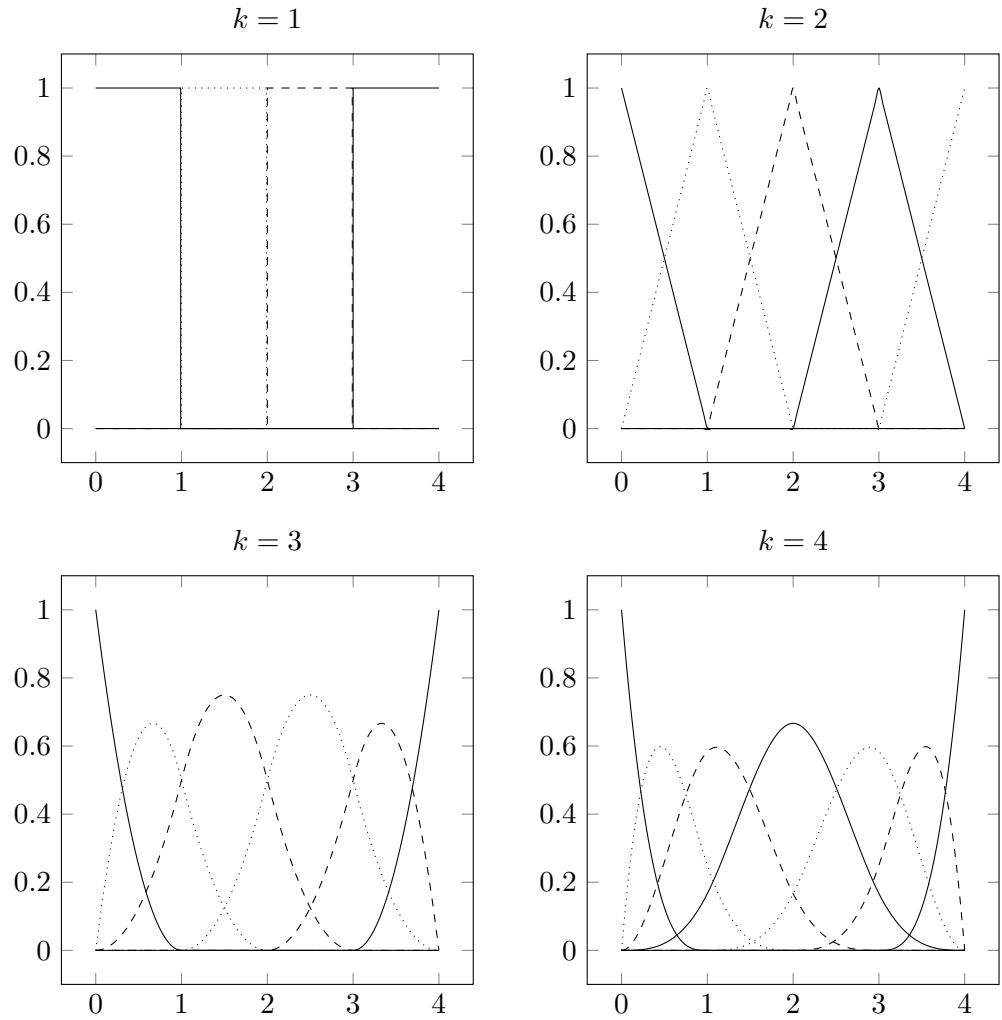
\begin{figure}[htbp]
	\centering
	\begin{tikzpicture}
		\begin{axis}	[
			name=plot1,
			height=7cm,width=7cm,
			title={$k=1$},
			ymin = -0.1,
			ymax = 1.1,
		]
		\addplot[const plot, no marks] coordinates {(0,1) (0.99,1) (0.99,0) (4,0)};
		\addplot[const plot, no marks, dotted] coordinates {(0,0) (1,0) (1,1) (1.99,1) (1.99,0) (4,0)};
		\addplot[const plot, no marks, dashed] coordinates {(0,0) (2,0) (2,1) (2.99,1) (2.99,0) (4,0)};
		\addplot[const plot, no marks] coordinates {(0,0) (3,0) (3,1) (4,1)};
		\end{axis}
		\begin{axis}	[
			name=plot2,
			height=7cm,
			width=7cm,
			title={$k=2$},
			at={($(plot1.east)+(1.5cm,0)$)},
			anchor=west,
			ymin = -0.1,
			ymax = 1.1,
		]
			\addplot [mark=none, smooth] table
				[x=t, y=B_0_2, col sep=comma]
				{examples/spline.data/spline-2.csv};
			\addplot [mark=none, smooth, dotted] table
				[x=t, y=B_1_2, col sep=comma]
				{examples/spline.data/spline-2.csv};
			\addplot [mark=none, smooth, dashed] table
				[x=t, y=B_2_2, col sep=comma]
				{examples/spline.data/spline-2.csv};
			\addplot [mark=none, smooth] table
				[x=t, y=B_3_2, col sep=comma]
				{examples/spline.data/spline-2.csv};
			\addplot [mark=none, smooth, dotted] table
				[x=t, y=B_4_2, col sep=comma]
				{examples/spline.data/spline-2.csv};
		\end{axis}
		\begin{axis}	[
			name=plot3,
			height=7cm,width=7cm,
			title={$k=3$},
			at={($(plot1.south)+(0,-1.5cm)$)},
			anchor=north,
			ymin = -0.1,
			ymax = 1.1,
		]
			\addplot [mark=none, smooth] table
				[x=t, y=B_0_3, col sep=comma]
				{examples/spline.data/spline-3.csv};
			\addplot [mark=none, smooth, dotted] table
				[x=t, y=B_1_3, col sep=comma]
				{examples/spline.data/spline-3.csv};
			\addplot [mark=none, smooth, dashed] table
				[x=t, y=B_2_3, col sep=comma]
				{examples/spline.data/spline-3.csv};
			\addplot [mark=none, smooth, dotted] table
				[x=t, y=B_3_3, col sep=comma]
				{examples/spline.data/spline-3.csv};
			\addplot [mark=none, smooth, dashed] table
				[x=t, y=B_4_3, col sep=comma]
				{examples/spline.data/spline-3.csv};
			\addplot [mark=none, smooth] table
				[x=t, y=B_5_3, col sep=comma]
				{examples/spline.data/spline-3.csv};
		\end{axis}
		\begin{axis}	[
			name=plot4,
			height=7cm,
			width=7cm,
			title={$k=4$},
			at={($(plot3.east)+(1.5cm,0)$)},
			anchor=west,
			ymin = -0.1,
			ymax = 1.1,
		]
			\addplot [mark=none, smooth] table
				[x=t, y=B_0_4, col sep=comma]
				{examples/spline.data/spline-4.csv};
			\addplot [mark=none, smooth, dotted] table
				[x=t, y=B_1_4, col sep=comma]
				{examples/spline.data/spline-4.csv};
			\addplot [mark=none, smooth, dashed] table
				[x=t, y=B_2_4, col sep=comma]
				{examples/spline.data/spline-4.csv};
			\addplot [mark=none, smooth] table
				[x=t, y=B_3_4, col sep=comma]
				{examples/spline.data/spline-4.csv};
			\addplot [mark=none, smooth, dotted] table
				[x=t, y=B_4_4, col sep=comma]
				{examples/spline.data/spline-4.csv};
			\addplot [mark=none, smooth, dashed] table
				[x=t, y=B_5_4, col sep=comma]
				{examples/spline.data/spline-4.csv};
			\addplot [mark=none, smooth] table
				[x=t, y=B_6_4, col sep=comma]
				{examples/spline.data/spline-4.csv};
		\end{axis}
	\end{tikzpicture}
	\caption{Plots of all $k + \ell - 2$ B-splines $B_{i,k,\Delta_k(T)}(t)$ for all possible $k \le \ell - 1$ with
	the extended knot vector $\Delta_k(T)$ with $T = \{0,1,2,3,4\}$ (hence $\ell = 5$)
	and $i\in\{ 0,\dots,(\ell-1) - k \}$ (from left to right). With
	each increase of $k$ the number of B-spline-functions increases by one, as desired.
	}
	\label{fig:b-splines-2}
\end{figure}
\begin{theorem}[\textsc{Curry}-\textsc{Schoenberg}]
	\label{thm:curry-schoenberg}
	Let $k,\ell \in \N$ with $k \le \ell - 1$ and
	$T := {\{ \tau_i \}}_{i = 0}^{\ell - 1}$
	with $\tau_0 < \dots < \tau_{\ell - 1}$.
	It holds with $n:=k+\ell-2$ that
	\begin{equation*}
		\Sigma_{k,T} = \spn\left(
			\left\{
				B_{0,k,\Delta_k(T)},\dots,B_{n-1,k,\Delta_k(T)}
			\right\}
		\right).
	\end{equation*}
\end{theorem}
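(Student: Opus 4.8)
The plan is to prove the two inclusions via a dimension-counting shortcut rather than head-on. By the preceding proposition on the dimension of the spline function space we know $\dim(\Sigma_{k,T}) = k + \ell - 2 = n$, while the right-hand side is spanned by exactly $n$ B-splines $B_{0,k,\Delta_k(T)},\dots,B_{n-1,k,\Delta_k(T)}$. Hence it suffices to establish two things: (i) that every such B-spline lies in $\Sigma_{k,T}$, giving $\spn(\cdots) \subseteq \Sigma_{k,T}$, and (ii) that these $n$ B-splines are linearly independent. Once both hold we have $n$ linearly independent elements inside an $n$-dimensional space, which therefore form a basis, and equality of the two sides follows at once.

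For inclusion (i) I would read the required properties directly off Proposition~\ref{prop:b-spline-properties} applied to the knot vector $\Delta_k(T)$. By Definition~\ref{def:extended_knot_vector} the interior knots of $\Delta_k(T)$ are precisely the simple knots $\tau_1,\dots,\tau_{\ell-2}$, so the breakpoints of each $B_{i,k,\Delta_k(T)}$ inside $[\tau_0,\tau_{\ell-1}]$ are among the $\tau_j$. Property~3 then gives that each B-spline is piecewise in $\Pi_k$ on the grid $T$, and property~4 gives $B_{i,k,\Delta_k(T)} \in \mathcal{C}^{k-2}([\tau_0,\tau_{\ell-1}])$; together these are exactly the two defining conditions of $\Sigma_{k,T}$. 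Thus each B-spline restricted to $[\tau_0,\tau_{\ell-1}]$ is a genuine element of $\Sigma_{k,T}$, and this step is routine.

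The heart of the argument is inclusion (ii), the linear independence, and here I would exploit the staggered support structure $\supp(B_{i,k,\Delta_k(T)}) = [\delta_i,\delta_{i+k}]$ from property~1 together with a left-to-right elimination. On the leftmost nondegenerate knot interval $(\tau_0,\tau_1)$ only the $k$ B-splines $B_{0,k},\dots,B_{k-1,k}$ fail to vanish; on the next interval the window of active B-splines shifts by one, and on any interior interval exactly $k$ consecutive B-splines are nonzero. Assuming a vanishing combination $\sum_i c_i B_{i,k,\Delta_k(T)} \equiv 0$, the plan is to sweep interval by interval and peel off the coefficients in turn. The crux — and the step I expect to be the main obstacle — is the \emph{local} linear independence statement: that the $k$ B-splines nonzero on a single knot interval restrict there to $k$ linearly independent polynomials, hence to a basis of the $k$-dimensional space $\Pi_k$ on that interval. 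This can be secured either through a Marsden-type identity representing every order-$k$ polynomial as a local B-spline combination (which forces the $k$ active B-splines to span $\Pi_k$ there, so being $k$ of them they are independent), or, at the clamped ends, through the derivative recursion of the preceding proposition, which yields a triangular system in the boundary coefficients. For this technical local-independence lemma I would cite \cite{db01} rather than reprove it; feeding it into the left-to-right sweep forces $c_0 = \dots = c_{n-1} = 0$, establishing independence and thereby the theorem.
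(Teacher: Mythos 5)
Your proposal is correct, but there is little in the paper to compare it against: the paper's entire proof of Theorem~\ref{thm:curry-schoenberg} is the citation \cite[IX~(44)]{db01}, so what you have written is essentially a reconstruction of that textbook argument rather than a divergence from the paper's route. Your bookkeeping checks out: with $\Delta_k(T)$ as in Definition~\ref{def:extended_knot_vector}, exactly $k$ consecutive B-splines are active on each knot interval $(\tau_j,\tau_{j+1})$ (namely those with indices $j,\dots,j+k-1$), the window shifting by one per interval; the dimension count $\dim(\Sigma_{k,T}) = n = k+\ell-2$ is supplied by the paper's preceding (unlabelled) dimension proposition, whose hypothesis of strictly increasing knots matches the theorem's; and inclusion (i) is indeed routine from Proposition~\ref{prop:b-spline-properties}, once one notes, as you do, that the interior knots of $\Delta_k(T)$ are simple so the $\mathcal{C}^{k-2}$ statement applies on all of $[\tau_0,\tau_{\ell-1}]$. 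The genuinely load-bearing step is the one you isolate: local linear independence of the $k$ active B-splines on a single interval. Deferring that lemma (via Marsden's identity or dual functionals) to \cite{db01} is consistent with the paper's declared policy of leaving out the more technical spline proofs, and once it is granted your left-to-right sweep does force $c_0=\dots=c_{n-1}=0$. Two micro-remarks: first, your inference from \enquote{the $k$ active B-splines span $\Pi_k$ locally} to their independence uses that $k$ spanning elements of a $k$-dimensional space form a basis — correct, but worth stating; second, the separate treatment you sketch for the clamped ends via the derivative recursion is unnecessary, since local independence applied interval by interval already annihilates all active coefficients, boundary intervals included.
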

\begin{proof}
	See \cite[IX (44)]{db01}.
\end{proof}
With this knowledge we have reached our goal and found
a basis for the spline function space. Given
$\Sigma_{k,T}$ is a real vector space, it makes sense
to define a mapping between it and coefficient
vectors for the B-spline basis. In the following segment
we make use of the results in Chapter~\ref{ch:funcspaceopt} on
the function space order $\preceq$ and
order-preserving isomorphisms.
\begin{definition}[Coefficient spline mapping]\label{def:coefficient_spline_mapping}
	Let $k,\ell \in \N$ with $k \le \ell - 1$,
	$T := {\{ \tau_i \}}_{i = 0}^{\ell - 1}$
	with $\tau_0 < \dots < \tau_{\ell - 1}$,
	$n := k + \ell -2$ and
	$\boldsymbol{s} = (s_0,\dots,s_{n-1}) \in \R^n$.
	The \emph{coefficient spline mapping}
	$\B_{k,T}:(\R^n, \le) \to (\Sigma_{k,T},\preceq)$ is defined as
	\begin{equation*}
		\B_{k,T}(\boldsymbol{s}) := \sum_{i=0}^{n-1} s_i \cdot B_{i,k,\Delta_k(T)}.
	\end{equation*}
\end{definition}
This mapping is both an isomorphism and preserves order, which we prove in
the following
\begin{proposition}
	Let $k,\ell \in \N$ with $k \le \ell - 1$ and
	$T := {\{ \tau_i \}}_{i = 0}^{\ell - 1}$
	with $\tau_0 < \dots < \tau_{\ell - 1}$.
	$\B_{k,T}$ is an order isomorphism of ordered vector-spaces (see
	Definition~\ref{def:oioovs}).
\end{proposition}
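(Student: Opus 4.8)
The plan is to verify the two constituents of an order isomorphism of ordered vector spaces (Definition~\ref{def:oioovs}) separately: that $\B_{k,T}$ is a linear isomorphism, and that it together with its inverse respects the orders $\le$ and $\preceq$. Since $\B_{k,T}$ is manifestly linear in its argument $\boldsymbol{s}$ (each coefficient $s_i$ enters the defining sum linearly), and well-defined because every B-spline lies in $\Sigma_{k,T}$, the vector-space part reduces to bijectivity. Here I would invoke Theorem~\ref{thm:curry-schoenberg}, which states that the $n = k+\ell-2$ B-splines $B_{0,k,\Delta_k(T)},\dots,B_{n-1,k,\Delta_k(T)}$ span $\Sigma_{k,T}$. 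Because the dimension formula gives $\dim(\Sigma_{k,T}) = k+\ell-2 = n$, a spanning set of exactly $n$ elements must be linearly independent, hence a basis. Consequently $\B_{k,T}$ sends the standard basis of $\R^n$ to a basis of $\Sigma_{k,T}$ and is therefore a linear isomorphism, its inverse being the linear map that reads off B-spline coefficients.

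For the order-theoretic part I would first treat monotonicity. Suppose $\boldsymbol{s} \le \boldsymbol{t}$, i.e.\ $s_i \le t_i$ for every $i$. By linearity, $\B_{k,T}(\boldsymbol{t}) - \B_{k,T}(\boldsymbol{s}) = \sum_{i=0}^{n-1} (t_i - s_i)\, B_{i,k,\Delta_k(T)}$, and every summand is the product of a nonnegative coefficient $t_i - s_i \ge 0$ with a B-spline that is nonnegative on $[\tau_0,\tau_{\ell-1}]$ by Proposition~\ref{prop:b-spline-properties} (item~2). Hence the difference is a nonnegative element, that is $\B_{k,T}(\boldsymbol{s}) \preceq \B_{k,T}(\boldsymbol{t})$, so $\B_{k,T}$ is order-preserving.

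The main obstacle is the converse direction, namely that $\B_{k,T}^{-1}$ is order-preserving (equivalently, that $\B_{k,T}$ reflects the order), since positivity of the B-splines by itself does not obviously force the coefficients of a nonnegative spline to be nonnegative. The clean route is to reformulate the claim via positive cones: an order isomorphism is precisely a linear isomorphism carrying the positive cone $\R^n_{\ge 0}$ \emph{onto} the positive cone of $(\Sigma_{k,T}, \preceq)$. The forward inclusion is exactly the monotonicity just established; for the reverse inclusion I would use the definition of the function-space order $\preceq$ from Chapter~\ref{ch:funcspaceopt} to characterise the nonnegative splines and then invoke the \emph{uniqueness} of the B-spline coefficient representation from the first step: if $\B_{k,T}(\boldsymbol{s})$ lies in the positive cone, then its uniquely determined coefficients are the $s_i$, which are thereby forced to be nonnegative, giving $\boldsymbol{s} \ge 0$. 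Reconciling the abstract order $\preceq$ with the coefficientwise order through this cone correspondence is where the real work lies; once it is in place, the bijectivity of the restriction to the positive cones, and hence the order-isomorphism property, follows at once.
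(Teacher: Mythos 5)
Your handling of the isomorphism part (Curry--Schoenberg spanning combined with the dimension count, so that a spanning set of size $n$ must be a basis) and of the forward implication ($\boldsymbol{s} \le \boldsymbol{t} \Rightarrow \B_{k,T}(\boldsymbol{s}) \preceq \B_{k,T}(\boldsymbol{t})$, via nonnegativity of the B-splines applied to $\sum_i (t_i - s_i) B_{i,k,\Delta_k(T)}$) is correct, and these are exactly the two ingredients the paper's own two-line proof cites. You even go beyond the paper in explicitly isolating the reverse implication demanded by the biconditional in Definition~\ref{def:oioovs}, which the paper's proof passes over in silence.

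However, your proposed closing of that last step is a non sequitur, and the gap is genuine. Uniqueness of the coefficient representation says only that a spline has exactly one coefficient vector; it exerts no control whatsoever on the signs of those coefficients when the spline is nonnegative. In fact the implication you need is false whenever $k \ge 3$, so in particular for the cubic case $k \ge 4$ used throughout the thesis. Pick an interior index $i \in \{1,\dots,n-2\}$ and set $s_j := 1$ for $j \ne i$ and $s_i := -\epsilon$ with $\epsilon > 0$. By the partition of unity (Proposition~\ref{prop:partition_of_unity}),
\begin{equation*}
	\B_{k,T}(\boldsymbol{s}) \;=\; 1 - (1+\epsilon)\, B_{i,k,\Delta_k(T)}.
\end{equation*}
For $k \ge 3$, at every point of $(\tau_0,\tau_{\ell-1})$ at least two B-splines are strictly positive (exactly $k$ on each open knot interval, $k-1$ at each interior knot), and an interior B-spline vanishes at both endpoints, so $M := \max_t B_{i,k,\Delta_k(T)}(t) < 1$. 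Choosing $0 < \epsilon < (1-M)/M$ gives $\B_{k,T}(\boldsymbol{s})(t) \ge 1 - (1+\epsilon)M > 0$ for all $t$, hence $\B_{k,T}(\boldsymbol{0}) \preceq \B_{k,T}(\boldsymbol{s})$ while $\boldsymbol{0} \not\le \boldsymbol{s}$: the order is preserved but not reflected, and no argument can repair this step. To be fair, this defect is inherited from the statement itself: the paper's proof establishes only the two facts you establish, and the proposition with the full equivalence of Definition~\ref{def:oioovs} holds only for $k \le 2$; what the later chapters actually rely on is the one-directional fact that nonnegative coefficients yield nonnegative splines, which both you and the paper do prove.
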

\begin{proof}
	This follows directly from Theorem~\ref{thm:curry-schoenberg} and
	$B_{i,k,\Delta_k(T)} \succeq 0$ for $i \in \{0,\dots,n:=k+\ell-2\}$
	mentioned in Proposition~\ref{prop:b-spline-properties}.
\end{proof}
Setting the details aside, what one can take away from this result is 
that manipulations of B-spline functions can equivalently be expressed
in terms of manipulations of their basis coefficients. In the context
of optimization problems considered in Chapter~\ref{ch:emd_analysis},
this enables us to formulate optimization problems in terms of 
B-spline basis coefficients.
\par
From the numerical perspective, we want to find quality conditions with which
we can compare two function space bases. One such aspect is orthogonality, which will
be elaborated as follows.
Consider $\R^2$ again with the standard basis $\{ {(1,0)}^T, {(0,1)}^T \}$.
This is an example for a so-called \enquote{orthogonal basis}, as these
vectors are orthogonal to each other with regard to
the \textsc{Euclid}ean inner product. In turn, this means that in a basis
decomposition, each basis vector only affects one entry of the resulting
vector. In function spaces, which are also vector spaces, bases can also
be orthogonal with regard to an inner product. A more general approach
though is the concept of a basis to be \enquote{locally linearly independent}.
This means that each basis function only affects a small
area of the interval the function operates on (i.e.\ the function has
local support). Thus, in turn, if one seeks
to find fitting coefficients for each basis function to best approximate
a given set of discrete datapoints, a locally linearly independent basis ensures
that each coefficient is only affected by datapoints within that small
area (which corresponds to the support of each basis function). A closely related
concept is that of the well-conditioned basis, where we can relate the
norm of the basis coefficients with the norm of the resulting function.
\begin{proposition}[Well-conditioned basis]\label{prop:well_conditioned_basis}
	Let $k,\ell \in \N$ with $k \le \ell - 1$,
	$T := {\{ \tau_i \}}_{i = 0}^{\ell - 1}$
	with $\tau_0 < \dots < \tau_{\ell - 1}$,
	$n := k + \ell -2$ and
	$\boldsymbol{s} = (s_0,\dots,s_{n-1}) \in \R^n$.
	There exists $c_{k,2} \in (0,1)$ (only depending on $k$) such that
	\begin{equation*}
		c_{k,2} \cdot {\| \boldsymbol{s} \|}_2 \le
		{\left\| \B_{k,\Delta_k(T)}(\boldsymbol{s}) \right\|}_2
		\le {\| \boldsymbol{s} \|}_2.
	\end{equation*}
\end{proposition}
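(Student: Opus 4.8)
The plan is to read both inequalities as a statement about how the Euclidean norm of the coefficient vector $\boldsymbol{s}$ compares with the $\mathcal{L}^2$-norm of the associated spline, and hence to reduce them to two-sided spectral bounds on the Gram matrix $G = \left(\langle B_{i,k,\Delta_k(T)}, B_{j,k,\Delta_k(T)}\rangle\right)_{i,j=0}^{n-1}$. Since $\left\|\B_{k,T}(\boldsymbol{s})\right\|_2^2 = \boldsymbol{s}^\top G \boldsymbol{s}$, the claim is equivalent to $\lambda_{\max}(G) \le 1$ and $\lambda_{\min}(G) \ge c_{k,2}^2 > 0$. Theorem~\ref{thm:curry-schoenberg} guarantees that $\B_{k,T}$ is a linear bijection, so $G$ is symmetric positive definite with positive eigenvalues; the compact support $\supp(B_{i,k,\Delta_k(T)}) = [\delta_i,\delta_{i+k}]$ from Proposition~\ref{prop:b-spline-properties} shows that $G_{ij}=0$ whenever $|i-j|\ge k$, so $G$ is banded with bandwidth controlled by $k$ — the structural fact that will let global features of the knot grid drop out of the final constant.

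For the upper bound I would argue directly and elementarily. Using the non-negativity $B_{i,k,\Delta_k(T)} \ge 0$ of Proposition~\ref{prop:b-spline-properties} together with the partition-of-unity property $\sum_{i} B_{i,k,\Delta_k(T)} \equiv 1$ on $[\tau_0,\tau_{\ell-1}]$ (which follows from the recursion in Definition~\ref{def:b-spline}; see \cite{db01}), the weights $B_{i,k,\Delta_k(T)}(t)$ form, for each fixed $t$, a convex combination. Convexity of $x\mapsto x^2$ then gives the pointwise estimate $\big(\sum_i s_i B_{i,k,\Delta_k(T)}(t)\big)^2 \le \sum_i s_i^2\, B_{i,k,\Delta_k(T)}(t)$. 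Integrating and invoking the elementary identity $\int B_{i,k,\Delta_k(T)} = (\delta_{i+k}-\delta_i)/k$ reduces the upper bound to $(\delta_{i+k}-\delta_i)/k \le 1$, which holds once the domain is normalized to unit length — the convention under which the sharp constant $1$ is attained. This is the easy direction.

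The lower bound is the substantive part and the main obstacle, because the constant must depend on $k$ alone and not on the (possibly degenerate) knot spacing. The plan is to follow de Boor and construct a dual system of functionals $\lambda_i$ on $\Sigma_{k,T}$ satisfying $\lambda_i(B_{j,k,\Delta_k(T)}) = \delta_{ij}$, so that $s_i = \lambda_i(\B_{k,T}(\boldsymbol{s}))$, and to bound each $\|\lambda_i\|$ against the $\mathcal{L}^2$-norm over the short support $[\delta_i,\delta_{i+k}]$ of $B_{i,k,\Delta_k(T)}$. Because each functional sees only $k$ neighbouring knots, a local rescaling onto a reference interval should remove the dependence on the actual knot locations and produce an estimate $|s_i| \le D_k\,\left\|\B_{k,T}(\boldsymbol{s})\right\|_{\mathcal{L}^2(\delta_i,\delta_{i+k})}$ with $D_k$ depending only on $k$; summing the squares over $i$ and using that the supports overlap at most $k$-fold (the bandedness noted above) would yield $\|\boldsymbol{s}\|_2^2 \le D_k^2\, k\, \left\|\B_{k,T}(\boldsymbol{s})\right\|_2^2$, i.e.\ the lower bound with $c_{k,2} := (D_k \sqrt{k})^{-1}$. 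The delicate point — and the place where I would lean on the estimates of \cite[Chapter~XI]{db01} — is precisely the grid-independence of $\|\lambda_i\|$: establishing that the norms of the de Boor dual functionals are controlled by a constant depending only on the order $k$, uniformly over all admissible knot sequences.
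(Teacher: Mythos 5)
Your Gram-matrix reformulation, the Jensen/partition-of-unity argument for the upper bound (valid once the domain is normalized so that $\delta_{i+k}-\delta_i\le k$, a hypothesis the statement tacitly needs), and the $k$-fold-overlap summation are all sound. The genuine gap sits exactly at the point you call delicate, and it is not merely delicate but false: the de Boor dual functionals do \emph{not} satisfy a bound $|s_i|\le D_k\,\|\B_{k,\Delta_k(T)}(\boldsymbol{s})\|_{\mathcal{L}^2(\delta_i,\delta_{i+k})}$ with $D_k$ depending only on $k$. Your own rescaling heuristic shows why it must fail: B-spline values and coefficients are invariant under an affine change of variable, but the $\mathcal{L}^2$ norm is not --- pulling $[\delta_i,\delta_{i+k}]$ back to a reference interval multiplies it by ${(\delta_{i+k}-\delta_i)}^{-1/2}$. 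The estimate that actually comes out of \cite[Ch.~XI]{db01} is $|s_i|\le D_k\,{(\delta_{i+k}-\delta_i)}^{-1/2}\,\|\B_{k,\Delta_k(T)}(\boldsymbol{s})\|_{\mathcal{L}^2(\delta_i,\delta_{i+k})}$; grid-independence of the dual functional norms holds only against the supremum norm, which is the one scale-invariant norm in the family. Summing squares then produces a constant involving $\max_i {(\delta_{i+k}-\delta_i)}^{-1}$, i.e.\ a mesh-dependent constant, not $c_{k,2}$.

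Moreover, no cleverer argument can close this gap, because the lower bound as stated fails without an assumption on the mesh: take $\boldsymbol{s}=e_i$ a standard basis vector on a uniform grid of spacing $h$ on $[0,1]$; then $\|e_i\|_2=1$ while $\|\B_{k,\Delta_k(T)}(e_i)\|_2=\|B_{i,k}\|_2\le\sqrt{kh}\to 0$ as $h\to 0$, since $0\le B_{i,k}\le 1$ and the support has length at most $kh$. For comparison, the paper's own proof takes a different route --- it quotes de Boor's sup-norm stability $c_{k,\infty}\|\boldsymbol{s}\|_\infty\le\|\B_{k,\Delta_k(T)}(\boldsymbol{s})\|_\infty\le\|\boldsymbol{s}\|_\infty$ and then appeals to the equivalence of all norms on $\R^n$ --- but those equivalence constants depend on $n=k+\ell-2$, hence on the grid, so that proof also yields at best an $n$-dependent constant and merely hides the obstruction that your localized approach makes visible. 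A correct grid-free statement requires locally renormalized B-splines (weights ${((\delta_{i+k}-\delta_i)/k)}^{-1/2}$), or alternatively a quasi-uniformity hypothesis bounding the mesh ratio, under which your dual-functional plan goes through essentially verbatim.
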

\begin{proof}
	It follows from \cite[XI (8)]{db01} that there exists $c_{k,\infty} \in (0,1)$
	with
	\begin{equation*}
		c_{k,\infty} \cdot {\| \boldsymbol{s} \|}_\infty \le
		{\left\| \B_{k,\Delta_k(T)}(\boldsymbol{s}) \right\|}_\infty
		\le {\| \boldsymbol{s} \|}_\infty.
	\end{equation*}
	Given that ${\left\| \B_{k,\Delta_k(T)}(\boldsymbol{s}) \right\|}_2 \le
	{\left\| \B_{k,\Delta_k(T)}(\boldsymbol{s}) \right\|}_\infty$ and all norms are
	equivalent on $\R^n$ the proposition follows.
\end{proof}
This implies that if there are small disturbances in the B-spline-coefficients
it only leads to small disturbances in the spline functions themselves,
further underlining hat B-splines are a good choice for numerical applications.
Another result of this proposition is that we can find an upper and lower bound
for the supremum norm of a given spline function by the supremum norm of its
coefficient vector.
\par
Another interesting observation is the
\begin{proposition}[partition of unity]\label{prop:partition_of_unity}
	Let $k,\ell \in \N$ with $k \le \ell - 1$ and
	$T := {\{ \tau_i \}}_{i = 0}^{\ell - 1}$
	with $\tau_0 < \dots < \tau_{\ell - 1}$.
	It holds with $n := k + \ell - 2$ that
	\begin{equation*}
		\B_{k,T}((1,\dots,1)) := \sum_{i=0}^{n-1} B_{i,k,\Delta_k(T)} = 1
	\end{equation*}
\end{proposition}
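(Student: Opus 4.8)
The plan is to prove the identity by induction on the order $k$, driven by the \textsc{de Boor}--\textsc{Cox}--\textsc{Mansfield} recursion of Definition~\ref{def:b-spline}. To keep the induction self-contained, I would state it for any nondecreasing knot sequence whose two boundary knots are repeated at least as often as the order; the extended knot vector $\Delta_k(T)$ of Definition~\ref{def:extended_knot_vector} is precisely of this type, and since its endpoints occur $k$ times it also satisfies the hypothesis at the lower order $k-1$, so the inductive step may reuse the very same knot sequence that appears on the right-hand side of the recursion.

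For the base case $k=1$ the functions $B_{i,1,\Delta_1(T)}$ are indicator functions of the intervals $[\tau_i,\tau_{i+1})$ (the last one closed). These intervals tile $[\tau_0,\tau_{\ell-1}]$, and any interval arising from a coincident pair of knots is empty so that its indicator vanishes; hence the sum is identically $1$.

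For the inductive step I would insert the recursion into $\sum_{i=0}^{n-1} B_{i,k,\Delta_k(T)}$ and reindex the second summand via $j := i+1$, so that each order-$(k-1)$ B-spline $B_{j,k-1,\Delta_k(T)}$ collects a contribution from two adjacent terms. For every interior index these two coefficients add up to
\begin{equation*}
	\frac{t-\delta_j}{\delta_{j+k-1}-\delta_j} +
	\frac{\delta_{j+k-1}-t}{\delta_{j+k-1}-\delta_j} = 1,
\end{equation*}
so the order-$k$ sum collapses to the order-$(k-1)$ sum, except for two extremal boundary terms whose coefficients are not obviously equal to $1$.

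Disposing of these two boundary terms is the crux of the argument and the only place where the special shape of $\Delta_k(T)$ is really used. They are $B_{0,k-1,\Delta_k(T)}$ and $B_{n,k-1,\Delta_k(T)}$, and their coefficients even carry the vanishing denominators $\delta_{k-1}-\delta_0$ and $\delta_{n+k-1}-\delta_n$. The resolution is the support property of Proposition~\ref{prop:b-spline-properties}: since $\Delta_k(T)$ repeats $\tau_0$ and $\tau_{\ell-1}$ exactly $k$ times, these two B-splines have the degenerate supports $[\tau_0,\tau_0]$ and $[\tau_{\ell-1},\tau_{\ell-1}]$ and therefore vanish on the interior of $[\tau_0,\tau_{\ell-1}]$. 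The problematic coefficients thus multiply the zero function and drop out, leaving exactly $\sum_{j} B_{j,k-1,\Delta_k(T)}$, which equals $1$ by the inductive hypothesis; the smoothness asserted in Proposition~\ref{prop:b-spline-properties} then carries the identity to the endpoints. I expect the careful bookkeeping of these boundary indices to be the only genuinely delicate part; an alternative, less elementary route would be to deduce the statement as the degree-zero case of polynomial reproduction together with the basis property from Theorem~\ref{thm:curry-schoenberg}.
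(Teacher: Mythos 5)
Your proof is correct, but it is genuinely different from what the paper does: the paper gives no argument at all and simply defers to the literature (the proof reads ``See \cite[IX (36)]{db01}'', where the identity is obtained along the lines of your closing remark, i.e.\ from polynomial reproduction/Marsden-type identities and the basis property of Theorem~\ref{thm:curry-schoenberg}). Your induction on the order via the \textsc{de Boor}--\textsc{Cox}--\textsc{Mansfield} recursion is the elementary route, and the details check out: after reindexing, the two coefficients attached to each interior $B_{j,k-1,\Delta_k(T)}$ are
\begin{equation*}
	\frac{t-\delta_j}{\delta_{j+k-1}-\delta_j} +
	\frac{\delta_{j+k-1}-t}{\delta_{j+k-1}-\delta_j} = 1,
\end{equation*}
and the two uncancelled boundary terms indeed carry the degenerate supports $[\tau_0,\tau_0]$ and $[\tau_{\ell-1},\tau_{\ell-1}]$, so they vanish identically by the paper's own convention for coincident knots (the remark following Definition~\ref{def:b-spline}), which also disposes of the $0/0$ coefficients. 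Two strengths of your argument are worth noting: first, the strengthened induction hypothesis (boundary multiplicity at least the order) is exactly what is needed, because the recursion lowers the order but keeps the knot sequence $\Delta_k(T)$, whose endpoints have multiplicity $k$, not $k-1$ --- an induction naively stated only for $\Delta_{k'}(T')$-type vectors would not close; second, the count works out, since the order-$(k-1)$ B-splines on $\Delta_k(T)$ are $B_{0,k-1},\dots,B_{n,k-1}$ and precisely the two extremal ones die, so your telescoped sum really is the full order-$(k-1)$ sum to which the hypothesis applies. What your approach buys is a proof that is self-contained within the paper's toolkit (Definitions~\ref{def:b-spline} and~\ref{def:extended_knot_vector}, Proposition~\ref{prop:b-spline-properties}); what the citation buys is brevity and access to the stronger fact (local partition of unity on each knot interval) proved in \cite{db01}. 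The only point where your write-up is slightly weaker than necessary is the appeal to continuity at the endpoints: the degenerate boundary splines vanish everywhere, not just on the interior, so the continuity argument is needed only to settle the value at $\tau_{\ell-1}$ inherited from the half-open/closed indicator convention at order $1$, and for $k\ge 2$ it does so.
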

\begin{proof}
	See \cite[IX (36)]{db01}.
\end{proof}
\begin{remark}\label{rem:spline-shorthand}
	The particular form of the extended knot vector according
	to Definition~\ref{def:extended_knot_vector} is considered to be fixed
	in this thesis.
	To simplify notation and bring the focus on the topic at hand instead
	of technicalities we will write $\Sigma_{k}$, $B_{i,k}$ and
	$\B_{k}$ instead of $\Sigma_{k,T}$, $B_{i,k,\Delta_k(T)}$ and
	$\B_{k,\Delta_k(T)}$.
	We just fix the chosen spline knot vector $T$ and assume that in
	the contexts it is used in it has been well-chosen. This assumption
	is not hard to make, given when we prove statements with the general
	variables $k,\ell$ and $n := k + \ell - 2$ we do not lose generality
	even if we ignore $T$'s exact form.
\end{remark}
With the results of this section, we can take a look at other function
space bases and argue why they were not used in this thesis.
One possible choice are \enquote{orthogonal polynomials}. Orthogonality
simplifies data fitting, but it comes at the cost of
numerical behaviour with potentially high polynomial degrees.
The many possible choices of
orthogonal polynomials also require deeper analysis of the matter than
what fits within the scope of this thesis.
\par
Another possible alternative choice are \enquote{radial
basis functions} (RBF), which have been diversely explored in the
context of EMD (see for example \cite{yyj12} and \cite{lww13}).
They are the other extreme compared to orthogonal polynomials
in terms of orthogonality, as each basis function spans across
the entire interval. Additionally, they do not present a locally
linearly independent basis, which can be at the cost of numerical stability.
Given the many choices of radial functions it is also difficult to
evaluate the quality of each choice.
Numerically, due to their non-locality, they yield hard to handle
full rank matrices when used as function bases, which do not scale well
for larger problems. Due to the depth of this matter RBFs will not
be investigated further in this thesis.
\chapter{Empirical Mode Decomposition Model and Analysis}\label{ch:emd_analysis}
As already introduced in Chapter~\ref{ch:introduction} we may consider one
step of the empirical mode decomposition as an optimization problem over the set
of intrinsic mode functions. For a given signal $s$ the cost function
might relate to the amount of residual $r := s - u$ left for a given candidate
function $u$, yielding for instance a problem of the form
\begin{equation}\label{eq:emdop-naive}
	\begin{aligned}
		\inf_{u} \quad &
			{\| s - u \|}_2^2 \\
		\text{s.t.} \quad & u \text{\ IMF}.
	\end{aligned}
\end{equation}
In this chapter, we will only focus on optimization problems of this kind,
namely the extraction of a single IMF (that we formally introduce later) from an input signal using a
cost function of some kind. The EMD method follows by iteratively
using the residual of the previous step as the input signal for the next step.
\par
The objective of this chapter is to clarify what the set of intrinsic
mode functions is. During this process we generalize the concept for arbitrary cost
functions satisfying convex-likeness, which is a weaker form of convexity.
Our goal is to find useful properties for this underlying optimization
problem. This could bring useful results and be a step forward for
the theoretical analysis of the empirical mode decomposition and be
a guide for the development of new heuristic methods.
\section{Intrinsic Mode Functions}\label{sec:intrinsic_mode_functions}
The fundamental building blocks of the empirical
mode decomposition are intrinsic mode functions (IMFs) $u(t)$
of the form
\begin{equation}\label{eq:imf-base_form}
	u(t) := a(t) \cos(\phi(t)),
\end{equation}
where $a(t)$ and $\phi(t)$ satisfy certain conditions we will lay out
later. One can imagine an intrinsic mode function to be a wave of
varying frequency that is enveloped by a varying amplitude, as shown in
Figure~\ref{fig:example-imf}.
\begin{figure}[htbp]
	\centering
	\begin{subfigure}[c]{0.32\textwidth}
		\centering
		\begin{tikzpicture}
			\begin{axis}[
				name=plot1,
				height=4.5cm,width=4.5cm,
			]
				\addplot [domain=0:1, samples=200]{(1 + 10 * x^3) *
				cos(deg((130 - 40 * x^2) * x))};
			\end{axis}
		\end{tikzpicture}
		\subcaption{$u(t)$}
	\end{subfigure}
	\begin{subfigure}[c]{0.32\textwidth}
		\centering
		\begin{tikzpicture}
			\begin{axis}[
				name=plot1,
				height=4.5cm,width=4.5cm,
			]
				\addplot [domain=0:1, samples=200]{(1 + 10 * x^3)};
			\end{axis}
		\end{tikzpicture}
		\subcaption{$a(t)$}
	\end{subfigure}
	\begin{subfigure}[c]{0.32\textwidth}
		\centering
		\begin{tikzpicture}
			\begin{axis}[
				name=plot1,
				height=4.5cm,width=4.5cm,
			]
				\addplot [domain=0:1, samples=200]{(130 - 120 * x^2)};
			\end{axis}
		\end{tikzpicture}
		\subcaption{$\phi'(t)$}
	\end{subfigure}
	\caption{An intrinsic mode function $u(t) := a(t) \cdot \cos(\phi(t))$ with its
	instantaneous amplitude $a(t)$ and frequency $\phi'(t)$.}
	\label{fig:example-imf}
\end{figure}
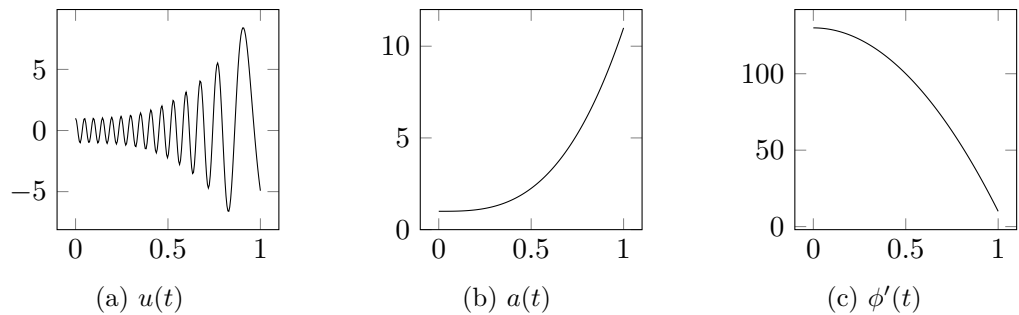
We can see that for given $a(t)$ and $\phi(t)$ the intrinsic mode
function $u(t)$ in Equation~(\ref{eq:imf-base_form}) is fully described.
It follows that the real objects of interest are $a(t)$ and $\phi(t)$,
especially in regard to conditions we want them to satisfy such that the
corresponding IMF has meaningful properties.
\par
The approach we take in this thesis is novel: We represent an IMF as
a function pair $(a(t),\phi(t))$ satisfying a set of IMF conditions instead
of defining an IMF as a function of analytical form $a(t) \cdot \cos(\phi(t))$,
where $a(t)$ and $\phi(t)$ have certain properties. The
crucial advantage of the new approach compared to the
classic one is that the components
$a(t)$ and $\phi(t)$ are \enquote{graspable} rather
than hidden within the IMF itself.
A central objective of this chapter is thus to find a way to extract
$a(t)$ and $\phi$ from an IMF $u(t)$. As we consider optimization
problems over IMFs we need to enforce the IMF conditions in some way,
which requires knowledge of $a(t)$ and $\phi(t)$. This
should not be dependent on such an extraction process until later.
\par
Given the central role of the pair $(a(t),\phi(t))$ for an IMF $u(t)$
we call it the \enquote{soul} of the intrinsic mode function and define
it as follows.
\begin{definition}[Intrinsic mode function soul (IMFS)]
	\label{def:imfs}
	Let $\mu_0, \mu_1, \mu_2 > 0$. The pair 
	$(a,\phi) \in \mathcal{C}^1(\R,\R) \times
	\mathcal{C}^2(\R,\R)$ is an \emph{intrinsic mode
	function soul (IMFS)} with characteristic $(\mu_0,\mu_1,\mu_2)$ if
	and only if
	\begin{align}
		0 &\preceq a \label{eq:imfs:1}\\
		\mu_0 &\preceq \phi' \label{eq:imfs:2}\\
		\left| a' \right| &\preceq \mu_1 \cdot
			\left| \phi' \right| \label{eq:imfs:3}\\
		\left| \phi'' \right| &\preceq \mu_2 \cdot
			\left| \phi' \right| \label{eq:imfs:4}
	\end{align}
	hold. We define the set of IMFSs as ${\mathcal{S}}_{\mu_0,\mu_1,\mu_2}$ and
	call $a$ the \emph{instantaneous amplitude}, $\phi$ the
	\emph{instantaneous phase} and $\phi'$ the \emph{instantaneous
	frequency}.
\end{definition}
The above definition is not arbitrary. To put it into
context with physical reality and other publications, we give the following remarks.
\begin{remark}[Constraint motivations]
	The motivations for Equations (\ref{eq:imfs:1}) and
	(\ref{eq:imfs:2}) are to ensure that both instantaneous
	amplitude and phase have physical meaning, i.e.\ no negative
	amplitude and strictly positive frequency (as the derivative
	of the phase is the frequency). We introduce $\mu_0$ rather
	than demanding $0 \preceq \phi'$ so we are only dealing
	with non-strict inequality constraints (i.e.\ $\preceq$ instead
	of $\prec$).
	\par
	Equations (\ref{eq:imfs:3}) and (\ref{eq:imfs:4}) are there
	to ensure a slowly varying instantaneous amplitude and
	frequency respectively, as we want each intrinsic mode function
	that is extracted to remain within a certain scope.
	The exact nature of this scope depends on the
	type of application and can be shaped with the parameters.
\end{remark}
\begin{remark}[relationship with \cite{dlw11}]
	Definition~\ref{def:imfs} is based on
	\cite[Definition 3.1]{dlw11}, but generalizes it in certain
	aspects by introducing an arbitrary lower bound $\mu_0$ for the frequency and generalizing the single parameter $\epsilon$
	(called \enquote{accuracy}) in \cite{dlw11} into two
	separate parameters $\mu_1$ and $\mu_2$ that are part of the
	characteristic. The latter generalization allows a more fine-grained
	control of the rate of change of the amplitude and frequency
	respectively over time without any trade-offs, which is
	further elaborated in Remark~\ref{rem:imf-characteristic}.
	\par
	One part of the definition, namely that the infimum of $\phi'$ shall be bounded, was left out given there is no practical reason for
	this condition. Given $\phi \in \C^2(\R,\R)$ this would imply
	that $\phi'(t)$ should have finite limits for $t \to \pm \infty$.
	As we can see for instance with the IMF $\cos(t^2)$ with soul $(a,\phi) = (1,t^2)$, and $\phi'(t) = 2t$ in particular, there would
	be no such simple way to represent this simple case with the
	definition given in \cite{dlw11}.
\end{remark}
\begin{remark}[use of modulus]
	Equations (\ref{eq:imfs:3}) and (\ref{eq:imfs:4}) state
	$\left| \phi' \right|$ instead of $\phi'$, even though
	$\phi' \preceq \mu_0 \prec 0$ is guaranteed by Equation
	(\ref{eq:imfs:2}). This is for reasons of consistency
	with the literature (e.g.\ \cite{dlw11}) that chooses the same
	form despite the guarantee.
\end{remark}
For examples and further motivation on the IMF characteristic,
which is more fitting in the chapters on application, see Subsection~\ref{subsec:ethos-imf-characteristic} and Section~\ref{sec:emd_examples}.
\par
The pair $(a,\phi)$ itself may perfectly represent the IMF properties,
but we also need to evaluate the IMF to, for instance,
assess how much residual $r(t) = s(t) - u(t)$ is left with
a given candidate pair $(a,\phi)$. For this purpose, we define
the evaluation as an operator on ${\mathcal{S}}_{\mu_0,\mu_1,\mu_2}$ as follows.
\begin{definition}[Intrinsic mode function operator]\label{def:imf-operator}
	Let $\mu_0, \mu_1, \mu_2 > 0$ and $(a,\phi) \in {\mathcal{S}}_{\mu_0,\mu_1,\mu_2}$.
	The \emph{intrinsic mode function operator} is defined as
	\begin{equation*}
		\I[a,\phi](t) := a(t) \cdot \cos(\phi(t)).
	\end{equation*}
\end{definition}
One has to keep in mind that for a given IMF, there may be multiple
souls that can generate it. This is elaborated in the following
\begin{remark}[IMF soul non-uniqueness]
	Consider the IMF
	\begin{equation*}
		u(t) := (1 + 2 t) \cdot \cos(2\pi t) \cdot \cos(4\pi t)
	\end{equation*}
	on the interval $[0,1]$. This can either be interpreted
	as $u = \I[a,\phi]$ with
	\begin{align*}
		a(t) &:= (1 + 2 t) \cdot \cos(2\pi t),\\
		\phi(t) &:= 4\pi t,
	\end{align*}
	or as $u = \I[\tilde{a},\tilde{\phi}]$ with
	\begin{align*}
		\tilde{a} &:= (1 + 2 t) \cdot \cos(4\pi t),\\
		\tilde{\phi} &:= 2\pi t.
	\end{align*}
	One can possibly exclude such double cases by varying the
	parameters $\mu_0$, $\mu_1$ and $\mu_2$ of the
	IMFS set (see Definition~\ref{def:imfs}), excluding
	possible other candidates by varying the boundaries,
	but this is a heuristical approach and won't be further
	elaborated here.
\end{remark}
Another important aspect is one of information theoretical
nature.
\begin{remark}[Information theory]
	If you consider the information content going
	from $(a,\phi)$ to the IMF $\I(a,\phi)$, the IMF
	operator may present cases where information is
	destroyed. In other words, in such a case it
	becomes impossible to reconstruct $a$ or $\phi$
	from an IMF that was previously generated from them.
	Two examples of such cases can be found in
	Figure~\ref{fig:example-imf-information_loss}.
	They are almost exclusively due to the fact that
	amplitude and phase vary almost equally fast.
	\begin{figure}[htbp]
		\centering
		\begin{subfigure}[c]{0.49\textwidth}
			\centering
			\begin{tikzpicture}
				\begin{axis}[
					name=plot1,
					height=6cm,width=8cm,
				]
					\addplot [domain=0:1, samples=200]{
						(2+cos(deg(2*pi*8*x))) * cos(deg(2*pi*10*x))
					};
					\addplot [domain=0:1, samples=200, densely dashed]{
						(2+cos(deg(2*pi*8*x)))
					};
					\addplot [domain=0:1, samples=200, densely dashed]{
						-(2+cos(deg(2*pi*8*x)))
					};
 				\end{axis}
			\end{tikzpicture}
			\subcaption{$u(t)$}
		\end{subfigure}
		\begin{subfigure}[c]{0.49\textwidth}
			\centering
			\begin{tikzpicture}
				\begin{axis}[
					name=plot1,
					height=6cm,width=8cm,
				]
					\addplot [domain=0:1, samples=200]{
						(3+cos(deg(2*pi*6*x))+cos(deg(2*pi*8*x))) *
						cos(deg(2*pi*8*x))
					};
					\addplot [domain=0:1, samples=200, densely dashed]{
						(3+cos(deg(2*pi*6*x))+cos(deg(2*pi*8*x)))
					};
					\addplot [domain=0:1, samples=200, densely dashed]{
						-(3+cos(deg(2*pi*6*x))+cos(deg(2*pi*8*x)))
					};
				\end{axis}
			\end{tikzpicture}
			\subcaption{$\tilde{u}(t)$}
		\end{subfigure}
		\caption{%
		Two intrinsic mode functions $u$ and $\tilde{u}$ (solid), where
		each is unable to reflect its
		instantaneous amplitude (dashed) because its rate of
		change is almost equal to the instantaneous frequency.
		This example was adapted from \cite[Figure~2]{hyy15}.}
		\label{fig:example-imf-information_loss}
	\end{figure}
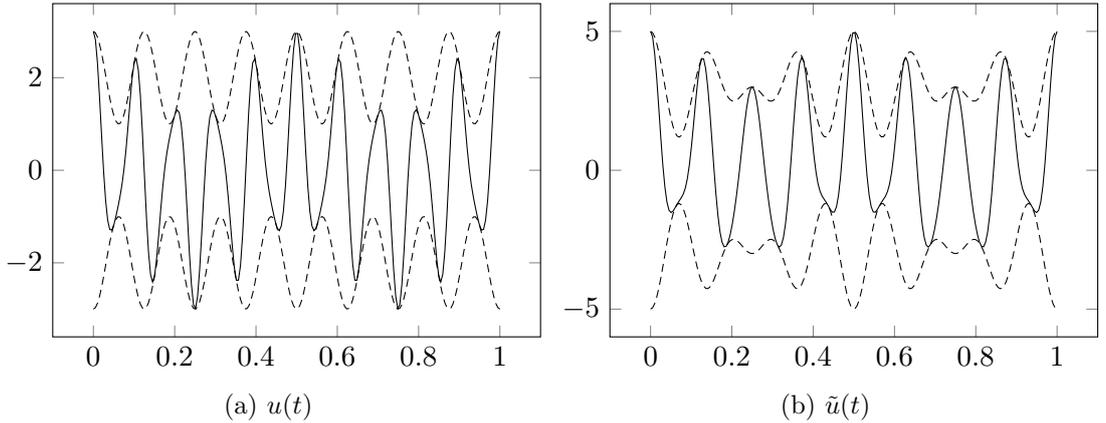
	\par
	One can deduce from this observation that when
	extracting $a$ and $\phi$ from an IMF, it is likely that
	$\mu_1 \ll 1$ and $\mu_2 \ll 1$ hold, i.e.\ 
	that amplitude and phase vary slowly relative to
	each other (and not destroy each other's information content).
	However, as given in the example in Figure~\ref{fig:example-imf-information_loss},
	the ground truth can of course
	still contain more information than what remains after applying
	the intrinsic mode function operator to it.
	For natural inputs, the ground truth is not known.
	Thus, such cases are more of a philosophical aspect
	of this derivation and reflect the analytical nature
	of the set of IMF souls compared to the practical
	nature of the IMF itself.
	\par
	See \cite{hyy15} for a discussion of more of such
	pathological cases and \cite[Definition~3.1]{dlw11}
	for further reading. We note here though that any
	attempt to \enquote{fix} such cases makes little
	sense given one can not create information from
	nothing.
\end{remark}
We will now focus on the parameters $\mu_0$, $\mu_1$ and $\mu_2$
of the set of IMF souls and the role they play when judging
the quality of an extracted IMF.
\begin{remark}[Characteristic]\label{rem:imf-characteristic}
	The IMFS characteristic defined in Definition~\ref{def:imfs} is
	a generalization of the IMF accuracy introduced in
	\cite[Definition~3.1]{dlw11}, which only employs a single
	parameter $\varepsilon > 0$ for both $\mu_1$ and $\mu_2$. This
	is a disadvantageous limitation for approaches aiming to
	only have slowly varying amplitude or frequency while not
	particularily limiting the behaviour of the respective other.
	The parameter $\mu_0$ was added as a lower frequency bound. This
	conveniently eliminates the strict inequality constraint
	$0 \prec \phi'$ from the original definition, which would
	complicate the theoretical analysis unnecessarily with no
	added benefit.
	\par
	Naturally, for a given IMFS we can calculate the characteristic
	$(\mu_0,\mu_1,\mu_2)$ with
	\begin{align*}
		\mu_0 &= \inf_{t \in \R} \phi'(t), \\
		\mu_1 &= \sup_{t \in \R} \left|\frac{a'(t)}{\phi'(t)}\right|, \\
		\mu_2 &= \sup_{t \in \R} \left|\frac{\phi''(t)}{\phi'(t)}\right|.
	\end{align*}
	This makes it possible to assess its relative quality and ascertain conditions
	on its characteristic. This idea is later further explored in the toolbox
	(see Subsection~\ref{subsec:ethos-imf-characteristic}).
\end{remark}
Now that we have formally defined intrinsic mode functions and put
them into the context of physical reality and \cite{dlw11},
we return to our original optimization problem in
Equation~(\ref{eq:emdop-naive}) and express it in terms of our newly
defined set of IMFSs. We obtain the following optimization problem.
\begin{equation}\label{eq:emdop-reformed}
	\begin{aligned}
		\min_{u} \quad &
			{\| s - \I[a,\phi] \|}_2^2 \\
		\text{s.t.} \quad & (a,\phi) \in {\mathcal{S}}_{\mu_0,\mu_1,\mu_2}
	\end{aligned}
\end{equation}
In the ideal case this would be a convex
optimization problem. This means that both the cost function and
the candidate set are convex (according to
Definitions \ref{def:convex_function} and \ref{def:convex_set})
and we have a global minimum.
Given we will later look at cost functions the first step is to see if
our candidate set is convex. We show that in the following
\begin{theorem}\label{thm:imfs-convex}
	Let $\mu_0, \mu_1, \mu_2 \ge 0$. ${\mathcal{S}}_{\mu_0,\mu_1,\mu_2}$ is convex
	according to Definition~\ref{def:convex_set}.
\end{theorem}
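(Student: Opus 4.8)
The plan is to verify the defining property of a convex set (Definition~\ref{def:convex_set}) directly: given two IMFSs and a mixing parameter, show that their convex combination is again an IMFS. Concretely, I would fix $(a_0,\phi_0),(a_1,\phi_1)\in{\mathcal{S}}_{\mu_0,\mu_1,\mu_2}$ and $\lambda\in[0,1]$, and set
\[
	a_\lambda := (1-\lambda)a_0 + \lambda a_1, \qquad
	\phi_\lambda := (1-\lambda)\phi_0 + \lambda \phi_1.
\]
Since $\mathcal{C}^1(\R,\R)$ and $\mathcal{C}^2(\R,\R)$ are real vector spaces, the pair $(a_\lambda,\phi_\lambda)$ again lies in $\mathcal{C}^1(\R,\R)\times\mathcal{C}^2(\R,\R)$, so only the four constraints (\ref{eq:imfs:1})--(\ref{eq:imfs:4}) remain to be checked. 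Moreover differentiation commutes with the finite linear combination, so $a_\lambda' = (1-\lambda)a_0'+\lambda a_1'$, and likewise $\phi_\lambda' = (1-\lambda)\phi_0'+\lambda\phi_1'$ and $\phi_\lambda'' = (1-\lambda)\phi_0''+\lambda\phi_1''$.

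The two sign constraints are immediate. For (\ref{eq:imfs:1}), nonnegativity of $a_0,a_1$ together with $1-\lambda,\lambda\ge 0$ gives $0\preceq a_\lambda$ pointwise. For (\ref{eq:imfs:2}), the bounds $\mu_0\preceq\phi_0'$ and $\mu_0\preceq\phi_1'$ combined with $(1-\lambda)+\lambda=1$ yield $\mu_0\preceq(1-\lambda)\mu_0+\lambda\mu_0\preceq\phi_\lambda'$. I would then record the consequence $\phi_\lambda'\succeq\mu_0\succeq 0$, so that $\left|\phi_\lambda'\right|=\phi_\lambda'$ pointwise; by the same reasoning $\left|\phi_0'\right|=\phi_0'$ and $\left|\phi_1'\right|=\phi_1'$.

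For the two slow-variation constraints I would use the triangle inequality on the left and the positivity just established on the right. For (\ref{eq:imfs:3}),
\[
	\left|a_\lambda'\right| \preceq (1-\lambda)\left|a_0'\right| + \lambda\left|a_1'\right| \preceq (1-\lambda)\mu_1\left|\phi_0'\right| + \lambda\mu_1\left|\phi_1'\right| = \mu_1\left((1-\lambda)\phi_0'+\lambda\phi_1'\right) = \mu_1\phi_\lambda' = \mu_1\left|\phi_\lambda'\right|,
\]
where the first $\preceq$ is the triangle inequality, the second applies the hypothesis (\ref{eq:imfs:3}) to each summand, and the subsequent equalities drop the moduli using $\phi_0',\phi_1',\phi_\lambda'\succeq 0$. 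An identical computation with $\phi_\lambda''$ in place of $a_\lambda'$ and $\mu_2$ in place of $\mu_1$ establishes (\ref{eq:imfs:4}), completing the verification.

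The only subtlety — and the step I would flag as the crux — is the appearance of absolute values on both sides of (\ref{eq:imfs:3}) and (\ref{eq:imfs:4}). A priori $\left|\cdot\right|$ is convex but not linear, so the mixed right-hand sides do not obviously reduce to the right-hand side of the combined constraint. The resolution is exactly the frequency positivity forced by (\ref{eq:imfs:2}): because each $\phi_i'$ is nonnegative, its modulus acts linearly, so $(1-\lambda)\left|\phi_0'\right|+\lambda\left|\phi_1'\right|$ collapses to $\left|\phi_\lambda'\right|$. This is what turns the convex combination of the right-hand bounds into an \emph{equality} with the required bound rather than a mere domination, and it is the reason the set is genuinely convex despite the modulus terms; note that the argument uses only $\mu_0\succeq 0$, so it covers the stated hypothesis $\mu_0,\mu_1,\mu_2\ge 0$.
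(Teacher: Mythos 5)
Your proposal is correct and follows essentially the same route as the paper's proof: form the convex combination, use linearity of differentiation, verify the two sign constraints directly, and handle the two slow-variation constraints via the triangle inequality plus the positivity $\phi_0',\phi_1'\succeq\mu_0\succeq 0$ to collapse the combination of moduli into the modulus of the combination. In fact you make explicit the one step the paper glosses over (where it writes a $\preceq$ that is really an equality forced by frequency positivity), so nothing further is needed.
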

\begin{proof}
	Let $(a_1,\phi_1), (a_2,\phi_2) \in {\mathcal{S}}_{\mu_0,\mu_1,\mu_2}$ and
	$q \in [0,1]$. We define an element on the path between
	$(a_1,\phi_1)$ and $(a_2,\phi_2)$ as
	\begin{equation*}
		(a_\star,\phi_\star) := (a_1,\phi_1) + q \cdot
		\left[ (a_2,\phi_2) - (a_1,\phi_1) \right] =
		(1-q) \cdot (a_1,\phi_1) + q \cdot (a_2,\phi_2).
	\end{equation*}
	Component-wise, we obtain
	\begin{equation*}
		a_\star = (1-q) \cdot a_1 + q \cdot a_2
	\end{equation*}
	and
	\begin{equation*}
		\phi_\star = (1-q) \cdot \phi_1 + q \cdot \phi_2.
	\end{equation*}
	We now show that $(a_\star,\phi_\star)$ satisfies the conditions
	from Definition~\ref{def:imfs}:
	\begin{enumerate}
		\item{%
			We note from Equation~(\ref{eq:imfs:1}) that
			$a_1 \succeq 0$ and $a_2 \succeq 0$ and follow\\
			$a_\star = (1-q) \cdot a_1 + q \cdot a_2 \succeq
			(1-q) \cdot 0 + q \cdot 0 = 0$.
		}
		\item{%
			We note from Equation~(\ref{eq:imfs:2}) that
			$\phi'_1 \succeq \mu_0$ and $\phi'_2 \succeq \mu_0$ and follow\\
			$\phi'_\star = (1-q) \cdot \phi'_1 + q \cdot \phi'_2
			\succeq (1-q) \cdot \mu_0 + q \cdot \mu_0 = \mu_0$
		}
		\item{%
			We note from Equations~(\ref{eq:imfs:2}) and (\ref{eq:imfs:3}) that
			$\phi'_1 \succeq \mu_0 \succeq 0$, $\phi'_2 \succeq \mu_0 \succeq 0$,
			$|a'_1| \preceq \mu_1 \cdot |\phi'_1|$ and $|a'_2| \preceq \mu_1 \cdot |\phi'_2|$ and follow\\
			$|a'_\star| = |(1-q) \cdot a'_1 + q \cdot a'_2| \preceq
			(1-q) \cdot |a'_1| + q \cdot |a'_2| \preceq
			\mu_1 \cdot \left( (1-q) \cdot |\phi'_1| + q \cdot |\phi'_2| \right) \preceq
			\mu_1 \cdot | (1-q) \cdot \phi'_1 + q \cdot \phi'_2 | = \mu_1 \cdot |\phi'_\star|$
		}
		\item{%
			We note from Equations~(\ref{eq:imfs:2}) and (\ref{eq:imfs:4}) that
			$\phi'_1 \succeq \mu_0 \succeq 0$, $\phi'_2 \succeq \mu_0 \succeq 0$,
			$|\phi''_1| \preceq \mu_2 \cdot |\phi'_1|$ and $|\phi''_2| \preceq \mu_2 \cdot |\phi'_2|$ and follow\\
			$|\phi''_\star| = |(1-q) \cdot \phi''_1 + q \cdot \phi''_2| \preceq
			(1-q) \cdot |\phi''_1| + q \cdot |\phi''_2| \preceq
			\mu_2 \cdot \left( (1-q) \cdot |\phi'_1| + q \cdot |\phi'_2| \right) =
			\mu_2 \cdot |(1-q) \cdot \phi'_1 + q \cdot \phi'_2 | = \mu_2 \cdot |\phi'_\star| $
		}
	\end{enumerate}
	It follows that $(a_\star,\phi_\star) \in {\mathcal{S}}_{\mu_0,\mu_1,\mu_2}$
	and thus ${\mathcal{S}}_{\mu_0,\mu_1,\mu_2}$ is convex.
\end{proof}
Up to this point we have defined ${\mathcal{S}}_{\mu_0,\mu_1,\mu_2}$ as a set of
function pairs. Let us reconsider the results of Chapters
\ref{ch:b-splines} and \ref{ch:funcspaceopt}: We
introduced a way to relate functions to each other and showed
that the one-to-one relation of spline functions and their
B-spline basis coefficients preserves that order. It thus becomes logical
to use this relation and express intrinsic mode function souls as
a set of pairs of vectors in $\R^n$. Their entries correspond to B-spline
basis coefficients of the spline functions describing instantaneous amplitude
and phase.
\begin{definition}[Intrinsic mode spline function soul (IMSpFS)]
	\label{def:imspfs}
	Let $\mu_0, \mu_1, \mu_2 > 0$ and $k \ge 4$ (for derivability). The pair 
	$(\boldsymbol{a},\boldsymbol{\phi}) \in \R^n \times \R^n$ is an
	\emph{intrinsic mode spline function soul (IMSpFS)} if and only if
	\begin{align}
		0 &\preceq \B_{k}(\boldsymbol{a}) \label{eq:imspfs:1}\\
		\mu_0 &\preceq \B_{k}'(\boldsymbol{\phi}) \label{eq:imspfs:2}\\
		\left| \B_{k}'(\boldsymbol{a}) \right| &\preceq \mu_1 \cdot
			\left| \B_{k}'(\boldsymbol{\phi}) \right| \label{eq:imspfs:3}\\
		\left| \B_{k}''(\boldsymbol{\phi}) \right| &\preceq \mu_2 \cdot
			\left| \B_{k}'(\boldsymbol{\phi}) \right| \label{eq:imspfs:4}
	\end{align}
	hold. We define the set of IMSpFSs as $\boldsymbol{\mathcal{S}}_{\mu_0,\mu_1,\mu_2}$.
\end{definition}
It becomes apparent that by using this reformulation the
handling of IMF souls merely as vectors instead of function
pairs is much simpler. What follows from
Theorem~\ref{thm:imfs-convex} is that, given the
relation between B-splines and their coefficients is
order-preserving, the set of IMSpFS's is also convex.
\section{Cost Functions}\label{sec:cost_functions}
Having obtained the result in Theorem~\ref{thm:imfs-convex},
if we now find a convex cost function that meaningfully judges an intrinsic
mode function soul relative to an input signal we would have solved
the problem of building a convex EMD optimization problem. This is because
we have already shown that the set of IMF souls is convex. Together with
a convex cost function we would then obtain a convex optimization problem.
The search for such a convex cost function will not be within the scope of this
thesis as it might require adding more constraints to the set of IMFSs
or a completely different approach altogether.
Instead, we will take a look at cost functions from \cite{ph08}
and \cite{ph10} that are well-established and have a strong footing
within the classic EMD theory.
\par
We have until now only looked at the nature of intrinsic mode functions
and not how we can actually express which fits our input signal the best.
Each step of the empirical mode decomposition applies to an input signal $s(t)$,
which we want to split up into an intrinsic mode function $u(t)$ and residual
function $r(t)$. To determine the split we want to minimize the \enquote{cost}
a split-up of a signal $s(t)$ into an IMF $u(t)$ and residual $r(t)$ has. This
cost should be relative to the quality of extraction. There are obviously
many possible ways to define such an EMD cost function and we will explore
this topic in the following section.
\par
The final goal is to generalize the optimization problem in
Equation~\ref{eq:emdop-reformed} for an arbitrary cost function later.
\subsection{Canonical}
The simplest idea for an EMD cost function is to look at the residual, as it
corresponds to the classic EMD approach proposed in \cite{hsl+98} and is formally
used in \cite{ph08} and \cite{ph10}. This makes the residual approach the most
common idea for an EMD cost function in the literature. The smaller
the difference between the signal and intrinsic mode function, the less the cost.
That is because we have then extracted as much from the signal as possible.
Additionally, as we will later see in Section~\ref{subsec:nsp-leakage-cf}, it is the
basis for derived EMD cost functions in the context of more advanced separation techniques.
\begin{definition}[Canonical EMD cost function]\label{def:ccf}
	Let $\mu_0, \mu_1, \mu_2 > 0$ and $s \in \mathcal{C}^0(\R,\R)$.
	The \emph{canonical EMD cost function}
	$c_1[s] \colon {\mathcal{S}}_{\mu_0,\mu_1,\mu_2} \to \R$
	is defined as
	\begin{equation*}
		c_1[s](a,\phi) := {\left\| s - \I(a,\phi) \right\|}_2^2 =
		{\left\| s - a \cdot \cos(\phi) \right\|}_2^2.
	\end{equation*}
\end{definition}
Just as with the set of IMFSs we can also formulate the EMD cost function in terms
of B-splines. We do that by expressing it as a function over
$\boldsymbol{\mathcal{S}}_{\mu_0,\mu_1,\mu_2}$ (see Definition~\ref{def:imspfs}) instead of
${\mathcal{S}}_{\mu_0,\mu_1,\mu_2}$. This makes it possible to examine
its convexity as introduced in Chapter~\ref{ch:convexity_theory}.
\begin{definition}[Canonical spline EMD cost function]\label{def:cscf}
	Let $\mu_0, \mu_1, \mu_2 > 0$, $k \ge 4$ (for derivability) and $\boldsymbol{s}\in\R^n$. The
	\emph{canonical spline EMD cost function}
	$\boldsymbol{c}_1[\boldsymbol{s}] \colon \boldsymbol{\mathcal{S}}_{\mu_0,\mu_1,\mu_2} \to \R$
	is defined as
	\begin{equation*}
		\boldsymbol{c}_1[\boldsymbol{s}](\boldsymbol{a},\boldsymbol{\phi}) :=
		c_1[\B_{k}(\boldsymbol{s})](\B_{k}(\boldsymbol{a}),\B_{k}(\boldsymbol{\phi})).
	\end{equation*}
\end{definition}
According to the motivation laid out earlier, we want this cost function to be
convex.
\begin{proposition}\label{prop:cscf-convex}
	The canonical spline EMD cost function $\boldsymbol{c}_1[\boldsymbol{s}]$
	is not convex in $(\boldsymbol{a}, \boldsymbol{\phi})$
	according to Definition~\ref{def:convex_function}.
\end{proposition}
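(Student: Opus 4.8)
The plan is to disprove convexity by exhibiting a single line segment inside the feasible set $\boldsymbol{\mathcal{S}}_{\mu_0,\mu_1,\mu_2}$ along which the restriction of $\boldsymbol{c}_1[\boldsymbol{s}]$ violates the inequality of Definition~\ref{def:convex_function}. Since a convex function on a convex domain must be convex along every line it contains, one such line suffices. First I would locate the source of the nonlinearity: for \emph{fixed} phase, the map $\boldsymbol{a} \mapsto \|\B_{k}(\boldsymbol{s}) - \B_{k}(\boldsymbol{a})\cos(\B_{k}(\boldsymbol{\phi}))\|_2^2$ is quadratic and hence convex, so any obstruction must arise from moving $\boldsymbol{\phi}$ through the oscillatory factor $\cos(\B_{k}(\boldsymbol{\phi}))$. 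I would therefore freeze the amplitude and vary only the phase.

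The construction I have in mind takes the zero signal $\boldsymbol{s} = 0$ and the constant amplitude $\boldsymbol{a} = (1,\dots,1)$, which is feasible since $\B_{k}(\boldsymbol{a}) \equiv 1 \succeq 0$ by the partition of unity (Proposition~\ref{prop:partition_of_unity}). For the base phase I would pick a coefficient vector $\boldsymbol{\phi}_0$ with $\psi := \B_{k}(\boldsymbol{\phi}_0)$ equal to the linear function $t \mapsto \mu_0 t$ (realizable by Theorem~\ref{thm:curry-schoenberg}, as linear functions lie in $\Sigma_{k}$), so that $\psi' \equiv \mu_0$ and $\psi'' \equiv 0$ satisfy Equations~(\ref{eq:imspfs:2})--(\ref{eq:imspfs:4}). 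The decisive trick is the choice of line direction $\boldsymbol{d} = (1,\dots,1)$: by the partition of unity, $\B_{k}(\boldsymbol{d}) \equiv 1$ is \emph{constant}, so moving along $\boldsymbol{\phi}_0 + r\,\boldsymbol{d}$ shifts the phase by the constant $r$ while leaving $\B_{k}'(\boldsymbol{\phi}_0 + r\boldsymbol{d}) = \psi'$ and $\B_{k}''(\boldsymbol{\phi}_0 + r\boldsymbol{d}) = \psi''$ untouched. Hence the entire line stays inside $\boldsymbol{\mathcal{S}}_{\mu_0,\mu_1,\mu_2}$.

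Along this line the cost reduces to $g(r) = \int_{\tau_0}^{\tau_{\ell-1}} \cos^2(\psi(t)+r)\,\mathrm{d}t$, and using $\cos^2 = \tfrac{1}{2}(1+\cos 2\cdot)$ together with the addition formula I would rewrite it as $g(r) = \tfrac{L}{2} + \tfrac{R}{2}\cos(2r+\delta)$, where $L := \tau_{\ell-1}-\tau_0$ and $R := \sqrt{A^2+B^2}$ with $A := \int \cos(2\psi)$, $B := \int \sin(2\psi)$. For $\psi(t)=\mu_0 t$ one computes $R^2 = (1-\cos(2\mu_0 L))/(2\mu_0^2) > 0$ provided $2\mu_0 L \notin 2\pi\Z$, which holds for a suitably (generically) chosen knot vector, permitted by Remark~\ref{rem:spline-shorthand}. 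Thus $g$ is a genuine nonconstant sinusoid and is not convex. To make the violation explicit I would take $q = \tfrac12$ and the two feasible endpoints obtained from $r_1 = -\tfrac{\delta}{2}-\tfrac{\pi}{2}$ and $r_2 = -\tfrac{\delta}{2}+\tfrac{\pi}{2}$, whose midpoint $r_m = -\tfrac{\delta}{2}$ gives $g(r_m) = \tfrac{L}{2}+\tfrac{R}{2} > \tfrac{L}{2}-\tfrac{R}{2} = \tfrac12\big(g(r_1)+g(r_2)\big)$, contradicting convexity.

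The step I expect to be the genuine obstacle is \emph{feasibility}, not the computation. The naive counterexample — constant phase functions on which $\cos^2$ obviously oscillates — fails immediately, because a constant phase has $\phi' \equiv 0 \not\succeq \mu_0 > 0$ and so lies outside $\boldsymbol{\mathcal{S}}_{\mu_0,\mu_1,\mu_2}$. The whole subtlety is to perturb the phase in a way that still changes $\cos(\phi)$ but keeps $\phi'$ (and $\phi''$) fixed; the constant shift direction $\boldsymbol{d}=(1,\dots,1)$, available precisely because of the partition of unity, is what reconciles the oscillatory behaviour of the cosine with the strict frequency lower bound. The only loose end is guaranteeing $R>0$, i.e.\ that the restricted cost is not accidentally constant, which I would secure by the mild genericity condition on the interval length noted above.
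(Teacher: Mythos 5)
Your proof is correct, and it takes a genuinely different --- and in fact more rigorous --- route than the paper. The paper computes the Hessian blocks of $\boldsymbol{c}_1[\boldsymbol{s}]$ and checks the hypotheses of Theorem~\ref{thm:gershgorin-hadamard}: for the $\boldsymbol{a}$-block it finds the matrix fails to be diagonally dominant, for the $\boldsymbol{\phi}$-block that the diagonal entries need not be positive, and it infers non-convexity from the failure of these conditions. Since diagonal dominance is only a \emph{sufficient} criterion for positive definiteness, that argument is heuristic (the paper concedes as much in the $\boldsymbol{\phi}$-part of its proof), whereas your construction of a feasible segment on which midpoint convexity fails is a genuine disproof. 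Your preliminary observation also quietly corrects the paper: for fixed $\boldsymbol{\phi}$ the $\boldsymbol{a}$-Hessian is the Gram matrix of the functions $B_{m,k}\cos(\B_{k}(\boldsymbol{\phi}))$, hence positive semidefinite, so the cost \emph{is} convex in $\boldsymbol{a}$ alone --- the obstruction lives entirely in the phase, exactly where you place it. The key idea, unavailable to the paper's pointwise Hessian computation, is the constant-shift direction $\boldsymbol{d}=(1,\dots,1)$: by Proposition~\ref{prop:partition_of_unity} it translates the phase without touching $\B_{k}'(\boldsymbol{\phi})$ or $\B_{k}''(\boldsymbol{\phi})$, so the whole line stays feasible, and the restricted cost $g(r)=\tfrac{L}{2}+\tfrac{R}{2}\cos(2r+\delta)$ with the explicit violation at $q=\tfrac12$ is a clean, self-contained counterexample. (Your construction also matches the paper's own conventions: the pair $(c,\mu_0 t)$ is precisely the \textsc{Slater} point used in Theorem~\ref{thm:spemdop-sc}.)

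One loose end deserves a cleaner fix. The proposition concerns a fixed but arbitrary knot vector $T$ (Remark~\ref{rem:spline-shorthand}), so you should not secure $R>0$ by choosing $T$ generically; instead keep $T$ arbitrary and adjust the slope of the base phase. Replace $\psi(t)=\mu_0 t$ by $\psi(t)=\nu t$ with $\nu\ge\mu_0$ and $\nu L\notin\pi\Z$ --- possible for every interval length $L>0$, since the excluded slopes form a discrete set --- and the feasibility conditions (\ref{eq:imspfs:2})--(\ref{eq:imspfs:4}) still hold, while $R^2=\bigl(1-\cos(2\nu L)\bigr)/(2\nu^2)>0$. With that adjustment your counterexample works for every admissible knot vector and every characteristic $(\mu_0,\mu_1,\mu_2)$, which is the full strength the proposition asserts.
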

\begin{proof}
	We approach this proof by checking if the requirements of
	Theorem~\ref{thm:gershgorin-hadamard} hold for distinct partial
	derivatives for entries of $\boldsymbol{a}$ and $\boldsymbol{\phi}$.
	We begin with $\boldsymbol{a}$ and first calculate
	the entries of the Hessian matrix $H_{\boldsymbol{c}_1[\boldsymbol{s}]}(\boldsymbol{a})$,
	which means that we consider
	$\boldsymbol{c}_1[\boldsymbol{s}](\boldsymbol{a},\boldsymbol{\phi})$ to only
	vary in $\boldsymbol{a}$. We first note that it holds
	\begin{align*}
		\boldsymbol{c}_1[\boldsymbol{s}](\boldsymbol{a},\boldsymbol{\phi}) & =
			c_1[\B_{k}(\boldsymbol{s})]\!\left(\B_{k}(\boldsymbol{a}),\B_{k}(\boldsymbol{\phi})\right) \\
		& = {\left\| \B_{k}(\boldsymbol{s}) -
			\I\!\left(\B_{k}(\boldsymbol{a}),\B_{k}(\boldsymbol{\phi})\right) \right\|}_2^2 \\
		& = {\left\| \B_{k}(\boldsymbol{s}) - \B_{k}(\boldsymbol{a}) \cdot
			\cos\!\left(\B_{k}(\boldsymbol{\phi})\right)\right\|}_2^2 \\
		& = \int_{-\infty}^{\infty}
			{\left[
				\B_{k}(\boldsymbol{s}) - \left(
					\sum_{i=0}^{n-1} a_i \cdot B_{i,k}(t)
				\right) \cdot
				\cos\!\left(
					\sum_{i=0}^{n-1} \phi_i \cdot B_{i,k}(t)
				\right)
			\right]}^2 \mathrm{d}t
	\end{align*}
	and can deduce for $m,p \in \{ 0,\dots,n-1 \}$
	\begin{align*}
		\frac{\partial \boldsymbol{c}_1[\boldsymbol{s}]}{\partial a_m}(\boldsymbol{a},\boldsymbol{\phi}) &=
			\int_{-\infty}^{\infty}
			(-2) \cdot {\left[
				\B_{k}(\boldsymbol{s}) -
				\left(
					\sum_{i=0}^{n-1} a_i \cdot B_{i,k}(t)
				\right) \cdot
				\cos\!\left(
					\B_{k}(\boldsymbol{\phi})(t)
				\right)
			\right]} \cdot \\
		& \phantom{=\int_{-\infty}^{\infty}\,\,}
			B_{m,k}(t) \cdot
			\cos\!\left(
				\B_{k}(\boldsymbol{\phi})(t)
			\right)
			\mathrm{d}t \\
		&= \int_{-\infty}^{\infty}
			(-2) \cdot
			B_{m,k}(t) \cdot
			\cos\!\left(
				\B_{k}(\boldsymbol{\phi})(t)
			\right) \cdot \\
		& \phantom{=\int_{-\infty}^{\infty}\,}
			{\left[
				\B_{k}(\boldsymbol{s}) -
				\left(
					\sum_{i=0}^{n-1} a_i \cdot B_{i,k}(t)
				\right) \cdot
				\cos\!\left(
					\B_{k}(\boldsymbol{\phi})(t)
				\right)
			\right]}
			\mathrm{d}t,
	\end{align*}
	and consequently as $\forall i \in \{0,\dots,n-1\} \colon B_{i,k} \succeq 0$
	\begin{equation}\label{eq:cf-proof-aderiv}
		\frac{\partial^2 \boldsymbol{c}_1[\boldsymbol{s}]}{\partial a_m \partial a_p}
			(\boldsymbol{a},\boldsymbol{\phi})
			= \int_{-\infty}^{\infty}
			2 \cdot
			B_{m,k}(t) \cdot
			B_{p,k}(t) \cdot
			\cos^2\!\left(
				\B_{k}(\boldsymbol{\phi})(t)
			\right)
			\mathrm{d}t \ge 0.
	\end{equation}
	Our Hessian matrix is of the form
	\begin{equation*}
		H_{\boldsymbol{c}_1[\boldsymbol{s}]}(\boldsymbol{a},\boldsymbol{\phi}) =
		{\left(\frac{\partial^2 \boldsymbol{c}_1[\boldsymbol{s}]}{\partial a_m \partial a_p}
		(\boldsymbol{a},\boldsymbol{\phi})\right)}_{(m,p) \in {\{1,\dots,n\}}^2},
	\end{equation*}
	and we now check the conditions for Theorem~\ref{thm:gershgorin-hadamard}.
	Symmetry follows immediately because the order of partial differentiation does
	not matter for continuously-differentiable functions. What is left to show for convexity is
	that the diagonal entries are strictly positive and the matrix is diagonally dominant.
	We know from Equation~(\ref{eq:cf-proof-aderiv}) that
	\begin{equation*}
		\frac{\partial^2 \boldsymbol{c}_1[\boldsymbol{s}]}{\partial a_m \partial a_m}
			(\boldsymbol{a},\boldsymbol{\phi})
			= \int_{-\infty}^{\infty}
			2 \cdot
			B^2_{m,k}(t) \cdot
			\cos^2\!\left(
				\B_{k}(\boldsymbol{\phi})(t)
			\right)
			\mathrm{d}t > 0,
	\end{equation*}
	which means that the diagonal entries are positive. To show that the matrix
	is diagonally dominant, we first note that Equation~(\ref{eq:cf-proof-aderiv})
	shows that all entries of the Hessian matrix are positive and we thus only
	have to consider the sum of non-diagonal entries without applying the modulus.
	It holds due to Proposition~\ref{prop:partition_of_unity}
	\begin{align*}
		\sum_{\substack{p = 0\\p \neq m}}^{n-1} \frac{\partial^2 \boldsymbol{c}_1[\boldsymbol{s}]}
			{\partial a_m \partial a_p}
			(\boldsymbol{a},\boldsymbol{\phi}) &=
			\sum_{\substack{p = 0\\p \neq m}}^{n-1}
			\int_{-\infty}^{\infty}
			2 \cdot
			B_{m,k}(t) \cdot
			B_{p,k}(t) \cdot
			\cos^2\!\left(
				\B_{k}(\boldsymbol{\phi})(t)
			\right)
			\mathrm{d}t \\
		&=
			\int_{-\infty}^{\infty}
			2 \cdot
			B_{m,k}(t) \cdot
			\left(
				\sum_{\substack{p = 0\\p \neq m}}^{n-1}
				B_{p,k}(t)
			\right) \cdot
			\cos^2\!\left(
				\B_{k}(\boldsymbol{\phi})(t)
			\right)
			\mathrm{d}t \\
		&=
			\int_{-\infty}^{\infty}
			2 \cdot
			B_{m,k}(t) \cdot
			\left(
				1 - B_{m,k}(t)
			\right) \cdot
			\cos^2\!\left(
				\B_{k}(\boldsymbol{\phi})(t)
			\right)
			\mathrm{d}t \\
		&\not<
			\int_{-\infty}^{\infty}
			2 \cdot
			B_{m,k}(t) \cdot
			B_{m,k}(t) \cdot
			\cos^2\!\left(
				\B_{k}(\boldsymbol{\phi})(t)
			\right)
			\mathrm{d}t \\
		&= \sum_{\substack{p = 0\\p \neq m}}^{n-1} \frac{\partial^2 \boldsymbol{c}_1[\boldsymbol{s}]}
			{\partial a_m \partial a_m}(\boldsymbol{a},\boldsymbol{\phi}).
	\end{align*}
	We have shown that the Hessian matrix is symmetric and has strictly positive
	diagonal elements, however, it is not diagonally dominant. With Theorem~\ref{thm:gershgorin-hadamard} alone
	we can thus not conclude that the Hessian matrix is positive definite. The
	Theorem of \textsc{Geršgorin}-\textsc{Hadamard} is by no means exhaustive, but
	one of the most precise methods for this task, which means that the assumption
	that this matrix is not positive definite is well-founded and we can state that the
	canonical cost function is not convex in $\boldsymbol{a}$.
	\par
	We now proceed with $\boldsymbol{\phi}$. Using the cosine sum formula we obtain
	\begin{align*}
		\frac{\partial \boldsymbol{c}_1[\boldsymbol{s}]}{\partial \phi_m}(\boldsymbol{a},\boldsymbol{\phi}) &=
			\int_{-\infty}^{\infty}
			(-2) \cdot {\left[
				\B_{k}(\boldsymbol{s}) - \B_{k}(\boldsymbol{a})(t) \cdot
				\cos\!\left(
					\sum_{i=0}^{n-1} \phi_i \cdot B_{i,k}(t)
				\right)
			\right]} \cdot \\
		& \phantom{=\int_{-\infty}^{\infty}\,\,\,}
			\B_{k}(\boldsymbol{a})(t) \cdot
			B_{m,k}(t) \cdot
			(-1) \cdot
			\sin\!\left(
				\sum_{i=0}^{n-1} \phi_i \cdot B_{i,k}(t)
			\right)
			\mathrm{d}t \\
		&= \int_{-\infty}^{\infty}
			2 \cdot
			\B_{k}(\boldsymbol{a})(t) \cdot
			B_{m,k}(t) \cdot
			\sin\!\left(
				\sum_{i=0}^{n-1} \phi_i \cdot B_{i,k}(t)
			\right) \cdot \\
		& \phantom{=\int_{-\infty}^{\infty}\,}
			{\left[
				\B_{k}(\boldsymbol{s}) - \B_{k}(\boldsymbol{a})(t) \cdot
				\cos\!\left(
					\sum_{i=0}^{n-1} \phi_i \cdot B_{i,k}(t)
				\right)
			\right]}
			\mathrm{d}t,
	\end{align*}
	and consequently
	\begin{align*}
		\frac{\partial^2 \boldsymbol{c}_1[\boldsymbol{s}]}{\partial \phi_m \partial \phi_p}
			(\boldsymbol{a},\boldsymbol{\phi}) &=
			\int_{-\infty}^{\infty}
			2 \cdot
			\B_{k}(\boldsymbol{a})(t) \cdot
			B_{m,k}(t) \cdot
			B_{p,k}(t) \cdot
			\cos\!\left(
				\sum_{i=0}^{n-1} \phi_i \cdot B_{i,k}(t)
			\right)	\cdot \\
		& \phantom{=\int_{-\infty}^{\infty}}
			{\left[
				\B_{k}(\boldsymbol{s}) - \B_{k}(\boldsymbol{a})(t) \cdot
				\cos\!\left(
					\sum_{i=0}^{n-1} \phi_i \cdot B_{i,k}(t)
				\right)
			\right]} +\\
		& \phantom{=\int_{-\infty}^{\infty}}
			2 \cdot
			{\B_{k}(\boldsymbol{a})(t)}^2 \cdot
			B_{m,k}(t) \cdot
			B_{p,k}(t) \cdot
			\sin^2\!\left(
				\sum_{i=0}^{n-1} \phi_i \cdot B_{i,k}(t)
			\right)
			\mathrm{d}t.
	\end{align*}
	As we can see in this expression, especially if we look at diagonal entries
	with $p = m$, they are not strictly positive given the oscillating
	cosine terms and we can not apply Theorem~\ref{thm:gershgorin-hadamard}.
	Granted, only because we can not apply it does not mean that the Hessian
	matrix corresponding to $\boldsymbol{\phi}$ is not positive semidefinite.
	The critical argument that leads to this conclusion though is that the sign
	is arbitrarily controlled by the unrelated parameter $\boldsymbol{a}$
	such that there is always a way to find a counterexample for some
	$\boldsymbol{a}$ such that the Hessian matrix for $\boldsymbol{\phi}$
	is not positive definite. In total, we thus find no general convexity
	property for $\boldsymbol{\phi}$.
\end{proof}
The result of this proposition clearly shows that, at least with this class
of cost functions, the search for a truly convex optimization problem leads to
a dead end. Convexity only makes sense if it applies to the entire function
for all mixed second partial derivatives (even between amplitude and phase).
Only showing it for a subset of the parameters, in our case the amplitude
$\boldsymbol{a}$, is not of much use. However, it shows the approach that must
be taken to analyze future candidates for such cost functions. Considering what
we've seen in the last proof and how close we were to convexity, we can
imply that such candidates will also yield diagonally dominant symmetric
Hessian matrices, and the only real aspect that will matter is the strict
positivity of the diagonal entries.
\par
However, not all is lost only because we have not shown convexity, and we will
go an alternative path in Section~\ref{sec:regularity} using the theory of
convex-likeness to show some useful properties. In the long run though,
the residual-approach might have to be overthought and completely novel
approaches developed, for instance ones making use of information theory
with the goal of maximum information extraction in each step.
\par
Unfortunately, this is not easy and probably even impossible, given we actually
need to evaluate the intrinsic mode function itself to assess the relation
of a candidate IMF in regard to the input signal. One cannot directly
do that with just the soul of the IMF. The reason for the problem
is that the IMF evaluation from its soul
\begin{equation*}
	\I[a,\phi](t) = a(t) \cdot \cos(\phi(t)),
\end{equation*}
or analogously in spline formulation
\begin{equation*}
		\I[\B_{k}(\boldsymbol{a}), \B_{k}(\boldsymbol{\phi})](t) =
		\left( \sum_{i=0}^{n-1} a_i \cdot B_{i,k}(t) \right) \cdot
		\cos\left( \sum_{i=0}^{n-1} \phi_i \cdot B_{i,k}(t) \right)
\end{equation*}
\enquote{moves} $\phi$ from frequency to signal space, which makes any
expression containing it non-convex.
If one manages to find a convex EMD cost function which in some way circumvents
this problem, one has in an instance solved a central part of the previously
discussed problem in regard to the empirical mode decomposition on an analytical
level. A consequence would be a convex
analytical optimization problem and a strong theoretical footing for EMD,
which would have far-reaching effects. From the current standpoint, though,
this feat seems to be impossible to achieve.
\subsection{Leakage Factor}\label{subsec:nsp-leakage-cf}
We have already seen the canonical EMD cost function in Definition~\ref{def:ccf}
in the previous subsection. The motivation behind it is that we want to leave as
little residual as possible and strive for the first IMFs to make up the biggest
part of the signal.
However, serving as a small outlook, what if we do not want to extract as much as
possible in each step and want to control the extraction degree? This has been
discussed in \cite{ph10} and can be achieved heuristically by putting a penalty
on the norm of the extracted IMF and scaling this penalty with a so-called
\enquote{leakage factor}.
\begin{definition}[Leakage factor EMD cost function {\cite[(16)]{ph10}}]\label{def:lfecf}
	Let $\mu_0, \mu_1, \mu_2 > 0$, $\gamma \ge 0$ and $s \in \mathcal{C}^0(\R,\R)$.
	The \emph{leakage factor EMD cost function}
	$c_\ell[s] \colon {\mathcal{S}}_{\mu_0,\mu_1,\mu_2} \to \R$
	is defined as
	\begin{equation*}
		c_\ell[s](a,\phi) := {\left\| s - \I(a,\phi) \right\|}_2^2 +
		\gamma \cdot {\left\| \I(a,\phi) \right\|}_2^2.
	\end{equation*}
\end{definition}
The higher the leakage factor $\gamma$ is chosen, the more we punish the
extraction of \enquote{large} IMFs and let it slip through for one of the
next EMD extraction steps. Analogous to the canonical spline EMD cost function,
we can define a leakage factor spline EMD cost function as follows.
\begin{definition}[Leakage factor spline EMD cost function]\label{def:lfsecf}
	Let $\mu_0, \mu_1, \mu_2 > 0$, $\gamma \ge 0$, $k \ge 4$ (for derivability) and
	$\boldsymbol{s}\in\R^n$. The
	\emph{leakage factor spline EMD cost function}
	$\boldsymbol{c}_\ell[\boldsymbol{s}] \colon \boldsymbol{\mathcal{S}}_{\mu_0,\mu_1,\mu_2} \to \R$
	is defined as
	\begin{equation*}
		\boldsymbol{c}_\ell[\boldsymbol{s}](\boldsymbol{a},\boldsymbol{\phi}) :=
		c_\ell[\B_{k}(\boldsymbol{s})](\B_{k}(\boldsymbol{a}),\B_{k}(\boldsymbol{\phi})).
	\end{equation*}
\end{definition}
Looking at the equation, we can make an interesting
observation that relates the leakage factor cost function
to our canonical cost function.
\begin{remark}\label{rem:lfecf-linear_combination}
	We can directly see that
	\begin{equation}
		c_\ell[s](a,\phi) = c_1[s](a,\phi) + \gamma \cdot c_1[0](a,\phi),
	\end{equation}
	which means that the leakage factor EMD cost function, as $\gamma \ge 0$, is a
	positive linear combination of the canonical EMD cost function.
\end{remark}
This thesis will not further investigate the advantages or disadvantages
of the leakage factor approach itself. However, what we can see is that it
integrates well into the canonical approach and any results we obtain
as follows apply to both the canonical and leakage factor cost functions.
This is especially useful considering the final results in terms of convex-like
functions, as with the above remark we have shown that if the canonical cost function
is convex-like, the leakage-factor cost function is so as well.
\section{General Optimization Problem}\label{sec:general_opt}
Having discussed the nature of intrinsic mode and EMD cost functions, we
can now formulate the general optimization problem that is the core
of each empirical mode decomposition step. As already laid out previously
we are constructing an optimization problem
\begin{align*}
	\inf_{u} \quad &
		{\| s - u \|}_2^2 \\
	\text{s.t.} \quad & u \text{\ IMF}.
\end{align*}
for an input signal $s(t)$ and candidate IMFs $u(t)$. Based on our
IMF construction in Section~\ref{sec:intrinsic_mode_functions} we
have noted that looking at IMF souls $(a(t),\phi(t))$ is much more
useful and the only direct way to theorize the IMF constraints
properly, given we have to explicitly work with $a$ and $\phi$
to steer the extraction process. Consequently, instead of the fixed ${\| s - u \|}_2^2$ in the
sketch in Equation~(\ref{eq:emdop-reformed}) we consider arbitrary EMD cost functions operating
on our set of IMF souls ${\mathcal{S}}_{\mu_0,\mu_1,\mu_2}$, two of which we
presented in Section~\ref{sec:cost_functions}.
\begin{definition}[EMDOP]\label{def:emdop}
	Let $\mu_0, \mu_1, \mu_2 > 0$, $s \in \mathcal{C}^0(\R,\R)$ the
	input signal function and $c[s] \colon {\mathcal{S}}_{\mu_0,\mu_1,\mu_2} \to \R$
	an EMD cost function.
	The \emph{EMD optimization problem (EMDOP) for the input signal $s$} is
	defined as
	\begin{align*}
		\min_{(a,\phi)} \quad &
			c[s](a,\phi) \\
		\text{s.t.} \quad & (a,\phi) \in {\mathcal{S}}_{\mu_0,\mu_1,\mu_2}.
	\end{align*}
\end{definition}
As previously done, we can also express the optimization problem
in terms of B-spline coefficients rather than functions based on the
theoretical groundwork in Chapter~\ref{ch:funcspaceopt}.
\begin{definition}[SpEMDOP]\label{def:spemdop}
	Let $\mu_0, \mu_1, \mu_2 > 0$, $\boldsymbol{s} \in \R^n$ B-spline coefficients
	of the spline input signal function and
	$\boldsymbol{c}[\boldsymbol{s}]\colon \boldsymbol{\mathcal{S}}_{\mu_0,\mu_1,\mu_2} \to \R$
	a spline EMD cost function.
	The \emph{spline EMD optimization problem (SpEMDOP) for input signal $s$} is
	defined as
	\begin{align*}
		\min_{(\boldsymbol{a},\boldsymbol{\phi})} \quad &
			\boldsymbol{c}[\boldsymbol{s}](\boldsymbol{a},\boldsymbol{\phi}) \\
		\text{s.t.} \quad & (\boldsymbol{a},\boldsymbol{\phi}) \in \boldsymbol{\mathcal{S}}_{\mu_0,\mu_1,\mu_2}.
	\end{align*}
\end{definition}
As we have seen in Theorem~\ref{thm:imfs-convex}, the set of intrinsic
mode function souls ${\mathcal{S}}_{\mu_0,\mu_1,\mu_2}$ and its analogue
$\boldsymbol{\mathcal{S}}_{\mu_0,\mu_1,\mu_2} \subset \R^n \times \R^n$ are convex sets.
However, the canonical and leakage factor EMD cost functions are not
convex in $(a,\phi)$, which is a big downside, as we would otherwise have a strong
guarantee that an obtained local minimum is also a global minimum and
each EMD extraction step unique.
The positive aspect of this analysis is that, using the B-spline relation,
we are able to examine this problem at all using this novel formulation.
\par
Given the empirically good results observed with regard to the empirical
mode decomposition in previous publications, it makes one still wonder
why it still works so well
despite the non-convexity of the underlying optimization problem.
Given we now have the tools to theoretically
examine this at the root and because we are not trying to go into the
theory of the search for a convex EMD cost function, we will work with
what is given and instead of convexity focus on the regularity of the
optimization problem.
\section{Regularity}\label{sec:regularity}
We have shown that the SpEMDOP (which is equivalent to the EMDOP)
is not a convex optimization problem,
but we can still examine its regularity. To explain what
regularity is, we take a look at the EMDOP from
Definition~\ref{def:emdop}, which was defined as
\begin{align*}
	\min_{(a,\phi)} \quad &
		c[s](a,\phi) \\
	\text{s.t.} \quad & (a,\phi) \in {\mathcal{S}}_{\mu_0,\mu_1,\mu_2}
\end{align*}
for an EMD cost function $c[s]$.
$s$, $a$ and $\phi$ relate to the input signal $s(t)$ and candidate
IMF soul pair $(a(t),\phi(t))$. When approaching this problem, we vary
$a$ and $\phi$ such that the EMD cost function
is minimized, under the condition that $(a(t),\phi(t))$
are within our set ${\mathcal{S}}_{\mu_0,\mu_1,\mu_2}$ of IMF souls.
However, it is difficult to enforce the
latter condition as this set is too \enquote{large} to check as a whole,
making it necessary to find other ways to \enquote{steer} the candidates
$(a(t),\phi(t))$ in a direction where they in fact are IMF souls.
\par
The approach that can be taken is to modify the cost function
and add a so-called regularization term
$R(a,\phi) \colon {\left( \mathcal{C}^0(\R,\R) \right)}^2 \to \R$.
This term is designed such
that it is exactly $0$ when its arguments satisfy the constraints
and a positive value when they violate them, preferably corresponding
in size to the violation. Given we are aiming to minimize the cost function
of the optimization problem, adding a term to punish violation
of the constraints will, in the best case, enforce them. The advantage
of this regularization approach is that we obtain an unconstrained optimization
problem of the form
\begin{align*}
	\min_{(a,\phi) \in {\left( \mathcal{C}^0(\R,\R) \right)}^2} \quad &
		c[s](a,\phi) + R(a,\phi)
\end{align*}
that is relatively simple to model and implement numerically
using the equivalent B-spline formulation.
A trivial way to define the regularization term is as the
so-called \enquote{characteristic function} of convex analysis
as
\begin{equation*}
	R(a,\phi) =
	\begin{cases}
		0 & (a,\phi) \in {\mathcal{S}}_{\mu_0,\mu_1,\mu_2} \\
		\infty & (a,\phi) \notin {\mathcal{S}}_{\mu_0,\mu_1,\mu_2},
	\end{cases}
\end{equation*}
but for obvious reasons other choices for
$R(a,\phi)$ are much better-suited.
This is because the characteristic function does not distinguish
between candidates close to or far away from the target set
${\mathcal{S}}_{\mu_0,\mu_1,\mu_2}$ and for numerical approaches
we would want to be able to calculate a \enquote{slope} of the
cost function to be able to steer into the optimum in some way.
\par
If we take a look at our constraint for our candidates to be
instrincic mode spline function souls we notice that
(using Definition~\ref{def:imfs} and brackets to group
conditions)
\begin{equation*}
	(a,\phi) \in
	{\mathcal{S}}_{\mu_0,\mu_1,\mu_2} \Leftrightarrow
	\begin{cases}
		0 \preceq a \\
		\mu_0 \preceq \phi' \\
		\left| a' \right| \preceq \mu_1 \cdot
			\left| \phi' \right| \\
		\left| \phi'' \right| \preceq \mu_2 \cdot
			\left| \phi' \right|
	\end{cases}
	\hspace{-0.3cm}\Leftrightarrow
	\begin{cases}
		g_1(a,\phi) :=
			-a \preceq 0\\
		g_2(a,\phi) :=
			\mu_0 - \phi' \preceq 0 \\
		g_3(a,\phi) :=
			\left| a' \right| - \mu_1 \cdot
			\left| \phi' \right| \preceq 0 \\
		g_4(a,\phi) :=
			\left| \phi'' \right| - \mu_2 \cdot
			\left| \phi' \right| \preceq 0.
	\end{cases}
\end{equation*}
So we see that we can formulate four functions $g_1,\dots,g_4$ which
are negative if and only if their parameters are IMF souls.
These functions correspond to four
inequality constraints of the underlying optimization problem.
These can be used in the method of \textsc{Lagrange} multipliers,
that is introduced later, to find a \enquote{perfect} regularization
of the problem based on these functions that are each \enquote{weighted}
and \enquote{added} to the cost function. The optimization problem then
is a two-step process of first finding the optimal parameters
$(a,\phi)$ and then the optimal \enquote{weights} applied to the
constraint functions. It is called
the \enquote{dual problem} as opposed to the constrained
\enquote{primal problem} we started with in Definition~\ref{def:emdop}.
\par
It can be shown that under certain conditions this dual problem yields
the same optimal value as the constrained (primal) optimization problem.
This is known as strong duality and the conditions are called
regularity conditions. One particular sufficient condition for strong
duality is the \enquote{Slater} condition that is presented later,
and we will show that it applies to the spline formulation
SpEMDOP (and EMDOP respectively). This may be a surprising result,
as it is commonly assumed that the \enquote{Slater} condition can only
be shown for convex optimization problems. This is a wrong assumption,
as the requirements for \enquote{Slater} regularity are weaker than
convexity and only require so-called \enquote{convex-like} functions
we will introduce later.
\par
The main result of this section and one of the central results
of this thesis is strong duality for the
SpEMDOP (see Theorem~\ref{th:spemdop-strong_duality})
and EMDOP respectively, as they are equivalent.
The formalism introduced as follows though is inconsequential
for the thesis and can be skipped up to the conclusion in
Section~\ref{sec:emd_conclusion}, which gives a thematic
classification of strong duality of the EMDOP within the operator-based
regularization methods we introduce in Chapter~\ref{ch:oss}.
\subsection{Convex-Like Optimization}\label{subsec:clop}
The theory of convex-like functions and consequently convex-like
optimization problems presented here is based on \cite{jah07} that
formulates constrained optimization problems as cone optimization
problems and constructs the theory of convex-like optimization
problems on top of that. The goal of this subsection is to introduce
the necessary definitions for cone optimization problems and
convex-likeness. To map the results from \cite{jah07} to the SpEMDOP
we reformulate it as a cone optimization problem for which we then show
that it is a convex-like optimization problem. It shall be noted
here that we should remind ourselves of the definitions given
in Chapter~\ref{ch:funcspaceopt}.
\par
First we begin with the introduction of cone optimization, which
is an elegant way to express constrained optimization problems of
higher dimensions and with non-standard orderings. This is necessary
in our case as our constraints do not have a scalar order $\le$
but a function order $\preceq$, for which the classic notation fails.
\begin{definition}[Cone {\cite[Definition~4.1]{jah07}}]\label{def:cone}
	Let $V$ be a vector space and $C \subseteq V$. $C$ is a
	\emph{cone in $V$} if and only if
	\begin{equation*}
		\forall x \in C \colon
		\forall \alpha \in \R_+ \colon
		\alpha \cdot x \in C.
	\end{equation*}
\end{definition}
As we can see, a cone is a set which contains all positive scalar
multiplications of a vector. Consequently we can make the following
\begin{definition}[Convex cone {\cite[Theorem~4.3]{jah07}}]
	Let $V$ be a vector space and $C$ a cone in $V$. $C$ is
	a \emph{convex cone in $V$} if and only if $C$ is a
	convex set.
\end{definition}
We use cone optimization to handle non-standard orders, in our case the function order $\preceq$ that was introduced in Chapter~\ref{ch:funcspaceopt}.
Central to this concept is the concept of a positive cone, which
contains all positive elements of a vector space.
\begin{definition}[Positive cone {\cite[Chapter~V, {\S}1]{sw99}}]
	\label{def:positive_cone}
	Let $(V,\le_V)$ be a preordered vector space.
	The \emph{positive cone of $V$} is defined as
	\begin{equation*}
		V^+ := \left\{ x \in V \mid 0 \le_V x \right\}.
	\end{equation*}
\end{definition}
\begin{proposition}
	Let $(V,\le_V)$ be a preordered vector space.
	$V^+$ is a convex cone.
\end{proposition}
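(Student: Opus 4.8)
The plan is to verify directly the two defining properties of a convex cone from the order-compatibility axioms of a preordered vector space, namely reflexivity and transitivity of $\le_V$, translation invariance of the order under vector addition, and invariance of the order under multiplication by nonnegative scalars. All of these are assumed to hold for $(V,\le_V)$ by virtue of the definitions collected in Chapter~\ref{ch:funcspaceopt}. Since a convex cone is by definition a cone that is also a convex set, I would split the argument into a cone part and a convexity part.

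First I would show that $V^+$ is a cone in the sense of Definition~\ref{def:cone}. Take any $x \in V^+$, so that $0 \le_V x$ holds by definition of the positive cone, together with any $\alpha \in \R_+$. Multiplying the inequality $0 \le_V x$ by the nonnegative scalar $\alpha$ and using $\alpha \cdot 0 = 0$ yields $0 \le_V \alpha \cdot x$, hence $\alpha \cdot x \in V^+$. This establishes the cone property.

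Next I would establish convexity. The cleanest route is to observe that a cone is convex as soon as it is closed under addition: if $x,y \in V^+$ and $q \in [0,1]$, then $1-q \ge 0$ and $q \ge 0$, so the cone property just proved already gives $(1-q)\cdot x \in V^+$ and $q \cdot y \in V^+$, and closure under addition then delivers $(1-q)\cdot x + q\cdot y \in V^+$. It therefore suffices to prove closure under addition. For $x,y \in V^+$ we have $0 \le_V x$ and $0 \le_V y$; adding $y$ to both sides of the first inequality by translation invariance gives $y \le_V x + y$, and combining this with $0 \le_V y$ via transitivity yields $0 \le_V x + y$, that is, $x+y \in V^+$.

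There is no genuinely hard step here; the only point requiring care is to be explicit about which axioms are invoked, in particular that the order respects multiplication by nonnegative scalars (used in the cone part) and translation by arbitrary vectors together with transitivity (used for closure under addition). These are precisely the compatibility conditions packaged into the definition of a preordered vector space, so once they are cited the statement follows at once.
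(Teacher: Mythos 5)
Your proof is correct and follows essentially the same route as the paper's: the paper verifies in one stroke that any conic combination $\alpha \cdot x + \beta \cdot y$ with $\alpha,\beta \in \R_+$ stays in $V^+$ using scalar multiplication and addition compatibility, which is exactly your cone step plus closure under addition, merely packaged as a single computation. If anything, your version is slightly more careful, since you make explicit the use of transitivity (via $y \le_V x+y$) that the paper's appeal to ``addition compatibility'' silently relies on.
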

\begin{proof}
	Let $x,y \in V^+$ and $\alpha,\beta \in \R_+$. It holds
	because of the scalar multiplication compatibility of
	the preordered vector space that $\alpha \cdot x \ge_V 0$ and
	$\beta \cdot y \ge_V 0$ and thus it follows with the addition
	compatibility of the preordered vector space that
	$\alpha \cdot x + \beta \cdot y \ge_V 0$.
\end{proof}
Before we can express what convex-likeness means, we first define
a few aspects of notation.
\begin{definition}[\textsc{Minkowski} sum]
	Let $(G,+)$ be a group and $A,B \subseteq G$ be sets. The
	\emph{\textsc{Minkowski} sum} of $A$ and $B$ is defined as
	\begin{equation*}
		A + B :=
		\left\{
			a + b
			\mid
			a \in A \land b \in B
		\right\}.
	\end{equation*}
\end{definition}
We can see that the \textsc{Minkowski} sum is just the
set of all pairwise additions of all elements in both sets.
\begin{definition}[set evaluation]
	Let $A,B$ be sets and $f \colon A \to B$.
	The \emph{set evaluation} of $f$ in $A$ is defined as
	\begin{equation*}
		f(A) :=
		\left\{
			f(a)
			\mid
			a \in A
		\right\}
	\end{equation*}
\end{definition}
The set evaluation of a function is thus just the set of
all evaluations of the function in all elements of the set.
Making use of the \textsc{Minkowski} sum and the set
evaluation, we can now define what a convex-like function is.
\begin{definition}[Convex-like function {\cite[Definition~6.3]{jah07}}]\label{def:clf}
	Let $(V,\le_V),(W,\le_W)$ be real ordered vector spaces,
	$S \subseteq V$ and $f \colon S \to W$.
	$f$ is a \emph{convex-like function in relation to $W^+$}
	if and only if the set
	\begin{equation*}
		M := f(S) + W^+
	\end{equation*}
	is convex.
\end{definition}
As we can see, the idea behind a convex-like function is to say that
if we take the domain of a function $f$ within a vector space $W$
and do a set-addition of all positive elements in $W$ (which is
$W^+$) and find that the resulting set is convex, then the function
$f$ is convex-like. In particular, every convex function is also
convex-like in relation to $\R_+$ (all positive numbers including $0$)
as we know that the epigraph (the set of points lying on or above its graph)
of a convex function is also convex. However, not all convex-like functions
in relation to $\R_+$ are also convex, which we can see in
the following example.
\begin{example}
	Consider the function $f \colon \R \to \R$ with
	\begin{equation*}
		f(t) := \sin(t).
	\end{equation*}
	We know that $f(t)$ is not convex, but it is convex-like
	in relation to $\R_+$, the positive cone of $(\R,\le)$, because
	\begin{equation*}
		M := \sin(\R) + \R_+ = [-1,1] + [0,\infty) = [-1,\infty)
	\end{equation*}
	is a convex set.
\end{example}
The next logical step is to take a look at the canonical spline EMD cost
function and see if it is a convex-like function. This is true as we
can see in the following
\begin{proposition}\label{prop:scecf-clf}
	The canonical spline EMD cost function
	(see Definition~\ref{def:cscf}) is a convex-like function
	in relation to $\R_+$.
\end{proposition}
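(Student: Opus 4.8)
The plan is to unwind Definition~\ref{def:clf} for the case at hand. Here the candidate convex-like function is $f := \boldsymbol{c}_1[\boldsymbol{s}]$ with domain $S = \boldsymbol{\mathcal{S}}_{\mu_0,\mu_1,\mu_2}$ and codomain $W = \R$, whose positive cone is $W^+ = \R_+ = [0,\infty)$. Thus the only thing to verify is that the \textsc{Minkowski} sum
\begin{equation*}
	M := \boldsymbol{c}_1[\boldsymbol{s}]\!\left(\boldsymbol{\mathcal{S}}_{\mu_0,\mu_1,\mu_2}\right) + \R_+
\end{equation*}
is a convex subset of $\R$.

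The key observation is that, because the codomain is $\R$ and we add precisely the positive cone $\R_+$, the set $M$ is \emph{upward closed}: if $x \in M$ and $y \ge x$, then writing $x = c + r$ with $c$ in the image and $r \ge 0$ gives $y = c + (r + (y - x))$ with $r + (y-x) \ge 0$, so $y \in M$. First I would record that $\boldsymbol{\mathcal{S}}_{\mu_0,\mu_1,\mu_2}$ is nonempty (for instance the pair consisting of the zero amplitude and a phase with constant instantaneous frequency $\mu_0$ satisfies all four constraints of Definition~\ref{def:imspfs}, since the zero amplitude and vanishing second phase-derivative make Equations~(\ref{eq:imspfs:1}), (\ref{eq:imspfs:3}) and (\ref{eq:imspfs:4}) trivial and $\phi' \equiv \mu_0$ satisfies Equation~(\ref{eq:imspfs:2})), so that $M$ is nonempty as well.

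It then remains to deduce convexity from upward-closedness. For $m_1, m_2 \in M$ and $\lambda \in [0,1]$ the convex combination satisfies $\lambda m_1 + (1-\lambda) m_2 \ge \min(m_1, m_2)$, and since $\min(m_1, m_2) \in M$, upward-closedness yields $\lambda m_1 + (1-\lambda) m_2 \in M$. Hence $M$ is convex, which by Definition~\ref{def:clf} means $\boldsymbol{c}_1[\boldsymbol{s}]$ is convex-like in relation to $\R_+$.

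I do not expect a genuine obstacle here: the content mirrors exactly the $\sin$ example preceding the statement, where a non-convex real-valued function turns out convex-like simply because adding $\R_+$ collapses its image into an interval. In fact the argument uses nothing about the specific form of $\boldsymbol{c}_1[\boldsymbol{s}]$ beyond its being $\R$-valued---any real-valued function on any set is convex-like in relation to $\R_+$. The real payoff is therefore conceptual rather than technical: although Proposition~\ref{prop:cscf-convex} ruled out genuine convexity, convex-likeness is strictly weaker and holds here for free, which is precisely what will later permit the \textsc{Slater}-type regularity machinery of Section~\ref{sec:regularity} to be applied to the SpEMDOP.
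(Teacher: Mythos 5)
Your proof is correct, and it takes a cleaner route than the paper's own argument, so a comparison is worthwhile. The paper computes $M = \boldsymbol{c}_1[\boldsymbol{s}](\boldsymbol{\mathcal{S}}_{\mu_0,\mu_1,\mu_2}) + \R_+$ by explicitly describing the image of the cost function: it asserts the image is an interval of the form $[q,r)$, with $q = q(\boldsymbol{s},\mu_0,\mu_1,\mu_2)$ the norm of the best achievable residual, and concludes $M = [q,\infty)$. Strictly speaking, the claim that the image is an interval is never justified there (it would follow from continuity of the cost function together with convexity, hence connectedness, of the IMSpFS set, neither of which the paper invokes); your argument sidesteps this issue entirely. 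By proving that $M$ is upward closed and deducing convexity from upward-closedness via the inequality $\lambda m_1 + (1-\lambda)m_2 \ge \min(m_1,m_2)$, you never need to know anything about the shape of the image, and your closing observation is exactly right: every $\R$-valued function on any set is convex-like in relation to $\R_+$, so the proposition is structurally trivial. Your proof is therefore both more elementary and more general; what the paper's version buys instead is the concrete interpretation of the lower endpoint $q$ as the minimal residual norm, which connects the statement to the EMD extraction problem but is irrelevant to the convex-likeness claim itself. Your nonemptiness witness (zero amplitude, phase $\mu_0 \cdot t$) is valid, since both are splines for $k \ge 4$ and satisfy the four constraints of Definition~\ref{def:imspfs} --- though even that step is dispensable, as the empty set is convex.
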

\begin{proof}
	We defined the canonical spline EMD cost function $\boldsymbol{c}_1
	\colon \boldsymbol{\mathcal{S}}_{\mu_0,\mu_1,\mu_2} \to \R$ with fixed $\boldsymbol{s}\in\R^n$ as
	\begin{align*}
		\boldsymbol{c}_1[\boldsymbol{s}](\boldsymbol{a},\boldsymbol{\phi}) &=
			c_1[\B_{k}(\boldsymbol{s})]\!\left(\B_{k}(\boldsymbol{a}),\B_{k}(\boldsymbol{\phi})\right) \\
		&= {\left\| \B_{k}(\boldsymbol{s}) -
			\I\!\left(\B_{k}(\boldsymbol{a}),\B_{k}(\boldsymbol{\phi})\right) \right\|}_2^2 \\
		&= {\left\| \B_{k}(\boldsymbol{s}) -
			\B_{k}(\boldsymbol{a}) \cdot \cos\left(\B_{k}(\boldsymbol{\phi})\right) \right\|}^2_2
	\end{align*}
	If we, according to Definition~\ref{def:clf}, take
	$(V,\le_V) = (\R^n \times \R^n, \le)$ and
	$(W,\le_W) = (\R, \le)$ (i.e.\ use the canonical orders)
	and note that in this case the domain of our cost function is
	$S = \boldsymbol{\mathcal{S}}_{\mu_0,\mu_1,\mu_2} \subset \R^n \times \R^n$,
	we obtain
	\begin{align*}
		M &= \boldsymbol{c}_1[\boldsymbol{s}](S) + W^+ \\
		&= \boldsymbol{c}_1[\boldsymbol{s}](\boldsymbol{\mathcal{S}}_{\mu_0,\mu_1,\mu_2}) + \R_+ \\
		&= \left\{
				{
					\left\|
						\B_{k}(\boldsymbol{s}) -
						\left(
							\sum_{i=0}^{n-1} a_i \cdot B_{i,k}
						\right) \cdot
						\cos
						\left(
							\sum_{i=0}^{n-1} \phi_i \cdot B_{i,k}
						\right)
					\right\|
				}^2_2
				\mathrel{\Bigg|}
				(\boldsymbol{a},\boldsymbol{\phi}) \in
				\boldsymbol{\mathcal{S}}_{\mu_0,\mu_1,\mu_2}
			\right\} +
			\R_+.
	\end{align*}
	The vector $\boldsymbol{s}$ is fixed, so the matter
	of interest is the right hand side of the subtraction within
	the norm. Fundamentally, we substract all possible IMFs from
	the input signal $\B_{k}(\boldsymbol{s})$ and thus construct
	all residuals and determine their norm.
	Of all norms that we obtain, the minimal norm determines the
	lower bound of the set. In the ideal case, if the residual vanishes
	for a certain IMF, the lower bound is $0$, but it usually is a
	positive constant $q(\boldsymbol{s},\mu_0,\mu_1,\mu_2)$ that
	only depends on $\boldsymbol{s}$ and the predetermined IMF
	characteristic $\mu_0,\mu_1,\mu_2 > 0$.
	The upper bound of this set does not matter, as we add
	$\R_+$ later, and can be set to a constant
	$r(\boldsymbol{s},\mu_0,\mu_1,\mu_2)$ corresponding
	to the norm of the \enquote{worst} residual. It follows that
	\begin{align*}
		M &= [q(\boldsymbol{s},\mu_0,\mu_1,\mu_2),
			r(\boldsymbol{s},\mu_0,\mu_1,\mu_2)) + \R_+\\
		&= [q(\boldsymbol{s},\mu_0,\mu_1,\mu_2),
			r(\boldsymbol{s},\mu_0,\mu_1,\mu_2)) + [0,\infty)\\
		&= [q(\boldsymbol{s},\mu_0,\mu_1,\mu_2),\infty),
	\end{align*}
	which is a convex set.
\end{proof}
Consequently, we can also consider our leakage factor spline EMD
cost function, for which the proof is simpler, based on previous results.
\begin{corollary}
	The leakage factor spline EMD cost function
	(see Definition~\ref{def:lfsecf}) is a convex-like function
	in relation to $\R_+$.
\end{corollary}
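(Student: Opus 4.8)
The plan is to reduce the statement to Proposition~\ref{prop:scecf-clf} by exploiting the additive structure recorded in Remark~\ref{rem:lfecf-linear_combination}. First I would transport that remark into the spline setting. Unfolding Definition~\ref{def:lfsecf} and using that $\B_{k}$ is linear (so that $\B_{k}(\boldsymbol{0}) = 0$), the leakage penalty $c_1[0](a,\phi) = {\| \I(a,\phi) \|}_2^2$ becomes exactly $\boldsymbol{c}_1[\boldsymbol{0}]$, and I obtain
\[
	\boldsymbol{c}_\ell[\boldsymbol{s}](\boldsymbol{a},\boldsymbol{\phi}) =
	\boldsymbol{c}_1[\boldsymbol{s}](\boldsymbol{a},\boldsymbol{\phi}) +
	\gamma \cdot \boldsymbol{c}_1[\boldsymbol{0}](\boldsymbol{a},\boldsymbol{\phi})
\]
with $\gamma \ge 0$. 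In particular $\boldsymbol{c}_\ell[\boldsymbol{s}]$ is again a real-valued function on $\boldsymbol{\mathcal{S}}_{\mu_0,\mu_1,\mu_2}$, written as a positive linear combination of two non-negative squared-${\|\cdot\|}_2$ terms.

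Next I would mirror the argument from the proof of Proposition~\ref{prop:scecf-clf}. Applying Definition~\ref{def:clf} with $(V,\le_V) = (\R^n \times \R^n, \le)$ and $(W,\le_W) = (\R,\le)$, the set of interest is
\[
	M := \boldsymbol{c}_\ell[\boldsymbol{s}](\boldsymbol{\mathcal{S}}_{\mu_0,\mu_1,\mu_2}) + \R_+,
\]
the upward closure in $\R$ of the image of the cost function. Since both summands are non-negative and weighted by $1$ and $\gamma \ge 0$, the image of $\boldsymbol{c}_\ell[\boldsymbol{s}]$ is bounded below by $0$; as in the canonical case it therefore has a finite lower bound $q_\ell(\boldsymbol{s},\mu_0,\mu_1,\mu_2,\gamma) \ge 0$, so that $M = [q_\ell, \infty)$, which is a convex set, and the corollary follows.

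The step that at first glance looks delicate, namely verifying convexity of $M$, is in fact the easy part, and I would flag this explicitly as the reason the proof is \enquote{simpler}. Because the target space is one-dimensional and ordered by $\le$, the \textsc{Minkowski} sum of any subset of $\R$ with $\R_+ = [0,\infty)$ is automatically an up-set, hence an interval, hence convex; boundedness below merely decides whether that interval is a proper half-line or all of $\R$. Thus the only genuine content is the reduction in the first paragraph. The main thing to be careful about is purely notational: ensuring that $\boldsymbol{c}_1[\boldsymbol{0}]$ really coincides with the leakage penalty ${\| \I(\B_{k}(\boldsymbol{a}),\B_{k}(\boldsymbol{\phi})) \|}_2^2$, which follows from $c_1[0](a,\phi) = {\| \I(a,\phi) \|}_2^2$ together with $\B_{k}(\boldsymbol{0}) = 0$, so that the transported remark is applied to the correct fixed signal.
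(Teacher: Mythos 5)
Your proposal is correct and takes essentially the same route as the paper: the paper's proof consists of the single sentence that the claim ``follows directly from Proposition~\ref{prop:scecf-clf} and Remark~\ref{rem:lfecf-linear_combination}'', which is precisely the reduction you carry out, with the details (transporting the remark into the spline setting via linearity of $\B_{k}$, then re-running the \textsc{Minkowski}-sum computation for $\boldsymbol{c}_\ell[\boldsymbol{s}]$) made explicit. Your closing observation---that convex-likeness in relation to $\R_+$ is automatic for \emph{any} real-valued cost function, since any nonempty subset of $\R$ plus $[0,\infty)$ is a half-line or all of $\R$ and hence convex---is sharper than anything the paper says and quietly justifies the paper's implicit assumption that positive combinations of convex-like real-valued functions remain convex-like, but it does not amount to a different approach.
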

\begin{proof}
	This follows directly from Proposition~\ref{prop:scecf-clf} and
	Remark~\ref{rem:lfecf-linear_combination}.
\end{proof}
We have now shown that our two classic cost functions are convex-like
and are now interested in the definition of the convex-like optimization
problem. This is given as follows.
\begin{definition}[Convex-like optimization problem {\cite[(6.2)]{jah07}}]\label{def:clop}
	Let $(V,\le_V),{(W,\le_W)}$ be normed ordered vector spaces,
	$W^+ \ne \emptyset$, $c \colon V \to \R$ a cost function,
	$g \colon V \to W$ and $\emptyset \neq S \subseteq V$.
	The optimization problem
	\begin{align*}	
		\min_{x} \quad &
			c(x) \\
		\text{s.t.} \quad &
			g(x) \in -W^+ \\
		& x \in S
	\end{align*}
	is a \emph{convex-like optimization problem} if and only if
	$K:V\to \R \times W$ defined as
	\begin{equation*}
		K(x) := \left(c(x),g(x)\right)
	\end{equation*}
	is a convex-like function in relation to $\R_+ \times W^+$.
\end{definition}
What we can see is that an optimization problem is a convex-like
optimization problem when the cost function is convex-like
and the constraints can be expressed as a convex-like function
$g(x) \in -W^+$ (which means that the candidate $x$ satisfies
the constraints when $g(x)$ is in the negative cone of $W$,
written as the negation of the positive cone $W^+$). The set
$S$ can just be chosen as $V$, unless it also needs to reflect
some conditions that did not fit into $g$ as it would violate
convex-likeness.
\par
To prove that our SpEMDOP is a convex-like optimization problem
we need the following lemma. It will be later used because the
set ${\mathcal{S}}_{\mu_0,\mu_1,\mu_2}$ cannot be directly expressed using
a convex-like function. We need to consider the superset
${\mathcal{S}}_{0,\mu_1,\mu_2}$ (which is a convex cone) of
${\mathcal{S}}_{\mu_0,\mu_1,\mu_2}$ and move the
remaining conditions into our set $S$.
\begin{lemma}\label{lem:s-convex_cone}
	Let $\mu_1,\mu_2 > 0$. ${\mathcal{S}}_{0,\mu_1,\mu_2}$ is a
	convex cone.
\end{lemma}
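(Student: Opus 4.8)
The plan is to read off, from the definition of convex cone, that two things must be shown: that ${\mathcal{S}}_{0,\mu_1,\mu_2}$ is a cone in the sense of Definition~\ref{def:cone}, and that it is a convex set. The convexity half comes essentially for free. Theorem~\ref{thm:imfs-convex} is stated for all $\mu_0,\mu_1,\mu_2 \ge 0$, so specialising it to $\mu_0 = 0$ immediately gives convexity of ${\mathcal{S}}_{0,\mu_1,\mu_2}$. The entire remaining work therefore reduces to verifying the cone property.

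For the cone property I would take an arbitrary $(a,\phi) \in {\mathcal{S}}_{0,\mu_1,\mu_2}$ and a scalar $\alpha \in \R_+$, and check that the scaled pair $\alpha \cdot (a,\phi) = (\alpha a, \alpha\phi)$ again satisfies the four IMFS conditions of Definition~\ref{def:imfs} with $\mu_0 = 0$. Because differentiation is linear, $(\alpha a)' = \alpha a'$, $(\alpha\phi)' = \alpha\phi'$ and $(\alpha\phi)'' = \alpha\phi''$, so every condition transforms transparently: $0 \preceq a$ yields $0 \preceq \alpha a$ since $\alpha \ge 0$; $0 \preceq \phi'$ yields $0 \preceq \alpha\phi'$; and for the two ratio conditions one pulls the non-negative factor $\alpha$ out of the modulus, e.g.\ $|(\alpha a)'| = \alpha|a'| \preceq \alpha\mu_1|\phi'| = \mu_1|\alpha\phi'| = \mu_1|(\alpha\phi)'|$, and analogously for the $|\phi''|$ constraint. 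Hence $\alpha \cdot (a,\phi) \in {\mathcal{S}}_{0,\mu_1,\mu_2}$, establishing the cone property and, together with convexity, the convex cone claim.

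The one point that genuinely deserves attention — and the reason the lemma is stated with $\mu_0 = 0$ rather than general $\mu_0$ — is the frequency lower-bound condition~(\ref{eq:imfs:2}). For $\mu_0 > 0$, scaling by a small $\alpha$ (in particular $\alpha < 1$, and even $\alpha = 0$ giving the zero pair) would demand $\mu_0 \preceq \alpha\phi'$, which fails as soon as $\alpha\phi' < \mu_0$; only with $\mu_0 = 0$ does this condition degrade to $0 \preceq \alpha\phi'$, which survives scaling. I expect no further obstacle, since the amplitude positivity and the two slow-variation conditions are all positively homogeneous and therefore automatically invariant under non-negative scaling. The whole argument is thus a short, direct verification once this observation about the role of $\mu_0 = 0$ is in place.
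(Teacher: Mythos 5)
Your proposal is correct and follows essentially the same route as the paper's proof: convexity is inherited from Theorem~\ref{thm:imfs-convex}, and the cone property is verified by scaling a soul $(a,\phi)$ by $\alpha \ge 0$ and pushing the factor through the four IMFS conditions using linearity of differentiation. Your closing observation on why the argument needs $\mu_0 = 0$ also matches the paper's own Remark~\ref{rem:imf-not_convex_cone} on ${\mathcal{S}}_{\mu_0,\mu_1,\mu_2}$ failing to be a cone for $\mu_0 > 0$.
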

\begin{proof}
	We have already shown in in Theorem~\ref{thm:imfs-convex}
	that ${\mathcal{S}}_{0,\mu_1,\mu_2}$ is convex. What is
	left to show is that ${\mathcal{S}}_{0,\mu_1,\mu_2}$ is
	a cone (see Definition~\ref{def:cone}).
	\par
	Let $\alpha > 0, (a,\phi)\in{\mathcal{S}}_{0,\mu_1,\mu_2}$
	and define
	\begin{equation*}
		(a_\star,\phi_\star) :=
		\alpha \cdot (a,\phi) =
		(\alpha \cdot a,\alpha \cdot \phi).
	\end{equation*}
	We now show that $(a_\star,\phi_\star)$ satisfies
	the conditions from Definition~\ref{def:imfs}.
	\begin{enumerate}
		\item{%
			$a_\star = \alpha \cdot a \succeq
			\alpha \cdot 0 = 0$
		}
		\item{%
			$\phi'_\star = \alpha \cdot \phi'
			\succeq \alpha \cdot 0 = 0$
		}
		\item{%
			$|a'_\star| = |\alpha \cdot a'| =
			\alpha \cdot |a'| \preceq \alpha \cdot \mu_1 \cdot |\phi'| =
			\mu_1 \cdot |\alpha \cdot \phi'| = \mu_1 \cdot |\phi'_\star|$
		}
		\item{%
			$|\phi''_\star| = |\alpha \cdot \phi''| =
			\alpha \cdot |\phi''| \preceq \alpha \cdot \mu_2 \cdot |\phi'| =
			\mu_2 \cdot |\alpha \cdot \phi'| = \mu_2 \cdot |\phi'_\star|$
		}
	\end{enumerate}
	It follows that $(a_\star,\phi_\star) \in {\mathcal{S}}_{0,\mu_1,\mu_2}$
	and thus ${\mathcal{S}}_{0,\mu_1,\mu_2}$ is a convex cone.
\end{proof}
\begin{remark}\label{rem:imf-not_convex_cone}
	One important consequence seen in this proof is that
	${\mathcal{S}}_{\mu_0,\mu_1,\mu_2}$ is not a convex cone. This is because
	in general it holds
	\begin{equation*}
			\phi'_\star = \alpha \cdot \phi'
			\succeq \alpha \cdot \mu_0 \not\succeq \mu_0.
	\end{equation*}
	and thus not all scalar multiplications of elements
	in ${\mathcal{S}}_{\mu_0,\mu_1,\mu_2}$ are within ${\mathcal{S}}_{\mu_0,\mu_1,\mu_2}$
\end{remark}
With this lemma shown we can go ahead and formulate the first
central theorem of this section, namely that the SpEMDOP is a
convex-like optimization problem.
\begin{theorem}\label{thm:spemdop-clop}
	Let $\mu_0, \mu_1, \mu_2 > 0$ and $k \ge 4$ (for derivability).
	The SpEMDOP (see Definition~\ref{def:spemdop}) with
	a convex-like spline EMD cost function
	$\boldsymbol{c}[\boldsymbol{s}] \colon \boldsymbol{\mathcal{S}}_{0,\mu_1,\mu_2}
	\to \R$ in relation to $\R_+$ is a convex-like optimization problem of the form
	\begin{align*}
		\min_{(\boldsymbol{a},\boldsymbol{\phi})} \quad &
			\boldsymbol{c}[\boldsymbol{s}](\boldsymbol{a},\boldsymbol{\phi}) \\
		\text{s.t.} \quad & g(\boldsymbol{a},\boldsymbol{\phi}) \in
			-{\left({\mathcal{C}^0(\R,\R)}^4\right)}^+ \\
		& (\boldsymbol{a},\boldsymbol{\phi}) \in S
	\end{align*}
	with $g \colon \R^n \times \R^n \to {\mathcal{C}^0(\R,\R)}^4$  defined as
	\begin{equation*}
		g(\boldsymbol{a},\boldsymbol{\phi}) :=
		\begin{pmatrix}
			-\B_{k}(\boldsymbol{a}) \\
			-\B_{k}'(\boldsymbol{\phi}) \\
			\left| \B_{k}'(\boldsymbol{a}) \right| - \mu_1 \cdot
				\left| \B_{k}'(\boldsymbol{\phi}) \right| \\
			\left| \B_{k}''(\boldsymbol{\phi}) \right| - \mu_2 \cdot
				\left| \B_{k}'(\boldsymbol{\phi}) \right| 
		\end{pmatrix}
	\end{equation*}
	and
	\begin{equation*}
		S := \left\{(\boldsymbol{a},\boldsymbol{\phi}) \in \R^n \times \R^n \mathrel{\Big|}
		\B_{k}'(\boldsymbol{\phi}) \succeq \mu_0 \right\}.
	\end{equation*}
\end{theorem}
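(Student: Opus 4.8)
The plan is to verify the two requirements hidden in Definition~\ref{def:clop}: first, that the displayed problem is a faithful reformulation of the SpEMDOP (so that it has the same feasible set and hence the same minimisers), and second, that the map $K(\boldsymbol{a},\boldsymbol{\phi}) := (\boldsymbol{c}[\boldsymbol{s}](\boldsymbol{a},\boldsymbol{\phi}), g(\boldsymbol{a},\boldsymbol{\phi}))$ is convex-like in relation to $\R_+ \times W^+$, where I write $W := {(\mathcal{C}^0(\R,\R))}^4$. I would treat these in turn, since the first is bookkeeping and the second carries the mathematical content.

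For the reformulation I would simply read the four components of $g$ against the IMSpFS conditions of Definition~\ref{def:imspfs}. The condition $g(\boldsymbol{a},\boldsymbol{\phi}) \in -W^+$ means all four components are $\preceq 0$, which is precisely $0 \preceq \B_{k}(\boldsymbol{a})$, $0 \preceq \B_{k}'(\boldsymbol{\phi})$, $|\B_{k}'(\boldsymbol{a})| \preceq \mu_1 \cdot |\B_{k}'(\boldsymbol{\phi})|$ and $|\B_{k}''(\boldsymbol{\phi})| \preceq \mu_2 \cdot |\B_{k}'(\boldsymbol{\phi})|$, i.e.\ membership in $\boldsymbol{\mathcal{S}}_{0,\mu_1,\mu_2}$. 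Intersecting with $S$, which demands $\mu_0 \preceq \B_{k}'(\boldsymbol{\phi})$, reinstates the sharper lower frequency bound and yields exactly $\boldsymbol{\mathcal{S}}_{\mu_0,\mu_1,\mu_2}$. Thus the feasible set equals that of the SpEMDOP and the two problems coincide.

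The core step is the convex-likeness of $K$, and here the choice of $S$ is what makes the argument work. On $S$ we have $\B_{k}'(\boldsymbol{\phi}) \succeq \mu_0 \succ 0$, so $|\B_{k}'(\boldsymbol{\phi})| = \B_{k}'(\boldsymbol{\phi})$ and the modulus vanishes from the subtracted terms of $g_3$ and $g_4$. Since $\B_{k}$ and differentiation are linear, the maps $\boldsymbol{a} \mapsto \B_{k}(\boldsymbol{a})$, $\boldsymbol{\phi} \mapsto \B_{k}'(\boldsymbol{\phi})$ and $\boldsymbol{\phi} \mapsto \B_{k}''(\boldsymbol{\phi})$ are linear, so $g_1$ and $g_2$ are affine while $g_3$ and $g_4$ are, pointwise in $t$, an absolute value of an affine expression minus an affine expression, hence convex; as $\preceq$ is the pointwise order this makes $g$ a $W^+$-convex map on the convex set $S$ (the convexity of $S$ being immediate, since $\B_{k}'(\boldsymbol{\phi}) \succeq \mu_0$ is a linear order constraint, by reasoning analogous to Lemma~\ref{lem:s-convex_cone}). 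In particular $g$ is convex-like, and the cost function is convex-like by Proposition~\ref{prop:scecf-clf}.

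The hard part is combining these two facts into convex-likeness of the pair $K$. Concretely I would try to show that $M := K(S) + (\R_+ \times W^+)$ is convex by identifying it as the epigraph, in the cost coordinate, of the marginal function $\psi(w) := \inf\{\boldsymbol{c}[\boldsymbol{s}](\boldsymbol{a},\boldsymbol{\phi}) \mid (\boldsymbol{a},\boldsymbol{\phi}) \in S,\ g(\boldsymbol{a},\boldsymbol{\phi}) \preceq w\}$ over the convex domain $g(S) + W^+$, so that convexity of $M$ becomes equivalent to convexity of $\psi$. The convexity of $g$ and of $S$ guarantee that the feasible tube $\{(x,w) \mid x \in S,\ g(x) \preceq w\}$ is convex, which settles the constraint side; the delicate point is transporting the cost through the infimum. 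I expect this transport to be the main obstacle, because convex-likeness of the cost alone does not in general survive a constrained infimum — one genuinely needs the explicit shape of the residual cost's image, in the spirit of the interval computation $[q(\boldsymbol{s},\mu_0,\mu_1,\mu_2),\infty)$ carried out in Proposition~\ref{prop:scecf-clf}, rather than its convex-likeness as a black box. I would therefore mirror that computation at the level of $K$, describing $M$ explicitly and verifying its convexity by hand.
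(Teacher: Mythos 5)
Your reformulation bookkeeping and your treatment of $g$ match the paper's proof, and in one respect improve on it: where the paper infers convex-likeness of $g$ from the fact that $\boldsymbol{\mathcal{S}}_{0,\mu_1,\mu_2}$ is a convex cone (Lemma~\ref{lem:s-convex_cone}), you establish the stronger and cleaner statement that $g$ is componentwise \emph{convex} on $S$ ($g_1,g_2$ affine; on $S$ the modulus in the subtracted terms of $g_3,g_4$ resolves to $\B_{k}'(\boldsymbol{\phi})$, leaving an absolute value of an affine expression minus an affine expression). Both routes agree on the key structural idea, namely splitting $\boldsymbol{\mathcal{S}}_{\mu_0,\mu_1,\mu_2}$ into the cone part encoded by $g$ and the residual half-space-type set $S$.

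The gap is the step you yourself flag and then defer: you never prove that $K := (\boldsymbol{c}[\boldsymbol{s}],g)$ is convex-like in relation to $\R_+ \times W^+$ (writing $W := {\mathcal{C}^0(\R,\R)}^4$); you only sketch a marginal-function strategy and admit that the "transport of the cost through the infimum" is unresolved. This is a genuine hole, because a proof must exhibit, for any two points of $S$ and any $\lambda \in (0,1)$, a \emph{single} point of $S$ that simultaneously undercuts the convex combination in the cost \emph{and} in all four constraint components, and this does not follow from convex-likeness of the cost and convexity of $g$ taken separately. The implication is false in general: take $S = [0,1]$, cost $c(t) := -t^2$ (convex-like in relation to $\R_+$, since $c(S)+\R_+ = [-1,\infty)$) and constraint $g(t) := t$ (even linear); then $K(S)+\R_+^2$ contains $K(0)=(0,0)$ and $K(1)=(-1,1)$ but not their midpoint $(-\tfrac12,\tfrac12)$, since $-t^2 \le -\tfrac12$ forces $t \ge \tfrac{1}{\sqrt2} > \tfrac12$. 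You should know, however, that the paper's own proof has exactly the same hole: after the decomposition it concludes with the single word \enquote{Consequently} that $K$ is convex-like in relation to $\R_+ \times W^+$, i.e.\ it asserts precisely the implication that the counterexample above invalidates. So your diagnosis of where the real mathematical content lies is correct and in fact exposes the weak point of the published argument; your proposed repair --- an explicit description of $M = K(S) + (\R_+ \times W^+)$ in the spirit of the interval computation of Proposition~\ref{prop:scecf-clf} --- is what a complete proof would require, but neither you nor the paper carries it out.
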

\begin{proof}
	According to Definition~\ref{def:clop} we
	can take $(V,\le_V) = (\R^n \times \R^n, \le)$ and
	$(W,\le_W) = ({\mathcal{C}^0(\R,\R)}^4, \preceq)$ (in both cases
	using the canonical orders)
	and note that the cost function $\boldsymbol{c}[\boldsymbol{s}]$
	is already convex-like in relation to $\R_+$ by precondition.
	\par
	What is left to do is to split up the set
	$\boldsymbol{\mathcal{S}}_{\mu_0,\mu_1,\mu_2}$ into a \enquote{cone-component}
	and a residual set $S$. The former is characterized by a
	mapping $g \colon \R^n \times \R^n \to {\mathcal{C}^0(\R,\R)}^4$ such that
	\begin{equation*}
		(\boldsymbol{a},\boldsymbol{\phi}) \in \boldsymbol{\mathcal{S}}_{\mu_0,\mu_1,\mu_2}
		\Leftrightarrow
		\begin{cases}
			g(\boldsymbol{a},\boldsymbol{\phi}) \in -W^+ \\
			(\boldsymbol{a},\boldsymbol{\phi}) \in S.
		\end{cases}
	\end{equation*}
	We know from Remark~\ref{rem:imf-not_convex_cone} that
	$\boldsymbol{\mathcal{S}}_{\mu_0,\mu_1,\mu_2}$ is not a convex cone. Hoewever,
	we know from Lemma~\ref{lem:s-convex_cone} that $\boldsymbol{\mathcal{S}}_{0,\mu_1,\mu_2}$
	is a convex cone, and we want to bring them into relation in some way.
	It holds (by Definition~\ref{def:imspfs}) that (using brackets to group
	conditions)
	\begin{equation*}
		(\boldsymbol{a},\boldsymbol{\phi}) \in \boldsymbol{\mathcal{S}}_{\mu_0,\mu_1,\mu_2}
		\Leftrightarrow
		\begin{cases}
			(\boldsymbol{a},\boldsymbol{\phi}) \in \boldsymbol{\mathcal{S}}_{0,\mu_1,\mu_2} \\
			\B_{k}'(\boldsymbol{\phi}) \succeq \mu_0
		\end{cases}
		\Leftrightarrow
		\begin{cases}
			\begin{cases}
				-\B_{k}(\boldsymbol{a}) \preceq 0\\
				-\B_{k}'(\boldsymbol{\phi}) \preceq 0\\
				\left| \B_{k}'(\boldsymbol{a}) \right| - \mu_1 \cdot
					\left| \B_{k}'(\boldsymbol{\phi}) \right| \preceq 0 \\
				\left| \B_{k}''(\boldsymbol{\phi}) \right| - \mu_2 \cdot
					\left| \B_{k}'(\boldsymbol{\phi}) \right| \preceq 0
			\end{cases}\\
			\B_{k}'(\boldsymbol{\phi}) \succeq \mu_0
		\end{cases}
	\end{equation*}
	and thus, as $\boldsymbol{\mathcal{S}}_{0,\mu_1,\mu_2}$ is a convex cone by
	Lemma~\ref{lem:s-convex_cone} and using the canonical spline isomorphism,
	we can define
	\begin{equation*}
		g(\boldsymbol{a},\boldsymbol{\phi}) :=
		\begin{pmatrix}
			-\B_{k}(\boldsymbol{a}) \\
			-\B_{k}'(\boldsymbol{\phi}) \\
			\left| \B_{k}'(\boldsymbol{a}) \right| - \mu_1 \cdot
				\left| \B_{k}'(\boldsymbol{\phi}) \right| \\
			\left| \B_{k}''(\boldsymbol{\phi}) \right| - \mu_2 \cdot
				\left| \B_{k}'(\boldsymbol{\phi}) \right| 
		\end{pmatrix}
	\end{equation*}
	and
	\begin{equation*}
		(\boldsymbol{a},\boldsymbol{\phi}) \in S
		:\Leftrightarrow
		\B_{k}'(\boldsymbol{\phi}) \succeq \mu_0.
	\end{equation*}
	If a candidate $(\boldsymbol{a},\boldsymbol{\phi})$ satisfies
	$g(\boldsymbol{a},\boldsymbol{\phi}) \preceq 0 \in {\mathcal{C}(\R,\R)}^4$
	and $(\boldsymbol{a},\boldsymbol{\phi}) \in S$ this means that
	$(\boldsymbol{a},\boldsymbol{\phi}) \in \boldsymbol{\mathcal{S}}_{\mu_0,\mu_1,\mu_2}$,
	our constraint set.
	\par
	As $\boldsymbol{\mathcal{S}}_{0,\mu_1,\mu_2}$ is a convex cone it
	follows by construction that $g$ is a convex-like function
	in relation to $W^+$. Consequently, $K \colon \R^n \times \R^n \to
	\R \times {\mathcal{C}^0(\R,\R)}^4$ defined as
	\begin{equation*}
		K(\boldsymbol{a},\boldsymbol{\phi}) :=
		(\boldsymbol{c}[\boldsymbol{s}](\boldsymbol{a},\boldsymbol{\phi}),
		g(\boldsymbol{a},\boldsymbol{\phi}))
	\end{equation*}
	is a convex-like function in relation to $\R_+ \times W^+$.
\end{proof}
Up to this point we have successfully shown that the SpEMDOP is a convex-like
optimization problem. It was not possible to fit the entire set
$\boldsymbol{\mathcal{S}}_{\mu_0,\mu_1,\mu_2}$ into the function $g$, as it
is not a convex cone, and there remained a property to be put into $S$.
However, this remaining property, namely that
$\B_{k}'(\boldsymbol{\phi}) \succeq \mu_0$, is simple enough.
\subsection{\textsc{Slater} Condition and Strong Duality}
Our next point of interest is to examine the regulartiy of the SpEMDOP.
With convex-likeness shown what remains to be seen is if it
also satisfies the \textsc{Slater} condition, which is defined as follows
\begin{definition}[\textsc{Slater} condition {\cite[Lemma~5.9]{jah07}}]\label{def:sc}
	Let $(V,\le_V),(W,\le_W)$ be normed ordered vector spaces,
	$W^+ \ne \emptyset$, $c \colon V \to \R$ a cost function,
	$g \colon V \to W$ and $\emptyset \neq S \subseteq V$.
	The convex-like optimization problem
	\begin{align*}	
		\min_{x} \quad &
			c(x) \\
		\text{s.t.} \quad &
			g(x) \in -W^+ \\
		& x \in S
	\end{align*}
	satisfies the \textsc{Slater} condition if and only if
	\begin{equation*}
		\exists \tilde{x} \in V \colon
		\begin{cases}
			g(\tilde{x}) \in -W^+ \\
			\tilde{x} \in S
		\end{cases}
		\hspace{-0.4cm}\colon
		g(\tilde{x}) \in \interior(-W^+).
	\end{equation*}
\end{definition}
The big advantage of the \textsc{Slater} condition over other
regularity conditions (for strong duality) is that it is
sufficient to find one point that strictly satisfies the
constraints. Even though it will not be further elaborated
here, most other regularity conditions require an examination
on a case-by-case basis for a given candidate. In our case,
finding a single intrinsic mode spline function soul that
is strictly satisfying the constraints is enough to show it
for all cases and candidates. We prove that such a point exists
in the following
\begin{theorem}\label{thm:spemdop-sc}
	Let $k \ge 4$ (for derivability).
	The SpEMDOP (see Definition~\ref{def:spemdop}) with
	the canonical spline EMD cost function
	(see Definition~\ref{def:cscf}) satisfies the
	\textsc{Slater} condition.
\end{theorem}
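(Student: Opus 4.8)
The plan is to produce a single strictly feasible point, since by Definition~\ref{def:sc} the \textsc{Slater} condition requires only the existence of one $\tilde{x} = (\boldsymbol{a},\boldsymbol{\phi})$ lying in $S$ with $g(\tilde{x}) \in \interior(-W^+)$, where $W = {\mathcal{C}^0(\R,\R)}^4$. The first thing I would make precise is what membership in $\interior(-W^+)$ means here: under the sup-norm, the interior of the positive cone of $\mathcal{C}^0$ on the compact spline interval consists exactly of the functions that are bounded away from zero, which by continuity and compactness is the same as being strictly positive everywhere. Hence $g(\tilde{x}) \in \interior(-W^+)$ reduces to asking that each of the four component functions of $g(\tilde{x})$ be strictly negative throughout $[\tau_0,\tau_{\ell-1}]$.

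Then I would choose the simplest possible soul, namely a constant amplitude together with a linear phase: take $\B_{k}(\boldsymbol{a}) \equiv c$ for some $c > 0$ and $\B_{k}(\boldsymbol{\phi})(t) = \omega t$ for some $\omega > \mu_0$. That coefficient vectors $\boldsymbol{a}, \boldsymbol{\phi} \in \R^n$ realizing these two splines exist follows from the \textsc{Curry}-\textsc{Schoenberg} theorem (Theorem~\ref{thm:curry-schoenberg}): since $k \ge 4$, the constant and linear functions both lie in $\Pi_k \subseteq \Sigma_{k}$ and are therefore in the span of the B-spline basis; the constant case can alternatively be read off directly from the partition of unity (Proposition~\ref{prop:partition_of_unity}).

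The verification is then immediate. For this candidate $\B_{k}'(\boldsymbol{a}) \equiv 0$ and $\B_{k}''(\boldsymbol{\phi}) \equiv 0$, so the four components of $g(\boldsymbol{a},\boldsymbol{\phi})$ evaluate to the constants $-c$, $-\omega$, $|\B_{k}'(\boldsymbol{a})| - \mu_1|\B_{k}'(\boldsymbol{\phi})| = -\mu_1\omega$, and $|\B_{k}''(\boldsymbol{\phi})| - \mu_2|\B_{k}'(\boldsymbol{\phi})| = -\mu_2\omega$, each of which is strictly negative because $c,\omega,\mu_1,\mu_2 > 0$. This places $g(\boldsymbol{a},\boldsymbol{\phi})$ in $\interior(-W^+)$. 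Moreover $\B_{k}'(\boldsymbol{\phi}) \equiv \omega \ge \mu_0$, so $(\boldsymbol{a},\boldsymbol{\phi}) \in S$, and the two requirements of the \textsc{Slater} condition are met.

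The construction is trivial once the target is clear, so the only genuinely delicate point is the very first step: correctly identifying the interior of the positive cone in the sup-norm on a function space. In an infinite-dimensional $\mathcal{C}^0$ it is not in general true that strict pointwise positivity forces a function into the cone's interior---one really needs the function to be uniformly bounded below by a positive constant---but because the four constraint functions produced by our candidate are constants on the compact spline domain, this distinction collapses and the strict negativity of the four constants is exactly what is needed.
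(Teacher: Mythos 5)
Your proposal is correct and follows essentially the same route as the paper's own proof: both exhibit the pair (constant amplitude $c>0$, linear phase) as the strictly feasible \textsc{Slater} point, reduce interior membership in $-W^+$ to the four strict inequalities, and verify them trivially since the derivative terms vanish. The only addition on your side is the explicit justification that, on the compact spline interval, strict pointwise negativity of the (constant) constraint functions indeed places them in the interior of the negative cone under the sup-norm --- a functional-analytic subtlety the paper passes over silently, and which your argument handles more carefully.
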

\begin{proof}
	Let $\mu_0, \mu_1, \mu_2 > 0$.
	We have already shown in Theorem~\ref{thm:spemdop-clop}
	that the SpEMDOP is a convex-like optimization problem
	of the form
	\begin{align*}
		\min_{(\boldsymbol{a},\boldsymbol{\phi})} \quad &
			\boldsymbol{c}_1[\boldsymbol{s}](\boldsymbol{a},\boldsymbol{\phi}) \\
		\text{s.t.} \quad & g(\boldsymbol{a},\boldsymbol{\phi}) \in
			-{\left({\mathcal{C}^0(\R,\R)}^4\right)}^+ \\
		& (\boldsymbol{a},\boldsymbol{\phi}) \in S
	\end{align*}
	with $g \colon \R^n \times \R^n \to {\mathcal{C}^0(\R,\R)}^4$  defined as
	\begin{equation*}
		g(\boldsymbol{a},\boldsymbol{\phi}) :=
		\begin{pmatrix}
			-\B_{k}(\boldsymbol{a}) \\
			-\B_{k}'(\boldsymbol{\phi}) \\
			\left| \B_{k}'(\boldsymbol{a}) \right| - \mu_1 \cdot
				\left| \B_{k}'(\boldsymbol{\phi}) \right| \\
			\left| \B_{k}''(\boldsymbol{\phi}) \right| - \mu_2 \cdot
				\left| \B_{k}'(\boldsymbol{\phi}) \right| 
		\end{pmatrix}
	\end{equation*}
	and
	\begin{equation*}
		S := \left\{(\boldsymbol{a},\boldsymbol{\phi}) \in \R^n \times \R^n \mathrel{\Big|}
		\B_{k}'(\boldsymbol{\phi}) \succeq \mu_0 \right\}.
	\end{equation*}
	Let $(\boldsymbol{a},\boldsymbol{\phi}) \in \R^n \times \R^n$. We can see, considering
	the approach taken in the proof of Theorem~\ref{thm:spemdop-clop}, that
	(using brackets to group conditions)
	\begin{equation*}
		\begin{cases}
			g(\boldsymbol{a},\boldsymbol{\phi}) \in -{\left({\mathcal{C}^0(\R,\R)}^4\right)}^+ \\
			(\boldsymbol{a},\boldsymbol{\phi}) \in S
		\end{cases}
		\Leftrightarrow
		\begin{cases}
			(\boldsymbol{a},\boldsymbol{\phi}) \in {\mathcal{S}}_{0,\mu_1,\mu_2} \\
			(\boldsymbol{a},\boldsymbol{\phi}) \in S.
		\end{cases}
	\end{equation*}
	As (using brackets to group conditions)
	\begin{equation*}
		g(\boldsymbol{a},\boldsymbol{\phi}) \in \interior\left(
		-{\left({\mathcal{C}^0(\R,\R)}^4\right)}^+\right)
		\Leftrightarrow
		(\boldsymbol{a},\boldsymbol{\phi}) \in \interior({\mathcal{S}}_{0,\mu_1,\mu_2})
		\Leftrightarrow
		\begin{cases}
			\B_{k}(\boldsymbol{a}) \succ 0\\
			\B_{k}'(\boldsymbol{\phi}) \succ 0\\
			\left| \B_{k}'(\boldsymbol{a}) \right| \prec \mu_1 \cdot
				\left| \B_{k}'(\boldsymbol{\phi}) \right| \\
			\left| \B_{k}''(\boldsymbol{\phi}) \right| \prec \mu_2 \cdot
				\left| \B_{k}'(\boldsymbol{\phi}) \right|
		\end{cases}
	\end{equation*}
	holds,
	\begin{equation}\label{eq:slater_point-conditions}
		\begin{cases}
			g(\boldsymbol{a},\boldsymbol{\phi}) \in -\interior
				{\left({\mathcal{C}^0(\R,\R)}^4\right)}^+ \\
			(\boldsymbol{a},\boldsymbol{\phi}) \in S
		\end{cases}
		\Leftrightarrow
		\begin{cases}
			\B_{k}(\boldsymbol{a}) \succ 0\\
			\B_{k}'(\boldsymbol{\phi}) \succeq \mu_0\\
			\left| \B_{k}'(\boldsymbol{a}) \right| \prec \mu_1 \cdot
				\left| \B_{k}'(\boldsymbol{\phi}) \right| \\
			\left| \B_{k}''(\boldsymbol{\phi}) \right| \prec \mu_2 \cdot
				\left| \B_{k}'(\boldsymbol{\phi}) \right|
		\end{cases}
	\end{equation}
	 follows with the definition of $S$.
	 Let $c>0$ and $\tilde{a},\tilde{\phi} \colon \R \to \R$ defined as
	 \begin{align*}
	 	\tilde{a}(t) &:= c, \\
	 	\tilde{\phi}(t) &:= \mu_0 \cdot t.
	 \end{align*}
	 We can immediately see that $\tilde{a},\tilde{\phi} \in \Sigma_{k}$
	 and
	 \begin{align*}
		\tilde{a} &\succ 0,\\
		\tilde{\phi}' = \mu_0 &\succeq \mu_0,\\
		\left| \tilde{a}' \right| = 0 &\prec \mu_1 \cdot \mu_0 =
			\mu_1 \cdot \left| \tilde{\phi}' \right|, \\
		\left| \tilde{\phi}'' \right| = 0 &\prec \mu_2 \cdot \mu_0 = \mu_2 \cdot
			\left| \tilde{\phi}' \right|.
	 \end{align*}
	 Thus, using the canonical spline isomorphism, we obtain
	 $(\boldsymbol{\tilde{a}},\boldsymbol{\tilde{\phi}}) := (\B_{k}^{\text{inv}}(\tilde{a}),
	 \B_{k}^{\text{inv}}(\tilde{\phi}))$ satisfying the conditions in
	 Equation~(\ref{eq:slater_point-conditions}), and thus we have shown
	 that the SpEMDOP satisfies the \textsc{Slater} condition.
\end{proof}
The pair $(\tilde{a}(t),\tilde{\phi}(t)) = (c, \mu_0 \cdot t)$ always strictly
satisfies the constraints and is thus the strictly interior point we have been
looking for.
\par
What remains to be seen is what we can deduce from the result that our SpEMDOP
is \textsc{Slater} regular. To do that, we have to introduce the duality theory
on cone optimization problems. This is the part that was left vague in the introduction
of this section and will now be properly defined, especially in regard to the
\textsc{Lagrange} multiplier method.
\begin{definition}[Dual cone {\cite[Definition~D.6]{jah07}}]
	\label{def:dual_cone}
	Let $(V,\le_V)$ be a normed ordered vector space and
	$V^\star := \{ \ell \colon V \to \R \mid \ell \text{ linear} \}$
	its dual space. The \emph{dual cone} of $V$ is defined as
	\begin{equation*}
		V' := \left\{ \ell \in V^\star \mathrel{|}
		\forall_{x \in V^+} \colon \ell(x) \ge 0 \right\}.
	\end{equation*}	
\end{definition}
The dual cone is thus the set of linear functions $\ell$ on $V$
that map positive elements in $V$ to positive numbers in $\R$,
building a bridge from the concept of positiveness in cones to
positive numbers.
In other words, when we take any element in the positive cone
$V^+$ of a vector space $V$ and apply $\ell$ to it, it is mapped to
a positive number. Conversely, any element
in the negative cone $-V^+$ is mapped to a negative number.
\par
Having defined the dual cone, we can now define the \textsc{Lagrange}
functional that has already been introduced at the beginning of
Section~\ref{sec:regularity}.
\begin{definition}[\textsc{Lagrange} functional {\cite[Definition~6.8]{jah07}}]
	\label{def:lf}
	Let $(V,\le_V),(W,\le_W)$ be normed ordered vector spaces,
	$W^+ \ne \emptyset$, $c \colon V \to \R$ a cost function,
	$g \colon V \to W$ and $\emptyset \neq S \subseteq V$.
	The \emph{\textsc{Lagrange} functional}
	$\Lambda \colon S \times W' \to \R$ associated with the optimization problem
	\begin{align*}	
		\min_{x} \quad &
			c(x) \\
		\text{s.t.} \quad &
			g(x) \in -W^+ \\
		& x \in S
	\end{align*}
	is defined as
	\begin{equation*}
		\Lambda(x,\lambda) := c(x) + \lambda(g(x)).
	\end{equation*}
	The function $W' \ni \lambda \colon W \to \R$ is called the \emph{dual variable}.
\end{definition}
As we can see, the dual variable $\lambda$ is taken from the dual cone,
such that the \enquote{sign} of $g(x)$ is preserved. By varying $\lambda$,
we specify how much each subcomponent of $g(x)$ influences the cost function
$\Lambda(X,\lambda)$. With this in mind, we take the idea further
and make the following
\begin{definition}[\textsc{Lagrange} dual functional]\label{def:ldf}
	Let $(V,\le_V),(W,\le_W)$ be normed ordered vector spaces,
	$W^+ \ne \emptyset$, $c \colon V \to \R$ a cost function,
	$g \colon V \to W$ and $\emptyset \neq S \subseteq V$.
	The \emph{\textsc{Lagrange} dual functional}
	$\ul{\Lambda} \colon W' \to \R$ associated with the optimization problem
	\begin{align*}	
		\min_{x} \quad &
			c(x) \\
		\text{s.t.} \quad &
			g(x) \in -W^+ \\
		& x \in S
	\end{align*}
	is defined as
	\begin{equation*}
		\ul{\Lambda}(\lambda) := \inf_{x \in S} \Lambda(x,\lambda).
	\end{equation*}
\end{definition}
In the \textsc{Lagrange} dual functional, we take the
\textsc{Lagrange} functional from earlier and optimize it over
the set of candidates within set $S$. Thus, the only variable
left of this problem is the choice of $\lambda$, so to say the weights
applied to each component of $g(x)$. Consequently, as described at
the beginning of the section, we can define the dual optimization
problem as this optimization over $\lambda$.
\begin{definition}[Dual optimization problem {\cite[(6.4)]{jah07}}]
	\label{def:dop}
	Let $(V,\le_V),(W,\le_W)$ be normed ordered vector spaces,
	$W^+ \ne \emptyset$, $c \colon V \to \R$ a cost function,
	$g \colon V \to W$ and $\emptyset \neq S \subseteq V$.
	The \emph{dual optimization problem} associated with the
	(primal) optimization problem
	\begin{align*}	
		\min_{x} \quad &
			c(x) \\
		\text{s.t.} \quad &
			g(x) \in -W^+ \\
		& x \in S
	\end{align*}
	is defined as
	\begin{align*}
		\max_{\lambda} \quad &
			\ul{\Lambda}(\lambda) \\
		\text{s.t.} \quad &
			\lambda \in W'.
	\end{align*}
\end{definition}
We see that now the set of candidates is the dual cone $W'$ of $W$
and thus we are optimizing over linear functions on $W$. In
the classical optimization theory, the $\lambda$ is a set of scalars
(the \textsc{Lagrange} multipliers), one for each constraint function
that already maps to $\R$. Given we map to positive cones of general
vector spaces, we have to take the little detour and define the
$\lambda$ as a linear function like above. Consistent with the
introduced theory, we can now define the concept of strong duality
as it has already been explained in the beginning.
\begin{definition}[Strong duality {\cite[Theorem~6.7]{jah07}}]
	Let $(V,\le_V),(W,\le_W)$ be normed ordered vector spaces,
	$W^+ \ne \emptyset$, $c \colon V \to \R$ a cost function,
	$g \colon V \to W$ and $\emptyset \neq S \subseteq V$.
	The optimization problem
	\begin{align*}	
		\min_{x} \quad &
			c(x) \\
		\text{s.t.} \quad &
			g(x) \in -W^+ \\
		& x \in S
	\end{align*}
	satisfies \emph{strong duality} if and only if the cost
	functions of the primal and dual optimization problems
	attain the same value in optimality.
\end{definition}
It has to be clear here that this does not mean that
both optimization problems yield the same solution. It just means
that if we solve both optimization problems, the respective
cost functions have the same value. To bring duality and
convex-likeness together, we make the following observations.
\begin{proposition}\label{prop:clop-strong_duality}
	A convex-like optimization problem (see Definition~\ref{def:clop})
	satisfies strong duality if it satisfies the \textsc{Slater}
	condition (see Definition~\ref{def:sc}).
\end{proposition}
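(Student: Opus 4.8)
The plan is to reduce the claim to the geometric separation argument underlying cone duality; this is the content of \cite[Theorem~6.7]{jah07}, and I would reconstruct it as follows. Write $p^\star := \inf\{ c(x) \mid g(x) \in -W^+,\ x \in S \}$ for the primal optimal value. Weak duality holds automatically: if $\lambda \in W'$ and $x$ is feasible, then $g(x) \in -W^+$ gives $\lambda(g(x)) \le 0$, so $\Lambda(x,\lambda) = c(x) + \lambda(g(x)) \le c(x)$, whence $\ul{\Lambda}(\lambda) \le p^\star$ for every $\lambda \in W'$. It therefore suffices to produce one $\lambda^\star \in W'$ with $\ul{\Lambda}(\lambda^\star) \ge p^\star$, since equality then forces strong duality.

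First I would form the image set
\[
	M := K(S) + (\R_+ \times W^+) =
	\left\{ \left(c(x) + \alpha,\, g(x) + w\right) \mathrel{\big|}
	x \in S,\ \alpha \in \R_+,\ w \in W^+ \right\},
\]
which is convex exactly because the problem is convex-like (Definition~\ref{def:clop} says $K = (c,g)$ is convex-like in relation to $\R_+ \times W^+$). By construction $(p^\star - \varepsilon, 0)$ lies outside $M$ for every $\varepsilon > 0$, while $(p^\star, 0)$ lies in its closure, so $(p^\star, 0)$ is a boundary point that cannot be improved in the first coordinate while staying feasible. Applying a geometric \textsc{Hahn}--\textsc{Banach} (\textsc{Eidelheit}) supporting-hyperplane theorem to $M$ at this point yields a nonzero functional $(\mu,\ell) \in \R \times W^\star$ with
\[
	\mu \cdot \left(c(x) + \alpha\right) + \ell\!\left(g(x) + w\right) \ge \mu \cdot p^\star
	\qquad \text{for all } x \in S,\ \alpha \in \R_+,\ w \in W^+.
\]
Letting $\alpha \to \infty$ forces $\mu \ge 0$, and scaling $w$ over the cone $W^+$ forces $\ell(w) \ge 0$ for all $w \in W^+$, i.e.\ $\ell \in W'$; setting $\alpha = 0$ and $w = 0$ leaves $\mu\, c(x) + \ell(g(x)) \ge \mu\, p^\star$ for all $x \in S$.

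The hard part --- and the only place the \textsc{Slater} condition is used --- is excluding the degenerate \enquote{vertical} case $\mu = 0$. If $\mu = 0$ then $\ell \ne 0$ and $\ell(g(x)) \ge 0$ for all $x \in S$. But a nonzero positive functional is strictly positive on the interior of $W^+$: if $\ell(z) = 0$ for an interior point $z$, then $z \pm t v \in W^+$ for small $t$ and arbitrary $v$, forcing $t\,\ell(v) \ge 0$ for both signs of $t$ and hence $\ell = 0$, a contradiction. Consequently, for the \textsc{Slater} point $\tilde{x} \in S$ with $g(\tilde{x}) \in \interior(-W^+)$ we obtain $\ell(g(\tilde{x})) < 0$, contradicting $\ell(g(\tilde x)) \ge 0$. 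Hence $\mu > 0$; normalizing to $\mu = 1$ and setting $\lambda^\star := \ell$ gives $c(x) + \lambda^\star(g(x)) \ge p^\star$ for all $x \in S$, i.e.\ $\ul{\Lambda}(\lambda^\star) \ge p^\star$. Together with weak duality this yields $\ul{\Lambda}(\lambda^\star) = p^\star$, so the primal and dual optimal values coincide and strong duality holds. The remaining technical subtlety is the topological hypothesis required for the separation theorem, namely that $-W^+$ be solid; for $W = \mathcal{C}^0(\R,\R)^4$ this is precisely what the \textsc{Slater} assumption supplies by furnishing an interior feasible direction.
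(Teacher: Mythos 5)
Your proof is correct. Note first that the paper itself does not prove this proposition: its proof consists of the single line \enquote{See \cite[Theorem~7.12]{jah07}}, so the entire argument is outsourced to the literature. What you have written is, in essence, a reconstruction of the argument behind that citation: weak duality from $\lambda(g(x)) \le 0$ at feasible points, convexity of $M = K(S) + (\R_+ \times W^+)$ from convex-likeness, an \textsc{Eidelheit}-type separation of $M$ from the non-improvable point $(p^\star,0)$, and the \textsc{Slater} point used exactly once to rule out a vertical separating hyperplane ($\mu = 0$) via the fact that a nonzero positive functional is strictly positive on $\interior(W^+)$. All of these steps are sound, and the final normalization $\lambda^\star := \ell/\mu$ does give $\ul{\Lambda}(\lambda^\star) \ge p^\star$, which together with weak duality closes the gap and moreover exhibits the dual optimum explicitly. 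Two points deserve to be made explicit rather than left to a closing remark. First, the separation theorem in a normed space needs $M$ to have nonempty interior; this holds because the \textsc{Slater} condition presupposes $\interior(-W^+) \neq \emptyset$, hence $\interior(\R_+ \times W^+) \neq \emptyset$ and so $\interior(M) \neq \emptyset$ --- this is the load-bearing topological hypothesis, not a peripheral subtlety. Second, forming the point $(p^\star,0)$ tacitly assumes $p^\star > -\infty$; this is consistent with the paper's definition of strong duality (which speaks of values attained in optimality), but it should be flagged as a standing assumption. With these caveats, your argument proves exactly what the cited Theorem~7.12 provides, so it is a faithful filling-in of the proof the paper omits rather than a genuinely different route.
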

\begin{proof}
	See \cite[Theorem~7.12]{jah07}.
\end{proof}
\begin{theorem}\label{th:spemdop-strong_duality}
	The SpEMDOP (see Definition~\ref{def:spemdop}) with
	a convex-like spline EMD cost function
	$\boldsymbol{c}[\boldsymbol{s}] \colon \boldsymbol{\mathcal{S}}_{0,\mu_1,\mu_2}
	\to \R$ in relation to $\R_+$ satisfies strong duality.
\end{theorem}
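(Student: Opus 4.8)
The plan is to chain together the three preparatory results already at hand, so that the proof reduces to a short concatenation rather than any new analysis. First, I would invoke Theorem~\ref{thm:spemdop-clop}: since the cost function $\boldsymbol{c}[\boldsymbol{s}]$ is by hypothesis convex-like in relation to $\R_+$, the SpEMDOP is a convex-like optimization problem in the sense of Definition~\ref{def:clop}, with the explicit constraint map $g$ and residual set $S$ exhibited there. This discharges the convex-likeness hypothesis of Proposition~\ref{prop:clop-strong_duality}.

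Second, I would verify that the same problem satisfies the \textsc{Slater} condition of Definition~\ref{def:sc}. The crucial observation is that \textsc{Slater} regularity is a statement purely about the feasible region, i.e.\ about $g$ and $S$, and does not refer to the cost function at all. Since $g$ and $S$ are identical for every spline EMD cost function (they encode only the IMFS constraints of Definition~\ref{def:imspfs}), the strictly feasible point constructed in the proof of Theorem~\ref{thm:spemdop-sc}, namely the soul corresponding to $(\tilde{a},\tilde{\phi}) = (c, \mu_0 \cdot t)$ for some $c > 0$, remains a valid \textsc{Slater} point regardless of which convex-like cost function is used. Thus the \textsc{Slater} condition transfers verbatim from the canonical case treated in Theorem~\ref{thm:spemdop-sc} to the general convex-like case considered here.

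Finally, having established both that the SpEMDOP is a convex-like optimization problem and that it satisfies the \textsc{Slater} condition, I would apply Proposition~\ref{prop:clop-strong_duality} directly to conclude that strong duality holds, completing the proof.

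The only genuine obstacle is the apparent mismatch in generality: Theorem~\ref{thm:spemdop-sc} is stated solely for the canonical spline EMD cost function, whereas the present theorem concerns an arbitrary convex-like cost function. This is resolved precisely by the remark above, that \textsc{Slater} regularity depends only on the constraint data and not on the objective, so no fresh feasibility argument is required. Everything else is a formal invocation of the cited results, and I expect the write-up to be correspondingly brief.
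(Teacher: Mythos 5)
Your proposal is correct and follows exactly the paper's own route: Theorem~\ref{thm:spemdop-clop} for convex-likeness of the problem, Theorem~\ref{thm:spemdop-sc} for the \textsc{Slater} condition, and Proposition~\ref{prop:clop-strong_duality} to conclude strong duality. In fact your write-up is slightly tighter than the paper's in two respects: you explicitly justify why the \textsc{Slater} point from Theorem~\ref{thm:spemdop-sc} (stated only for the canonical cost function) transfers to an arbitrary convex-like cost function --- namely that the \textsc{Slater} condition concerns only the constraint data $g$ and $S$, which the paper leaves implicit --- and you cite the correct closing result, whereas the paper's proof mistakenly references Proposition~\ref{prop:scecf-clf} (convex-likeness of the canonical cost function) where Proposition~\ref{prop:clop-strong_duality} is clearly intended.
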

\begin{proof}
	We have shown in Theorem~\ref{thm:spemdop-clop} that the SpEMDOP
	is a convex-like optimization problem with a convex-like spline
	EMD cost function and in Theorem~\ref{thm:spemdop-sc}
	that it satisfies the \textsc{Slater} condition. It follows directly
	with Proposition~\ref{prop:scecf-clf} that the SpEMDOP satisfies
	strong duality.
\end{proof}
\section{Conclusion}\label{sec:emd_conclusion}
The main result of this chapter is that the newly introduced EMD
optimization problem (EMDOP)
\begin{align*}
	\min_{(a,\phi)} \quad &
		c[s](a,\phi) \\
	\text{s.t.} \quad & (a,\phi) \in {\mathcal{S}}_{\mu_0,\mu_1,\mu_2}.
\end{align*}
with an EMD cost function $c[s]$ (see Definition~\ref{def:ccf} for
the definition of the canonical cost function) over the set
${\mathcal{S}}_{\mu_0,\mu_1,\mu_2}$ of IMF souls (IMFS) (see Definition~\ref{def:imfs})
satisfies strong duality, which has been shown using
its equivalent B-spline formulation. The only condition is that
the cost function is convex-like (see Definition~\ref{def:clf}).
In particular, if we find a convex EMD cost function, given every
convex function is also convex-like, we will also have automatically
shown strong duality as well. In general, the model is thus a
good object to further study the EMD from a theoretical perspective,
as the only variable is the cost function, for which only simple
properties have to be shown to obtain strong results for the entire
EMD optimization problem. This is also the reason why the cost function
has been kept as general as possible in the theoretical derivation.
\par
It is the author's impression that there are only two avenues
to further formalize the empirical mode decomposition, and none of
them is the development of more informal heuristics. The first one
is to find a convex cost function for the entire set of IMF souls,
which would be the optimal scenario. The second one is to add more
constraints to the set of IMF souls, such that an EMD cost function
is convex on this restricted set. This would require an adaption of
the proof in this section and might make some aspects much more
difficult. It remains to be seen which direction will be taken.
\par
In terms of regularization and the role of strong duality, which
at first sight \enquote{only} applies to the method of
\textsc{Lagrange} multipliers, in terms of general regularization
schemes we can make the following remark:
One can imagine the \textsc{Lagrange} multipliers to be the most
perfect regularization term possible. If we look at it intuitively,
it finds a feasible solution and optimally assigns weights to each
constraint such that the cost function is as minimal as possible.
We have shown that strong duality holds and thus that the \textsc{Lagrange}
multiplier method yields the same optimal cost function value, no
matter if one considers the constrained primal problem or the
dual \textsc{Lagrange} problem. Thus it follows that considering
regularization terms (refer to $R(\boldsymbol{a},\boldsymbol{\phi})$
at the beginning of Section~\ref{sec:regularity}) is a valid approach.
If we could not have shown strong duality, even a very good
regularization term, which comes close to the \textsc{Lagrange}
term, would not have the chance to properly \enquote{represent}
the constraints.
\par
Consequently, as the approach outlined in Chapter~\ref{ch:oss} examines
one regularization term approach using operators,
and with the results of this chapter, we can assume that it is
not wrong to approach the empirical mode decomposition like this.
In broader terms, the strong duality shown in this chapter might
even explain why many of the heuristic EMD methods work as well
as they do.
\chapter{Operator-Based Analysis of Intrinsic Mode Functions}\label{ch:oss}
Let us again consider the EMD optimization problem (EMDOP) that
we defined in the previous chapter as
\begin{align*}
	\min_{(a,\phi)} \quad &
		c[s](a,\phi) \\
	\text{s.t.} \quad & (a,\phi) \in
	{\mathcal{S}}_{\mu_0,\mu_1,\mu_2}
\end{align*}
with an EMD cost function $c[s](a,\phi)$ (for instance the canonical cost
function from Definition~\ref{def:ccf}) and the set of IMF souls
${\mathcal{S}}_{\mu_0,\mu_1,\mu_2}$.
At the beginning of Section~\ref{sec:regularity} we looked at the approach
of adding a regularization term $R(a,\phi)$ to
the cost function of the optimization problem. This regularization term
punishes violations of the constraints given by the IMF soul set and,
in the ideal case, \enquote{steers} arbitrary candidate function pairs
$(a,\phi) \in {\left( \mathcal{C}^0(\R,\R) \right)}^2$ into the
desired constraints. The resulting regularized optimization problem
\begin{align*}
	\min_{(a,\phi) \in {\left( \mathcal{C}^0(\R,\R) \right)}^2} \quad &
		c[s](a,\phi) + R(a,\phi)
\end{align*}
is unconstrained and easier to handle than the original constrained
optimization problem. We have examined the regularity of the EMDOP in
Section~\ref{sec:regularity} and found out that if we find a
\enquote{perfect} regularization operator that behaves
equivalently to the regularization of the \textsc{Lagrange}
multiplier method, we can minimize the cost function just
as well as with the constrained optimization problem.
Up to this point though, we have not yet seen a non-trivial definition
of a regularization term for the EMD optimization problem.
\par
The motivation of this chapter is to examine one such approach
for defining a regularization term that is called the
null-space-pursuit (NSP) (see \cite{ph08} and \cite{ph10}) which is
classified as a so-called operator-based signal-separation (OSS) method.
It is based on so-called \enquote{adaptive operators} that have been
introduced with an example in Chapter~\ref{ch:introduction}. The
fundamental idea is as follows: Suppose that we have a function
$h(t)$ that \enquote{contains} a function $\phi(t)$, for example
$h(t)=\cos(\phi(t))$. It is our interest to extract $\phi(t)$ from it.
To approach this problem, we can define an
operator $\mathcal{D}_{\tilde{\phi}}$ with a parameter function
$\tilde{\phi}(t)$ as
\begin{equation*}
	\left(\mathcal{D}_{\tilde{\phi}}h\right)(t) := \frac{\partial^2 h(t)}{\partial t^2} -
		\frac{\tilde{\phi}''(t)}{\tilde{\phi}'(t)} \cdot \frac{\partial h(t)}{\partial t} +
		{(\tilde{\phi}'(t))}^2 \cdot h(t),
\end{equation*}
for which it holds (see Equation~(\ref{eq:diffop-example})) that
\begin{equation*}
	\mathcal{D}_{\phi} h \equiv 0.
\end{equation*}
Adapting it to the EMD optimization problem, our goal
is to find an adaptive operator
$\mathcal{D}_{(\tilde{a},\tilde{\phi})}$ such that for
an intrinsic mode function (IMF) $u(t)$ of the form
$u(t) := a(t) \cdot \cos(\phi(t))$ with instantaneous amplitude
$a(t)$ and phase $\phi(t)$, it holds that
\begin{equation*}
	\mathcal{D}_{(a,\phi)}u \equiv 0.
\end{equation*}
Given any norm is positive definite, this is equivalent to
the norm of the operator vanishing, namely
\begin{equation*}
	{\| \mathcal{D}_{(a,\phi)}u \|}_2^2 = 0.
\end{equation*}
In the ideal case that the operator does not match (i.e.\ vanishes for) other
functions, we can make the following observation: If a function
is \enquote{annihilated} by the operator $\mathcal{D}_{(a,\phi)}$,
we can assume that the function is of the IMF form
$a(t) \cdot \cos(\phi(t))$. We can use that to our advantage
by reminding ourselves how we defined the regularization operator
$R(a,\phi)$. We want it to be exactly zero when the constraints
are satisfied and non-zero otherwise. This corresponds to the
norm of our adaptive operator and it is justified to set the
regularization of our EMDOP to
\begin{equation*}
	R(a,\phi) := {\| \mathcal{D}_{(a,\phi)}u \|}_2^2.
\end{equation*}
If a function is annihilated by the operator we can equivalently
say that the function is in the kernel of this operator. Another
name for the kernel is the \enquote{null-space}, and thus it
becomes clear why this regularization method is called the
null-space-pursuit, as we aim to vary the operator parameters
$(\tilde{a},\tilde{\phi})$ until the operator itself vanishes.
This tells us that the input function is an IMF and what the
underlying instantaneous amplitude and phase look like.
\section{IMF Differential Operator}
Up to this point we have only described the properties we would like
to see from an adaptive operator $\mathcal{D}_{(\tilde{a},\tilde{\phi})}$
for the EMD optimization problem, which we will from now on call
\enquote{IMF differential operator}. We have not yet defined one and
will do that in this section.
\par
The operator we are going to examine is a natural generalization of the
complex-valued differential operator presented in \cite{gphx17} from a
first order to a second order differential operator, which will be
elaborated later.
Before we define it we first define two operators that are used in the
expression of the IMF operator itself and will become more important later.
\begin{definition}[Instantaneous envelope derivation operator]
	\label{def:inst_envelope_deriv_operator}
	The \emph{instantaneous envelope derivation operator} is
	defined as
	\begin{equation*}
		A[a] := \frac{a'}{a}.
	\end{equation*}
\end{definition}
\begin{definition}[Inverse square continuous frequency operator]
	\label{def:inv_sq_cont_freq_operator}
	The \emph{inverse square continuous frequency operator} is
	defined as
	\begin{equation*}
		\Omega[\phi] := \frac{1}{{(\phi')}^2}.
	\end{equation*}
\end{definition}
We can think of the instantaneous envelope derivation
and inverse square continuous frequency operators as derived
expressions of $a$ and $\phi$. Using the notation from 
Definition~\ref{def:differentiation_operator} we define our operator
as follows.
\begin{definition}[IMF differential operator \cite{gphx17}]\label{def:imf-diffop}
	The \emph{IMF differential operator} is defined as
	\begin{align*}
		\mathcal{D}_{(a,\phi)} & := \Omega[\phi] \cdot D^2 +\\
		&\hspace{0.5cm} \left[ -2 \cdot \Omega[\phi] \cdot A[a] +
			\frac{1}{2} \cdot \Omega'[\phi] \right]
			\cdot D^1 +\\
		&\hspace{0.5cm} \left[ \Omega[\phi] \cdot \left( A^2[a] - A'[a] \right) -
		\frac{1}{2} \cdot \Omega'[\phi] \cdot A[a] + 1 \right] \cdot D^0.
	\end{align*}
\end{definition}
As we can see, the IMF differential operator contains derivative operators
of up to order two, which is why we call it a second order differential operator.
\subsection{Properties}
We will now show that it is in fact an operator that annihilates IMFs when
its parameters match the soul of the input IMF. For that, we remind ourselves
of the IMF operator from Definition~\ref{def:imf-operator}.
\begin{proposition}
	Let $(a,\phi) \in {\mathcal{S}}_{\mu_0,\mu_1,\mu_2}$. It holds
	\begin{equation*}
		\mathcal{D}_{(a,\phi)} \I[a,\phi] = 0.
	\end{equation*}
\end{proposition}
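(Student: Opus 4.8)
The plan is to verify the identity by direct substitution. All three coefficient operators appearing in $\mathcal{D}_{(a,\phi)}$ (Definition~\ref{def:imf-diffop}) are well-defined on an IMFS: Definition~\ref{def:imfs} gives $\phi' \succeq \mu_0 > 0$, so $\Omega[\phi] = 1/(\phi')^2$ and $\Omega'[\phi]$ are finite, while the factors $A[a] = a'/a$ and $A'[a]$ are manipulated on the set $\{a > 0\}$, with the $1/a$-singularities cancelling against one another in the end. First I would compute, by the product and chain rules, the derivatives of $u := \I[a,\phi] = a\cos(\phi)$ (Definition~\ref{def:imf-operator}), obtaining
\[
	u' = a'\cos(\phi) - a\phi'\sin(\phi)
\]
and
\[
	u'' = \bigl(a'' - a(\phi')^2\bigr)\cos(\phi) - \bigl(2a'\phi' + a\phi''\bigr)\sin(\phi).
\]

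Next I would write out the coefficient operators explicitly in terms of $a, a', a'', \phi', \phi''$, namely $\Omega[\phi] = (\phi')^{-2}$, $\Omega'[\phi] = -2\phi''(\phi')^{-3}$, $A[a] = a'/a$, and $A^2[a] - A'[a] = 2(a')^2/a^2 - a''/a$ (see Definitions~\ref{def:inst_envelope_deriv_operator} and \ref{def:inv_sq_cont_freq_operator}), and then substitute $u$, $u'$, $u''$ into the three bracketed terms of $\mathcal{D}_{(a,\phi)}$.

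The core of the argument is to collect every resulting summand into a $\cos(\phi)$-group and a $\sin(\phi)$-group, these being the only two trigonometric factors that occur. I expect the second-order part $\Omega[\phi]\,u''$ to contribute the $\cos(\phi)$-terms $a''/(\phi')^2$ and $-a$, the first-order part to contribute $-2(a')^2/(a(\phi')^2)$ and $-a'\phi''/(\phi')^3$, and the zeroth-order part to contribute exactly the negatives of all four, so that the $\cos(\phi)$-coefficient vanishes identically. Likewise I expect the $\sin(\phi)$-coefficient to reduce to $-2a'/\phi' - a\phi''/(\phi')^2$ coming from $\Omega[\phi]\,u''$, cancelled precisely by the matching terms produced by $\bigl(-2\Omega[\phi]A[a] + \tfrac{1}{2}\Omega'[\phi]\bigr)u'$.

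The main obstacle is purely the bookkeeping: keeping every sign and every power of $\phi'$ correct while carrying roughly a dozen rational expressions through the expansion, and in particular confirming that the $1/a$ and $1/a^2$ singularities introduced by $A[a]$ and $A'[a]$ enter with opposite signs in different brackets and so annihilate before any evaluation at a point. Once both trigonometric groups are shown to be identically zero, the identity $\mathcal{D}_{(a,\phi)}\I[a,\phi] = 0$ follows at once.
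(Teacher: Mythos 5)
Your proposal is correct and follows essentially the same route as the paper's proof: compute $u'$ and $u''$ for $u = a\cos(\phi)$, expand the three coefficient operators via $\Omega'[\phi] = -2\phi''/(\phi')^3$ and $A^2[a]-A'[a] = 2(a')^2/a^2 - a''/a$, substitute, and verify that the $\cos(\phi)$- and $\sin(\phi)$-coefficients each cancel identically — and your predicted groupings (the zeroth-order bracket negating the four $\cos$-terms, the first-order bracket negating the two $\sin$-terms from $\Omega[\phi]u''$) are exactly what the computation yields. If anything, your remark about the $1/a$-singularities cancelling before pointwise evaluation is a point of care the paper's proof passes over silently.
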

\begin{proof}
	We first prepare some results of derivatives for different
	expressions that will occur later. The two operators defined
	earlier behave as follows:
	\begin{align*}
		\Omega'[\phi] &= {\left( \frac{1}{{(\phi')}^2} \right)}' =
			(-2) \cdot \frac{\phi''}{{(\phi')}^3},\\
		A'[a] &= {\left( \frac{a'}{a} \right)}' =
			\frac{a''}{a} - \frac{{(a')}^2}{a^2} = \frac{a''\cdot a - {(a')}^2}{a^2}.
	\end{align*}
	Additionally, we determine the second derivative of an IMF as
	\begin{align*}
		D^2(a \cdot \cos(\phi)) = {\left( a \cdot \cos(\phi) \right)}'' &=
			{\left( a' \cdot \cos(\phi) - a \cdot \phi'
			\cdot \sin(\phi) \right)}' \\
		&= a'' \cdot \cos(\phi) - 2 \cdot a' \cdot \phi' \cdot \sin(\phi) -
			a \cdot \phi'' \cdot \sin(\phi) -
			a \cdot {\left( \phi' \right)}^2 \cos(\phi) \\
		&= \left[ -2 \cdot a' \cdot \phi' - a \cdot \phi'' \right] \cdot
			\sin(\phi) +
			\left[ a'' - a \cdot {\left( \phi' \right)}^2 \right] \cdot
			\cos(\phi).
	\end{align*}
	With these results we can look at the differential operator itself and expand the derivations accordingly by applying the differential operators:
	\begin{align*}
		\mathcal{D}_{(a,\phi)} \I[a,\phi] &=
			\mathcal{D}_{(a,\phi)} \left( a \cdot \cos(\phi) \right)\\
		&= \Omega[\phi] \cdot {\left( a \cdot \cos(\phi) \right)}'' +\\
		&\hspace{0.45cm} \left[ -2 \cdot \Omega[\phi] \cdot A[a] +
			\frac{1}{2} \cdot \Omega'[\phi] \right]
			\cdot {\left( a \cdot \cos(\phi) \right)}' +\\
		&\hspace{0.45cm} \left[ \Omega[\phi] \cdot \left( A^2[a] - A'[a] \right) -
			\frac{1}{2} \cdot \Omega'[\phi] \cdot A[a] + 1 \right]
			\cdot {\left( a \cdot \cos(\phi) \right)}.
	\end{align*}
	To show the proposition we now calculate the derivatives making
	use of the chain rule and simplify:
	\begin{align*}
		\mathcal{D}_{(a,\phi)} \I[a,\phi] &= {\left( \frac{1}{{(\phi')}^2} \right)} \cdot
			\left\{ \left[ -2 \cdot a' \cdot \phi' - a \cdot \phi'' \right] \cdot
			\sin(\phi) +
			\left[ a'' - a \cdot {\left( \phi' \right)}^2 \right] \cdot
			\cos(\phi) \right\} +\\
		&\hspace{0.45cm} \left[ \frac{-2 \cdot a'}{a \cdot {\left( \phi' \right)}^2} +
			\frac{1}{2} \cdot (-2) \cdot \frac{\phi''}{{(\phi')}^3} \right] \cdot 
			{\left( a' \cdot \cos(\phi) - a \cdot \phi' \cdot \sin(\phi) \right)} +\\
		&\hspace{0.45cm} \left[ {\left( \frac{1}{{(\phi')}^2} \right)} \cdot
			\left( \frac{{(a')}^2}{a^2} - \frac{a \cdot a'' - {(a')}^2}{a^2} \right) -
			\frac{1}{2} \cdot (-2) \cdot \frac{a' \cdot \phi''}{a \cdot {(\phi')}^3} + 1\right] \cdot
			{\left( a \cdot \cos(\phi) \right)} \\
		&= \frac{-2 \cdot a' \cdot \phi' - a \cdot \phi''}{{(\phi')}^2} \cdot \sin(\phi) +
			\frac{a'' - a \cdot {\left( \phi' \right)}^2}{{(\phi')}^2} \cdot \cos(\phi) +\\
		&\hspace{0.45cm} \left[ \frac{-2 \cdot a'}{a \cdot {\left( \phi' \right)}^2} -
			\frac{\phi''}{{(\phi')}^3} \right] \cdot 
			{\left( a' \cdot \cos(\phi) - a \cdot \phi' \cdot \sin(\phi) \right)} +\\
		&\hspace{0.45cm} \left[ \frac{2 \cdot {(a')}^2 - a \cdot a''}
			{a^2 \cdot {(\phi')}^2} + \frac{a' \cdot \phi''}{a \cdot {(\phi')}^3} + 1 \right] \cdot
			{\left( a \cdot \cos(\phi) \right)}.
	\end{align*}
	Sine and cosine are separated and we show that their coefficients are zero, implying
	that the entire expression is zero, as follows:
	\begin{align*}
		\mathcal{D}_{(a,\phi)} \I[a,\phi] &=
			\left[ \frac{-2 \cdot a'}{\phi'} - \frac{a \cdot \phi''}{{(\phi')}^2}\right] \cdot
			\sin(\phi) + \left[ \frac{a''}{{(\phi')}^2} - a \right] \cdot \cos(\phi) +\\
		&\hspace{0.45cm} \left[ \frac{2 \cdot a'}{\phi'} + \frac{a \cdot \phi''}
			{{(\phi')}^2} \right] \cdot \sin(\phi) +
			\left[ \frac{-2 \cdot {\left(a'\right)}^2}{a \cdot {\left( \phi' \right)}^2} -
			\frac{a' \cdot \phi''}{{(\phi')}^3} \right] \cdot \cos(\phi) +\\
		&\hspace{0.45cm} \left[ \frac{2 \cdot {(a')}^2}{a \cdot {(\phi')}^2} - \frac{a''}
			{{(\phi')}^2} + \frac{a' \cdot \phi''}{{(\phi')}^3} + a \right] \cdot \cos(\phi) \\
		&= \left[ \frac{-2 \cdot a'}{\phi'} - \frac{a \cdot \phi''}{{(\phi')}^2} +
			\frac{2 \cdot a'}{\phi'} + \frac{a \cdot \phi''}{{(\phi')}^2} \right] \cdot \sin(\phi) +\\
		&\hspace{0.45cm} \left[ \frac{a''}{{(\phi')}^2} - a +
			\frac{-2 \cdot {\left(a'\right)}^2}{a \cdot {\left( \phi' \right)}^2} -
			\frac{a' \cdot \phi''}{{(\phi')}^3} +
			\frac{2 \cdot {(a')}^2}{a \cdot {(\phi')}^2} - \frac{a''}
			{{(\phi')}^2} + \frac{a' \cdot \phi''}{{(\phi')}^3} + a \right] \cdot \cos(\phi)\\
		&= 0. \qedhere
	\end{align*}
\end{proof}
This shows that $\mathcal{D}_{(a,\phi)}$ is in fact an annihilating operator. When considering
the IMF differential operator from Definition~\ref{def:imf-diffop} again,
one can observe that the only way the
$a$ and $\phi$ \enquote{interface} with the operator is through
instantaneous envelope derivation operator $A[a]$
and inverse square continuous frequency operator $\Omega[\phi]$.
Given both are functions just like $a$ and $\phi$, we can
express the operator parametrized by $A$ and $\Omega$ instead of $a$
and $\phi$ and call it the modified IMF operator.
\begin{definition}[Modified IMF operator]\label{def:modified-imf-diffop}
	The \emph{modified IMF operator} is defined as
	\begin{align*}
		\mathcal{\tilde{D}}_{(A,\Omega)} & := \Omega \cdot D^2 +\\
		&\hspace{0.5cm} \left[ -2 \cdot \Omega \cdot A +
			\frac{1}{2} \cdot \Omega' \right]
			\cdot D^1 +\\
		&\hspace{0.5cm} \left[ \Omega \cdot \left( {A}^2 - A' \right) -
		\frac{1}{2} \cdot \Omega' \cdot A + 1 \right] \cdot D^0.
	\end{align*}
\end{definition}
How to determine $a$ from $A$ and $\phi$ from $\Omega$ shall not yet
be of concern here, but for instance in the case of $\Omega$,
the inverse square root of $\Omega$ yields $\phi'$ directly
(compare Definition~\ref{def:inv_sq_cont_freq_operator}).
\par
It is now in our interest to examine the behaviour of the differential
operator under its parameters. As we vary $A$ and $\Omega$, we want to
know that if we found an annihilating pair we really obtained a unique
solution or not. We will approach this question just like the
cost functions in Section~\ref{sec:cost_functions} and consider the
$A$ and $\Omega$ to be spline functions that vary over their
B-spline basis coefficients $\boldsymbol{A} \in \R^n$ and
$\boldsymbol{\Omega} \in \R^n$. To give an example, we consider $A$
to be the spline function (using Definitions \ref{def:b-spline} and
\ref{def:coefficient_spline_mapping})
\begin{equation*}
	\B_{k}(\boldsymbol{A}) = \sum_{i=0}^{n-1} A_i \cdot B_{i,k}
\end{equation*}
that varies over the vector entries $A_i$ of $\boldsymbol{A}$.
If we consider the norm of this function, we can for instance
calculate its partial derivative in $A_m$ for $m \in \{0,\dots,n-1\}$
using the chain rule as
\begin{equation*}
	\frac{\partial {\| \B_{k}(\boldsymbol{A}) \|}_2^2}{\partial A_m} =
	\frac{\partial}{\partial A_m} \left(
		\int_{-\infty}^{\infty} {\left( \B_{k}(\boldsymbol{A}) \right)}^2 \mathrm{d}t
		\right) =
	\int_{-\infty}^{\infty}
		\frac{\partial {\left( \B_{k}(\boldsymbol{A}) \right)}^2}
		{\partial A_m} \mathrm{d}t =
	\int_{-\infty}^{\infty}
		2 \cdot \B_{k}(\boldsymbol{A}) \cdot B_{m,k}(t) \mathrm{d}t.
\end{equation*}
To go even further, we can of course also partially derive again,
this time in $A_p$ for $p \in \{0,\dots,n-1\}$. We obtain
\begin{equation*}
	\frac{\partial^2 {\| \B_{k}(\boldsymbol{A}) \|}_2^2}
		{\partial A_m \partial A_p} =
	\frac{\partial}{\partial A_m} \left(
		\int_{-\infty}^{\infty}
		2 \cdot \B_{k}(\boldsymbol{A}) \cdot B_{m,k}(t) \mathrm{d}t
		\right) =
	\int_{-\infty}^{\infty}
		2 \cdot B_{m,k}(t) \cdot B_{p,k}(t) \mathrm{d}t.
\end{equation*}
The result of this particular observation is that the covariation
of ${\| \B_{k}(\boldsymbol{A}) \|}_2^2$ in $A_m$ and $A_p$ is directly
related to the orthogonality of $B_{m,k}$ and $B_{p,k}$. If the
B-spline basis were truly orthogonal, the final integral will always
be zero. However, the B-spline basis is not orthogonal. Thus, if
$m$ and $p$ are \enquote{close} to each other or even equal, the
integral will be positive. Another interpretation is to consider
the Hessian matrix in partial derivates in $A_i$, which can be
used to prove that a function is convex in multiple variables.
After all, the function ${\| \B_{k}(\boldsymbol{A}) \|}_2^2$ is
a mapping $\R^n \to \R$. Making use of this general technique,
we prove the following
\begin{theorem}\label{thm:imf-operator-convex}
	Let $f \in \mathcal{C}^2(\R,\R)$ and $k \ge 4$ (for derivability).
	${\left\| \tilde{\mathcal{D}}_{(\B_{k}(\boldsymbol{A}),\B_{k}(\boldsymbol{\Omega}))}
	f \right\|}_2^2$ is strictly convex in $\boldsymbol{\Omega}$ but not
	convex in $\boldsymbol{A}$.
\end{theorem}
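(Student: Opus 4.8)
The plan is to exploit the very different way in which $\Omega$ and $A$ enter the modified IMF operator of Definition~\ref{def:modified-imf-diffop}. The first step is to regroup $\tilde{\mathcal{D}}_{(A,\Omega)}f$ by its dependence on $\Omega$. Collecting the coefficients of $\Omega$ and $\Omega'$ yields
\[
\tilde{\mathcal{D}}_{(A,\Omega)}f = \Omega\cdot g_1 + \Omega'\cdot g_2 + f,
\qquad
g_1 := f'' - 2Af' + (A^2 - A')f,\quad
g_2 := \tfrac{1}{2}(f' - Af),
\]
where crucially $g_1$ and $g_2$ involve only $f$ and the (fixed) $A$, not $\Omega$. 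Writing $\Omega=\B_{k}(\boldsymbol{\Omega})=\sum_i\Omega_i B_{i,k}$ and $\Omega'=\sum_i\Omega_i B_{i,k}'$, the map $\boldsymbol{\Omega}\mapsto\tilde{\mathcal{D}}_{(A,\B_{k}(\boldsymbol{\Omega}))}f=\sum_i\Omega_i h_i+f$, with $h_i:=B_{i,k}\,g_1+B_{i,k}'\,g_2$, is \emph{affine}. Hence the objective is the squared norm of an affine function of $\boldsymbol{\Omega}$: a quadratic form whose Hessian is $2G$ with the Gram matrix $G=(\langle h_i,h_j\rangle)_{i,j}$. A Gram matrix is positive semidefinite, which gives convexity at once; it is positive definite exactly when the $h_i$ are linearly independent, since $\boldsymbol{c}^{\mathsf{T}}G\,\boldsymbol{c}=\bigl\|\B_{k}(\boldsymbol{c})\,g_1+\B_{k}'(\boldsymbol{c})\,g_2\bigr\|_2^2$.

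For the non-convexity in $\boldsymbol{A}$ I would instead regroup by powers of $A$,
\[
\tilde{\mathcal{D}}_{(A,\Omega)}f = \alpha A^2 + \beta A + \gamma A' + \delta,
\qquad
\alpha=\Omega f,\ \ \beta=-2\Omega f'-\tfrac{1}{2}\Omega' f,\ \ \gamma=-\Omega f,\ \ \delta=\Omega f''+\tfrac{1}{2}\Omega' f'+f,
\]
so that the objective is a \emph{quartic} in $\boldsymbol{A}$. Evaluating the Hessian at the convenient base point $\boldsymbol{A}=0$ annihilates every term of degree $\geq 3$, leaving the quadratic form
\[
Q(\boldsymbol{v}) = 2\int\bigl[(\beta^2+2\alpha\delta)\,A_v^2 + 2\beta\gamma\,A_v A_v' + \gamma^2\,(A_v')^2\bigr]\,\mathrm{d}t,
\qquad A_v:=\B_{k}(\boldsymbol{v}).
\]
It then suffices to exhibit one instance where $Q$ is negative. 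Taking $\Omega\equiv 1$ (a legitimate spline by the partition of unity, Proposition~\ref{prop:partition_of_unity}, so $\Omega'\equiv 0$) gives weight $\beta^2+2\alpha\delta=4(f')^2+2ff''+2f^2$; choosing $f$ to be a compactly supported $\mathcal{C}^2$ function that coincides with $\cos(\omega(t-t_0))$ near a point $t_0$ with $\omega$ large makes this weight $\approx 2(1-\omega^2)<0$ there, while $f'(t_0)=0$ kills the cross term. A spline bump $\boldsymbol{v}$ localized at $t_0$ with small derivative then yields $Q(\boldsymbol{v})<0$. This reproduces, in cleaner form, the mechanism of Proposition~\ref{prop:cscf-convex}: the sign of the second variation is governed by the unrelated data $f$ and $\Omega$, so positive definiteness must fail.

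The hard part is the \emph{strict} half of the first claim, i.e.\ the linear independence of the $h_i$. A vanishing combination $\sum_i c_i h_i\equiv 0$ is equivalent to $P\,g_1+P'\,g_2\equiv 0$ for the spline $P:=\B_{k}(\boldsymbol{c})$, a first-order linear relation whose only smooth solutions on $\{g_2\neq 0\}$ are the exponentials $P=C\exp\bigl(-\!\int g_1/g_2\bigr)$, which are not polynomial splines unless $C=0$. I would make this rigorous by working locally on the leftmost knot interval, where by the compact support of Proposition~\ref{prop:b-spline-properties} only finitely many $h_i$ are active and $P$ is a genuine polynomial, and then propagating $P\equiv 0$ across successive intervals to conclude $\boldsymbol{c}=0$. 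Isolating the precise nondegeneracy needed on $f$ (at the very least $f\not\equiv 0$, which excludes the degenerate case where all $h_i$ vanish) is the most delicate point, and is where I expect the real work of the argument to lie.
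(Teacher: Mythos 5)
Your $\boldsymbol{\Omega}$-half rests on the same mechanism as the paper's proof: the paper also observes that $\partial^2\tilde{\mathcal{D}}_{(\B_{k}(\boldsymbol{A}),\B_{k}(\boldsymbol{\Omega}))}f/\partial\Omega_m\partial\Omega_p\equiv 0$, so that the Hessian in $\boldsymbol{\Omega}$ is the Gram matrix $\int 2\,\partial_{\Omega_m}\tilde{\mathcal{D}}f\cdot\partial_{\Omega_p}\tilde{\mathcal{D}}f\,\mathrm{d}t$ (the paper then invokes Ger\v{s}gorin--Hadamard with a claimed diagonal dominance instead of your linear-independence criterion). The genuine gap is in your $\boldsymbol{A}$-half: the proposed counterexample at the base point $\boldsymbol{A}=0$, $\Omega\equiv 1$ cannot work, for \emph{any} choice of $f$. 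Since $A_v=\B_{k}(\boldsymbol{v})$ has compact support, you may integrate the $ff''A_v^2$ term by parts,
\begin{equation*}
	\int 2ff''A_v^2\,\mathrm{d}t
	= -\int 2(f')^2A_v^2\,\mathrm{d}t - \int 4ff'A_vA_v'\,\mathrm{d}t,
\end{equation*}
and substituting this into $Q$ makes the cross terms cancel exactly:
\begin{equation*}
	Q(\boldsymbol{v})/2
	= \int \left(2f'A_v+fA_v'\right)^2 + 2f(f''+f)A_v^2\,\mathrm{d}t
	= \int 2(f')^2A_v^2 + f^2(A_v')^2 + 2f^2A_v^2\,\mathrm{d}t \ge 0.
\end{equation*}
So the second variation at this base point is positive semidefinite no matter how oscillatory $f$ is; the pointwise negativity of $\beta^2+2\alpha\delta$ is an artifact of reading the three terms of $Q$ separately when they are coupled by integration by parts. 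Any proof of non-convexity must work at a base point $\boldsymbol{A}\neq 0$, where the cubic and quartic terms of the objective feed into the Hessian through the summand $2\int\tilde{\mathcal{D}}f\cdot\partial^2_{A_mA_p}\tilde{\mathcal{D}}f\,\mathrm{d}t$ with $\partial^2_{A_mA_p}\tilde{\mathcal{D}}f = 2\B_{k}(\boldsymbol{\Omega})B_{m,k}B_{p,k}f$ — which is exactly where the paper (admittedly heuristically) locates the failure.

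The second issue is the part you flagged yourself: the linear independence of the $h_i$ is not just delicate, it can genuinely fail. Take $A\equiv 0$ (i.e.\ $\boldsymbol{A}=0$) and $f$ affine, say $f(t)=t$: then $g_1=f''\equiv 0$ and $g_2=\tfrac{1}{2}f'$ is a nonzero constant, so $h_i=\tfrac{1}{2}f'\,B_{i,k}'$, and by the partition of unity (Proposition~\ref{prop:partition_of_unity}) one has $\sum_i B_{i,k}'=\left(\sum_i B_{i,k}\right)'=0$; hence the all-ones vector lies in the kernel of the Gram matrix, and indeed the objective is constant along $\boldsymbol{\Omega}\mapsto\boldsymbol{\Omega}+c\cdot(1,\dots,1)$, so strict convexity fails. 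Your ODE argument breaks down here precisely because $\{g_2\neq 0\}$ is everything while $g_1/g_2\equiv 0$, making the "exponential" a constant — which \emph{is} a spline. Consequently the strict-convexity claim cannot be closed without a nondegeneracy hypothesis on $f$; your Gram-matrix framework has the merit of making this obstruction visible, whereas the paper's diagonal-dominance step silently fails in the same example.
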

\begin{proof}
	By definition we obtain that
	\begin{equation*}
		{\left\| \tilde{\mathcal{D}}_{(\B_{k}(\boldsymbol{A}),\B_{k}(\boldsymbol{\Omega}))}
		f \right\|}_2^2 =
		\int_{-\infty}^{\infty}
		{\left(
			\tilde{\mathcal{D}}_{(\B_{k}(\boldsymbol{A}),\B_{k}(\boldsymbol{\Omega}))}f
		\right)}^2
		\mathrm{d}t.
	\end{equation*}
	In particular we first consider the term within the norm itself and expand it
	from its definition (see Definition~\ref{def:modified-imf-diffop})
	\begin{align*}
		\tilde{\mathcal{D}}_{(\B_{k}(\boldsymbol{A}),\B_{k}(\boldsymbol{\Omega}))} f &=
			\B_{k}(\boldsymbol{\Omega}) \cdot f'' +\\
		&\hspace{0.5cm} \bigg[ -2 \cdot \B_{k}(\boldsymbol{\Omega}) \cdot \B_{k}(\boldsymbol{A}) +
			\frac{1}{2} \cdot {\left(\B_{k}(\boldsymbol{\Omega})\right)}' \bigg] \cdot f' +\\
		&\hspace{0.5cm} \bigg[ \B_{k}(\boldsymbol{\Omega}) \cdot \left(
			{\left(\B_{k}(\boldsymbol{A})\right)}^2 - {\left(\B_{k}(\boldsymbol{A})\right)}' \right) -
			\frac{1}{2} \cdot {\left( \B_{k}(\boldsymbol{\Omega}) \right)}' \cdot \B_{k}(\boldsymbol{A}) +
			1 \bigg] \cdot f.
	\end{align*}
	Making use of Definition~\ref{def:coefficient_spline_mapping} to expand
	the spline functions into B-spline expressions it follows
	\begin{align*}
		\tilde{\mathcal{D}}_{(\B_{k}(\boldsymbol{A}),\B_{k}(\boldsymbol{\Omega}))} f 
		&= \left(\sum_{i=0}^{n-1} \Omega_i \cdot B_{i,k}\right) \cdot f'' +\\
		&\hspace{0.5cm} \Bigg[ -2 \cdot 
			\left(\sum_{i=0}^{n-1} \Omega_i \cdot B_{i,k}\right) \cdot
			{\left(\sum_{i=0}^{n-1} A_i \cdot B_{i,k}\right)} +\\
		&\hspace{0.7cm}
			\frac{1}{2} \cdot \left(\sum_{i=0}^{n-1} \Omega_i \cdot
			B'_{i,k}\right) \Bigg] \cdot f' +\\
		&\hspace{0.5cm} \Bigg[ \left(\sum_{i=0}^{n-1}
			\Omega_i \cdot B_{i,k}\right) \cdot \Bigg(
			{\left(\sum_{i=0}^{n-1} A_i \cdot B_{i,k}\right)}^2 -
			{\left(\sum_{i=0}^{n-1} A_i \cdot B'_{i,k}\right)} \Bigg) -\\
		&\hspace{0.7cm}
			\frac{1}{2} \cdot \left(\sum_{i=0}^{n-1} \Omega_i \cdot B'_{i,k}\right)
			\cdot \left(\sum_{i=0}^{n-1} A_i \cdot B_{i,k}\right) +
			1 \Bigg] \cdot f.
	\end{align*}
	We approach this proof checking if the requirements of
	Theorem~\ref{thm:gershgorin-hadamard} hold for distinct partial
	derivatives for entries of $\boldsymbol{A}$ and $\boldsymbol{\Omega}$.
	By applying partial derivatives we obtain that for $m,p \in \{ 0,\dots,n-1 \}$
	it holds with the chain rule for partial derivatives in $\boldsymbol{A}$
	\begin{align}
		\frac{\partial^2 {\left\|\tilde{\mathcal{D}}_{(\B_{k}(\boldsymbol{A}),
			\B_{k}(\boldsymbol{\Omega}))} f\right\|}_2^2}
			{\partial A_m \partial A_p}(\boldsymbol{A},\boldsymbol{\Omega}) &=
			\frac{\partial}{\partial A_p} \left(
				\int_{-\infty}^{\infty}
				2
				\cdot
				\tilde{\mathcal{D}}_{(\B_{k}(\boldsymbol{A}),\B_{k}(\boldsymbol{\Omega}))}f
				\cdot
				\frac{
					\partial
					\tilde{\mathcal{D}}_{(\B_{k}(\boldsymbol{A}),\B_{k}(\boldsymbol{\Omega}))}f
				}{\partial A_m}				
				\mathrm{d}t
			\right) \notag \\
		&= \int_{-\infty}^{\infty}
			2
			\cdot
			\frac{
				\partial
				\tilde{\mathcal{D}}_{(\B_{k}(\boldsymbol{A}),\B_{k}(\boldsymbol{\Omega}))}f
			}{\partial A_m}
			\cdot
			\frac{
				\partial
				\tilde{\mathcal{D}}_{(\B_{k}(\boldsymbol{A}),\B_{k}(\boldsymbol{\Omega}))}f
			}{\partial A_p} + \notag \\
		&\phantom{=\int_{-\infty}^{\infty}\,\,\,}
			2
			\cdot
			\tilde{\mathcal{D}}_{(\B_{k}(\boldsymbol{A}),\B_{k}(\boldsymbol{\Omega}))}f
			\cdot
			\frac{
				\partial^2
				\tilde{\mathcal{D}}_{(\B_{k}(\boldsymbol{A}),\B_{k}(\boldsymbol{\Omega}))}f
			}{\partial A_m \partial A_p}
			\mathrm{d}t \label{eq:proof-diffop-convex-A}
	\end{align}
	and analogously for partial derivatives in $\boldsymbol{\Omega}$
	\begin{align}
		\frac{\partial^2 {\left\|\tilde{\mathcal{D}}_{(\B_{k}(\boldsymbol{A}),
			\B_{k}(\boldsymbol{\Omega}))} f\right\|}_2^2}
			{\partial \Omega_m \partial \Omega_p}(\boldsymbol{A},\boldsymbol{\Omega}) &=
			\int_{-\infty}^{\infty}
			2
			\cdot
			\frac{
				\partial
				\tilde{\mathcal{D}}_{(\B_{k}(\boldsymbol{A}),\B_{k}(\boldsymbol{\Omega}))}f
			}{\partial \Omega_m}
			\cdot
			\frac{
				\partial
				\tilde{\mathcal{D}}_{(\B_{k}(\boldsymbol{A}),\B_{k}(\boldsymbol{\Omega}))}f
			}{\partial \Omega_p} + \notag \\
		&\phantom{=\int_{-\infty}^{\infty}\,\,\,}
			2
			\cdot
			\tilde{\mathcal{D}}_{(\B_{k}(\boldsymbol{A}),\B_{k}(\boldsymbol{\Omega}))}f
			\cdot
			\frac{
				\partial^2
				\tilde{\mathcal{D}}_{(\B_{k}(\boldsymbol{A}),\B_{k}(\boldsymbol{\Omega}))}f
			}{\partial \Omega_m \partial \Omega_p}
			\mathrm{d}t. \label{eq:proof-diffop-convex-Omega}
	\end{align}
	First for partial derivatives in $\boldsymbol{A}$, we consider the bare
	derivatives of the operator (without the norm) that we found within the derivative
	expressions of the operator within the norm. For the first order we find that
	\begin{align*}
		\frac{\partial \tilde{\mathcal{D}}_{(\B_{k}(\boldsymbol{A}),\B_{k}(\boldsymbol{\Omega}))} f }
			{\partial A_m}(\boldsymbol{A},\boldsymbol{\Omega}) &= \left[ -2 \cdot 
			{\B_{k}(\boldsymbol{\Omega})} \cdot B_{m,k} \right] \cdot f' +\\
		&\hspace{0.41cm} \left[ \B_{k}(\boldsymbol{\Omega}) \cdot \left(
			2 \cdot \B_{k}(\boldsymbol{A}) \cdot B_{m,k} - B'_{m,k} \right) -
			\frac{1}{2} \cdot {\left( \B_{k}(\boldsymbol{\Omega}) \right)}'
			\cdot B_{m,k} \right] \cdot f\\
		&= \left[ -2 \cdot 
			{\B_{k}(\boldsymbol{\Omega})} \cdot B_{m,k} \right] \cdot f' +\\
		&\hspace{0.46cm} \Bigg[ \B_{k}(\boldsymbol{\Omega}) \cdot \left(
			2 \cdot {\left(\sum_{i=0}^{n-1} A_i \cdot B_{i,k}\right)} \cdot
			B_{m,k} - B'_{m,k} \right) -\\
		&\hspace{0.7cm} \frac{1}{2} \cdot {\left( \B_{k}(\boldsymbol{\Omega}) \right)}'
			\cdot B_{m,k} \Bigg] \cdot f,
	\end{align*}
	and for the second order we finally obtain
	\begin{equation*}
		\frac{\partial^2 \tilde{\mathcal{D}}_{(\B_{k}(\boldsymbol{A}),\B_{k}(\boldsymbol{\Omega}))} f }
			{\partial A_m \partial A_p}(\boldsymbol{A},\boldsymbol{\Omega}) =
			2 \cdot \B_{k}(\boldsymbol{\Omega}) \cdot B_{m,k} \cdot B_{p,k} \cdot f \not\equiv 0.
	\end{equation*}
	This implies that for $m = p$ the sign of the term in Equation~(\ref{eq:proof-diffop-convex-A})
	is not positive, as the bare operator is not zeroed out in the second summand.
	Thus the necessary condition in Theorem~\ref{thm:gershgorin-hadamard}
	that all diagonal elements of the Hessian matrix must be positive is violated. It follows
	that we can not show convexity for partial derivatives in $\boldsymbol{A}$, as the Hessian
	matrix for derivatives in $\boldsymbol{A}$ can not be shown to be positive semidefinite.
	\par
	For partial derivatives of the bare operator in $\boldsymbol{\Omega}$ we find
	for the first order that
	\begin{align*}
		\frac{\partial \tilde{\mathcal{D}}_{(\B_{k}(\boldsymbol{A}),\B_{k}(\boldsymbol{\Omega}))} f }
			{\partial \Omega_m}(\boldsymbol{A},\boldsymbol{\Omega}) &=
			B_{m,k} \cdot f'' +\\
		&\hspace{0.5cm} \bigg[ -2 \cdot \B_{k}(\boldsymbol{A}) \cdot B_{m,k} +
			\frac{1}{2} \cdot B'_{m,k} \bigg] \cdot f' +\\
		&\hspace{0.5cm} \bigg[ B_{m,k} \cdot \left(
			{\left(\B_{k}(\boldsymbol{A})\right)}^2 - {\left(\B_{k}(\boldsymbol{A})\right)}' \right) -
			\frac{1}{2} \cdot \B_{k}(\boldsymbol{A}) \cdot B'_{m,k} \bigg] \cdot f
	\end{align*}
	which yields for the second order that
	\begin{equation*}
		\frac{\partial^2 \tilde{\mathcal{D}}_{(\B_{k}(\boldsymbol{A}),\B_{k}(\boldsymbol{\Omega}))} f }
			{\partial \Omega_m \partial \Omega_p}(\boldsymbol{A},\boldsymbol{\Omega}) \equiv 0.
	\end{equation*}
	We obtain from this result that the second summand in the integral in
	Equation~(\ref{eq:proof-diffop-convex-Omega}) is zero and it holds that
	\begin{equation*}
		\frac{\partial^2 {\left\|\tilde{\mathcal{D}}_{(\B_{k}(\boldsymbol{A}),
		\B_{k}(\boldsymbol{\Omega}))} f\right\|}_2^2}
		{\partial \Omega_m \partial \Omega_p}(\boldsymbol{A},\boldsymbol{\Omega}) =
		\int_{-\infty}^{\infty}
		2
		\cdot
		\frac{
			\partial
			\tilde{\mathcal{D}}_{(\B_{k}(\boldsymbol{A}),\B_{k}(\boldsymbol{\Omega}))}f
		}{\partial \Omega_m}
		\cdot
		\frac{
			\partial
			\tilde{\mathcal{D}}_{(\B_{k}(\boldsymbol{A}),\B_{k}(\boldsymbol{\Omega}))}f
		}{\partial \Omega_p}
		\mathrm{d}t.
	\end{equation*}
	We can immediately see that the Hessian matrix of
	${\left\|\tilde{\mathcal{D}}_{(\B_{k}(\boldsymbol{A}),
	\B_{k}(\boldsymbol{\Omega}))} f\right\|}_2^2$ for derivatives in $\boldsymbol{\Omega}$
	is symmetric. In particular, for $m = p$, it also holds
	\begin{equation*}
		\frac{\partial^2 {\left\|\tilde{\mathcal{D}}_{(\B_{k}(\boldsymbol{A}),
		\B_{k}(\boldsymbol{\Omega}))} f\right\|}_2^2}
		{\partial \Omega_m^2}(\boldsymbol{A},\boldsymbol{\Omega}) =
		\int_{-\infty}^{\infty}
		2
		\cdot
		{\left(
		\frac{
			\partial
			\tilde{\mathcal{D}}_{(\B_{k}(\boldsymbol{A}),\B_{k}(\boldsymbol{\Omega}))}f
		}{\partial \Omega_m}
		\right)}^2
		\mathrm{d}t > 0,
	\end{equation*}
	which means that the diagonal entries of the Hessian matrix for
	$\boldsymbol{\Omega}$ are strictly positive. It is also diagonally dominant
	with the same argument as in the proof of Proposition~\ref{prop:cscf-convex},
	namely due to the compact support and partial orthogonality of the
	B-spline basis functions. It follows with Theorem~\ref{thm:gershgorin-hadamard}
	that the Hessian matrix for derivatives in $\boldsymbol{\Omega}$ is positive definite
	and by Proposition~\ref{prop:hessian_convexity_condition} that
	${\left\|\tilde{\mathcal{D}}_{(\B_{k}(\boldsymbol{A}),
	\B_{k}(\boldsymbol{\Omega}))} f\right\|}_2^2$ is strictly convex in $\boldsymbol{\Omega}$.
\end{proof}
Given this result we do not have the theoretical guarantee that our
operator gives us a unique and minimal solution, as it is not generally
convex. This is obviously undesirable as our primary motivation is to
find and examine methods that have a stronger theoretical foundation
than the classic heuristic EMD methods.
\subsection{Simplification}\label{subsec:imf-operator-simplification}
The reassuring part of the result in Theorem~\ref{thm:imf-operator-convex}
is that the function is strictly convex if we reduce the variation to
$\boldsymbol{\Omega}$ and keep $\boldsymbol{A}$ constant. Without loss of
generality, if we know that our input IMF has constant amplitude $1$,
namely that it only has the form $\cos(\phi)$, we can apply a simplified differential
operator to it of which we know that it is strictly convex. This assumption may
sound a bit too extravagant, but we will show in Chapter~\ref{ch:hobm}
that it is meaningful and use the results of the following subsection
to extract the instantaneous phase from IMFs with constant amplitude $1$.
\par
If we know that our input IMFs will have the form $\cos(\phi)$, we might
wonder how our IMF differential operator changes under this assumption.
\begin{proposition}
	It holds
	\begin{equation*}
		\mathcal{D}_{(1,\phi)} = \Omega[\phi] \cdot D^2 +
		\frac{1}{2} \cdot \Omega'[\phi] \cdot D^1 +
		D^0.
	\end{equation*}
\end{proposition}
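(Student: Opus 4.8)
The plan is to obtain the claim by directly specializing the IMF differential operator of Definition~\ref{def:imf-diffop} to the case of constant unit amplitude $a \equiv 1$. The crucial observation is that the amplitude enters $\mathcal{D}_{(a,\phi)}$ exclusively through the instantaneous envelope derivation operator $A[a] = a'/a$ (see Definition~\ref{def:inst_envelope_deriv_operator}); the frequency contribution $\Omega[\phi]$ is untouched. Hence the first step is to record the values of $A$ and its derivative at $a \equiv 1$. Since the constant function $1$ has vanishing derivative, we get $A[1] = 0/1 = 0$, so $A[1]$ is the constant function $0$, giving $A'[1] = 0$ and $A^2[1] = 0$ as well.

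With these three quantities in hand, I would substitute $A[a] = 0$, $A'[a] = 0$ and $A^2[a] = 0$ into the three bracketed coefficients of $D^2$, $D^1$ and $D^0$ appearing in Definition~\ref{def:imf-diffop}. The $D^2$-coefficient $\Omega[\phi]$ contains no occurrence of $A$ and is carried over unchanged. In the $D^1$-coefficient $-2 \cdot \Omega[\phi] \cdot A[a] + \frac{1}{2} \cdot \Omega'[\phi]$ the first summand drops out, leaving exactly $\frac{1}{2} \cdot \Omega'[\phi]$. In the $D^0$-coefficient $\Omega[\phi] \cdot (A^2[a] - A'[a]) - \frac{1}{2} \cdot \Omega'[\phi] \cdot A[a] + 1$ both $A$-dependent terms vanish, leaving the constant $1$. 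Collecting the three simplified coefficients reproduces precisely the asserted expression for $\mathcal{D}_{(1,\phi)}$.

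There is no genuine obstacle in this argument: it is a pure substitution, and the only point worth flagging is that $a \equiv 1$ is an admissible amplitude, so that $\Omega[\phi]$ and $\Omega'[\phi]$ remain well defined. This is guaranteed as soon as $\phi' \neq 0$, which holds under the IMFS conditions of Definition~\ref{def:imfs} (indeed $\phi' \succeq \mu_0 > 0$). Thus the entire proof collapses to the single computation $A[1] = 0$ followed by reading off the coefficients.
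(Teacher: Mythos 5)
Your proof is correct and matches the paper's own argument, which likewise reduces everything to the observation $A[1] = (1)'/1 = 0$ and then reads the simplified coefficients off Definition~\ref{def:imf-diffop}. Your additional remarks ($A'[1]=0$, $A^2[1]=0$, well-definedness of $\Omega[\phi]$) simply make explicit what the paper leaves implicit.
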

\begin{proof}
	It follows directly from Definition~\ref{def:imf-diffop} and observing that
	\begin{equation*}
		A[1] = \frac{{(1)}'}{1} = 0.\qedhere
	\end{equation*}
\end{proof}
The great simplification of the operator is apparent. We now wonder how
the modified IMF operator behaves under the assumption that $a \equiv 1$.
Making the observation that the instantaneous envelope derivation operator
$A[a]$ (see Definition~\ref{def:inst_envelope_deriv_operator}) vanishes
for $a \equiv 1$, as $A[1] = {(1)}' / 1 = 0$, we can
see that the parametrization for the modified IMF operator is
$(0,\Omega)$ and we can formulate the following
\begin{corollary}\label{cor:modified-imf-diffop-convex}
	Let $f \in \mathcal{C}^2(\R,\R)$ and $k \ge 4$ (for derivability).
	${\left\| \tilde{\mathcal{D}}_{(0,\B_{k}(\boldsymbol{\Omega}))}
	f \right\|}_2^2$ is strictly convex in $\boldsymbol{\Omega}$.
\end{corollary}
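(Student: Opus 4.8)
The plan is to recognise that this corollary is simply the $\boldsymbol{A} = 0$ specialization of the strict-convexity-in-$\boldsymbol{\Omega}$ half of Theorem~\ref{thm:imf-operator-convex}, and then to check that that half of the proof never relied on any property of $\boldsymbol{A}$. First I would recall the structural reason the theorem worked: the modified IMF operator $\tilde{\mathcal{D}}_{(A,\Omega)}$ (Definition~\ref{def:modified-imf-diffop}) is \emph{affine} in $\Omega$, since $\Omega$ and $\Omega'$ enter only linearly and the $D^0$-term carries a $\Omega$-independent summand $+1$. Hence, after the B-spline expansion $\B_{k}(\boldsymbol{\Omega}) = \sum_i \Omega_i B_{i,k}$, the map $\boldsymbol{\Omega} \mapsto \tilde{\mathcal{D}}_{(0,\B_{k}(\boldsymbol{\Omega}))}f$ is affine and its second partial derivatives vanish identically. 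Setting $\boldsymbol{A} = 0$ (equivalently $A[a] = 0$ for $a \equiv 1$), the first partial derivative collapses to $\partial_{\Omega_m}(\tilde{\mathcal{D}}_{(0,\B_{k}(\boldsymbol{\Omega}))}f) = B_{m,k}\,f'' + \tfrac{1}{2}\,B'_{m,k}\,f'$, which no longer depends on $\boldsymbol{\Omega}$.

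Because the second partials of the operator vanish, the Hessian of the squared $L^2$ norm reduces, exactly as in Equation~(\ref{eq:proof-diffop-convex-Omega}), to the Gram matrix with entries $H_{mp} = \int_{-\infty}^{\infty} 2\,(B_{m,k}f'' + \tfrac{1}{2}B'_{m,k}f')(B_{p,k}f'' + \tfrac{1}{2}B'_{p,k}f')\,\mathrm{d}t$, which is automatically symmetric and positive semidefinite. To upgrade to strict convexity I would follow the theorem and invoke Theorem~\ref{thm:gershgorin-hadamard}: the diagonal entries $\int 2\,(B_{m,k}f'' + \tfrac{1}{2}B'_{m,k}f')^2\,\mathrm{d}t$ are strictly positive, and the banded structure forced by the compact supports $\supp(B_{i,k}) = [\tau_i,\tau_{i+k}]$ (Proposition~\ref{prop:b-spline-properties}) together with the partial orthogonality of the basis yields diagonal dominance, so the matrix is positive definite; strict convexity then follows by Proposition~\ref{prop:hessian_convexity_condition}.

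The only point that genuinely needs attention — and the main obstacle — is that removing the $A$-dependent terms strips the first-order generators down to $B_{m,k}f'' + \tfrac{1}{2}B'_{m,k}f'$, so I must confirm that these are still nontrivial and linearly independent over $L^2$. They can degenerate: for $f \equiv 0$ the whole norm is constant, and more subtly any $f$ whose restriction to some knot interval satisfies $g\,f'' + \tfrac{1}{2}g'\,f' = 0$ for a nonzero $g = \B_{k}(\boldsymbol{c})$ would break either strict positivity of a diagonal entry or the independence of the generators. I would therefore argue that for any admissible input IMF of the form $\cos(\phi)$ — which is exactly the constant-amplitude case this simplified operator is designed for, and which is not annihilated by $\tilde{\mathcal{D}}_{(0,\cdot)}$ on any knot interval — the generators are nondegenerate, so the Gram matrix is positive definite. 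With that nondegeneracy in hand, and since nothing else in the $\boldsymbol{\Omega}$-argument of Theorem~\ref{thm:imf-operator-convex} used $A \neq 0$, the corollary is immediate.
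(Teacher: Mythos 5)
Your proposal takes essentially the same route as the paper: the paper's entire proof of this corollary is the one-liner that it follows directly from Theorem~\ref{thm:imf-operator-convex}, and your first two paragraphs are precisely that specialization ($a \equiv 1$, hence $A[1] = 0$, hence parameter $\boldsymbol{A} = 0$), with the reduced generators $B_{m,k}f'' + \tfrac{1}{2}B'_{m,k}f'$ written out explicitly.

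Two remarks on where you go beyond the paper, both of which are useful. First, your degeneracy caveat is a genuine observation, not pedantry: for $f \equiv 0$ (or any $f$ annihilated by all the generators on the relevant interval) the squared norm is constant in $\boldsymbol{\Omega}$ and therefore not strictly convex, so the corollary --- and Theorem~\ref{thm:imf-operator-convex} itself --- is false as stated for arbitrary $f \in \mathcal{C}^2(\R,\R)$; the paper's one-line proof silently inherits this flaw, and your restriction to nondegenerate inputs of the form $\cos(\phi)$ is the hypothesis the statement actually needs rather than an optional extra. Second, your observation that the Hessian is a Gram matrix, hence automatically symmetric positive semidefinite and positive definite exactly when the generators are linearly independent in $L^2$, is the sounder path to strict convexity; the diagonal-dominance step that you (following the theorem) borrow from Theorem~\ref{thm:gershgorin-hadamard} is not actually justified in either place --- in Proposition~\ref{prop:cscf-convex} the very same compact-support/partial-orthogonality reasoning led the paper to conclude \emph{non}-dominance --- so if you flesh out your sketch, drop the Geršgorin argument and prove linear independence of the functions $g_m := B_{m,k}f'' + \tfrac{1}{2}B'_{m,k}f'$ directly.
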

\begin{proof}
	This follows directly from Theorem~\ref{thm:imf-operator-convex}.
\end{proof}
Given this convexity property, we have a theoretical guarantee that we reach
a global minimum for a given input IMF and a unique $\boldsymbol{\Omega}$.
As already mentioned earlier, we obtain the desired instantaneous frequency
from $\boldsymbol{\Omega}$ by inversely applying the inverse square
continuous frequency operator in Definition~\ref{def:inv_sq_cont_freq_operator}.
This equates to inverting and taking the square root of $\Omega$, which
is a relatively simple operation.
\subsection{Discretization}
Given the results from this chapter and especially Theorem~\ref{thm:imf-operator-convex}
we will as follows only consider the simple case with constant amplitude $a \equiv 1$
for discretization, as this will also be the only relevant case for the toolbox
presented in Chapter~\ref{ch:hobm} given we can't use the differential operator
to extract the amplitude anyway.
\par
We have shown in Proposition~\ref{cor:modified-imf-diffop-convex} that
${\left\| \tilde{\mathcal{D}}_{(0,\B_{k}(\boldsymbol{\Omega}))}
f \right\|}_2^2$ is strictly convex in $\boldsymbol{\Omega}$, however,
for the discretization we can make two observations to simplify it:
The first is that given $\boldsymbol{\Omega} \in \R^n$ we need a system of
at least $n$ samples and additional boundary conditions to solve the problem.
The second is that given we have a uniform grid we can,
instead of minimizing an integral function, minimize at least $n$ samples
$\left(\tilde{\mathcal{D}}_{(0,\B_{k}(\boldsymbol{\Omega}))}
f\right)(t_i)$ over $\boldsymbol{\Omega}$. We take this detour as
we can see that
$\tilde{\mathcal{D}}_{(0,\B_{k}(\boldsymbol{\Omega}))} f$ is linear
in $\boldsymbol{\Omega}$, yielding a least squares problem of
the form
\begin{equation*}
	\left\| A \cdot \boldsymbol{\Omega} - b \right\|^2,
\end{equation*}
where $A$ is the matrix representing $\tilde{\mathcal{D}}_
{(0,\B_{k}(\boldsymbol{\Omega}))}$ and $b$ is the vector of
samples in $t_i$. This is better than the nonlinear problem
we would obtain by just using the squared integral.
\par
Given the precomputation-concepts of the toolbox we make use of the
precomputed \enquote{extended grid} (see Section~\ref{sec:ethos_toolbox})
and evaluate $\tilde{\mathcal{D}}_{(0,\B_{k}(\boldsymbol{\Omega}))} f$ on the extended
grid, obtaining more than $n$ equations, one for each point on the extended
grid. This way we obtain implicit boundary conditions, saving us from proposing
possibly wrong or ill-chosen ones in the process. See Subsection~\ref{subsec:boundary_effects}
for more reflexions on boundary effects.
\par
The instantaneous frequency is calculated from $\boldsymbol{\Omega}$
by evaluating $\B_k(\boldsymbol{\Omega})$, applying an inversion and
square root and running a B-splines-fit on the resulting data. Given
the nature of the transformation the inverse square
continuous frequency operator in
Definition~\ref{def:inv_sq_cont_freq_operator} specifies it is
most likely impossible to exploit any B-spline property to circumvent this
step and directly work with the B-spline coefficient vector $\boldsymbol{\Omega}$.
\section{Examples}\label{sec:examples-diffop}
Following the previous theoretical perspective, this section gives
a few examples on the numerical behaviour of the simplified
IMF differential operator. For this purpose we restrict ourselves
to IMFs with constant instantaneous amplitude $a \equiv 1$ and known
analytical form and apply our operator to them. The question is
how well we manage to extract the instantaneous phase $\phi$,
which we can assess by comparing the results to the ground truth.
The examples were implemented using the ETHOS-toolbox and can be found
in Listing~\ref{lst:examples-regopc}.
\par
The parameters $(k,q,n)$ given in the figure captions refer to the
spline order $k$, in-fill-count $q$ (see Section~\ref{sec:ethos_toolbox})
and number of B-spline basis functions $n$. See Subsection~\ref{subsec:boundary_effects}
for a discussion on the boundary effects of these examples in the context
of information theory and other literature.
\begin{example}[Constant frequency]\label{ex:diffop-0}
	As an introduction consider the simple IMF
	\begin{equation}\label{eq:example_diffop-0-u}
		u_0(t) := \cos(\phi_0(t)) := \cos(40 \cdot t),
	\end{equation}
	on the interval $[0,1]$ (see Figure~\ref{fig:example_diffop-0-u}).
	The analytical instantaneous frequency $\phi'_0$ is
	$40$, i.e.\ the IMF is of constant frequency.
	\par
	Given the instantaneous amplitude is constantly $1$, we
	can use the toolbox to fit the simple IMF differential operator to $u_0$ to calculate the
	numerical instantaneous frequency $\tilde{\phi}'_0$.
	\par	
	The difference between $\phi'_0$ and $\tilde{\phi}'_0$
	is too small to be visible in a normal plot and thus we examine the semi-log plot of the
	relative error (see Figure~\ref{fig:example_diffop-0-relerr}). We can see that the
	relative error is at most $0.1\%$ briefly at the beginning and stays below
	$10^{-4}$ on the remaining interval.
	\begin{figure}[htbp]
		\centering
		\begin{tikzpicture}
			\begin{axis}[xlabel=$t$, ylabel=$u_0(t)$]
			\addplot [mark=none, smooth] table [x=x, y=y, col sep=comma] {examples/regop.data/0-u.csv};
			\end{axis}
		\end{tikzpicture}
		\caption{Plot of the IMF $u_0$ (see (\ref{eq:example_diffop-0-u})) from
		Example~\ref{ex:diffop-0}.}
		\label{fig:example_diffop-0-u}
	\end{figure}
	\begin{figure}[htbp]
		\centering
		\begin{tikzpicture}
			\begin{semilogyaxis}[xlabel=$t$, ylabel near ticks,
				ylabel=$\left| \frac{\phi'_0(t) -
				\tilde{\phi}'_0(t)}{\phi'_0(t)} \right|$]
			\addplot [mark=none, smooth] table [x=x, y=y, col sep=comma] {examples/regop.data/0-relerr.csv};
			\end{semilogyaxis}
		\end{tikzpicture}
		\caption{Semi-log plot of the relative error between the analytical
		instantaneous frequency $\phi'_0=40$ and the solution
		$\tilde{\phi}'_0$, obtained using the
		IMF differential operator for $(k, q, n) = (4, 4, 180)$, from
		 Example~\ref{ex:diffop-0}.}
		\label{fig:example_diffop-0-relerr}
	\end{figure}
\end{example}
\begin{example}[Harmonic peaks]\label{ex:diffop-1}
	Consider the simple IMF
	\begin{equation}\label{eq:example_diffop-1-u}
		u_1(t) := \cos(\phi_1(t)) := \cos(3 \cdot \sin(3 \cdot \pi \cdot t) +
		16 \cdot \pi \cdot t),
	\end{equation}
	on the interval $[0,1]$ (see Figure~\ref{fig:example_diffop-1-u}).
	It is easy to analytically obtain the instantaneous
	frequency $\phi'_1$ of the signal by calculation, namely
	\begin{equation}\label{eq:example_diffop-1-freq-analytic}
		\phi'_1(t) = 9 \cdot \pi \cdot \cos(3 \cdot \pi \cdot t) +
		16 \cdot \pi,
	\end{equation}
	which you can find pictured in Figure~\ref{fig:example_diffop-1-freq-analytic}.
	\par
	Given the IMF has constant instantaneous amplitude $1$ we can use
	the toolbox to fit the simple IMF differential operator to $u_1$ to calculate the numerical
	instantaneous frequency $\tilde{\phi}'_1$. As can be seen the instantenous frequency
	itself is a wave function too, explaining the irregular shape of the IMF.
	\par	
	The difference between $\phi'_1$ and $\tilde{\phi}'_1$
	is too small to be visible in a normal plot and we thus examine the semi-log plot of the
	relative error (see Figure~\ref{fig:example_diffop-1-relerr}). We can see that the
	relative error is at most $1\%$ briefly at the beginning and between $1$ and $4$ orders
	of magnitude lower on the remaining interval.
	\begin{figure}[htbp]
		\centering
		\begin{tikzpicture}
			\begin{axis}[xlabel=$t$, ylabel=$u_1(t)$]
			\addplot [mark=none, smooth] table [x=x, y=y, col sep=comma] {examples/regop.data/1-u.csv};
			\end{axis}
		\end{tikzpicture}
		\caption{Plot of the IMF $u_1$ (see (\ref{eq:example_diffop-1-u})) from
		Example~\ref{ex:diffop-1}.}
		\label{fig:example_diffop-1-u}
	\end{figure}
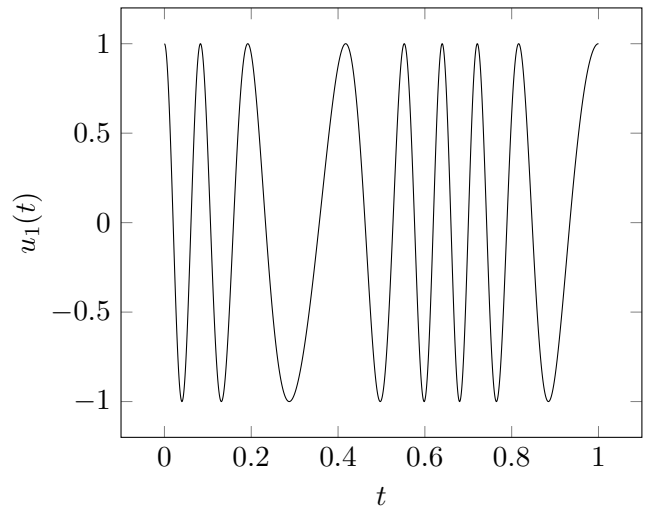	
	\begin{figure}[htbp]
		\centering
		\begin{tikzpicture}
			\begin{axis}[xlabel=$t$, ylabel=$\phi'_1(t)$]
			\addplot [mark=none, smooth] table [x=x, y=y, col sep=comma] {examples/regop.data/1-freq-analytic.csv};
			\end{axis}
		\end{tikzpicture}
		\caption{Plot of the analytical instantaneous frequency
		$\phi'_1$ (see (\ref{eq:example_diffop-1-freq-analytic}))
		from Example~\ref{ex:diffop-1}.}
		\label{fig:example_diffop-1-freq-analytic}
	\end{figure}
	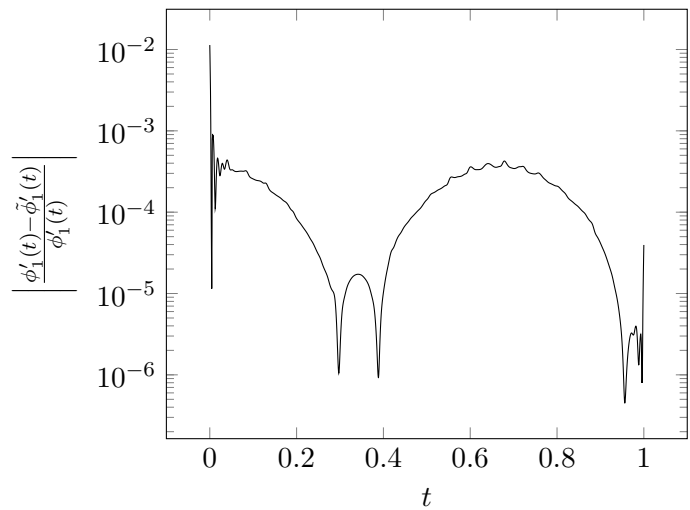
\begin{figure}[htbp]
		\centering
		\begin{tikzpicture}
			\begin{semilogyaxis}[xlabel=$t$, ylabel near ticks,
				ylabel=$\left| \frac{\phi'_1(t) -
				\tilde{\phi}'_1(t)}{\phi'_1(t)} \right|$]
			\addplot [mark=none, smooth] table [x=x, y=y, col sep=comma] {examples/regop.data/1-relerr.csv};
			\end{semilogyaxis}
		\end{tikzpicture}
		\caption{Semi-log plot of the relative error between the analytical
		instantaneous frequency $\phi'_1$
		(see (\ref{eq:example_diffop-1-freq-analytic})) and the solution
		$\tilde{\phi}'_1$, obtained using the
		IMF differential operator for $(k, q, n) = (4, 4, 180)$, from
		 Example~\ref{ex:diffop-1}.}
		\label{fig:example_diffop-1-relerr}
	\end{figure}
\end{example}
\begin{example}[Sigmoid up-chirp]\label{ex:diffop-2}
	An aspect of interest is an IMF with rapidly increasing frequency in a short
	timeframe. We want to know how well our differential operator handles such a case.
	\par
	A signal whose frequency changes over time is called a \enquote{chirp}, and one
	with increasing frequency over time an \enquote{up-chirp}.
	Even with a rapid increase, as with all natural phenomena, we can reasonably expect
	our frequency to still be smooth. This is best illustrated if we compare the
	frequency with velocity. We can not have sudden changes in velocity of an object
	either, as it would imply infinte acceleration in that moment.
	To take the idea further, we can not have sudden changes in the acceleration either,
	as it would imply infinite jerk (rate of change of acceleration) in that moment,
	et cetera.
	\par
	A good modelling function for this is a sigmoid
	function, more precisely the logistic function, which we will make use of in this
	example.
	Consider the simple IMF
	\begin{equation}\label{eq:example_diffop-2-u}
		u_2(t) := \cos(\phi_2(t)) := \cos\left( 40 \cdot t + \frac{100}{90} \cdot
		\ln\!\left( 1 + \exp\!\left(90 \cdot (t - 0.5)\right)\right)\right)
	\end{equation}
	on the interval $[0,1]$ (see Figure~\ref{fig:example_diffop-2-u}). We calculate
	the instantaneous frequency $\phi'_2$, which happens to be a transformation of the
	logistic function, analytically as
	\begin{equation}\label{eq:example_diffop-2-freq-analytic}
		\phi'_2(t) = 40 + \frac{100}{1 + \exp\!\left( -90 \cdot (t - 0.5) \right)}.
	\end{equation}
	You can find it pictured in Figure~\ref{fig:example_diffop-2-freq-analytic}. It
	represents a sudden frequency increase from $40$ to $140$ in a very short
	timeframe around the middle of the interval $[0,1]$.
	\par
	As in Example~\ref{ex:diffop-1}, given the instantaneous amplitude is constantly $1$, we
	can use the toolbox to fit the simple IMF differential operator to $u_2$ to calculate the
	numerical instantaneous frequency $\tilde{\phi}'_2$.
	\par	
	The difference between $\phi'_2$ and $\tilde{\phi}'_2$
	is too small to be visible in a normal plot and thus we examine the semi-log plot of the
	relative error (see Figure~\ref{fig:example_diffop-2-relerr}). We can see that the
	relative error is at most roughly $0.1\%$ and ranges between around
	$2$ orders of magnitude below that.
	\begin{figure}[htbp]
		\centering
		\begin{tikzpicture}
			\begin{axis}[xlabel=$t$, ylabel=$u_2(t)$]
			\addplot [mark=none, smooth] table [x=x, y=y, col sep=comma] {examples/regop.data/2-u.csv};
			\end{axis}
		\end{tikzpicture}
		\caption{Plot of the IMF $u_2$ (see (\ref{eq:example_diffop-2-u})) from
		Example~\ref{ex:diffop-2}.}
		\label{fig:example_diffop-2-u}
	\end{figure}
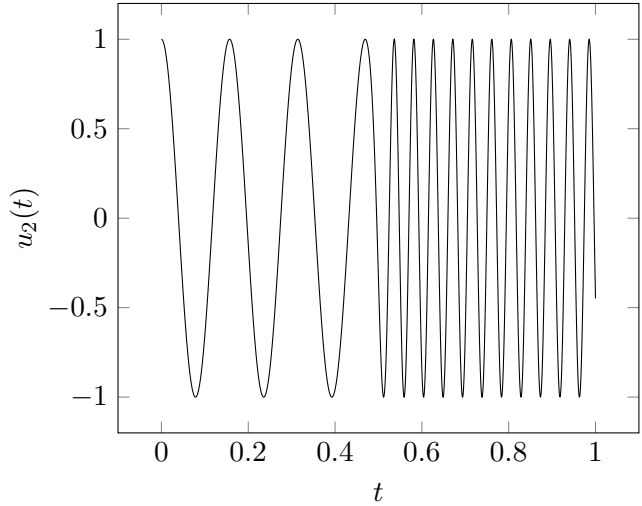
	\begin{figure}[htbp]
		\centering
		\begin{tikzpicture}
			\begin{axis}[xlabel=$t$, ylabel=$\phi'_2(t)$]
			\addplot [mark=none, smooth] table [x=x, y=y, col sep=comma] {examples/regop.data/2-freq-analytic.csv};
			\end{axis}
		\end{tikzpicture}
		\caption{Plot of the analytical instantaneous frequency
		$\phi'_2$ (see (\ref{eq:example_diffop-2-freq-analytic}))
		from Example~\ref{ex:diffop-2}.}
		\label{fig:example_diffop-2-freq-analytic}
	\end{figure}
	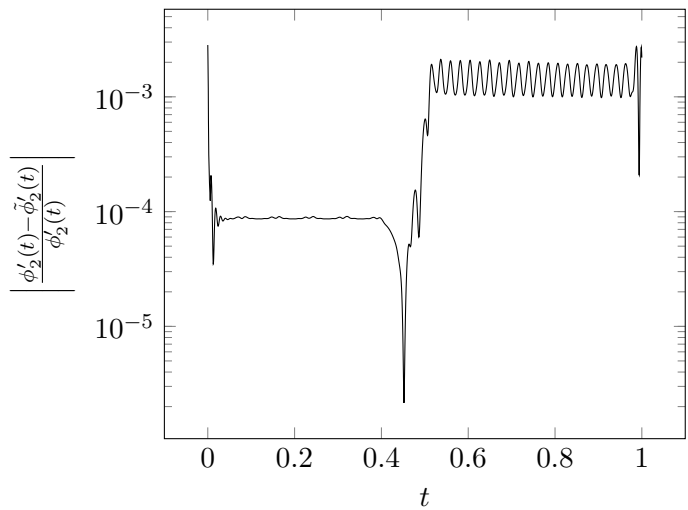
\begin{figure}[htbp]
		\centering
		\begin{tikzpicture}
			\begin{semilogyaxis}[xlabel=$t$, ylabel near ticks,
				ylabel=$\left| \frac{\phi'_2(t) -
				\tilde{\phi}'_2(t)}{\phi'_2(t)} \right|$]
			\addplot [mark=none, smooth] table [x=x, y=y, col sep=comma] {examples/regop.data/2-relerr.csv};
			\end{semilogyaxis}
		\end{tikzpicture}
		\caption{Semi-log plot of the relative error between the analytical
		instantaneous frequency $\phi'_2$
		(see (\ref{eq:example_diffop-2-freq-analytic})) and the solution
		$\tilde{\phi}'_2$, obtained using the
		IMF differential operator for $(k, q, n) = (4, 4, 180)$, from
		 Example~\ref{ex:diffop-2}.}
		\label{fig:example_diffop-2-relerr}
	\end{figure}
\end{example}
\FloatBarrier
\section{Discussion}
In this chapter we have examined the IMF differential operator
(see Definition~\ref{def:imf-diffop}) as a possible means to extract
instantaneous amplitude and frequency from an IMF and to
regularize the EMD optimization problem introduced in 
Definition~\ref{def:emdop}. What we noticed in Theorem~\ref{thm:imf-operator-convex} is that the IMF differential
operator is not convex in the parameters corresponding to amplitude
and frequency, which is why we modified it in
Definition~\ref{def:modified-imf-diffop} to work only on IMFs with
constant amplitude $1$ and only extract the frequency, which we proved
in Corollary~\ref{cor:modified-imf-diffop-convex}. Consequently,
we showed in the examples in Section~\ref{sec:examples-diffop} that
the operator, as expected, successfully extracts the frequency from given
IMFs with constant amplitude $1$.
\par
The limitatin to IMFs with constant amplitude $1$ appears to be a drastic
limitation, but we show in Chapter~\ref{ch:hobm} how to work around it and
extract the amplitude already during the EMD sifting process.
However, when considering the EMD optimization problem from Definition~\ref{def:emdop} the general differential operator from Definition~\ref{def:imf-diffop} is unsuitable as a general regularization
term $R(a,\phi)$ from a theoretical perspective.
Given we have shown that it is not convex, one can even
consider it to be more of a heuristic tool.
\chapter{Hybrid Operator-Based Methods}\label{ch:hobm}
This chapter is the culmination of the results obtained in the previous
chapters. In Chapter~\ref{ch:emd_analysis} we formulated and analyzed
the optimization problem (see Equation~(\ref{eq:emdop-reformed}))
\begin{equation*}
	\begin{aligned}
		\min_{u} \quad &
			{\| s - \I[a,\phi] \|}_2^2 \\
		\text{s.t.} \quad & (a,\phi) \in {\mathcal{S}}_{\mu_0,\mu_1,\mu_2}
	\end{aligned}
\end{equation*}
In particular we have shown that this optimization problem can also be
expressed as a regularized optimization problem
(see Section~\ref{sec:regularity})
\begin{align*}
	\min_{(a,\phi) \in {\left( \mathcal{C}^0(\R,\R) \right)}^2} \quad &
		c[s](a,\phi) + R(a,\phi).
\end{align*}
$R(a,\phi)$ is a regularization term that punishes solution candidates
of the optimization problem that are not an IMF soul
(see Definition~\ref{def:imfs}).
\par
In Chapter~\ref{ch:oss} we introduced one possible way to define
this regularization term. We made use of a differential operator
$\mathcal{D}_{(\tilde{a},\tilde{\phi})}$ with parameters
$\tilde{a}$ and $\tilde{\phi}$ that annihilates IMF functions
$u(t) := a(t) \cdot \cos(\phi(t))$ when $\tilde{a} = a$ and
$\tilde{\phi} = \phi$. Unfortunately, this operator does not
yield unique results for a given input IMF
(see Theorem~\ref{thm:imf-operator-convex}) and thus is
highly reliant on heuristics to work.
\par
However, when reduced to input IMFs with constant amplitude $1$,
the differential operator is convex and the resulting frequency for
an IMF is unique (see Corollary~\ref{cor:modified-imf-diffop-convex}). This result does not look very useful,
but can be leveraged when combined with the classic EMD method
proposed by \cite{hsl+98}.
The result of this combination is a hybrid of classic and
modern methods and will be introduced later in this chapter. Before
considering this approach, we first introduce the classic EMD method.
\section{Classic EMD method}\label{sec:classic_emd}
The classic EMD method was first proposed in \cite{hsl+98}
and will be described as follows. The EMD is a multistep
method, but we will without loss of generality
only consider a single extraction step.
In this step we separate a given multicomponent signal $s(t)$
into an IMF $u(t)$ and a residual $r(t)$.
This is without loss of generality, as subsequent extraction
steps are realized by considering the
residual of the previous step as the input signal for the current
step. Continuing this process, we sooner or later obtain a residual
that does not contain any more IMFs. The stopping criterion
might for instance be when the residual has no or at
most one local extremum, but this is not within the scope of this thesis.
\par
After the separation of $s(t)$ into $u(t)$ and $r(t)$,
one can determine the instantaneous amplitude $a(t)$
and phase $\phi(t)$ of $u(t) := a(t) \cdot \cos(\phi(t))$ by
complexification of $u(t)$ using the \textsc{Hilbert}
transform, which will not be further elaborated here.
We note here though that this \textsc{Hilbert} transform
provides some numerical challenges. In particular,
it requires heuristics to work properly in the numerical
context, which is why alternatives to this approach are
desired and presented in this thesis.
\par
The process of
separation is called \enquote{sifting} in the original
paper \cite{hsl+98} and commonly referred to as the
empirical mode decomposition (EMD). The extraction of $a(t)$
and $\phi(t)$ from $u(t)$ is called the
\enquote{\textsc{Hilbert} spectral-analysis} (HSA).
The complete process of EMD and HSA is referred
to as the \enquote{\textsc{Hilbert}-\textsc{Huang}-transform}
(HHT). Of note here is though that because the
HHT describes a very specific approach using the
\textsc{Hilbert} transform, one finds that the term
\enquote{EMD} is often used to also include the
spectral analysis part that makes use of some
other method.
\par
The sifting method of separating the signal $s(t)$ into
an IMF $u(t)$ and residual $r(t)$ can be separated into
three steps, illustrated in Figure~\ref{fig:sifting-2}
and given in Algorithm~\ref{alg:sifting}.
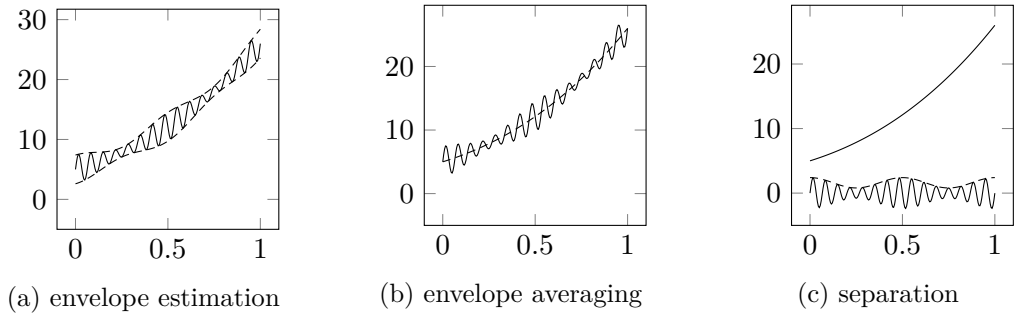
\begin{figure}[htbp]
	\centering
	\begin{subfigure}[c]{0.32\textwidth}
		\centering
		\begin{tikzpicture}
			\begin{axis}[
				name=plot1,
				height=4.5cm,width=4.5cm,
				ymin=-5,
			]
				\addplot [domain=0:1, samples=200]
					{2 + 3 * (1 + x)^3 +
					0.8 * (2 + cos(deg(4 * pi * x))) * sin(deg(30 * pi * x))};
				\addplot [domain=0:1, samples=200, densely dashed]
					{2 + 3 * (1 + x)^3 +
					0.8 * (2 + cos(deg(4 * pi * x))) * 1};
				\addplot [domain=0:1, samples=200, densely dashed]
					{2 + 3 * (1 + x)^3 +
					0.8 * (2 + cos(deg(4 * pi * x))) * (-1)};
			\end{axis}
		\end{tikzpicture}
		\subcaption{envelope estimation}
	\end{subfigure}
	\begin{subfigure}[c]{0.32\textwidth}
		\centering
		\begin{tikzpicture}
			\begin{axis}[
				name=plot1,
				height=4.5cm,width=4.5cm,
				ymin=-5,
			]
				\addplot [domain=0:1, samples=200, densely dashed]
					{2 + 3 * (1 + x)^3};
				\addplot [domain=0:1, samples=200]
					{2 + 3 * (1 + x)^3 +
					0.8 * (2 + cos(deg(4 * pi * x))) * sin(deg(30 * pi * x))};
			\end{axis}
		\end{tikzpicture}
		\subcaption{envelope averaging}
	\end{subfigure}
	\begin{subfigure}[c]{0.32\textwidth}
		\centering
		\begin{tikzpicture}
			\begin{axis}[
				name=plot1,
				height=4.5cm,width=4.5cm,
				ymin=-5,
			]
				\addplot [domain=0:1, samples=200]
					{2 + 3 * (1 + x)^3};
				\addplot [domain=0:1, samples=200]
					{0.8 * (2 + cos(deg(4 * pi * x))) * sin(deg(30 * pi * x))};
				\addplot [domain=0:1, samples=200, densely dashed]
					{0.8 * (2 + cos(deg(4 * pi * x))) * 1};
			\end{axis}
		\end{tikzpicture}
		\subcaption{separation}
	\end{subfigure}
	\caption{%
		Visualization of the EMD sifting process of a $1$-component
		signal.
	}
	\label{fig:sifting-2}
\end{figure}
\begin{algorithm}[htbp]
	\begin{algorithm2e}[H]
		\SetKwInOut{Input}{input}
		\SetKwInOut{Output}{output}
		\Input{multicomponent signal $s \in \mathcal{C}^2(\R,\R)$
		}
		\Output{intrinsic mode function $u \in \mathcal{C}^2(\R,\R)$ \\
			instantaneous amplitude $a \in \mathcal{C}^2(\R,\R)$ \\
			residual $r \in \mathcal{C}^2(\R,\R)$		
		}
		\BlankLine
		$\ul{a} \longleftarrow \text{LowerEnvelope(s)}$\;
		$\ol{a} \longleftarrow \text{UpperEnvelope}(s)$\;
		\BlankLine
		$r \longleftarrow \frac{1}{2} \cdot (\ul{a} + \ol{a})$\;
		$u \longleftarrow s - r$\;
		$a \longleftarrow \ol{a} - r$\;
	\end{algorithm2e}
	\caption{EMD sifting algorithm.}
	\label{alg:sifting}
\end{algorithm}
The first step is to estimate the lower and upper envelopes $\ul{a}(t)$ 
and $\ul{a}(t)$ of the input signal. What an envelope is exactly will
be defined later. The second is to take the average of $\ul{a}(t)$ 
and $\ul{a}(t)$, yielding the residual $r(t)$, and the third is to
separate the signal into residual and IMF $u(t)$ by subtracting $r(t)$
from $s(t)$. The instantaneous amplitude $a(t)$ of $u(t) :=
a(t) \cdot \cos(\phi(t))$ follows naturally by subtracting $r$ from
the upper envelope $\ol{a}$.
\par
From this observation we can conclude two things: The first is that
the envelope estimation is central to the EMD method. The second
is that given we obtain the IMF $u(t)$ and its instantaneous amplitude
$a(t)$ naturally from the sifting process, we can make use of our
differential operator to extract the instantaneous phase $\phi(t)$.
This is because the IMF $\tilde{u}(t) := u(t) / a(t)$ has amplitude
$1$ and makes it possible to use the differential operator introduced
in Chapter~\ref{ch:oss} in a theoretically meaningful way.
What is left to do is to analyze the envelope estimation method itself,
which we will do as follows.
\section{Envelope Estimation}\label{sec:envelope_estimation}
An envelope is not uniquely classified, but defined as a
function that encloses a function either from above (\enquote{upper
envelope}) or below (\enquote{lower envelope}).
\begin{definition}[Lower/upper envelope]\label{def:envelope}
	Let $v,f \in \mathcal{C}^0(\R,\R)$. $v$ is a \emph{lower envelope}
	of $f$ if and only if
	\begin{equation*}
		v \preceq f.
	\end{equation*}
	$v$ is an \emph{upper envelope} of $f$ if and only if
	\begin{equation*}
		v \succeq f.
	\end{equation*}
\end{definition}
As an example, an upper envelope for $\cos(t)$ is the constant function
$1$ and a lower envelope is the constant function $-1$, but we can also
choose $2$ and $-2$ or $\cos(t)$ for both as lower and upper amplitudes
(see Figure~\ref{fig:sin-upper_envelopes}).
\begin{figure}[htbp]
	\centering
	\begin{tikzpicture}
		\begin{axis}[]
		\addplot [domain=0:21, samples=200]{cos(deg(x))};
		\addplot [densely dashed, domain=0:21, samples=200]{0.2 + 0.8 * cos(deg(x)))};
		\addplot [densely dashed, domain=0:21, samples=200]{0.5 + 0.5 * cos(deg(x)))};
		\addplot [densely dashed, domain=0:21, samples=20]{1};
		\addplot [densely dashed, domain=0:21, samples=20]{2};
		\end{axis}
	\end{tikzpicture}
	\caption{Examples (dashed) for upper envelopes of $\cos(t)$ (solid).}
	\label{fig:sin-upper_envelopes}
\end{figure}
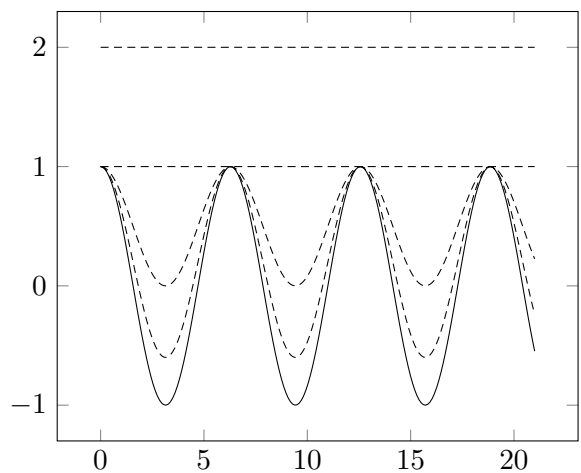
We can thus note that by far there is no unique choice for a lower
and upper envelope of a function and we will have to specify more
requirements the envelopes have to fulfill.
In the context of the empirical mode decomposition, determining the
lower and upper envelopes of an input signal is the central step to
obtain the residual and IMF, as explained in Section~\ref{sec:classic_emd}.
\par
As follows, we will, without loss of generality, only consider the
upper envelope estimation. The procedure for the lower envelope follows
respectively, given the following
\begin{proposition}\label{prop:lower_envelope-upper_envelope}
	Let $f \in \mathcal{C}^0(\R,\R)$ and $v \in \mathcal{C}^0(\R,\R)$
	be a lower envelope of $s$.
	It holds that $-v$ is an upper envelope of $-f$.
\end{proposition}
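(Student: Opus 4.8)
The plan is to unwind the two definitions in Definition~\ref{def:envelope} and then invoke the compatibility of the function order $\preceq$ with negation, which is a standard consequence of the preordered vector space structure recalled in Chapter~\ref{ch:funcspaceopt}. By hypothesis $v$ is a lower envelope of $f$, so by definition $v \preceq f$. The goal is to establish $-v \succeq -f$, together with $-v \in \mathcal{C}^0(\R,\R)$, which is exactly the assertion that $-v$ is an upper envelope of $-f$.

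First I would rewrite $v \preceq f$ in terms of the positive cone, namely $0 \preceq f - v$, using the translation invariance of the order (i.e.\ that $\preceq$ is an ordering of the ordered vector space ${\left(\mathcal{C}^0(\R,\R)\right)}$). Observing that $f - v = (-v) - (-f)$, this says precisely $(-v) - (-f) \succeq 0$, which upon translating back yields $-f \preceq -v$, equivalently $-v \succeq -f$. Continuity of $-v$ is immediate, since the negation of a continuous function is continuous, so indeed $-v \in \mathcal{C}^0(\R,\R)$ and $-v$ is an upper envelope of $-f$.

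The statement is essentially a one-line consequence of the order axioms, so there is no real obstacle to overcome; the only point demanding care is that multiplying by $-1$ \emph{reverses} the order rather than preserving it. This is why I would phrase the argument through the positive cone and translation invariance, rather than through scalar multiplication compatibility, since the latter in the form used earlier in the excerpt (e.g.\ in the proof that $V^+$ is a convex cone) only covers multiplication by nonnegative scalars.
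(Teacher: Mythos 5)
Your proof is correct and takes essentially the same route as the paper, whose entire proof is the single equivalence $v \preceq f \Leftrightarrow -v \succeq -f$ asserted without further justification. Your elaboration via the positive cone and translation invariance (rather than scalar multiplication compatibility, which indeed only covers $\lambda \ge 0$) is simply a careful justification of that one asserted step.
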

\begin{proof}
	It holds by Definition~\ref{def:envelope} that $v \preceq f$ and
	\begin{equation*}
		v \preceq f
		\Leftrightarrow -v \succeq -f. \qedhere
	\end{equation*}
\end{proof}
Thus, to determine the lower envelope we simply determine the negated
upper envelope of the negated input function.
\subsection{Classic Envelope Estimation}
Knowing the requirements for an upper envelope in the context of the
empirical mode decomposition listed previously, we now take a look at the
classic envelope estimation proposed in \cite[Section~5]{hsl+98}.
When we reconsider the previous example $\cos(t)$ (which has upper envelope $1$)
we see that the function assumes the value
$1$ in its local maxima. Consequently, we can propose that an IMF assumes
the value of its upper envelope in its local maxima and we obtain the upper
envelope by interpolating them. The corresponding algorithm in pseudocode
can be found in Algorithm~\ref{alg:envelope_estimation-classic}.
\begin{algorithm}[htbp]
	\begin{algorithm2e}[H]
		\SetKwInOut{Input}{input}
		\SetKwInOut{Output}{output}
		\Input{multicomponent signal $s \in \mathcal{C}^2(\R,\R)$}
		\Output{upper envelope $m \in \mathcal{C}^2(\R,\R)$}
		\BlankLine
		$P \longleftarrow \{ (t, s(t)) \in \R\times\R \mid s'(t) = 0 \land s''(t) < 0 \}$\;
		$m \longleftarrow \text{Interpolate}(P)$\;
	\end{algorithm2e}
	\caption{Classic upper envelope estimation algorithm.}
	\label{alg:envelope_estimation-classic}
\end{algorithm}
\par
The Interpolate-method in the Algorithm is left out by choice and
means the fitting of a B-spline-curve to each point in the set $p$.
\par
The problem is that with varying amplitude the estimated envelope tends to
dip below the signal, thus violating the definition of an envelope not to
cut the signal at any moment. This is illustrated
in Figure~\ref{fig:classic_sifting_problem}.
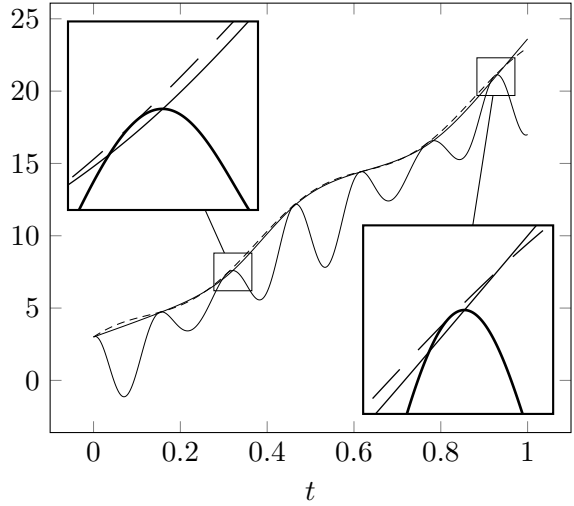
\begin{figure}[htbp]
	\centering
	\begin{tikzpicture}[spy using outlines={rectangle,lens={scale=5},
	size=2.5cm, connect spies}]
		\begin{axis}[xlabel=$t$]
		\addplot [mark=none, smooth, very thin] table [x=x, y=y, col sep=comma] {examples/envelope.data/x-s.csv};
		\addplot [mark=none, smooth, ultra thin] table [x=x, y=y, col sep=comma] {examples/envelope.data/x-a-classic.csv};
		\addplot [mark=none, smooth, ultra thin, densely dashed] table [x=x, y=y, col sep=comma] {examples/envelope.data/x-a-analytic.csv};
		\coordinate (spypoint_0) at (axis cs:0.321,7.5);
		\coordinate (glass_0) at (axis cs:0.16,18.3);
		\coordinate (spypoint_1) at (axis cs:0.927,21);
		\coordinate (glass_1) at (axis cs:0.84,4.2);	
		\end{axis}
		\spy on (spypoint_0) in node[fill=white] at (glass_0);
		\spy on (spypoint_1) in node[fill=white] at (glass_1);
	\end{tikzpicture}
	\caption{Multicomponent signal $s_0$ (thick) from Example~\ref{ex:envelope-0}
	with analytical upper envelope (dashed) and estimated upper envelope using
	the classic sifting algorithm (thin). Two sections where the latter cuts
	the signal are enlarged.}
	\label{fig:classic_sifting_problem}
\end{figure}
As we can see, the classic method of interpolating the local maxima
reaches its limits very quickly and is in general not a very good
envelope estimation method, given it violates the definition.
\subsection{Iterative Slope Envelope Estimation}
There have been multiple approaches to the problem
with the intersection of envelope and signal that we described earlier.
\cite[Subsection~2.3]{hk13} introduced an optimization scheme to obtain
the envelope, strictly enforcing the nature of the envelope definition,
but at the cost of the smoothness of the resulting amplitude estimation.
\cite{hph12} approached the problem by analytically moving the interpolation
points from the local maxima to more fitting spots, with the disadvantage
that these approaches only work where it is at least possible to estimate
the current frequency. Additionally, it only allows to work with IMFs and
not a multicomponent signal, which we are relying on in the sifting process,
because it is not possible to easily find analytical results taking the
entire multicomponent signal into account.
\par
If we take a step back and think how a human would draw an uppper envelope of
a signal $s(t)$ by hand, we see that the result $m_0(t)$ of the classic
sifting can be considered as a first step toward a better envelope
estimation which just needs some refining.
We do that by taking $m_0(t)$ and finding every point on the signal $s(t)$
where $m_0'(t) = s'(t)$ (matching slope) and $s''(t) < 0$ (negative curvature)
hold. We obtain the upper envelope $m_1(t)$ by interpolating these points.
Repeating this process yields a curve with a better fitting, as it
becomes by definition a tangential curve.
\par
The algorithm describing this process can be found as pseudocode in
Algorithm~\ref{alg:envelope_estimation-iterative_slope}. We begin
with a multicomponent signal $s(t)$ and a tolerance.
Our estimated upper envelope $m$ is first initialized to
the zero-function before entering the main loop, in which $m$
is copied to $\tilde{m}$ and $m$ set to the
next envelope estimate iterate. If the difference between
the previous and current envelope estimate iterate
is strictly smaller than our tolerance $\varepsilon$
in the supremum norm, we are done.
\par
We make use of the supremum norm given it is easy
to calculate a close upper bound of it within the
well-conditioned B-spline basis (see
\ref{prop:well_conditioned_basis}), which amounts
to just the supremum norm of the respective vector
of  B-spline basis coefficients.
\begin{algorithm}[htbp]
	\begin{algorithm2e}[H]
		\SetKwInOut{Input}{input}
		\SetKwInOut{Output}{output}
		\Input{multicomponent signal $s \in \mathcal{C}^2(\R,\R)$\\
			tolerance $\varepsilon > 0$
		}
		\Output{upper envelope $m \in \mathcal{C}^2(\R,\R)$}
		\BlankLine
		$m \longleftarrow 0 \in \mathcal{C}^0(\R,\R)$\;
		\Repeat{${\| m - \tilde{m} \|}_\infty < \varepsilon$}{
			$\tilde{m} \longleftarrow m$\;
			$p \longleftarrow \{ t \in \R \mid s'(t) = \tilde{m}'(t) \land s''(t) < 0 \}$\;
			$m \longleftarrow \text{Interpolate}(p)$\;
		}
	\end{algorithm2e}
	\caption{Iterative slope upper envelope estimation algorithm.}
	\label{alg:envelope_estimation-iterative_slope}
\end{algorithm}
\begin{remark}[Generalization of the classic envelope estimation method]\label{rem:iterative_slope-generalization}
	Let us compare Algorithms \ref{alg:envelope_estimation-classic}
	and \ref{alg:envelope_estimation-iterative_slope}.
	We remind ourselves that to determine the upper envelope,
	the classic method interpolates the local maxima.
	The slope in the maxima is $0$ and the curvature is
	negative. It is easy to see that the first iteration of the
	iterative slope envelope algorithm is simply the classic
	envelope estimation, because the slope of the $0$-function
	is also zero. Thus, all slope matches in the signal are those
	where the slope is zero.
	\par
	Forcing the algorithm to finish after the first iteration by
	setting $\varepsilon = \infty$ we obtain the classic
	method. We can thus say that the proposed upper envelope
	estimation algorithm is a generalization of the classic
	algorithm.
\end{remark}
Obtaining the lower envelope of a given input signal $s$ is analogous
to Proposition~\ref{prop:lower_envelope-upper_envelope} by
determining the negative upper envelope of the negated input
signal $-s$. Given these negations are linear time operations
there is no effect on the run-time of the algorithm regardless
of whether we estimate the upper or lower envelope.
\par
An advantage of this algorithm over the method presented in \cite{hph12}
is that we do not need to estimate the instantaneous frequency and do
not require the input signal to have any special form. Given our new
method is a generalization of the classic envelope estimation,
it fits more naturally into the existing methods. Moreover, we
solve the intersection problem as described in
Figure~\ref{fig:classic_sifting_problem} and obtain meaningful envelopes
that satisfy the definition.
\subsection{Examples}\label{subsec:envelope_estimation_examples}
The following examples were implemented using the ETHOS-toolbox developed
in the course of this thesis and can be found in Listing~\ref{lst:examples-envelopec}.
The parameters $(k,q,n,\varepsilon)$ given in the figure captions refer to the
spline order $k$, in-fill-count $q$ (see Section~\ref{sec:ethos_toolbox}),
number of B-spline basis functions $n$ (see Definition~\ref{def:b-spline}) and envelope extraction
tolerance $\varepsilon$ (see Algorithm~\ref{alg:envelope_estimation-iterative_slope}).
See Subsection~\ref{subsec:boundary_effects} for a discussion on the boundary
effects of these examples in the context of information theory and other literature.
\begin{example}[Ladder]\label{ex:envelope-0}
	Consider the composite signal
	\begin{equation}\label{eq:example_envelope-0-s}
		s_0(t) := 40 \cdot t + (20 + 10 \cdot \cos(5 \cdot \pi \cdot t))
		\cdot \cos(25 \cdot \pi \cdot t)
	\end{equation}
	on the interval [0,1]. Beginning with the highest frequency
	component, the first analytical envelope to be extracted by the sifting process
	is
	\begin{equation}\label{eq:example_envelope-0-m}
		m_0(t) := 40 \cdot t + (20 + 10 \cdot \cos(5 \cdot \pi \cdot t)).
	\end{equation}
	In Figure~\ref{fig:example_envelope-0-s-m} you can see the result of
	the proposed iterative slope sifting process compared with the
	analytical envelope $m_0$.
	\par
	Due to the little differences in most
	parts we examine the semi-log plot of the relative error
	(see Figure~\ref{fig:example_envelope-0-relerr}) for both the classic
	and iterative slope sifting processes. We can see that the
	relative error for the proposed iterative slope method is up to
	an order of magnitude less in
	some parts while staying equally good e.g.\ in the boundary regions,
	which is more due to an information theoretical reason and not a
	quality criterion of the sifting algorithm.
	\par
	What is more important is that the iterative slope envelope is a true
	envelope in that it does not cut the signal in any location like
	the envelope obtained with the classic sifting algorithm. This is
	due to the fact that the iterative slope envelope is a tangent
	by construction.
	\begin{figure}[htbp]
		\centering
		\begin{tikzpicture}
			\begin{axis}[xlabel=$t$]
			\addplot [mark=none, smooth] table [x=x, y=y, col sep=comma] {examples/envelope.data/0-s.csv};
			\addplot [mark=none, smooth, thin] table [x=x, y=y, col sep=comma] {examples/envelope.data/0-a.csv};
			\addplot [mark=none, smooth, dotted] table [x=x, y=y, col sep=comma] {examples/envelope.data/0-a-analytic.csv};
			\end{axis}
		\end{tikzpicture}
		\caption{Plot of the multicomponent signal $s_0$ (see (\ref{eq:example_envelope-0-s})), its analytical highest frequency
		component envelope $m_0$ (dotted) (see (\ref{eq:example_envelope-0-m}))
		and calculated iterative slope envelope (thin) for
		$(k, q, n, \varepsilon) = (4, 4, 180, 0.01)$ from
		Example~\ref{ex:envelope-0}.}
		\label{fig:example_envelope-0-s-m}
	\end{figure}
	\begin{figure}[htbp]
		\centering
		\begin{tikzpicture}
			\begin{semilogyaxis}[xlabel=$t$]
			\addplot [mark=none, smooth] table [x=x, y=y, col sep=comma] {examples/envelope.data/0-relerr.csv};
			\addplot [mark=none, smooth, dotted] table [x=x, y=y, col sep=comma] {examples/envelope.data/0-relerr-classic.csv};
			\end{semilogyaxis}
		\end{tikzpicture}
		\caption{Semi-log plot of the relative errors between the analytical
		highest frequency component envelope $m_0$
		(see (\ref{eq:example_envelope-0-m})) and both the envelopes
		obtained using the classic (dotted) and iterative slope
		sifting algorithms for $(k,q,n,\varepsilon) = (4,4,180,0.01)$
		from Example~\ref{ex:envelope-0}.}
		\label{fig:example_envelope-0-relerr}
	\end{figure}
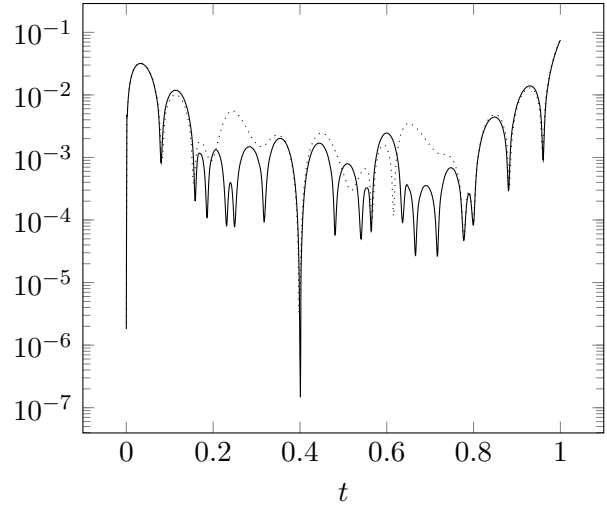
\end{example}
\begin{example}[{\cite[Figure~2]{hph12}}]\label{ex:envelope-1}
	Consider the IMF
	\begin{equation}\label{eq:example_envelope-1-s}
		s_1(t) := \frac{1}{16} \cdot (t^2 + 2) \cdot
		\cos(\pi \cdot \sin(8 \cdot t) + \pi)
	\end{equation}
	on the interval [-4,4]. The analytical envelope to be
	extracted by the sifting process is
	\begin{equation}\label{eq:example_envelope-1-m}
		m_1(t) := t^2 + 2.
	\end{equation}
	In Figure~\ref{fig:example_envelope-1-s-m} you can see the result of
	the proposed iterative slope sifting process compared with the
	analytical envelope $m_1$.
	\par
	Due to the little differences in most
	parts we examine the semi-log plot of the relative error
	(see Figure~\ref{fig:example_envelope-1-relerr}) for both the classic
	and iterative slope sifting processes. We can see that the
	relative error for the proposed iterative slope method is equal
	to that of the classic sifting method and even up to an order of magnitude
	lower in the increasing branch of $s_1$.
	\par	
	The reason the error is
	not symmetric like that of the classic sifting method is because
	even though the maxima are symmetrically distributed, $s_1$ itself
	is not symmetric. The classic sifting method only considers the maxima
	though and thus is oblivious to the shape of $s_1$ itself, unlike
	the iterative slope method.
	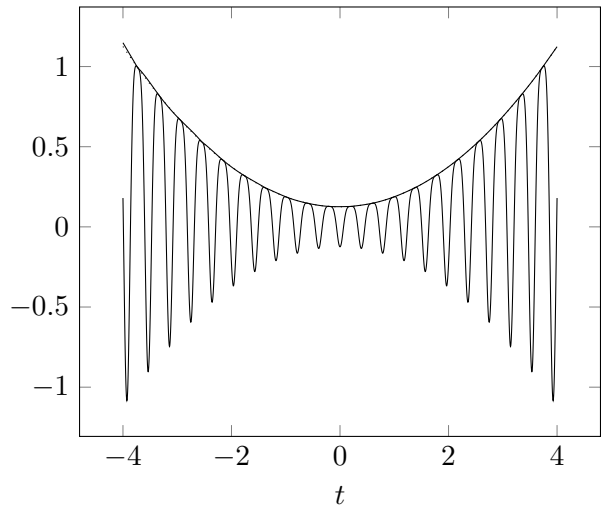
\begin{figure}[htbp]
		\centering
		\begin{tikzpicture}
			\begin{axis}[xlabel=$t$]
			\addplot [mark=none, smooth] table [x=x, y=y, col sep=comma] {examples/envelope.data/1-s.csv};
			\addplot [mark=none, smooth, thin] table [x=x, y=y, col sep=comma] {examples/envelope.data/1-a.csv};
			\addplot [mark=none, smooth, dotted] table [x=x, y=y, col sep=comma] {examples/envelope.data/1-a-analytic.csv};
			\end{axis}
		\end{tikzpicture}
		\caption{Plot of the multicomponent signal $s_1$ (see (\ref{eq:example_envelope-1-s})), its analytical highest frequency
		component envelope $m_1$ (dotted) (see (\ref{eq:example_envelope-1-m}))
		and calculated iterative slope envelope (thin) for
		$(k, q, n, \varepsilon) = (4, 4, 180, 0.1)$ from
		Example~\ref{ex:envelope-1}.}
		\label{fig:example_envelope-1-s-m}
	\end{figure}
	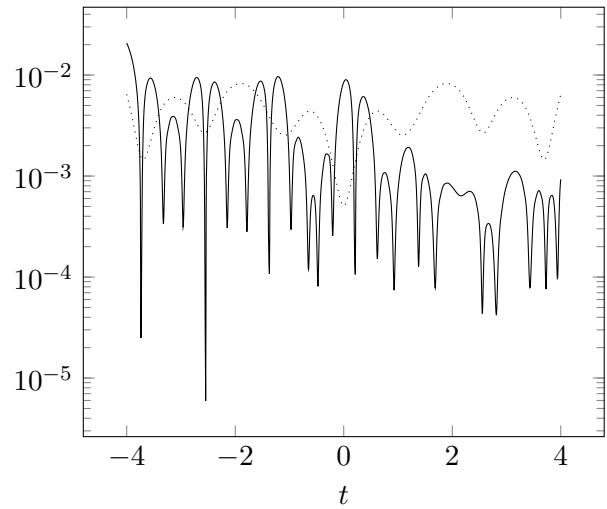
\begin{figure}[htbp]
		\centering
		\begin{tikzpicture}
			\begin{semilogyaxis}[xlabel=$t$]
			\addplot [mark=none, smooth] table [x=x, y=y, col sep=comma] {examples/envelope.data/1-relerr.csv};
			\addplot [mark=none, smooth, dotted] table [x=x, y=y, col sep=comma] {examples/envelope.data/1-relerr-classic.csv};
			\end{semilogyaxis}
		\end{tikzpicture}
		\caption{Semi-log plot of the relative errors between the analytical
		highest frequency component envelope $m_1$
		(see (\ref{eq:example_envelope-1-m})) and both the envelopes
		obtained using the classic (dotted) and iterative slope
		sifting algorithms for $(k,q,n,\varepsilon) = (4,4,180,0.1)$
		from Example~\ref{ex:envelope-1}.}
		\label{fig:example_envelope-1-relerr}
	\end{figure}
\end{example}
What we can clearly see is that the newly presented iterative slope sifting
algorithm provides a better envelope estimation than the classic sifting algorithm. Of note is especially the intuition behind it and the fact that
it is a generalization of the classic method. For this reason, we will
make use of it in our hybrid EMD algorithm presented in the next section.
\FloatBarrier
\section{Hybrid EMD Algorithm}\label{sec:emd_algo}
This section presents a new EMD algorithm making use of
the new iterative slope sifting algorithm and the differential-
operator-based method presented earlier. Given the former is
considered a more classic approach compared to the operator-based
signal-separation and the latter operator-based method is a modern
concept it is fitting to call this algorithm a \enquote{hybrid}
algorithm. The entire procedure for a single step of the decomposition
is given in Algorithm~\ref{alg:emd}, but we will construct the method
step by step in the following section for a given multicomponent signal $s$.
The complete decomposition is obtained by successive runs of the
algorithm with the residual subtracted from the signal as the input
for the next step.
\begin{algorithm}[htbp]
	\begin{algorithm2e}[H]
		\SetKwInOut{Input}{input}
		\SetKwInOut{Output}{output}
		\Input{multicomponent signal $s \in \mathcal{C}^2(\R,\R)$\\
			tolerance $\varepsilon > 0$
		}
		\Output{intrinsic mode function $u \in \mathcal{C}^2(\R,\R)$\\
			instantaneous amplitude $a \in \mathcal{C}^2(\R,\R)$\\
			instantaneous frequency $\phi' \in \mathcal{C}^2(\R,\R)$\\
			residual $r \in \mathcal{C}^2(\R,\R)$
		}
		\BlankLine
		$\ol{a} \leftarrow \text{UpperEnvelope}(s,\varepsilon)$\;
		$\ul{a} \leftarrow -\text{UpperEnvelope}(-s,\varepsilon)$\;
		$r \leftarrow \frac{1}{2} \cdot (\ol{a} + \ul{a})$\;
		$u \leftarrow s - r$\;
		$a \leftarrow \ol{a} - r$\;
		$\Omega^\star \leftarrow \arg\min_{\Omega \in \mathcal{C}^2(\R,\R)}
			\left(
				{\left\|\tilde{\mathcal{D}}_{(0,\Omega)}(u / a)\right\|}_2^2
			\right)$\;
		$\phi' \leftarrow \frac{1}{\sqrt{\Omega^\star}}$\;
	\end{algorithm2e}
	\caption{Hybrid operator-based empirical mode decomposition algorithm.}
	\label{alg:emd}
\end{algorithm}
\par
The first step is to determine the upper and lower envelopes $\ol{a}$
and $\ul{a}$ of $s$.
We make use of the fact that the lower envelope is just the negation
of the upper envelope of the negated signal $s$. This is why we
previously only considered the upper envelope estimation, as the lower
envelope estimation follows as a corollary.
\par
The idea behind the following steps to obtain $r$, $u$ and $a$ were first
introduced in \cite{hsl+98}. Once we've determined $\ol{a}$ and $\ul{a}$
we can calculate the residual $r$ as their mean.
The intrinsic mode function $u$ is obtained by subtracting $r$ from $s$
and the instantaneous amplitude $a$ is calculated by subtracting $r$ from
$\ol{a}$.
\par
Next we use the simple case (see Subsection~\ref{subsec:imf-operator-simplification})
of the modified IMF differential operator
(see Definition~\ref{def:modified-imf-diffop})
to obtain our instantaneous frequency. We first solve the NSP optimization
problem for the inverse square continuous frequency operator $\Omega[\phi]$
(see Definition~\ref{def:inv_sq_cont_freq_operator}) and then calculate
the instantaneous frequency directly by applying the inverse square
root.
\section{ETHOS Toolbox}\label{sec:ethos_toolbox}
The central numerical piece of this thesis is the ETHOS toolbox.
It stands
for \enquote{\ul{E}MD \ul{T}oolbox using \ul{H}ybrid
\ul{O}perator-Based Methods and B-\ul{s}plines} and has been
developed in the course of this thesis to provide an implementation
for the new concepts presented in this work, making it possible
to do an empirical mode decomposition on a discrete input signal.
The implementation language is C99 (see \cite{iso99}),
making use of the GNU Scientific Library (see \cite{gdt+18}) for
the numerical backend (including B-splines).
All examples in this and previous chapters have been realized in
this toolbox (see Section~\ref{sec:code-examples}) and the
entire source code is listed under Section~\ref{sec:code-ethos_toolbox}.
\par
The main header exposing the toolbox function is
\texttt{ethos.h} (see Listing~\ref{lst:ethos-ethosh}) with
the main datatype \texttt{struct ethos}. Nearly all
functions take a \texttt{struct ethos} as input and it is
the main storage for system parameters and precomputed
data.
\par
As follows, we will take a look at the most important
functions with regard to the decomposition process. For
all exposed functions refer to \texttt{ethos.h}
(see Listing~\ref{lst:ethos-ethosh}).
\subsection{Initialization and Precomputation}
The main initialization and first step of any program
using the ETHOS toolbox is done by \texttt{ethos\_init()}
(see Listing~\ref{lst:ethos-ethosc}). It takes a
pointer to an ETHOS-struct \texttt{e}, vector \texttt{T}
of length \texttt{N} with spline
order \texttt{k}, in-fill-count \texttt{q},
density \texttt{d} and grid-type \texttt{g} as input and
fills the given ETHOS-struct with the necessary parameters
and precomputes data for later use. The parameters are
explained as follows.
\par
The vector \texttt{T} contains all time-steps of the
discrete input signal, or comparable like a superset of
multiple possible interpolation areas, and it is our
interest to only take a certain subset of these steps for our spline
knot-vector. This is controlled by the parameters
\texttt{d}, controlling the density
of the spline knot vector relative to the input vector
and residing in the interval $(0,1)$, and \texttt{g},
controlling the way the selection is made
(uniformly or adaptively).
\par
When the spline knot-vector is obtained, it is uniformly
in-filled with \texttt{q} points between each spline knot.
The motivation for this process is to be able to pre-evaluate
the splines and their derivatives during the initialization step
on this \enquote{extended grid} (i.e.\ the in-filled knot-vector).
The reason why the knot-vector is not just made denser is
because we want to limit the number of basis functions and
for the sake of plotting or general evaluation do not need so
many basis functions, because B-splines as is provide a great
amount of smoothness.
\par
The evaluation happens only for the non-zero parts of each
basis function, resulting in linear memory complexity for
this precomputation step.
\par
On this extended grid the B-splines and their first and
second derivatives are evaluated and the results stored
in \texttt{dB} within the ETHOS struct, just like the
spline knot-vector in \texttt{grid} and in-filled form
in \texttt{extgrid}.
\par
The size of the B-spline basis \texttt{n} is of great
importance, as each function is stored internally as a
vector of this length, corresponding as coefficients
of the B-spline basis.
\subsection{Data Filtering}
After initialization, the next step is to filter the
discrete input data, with the goal of obtaining the
B-spline coefficients for this given function on the
initialized grid. The function for this purpose is
\texttt{ethos\_fit()} (see Listing~\ref{lst:ethos-ethosc})
and it takes an arbitrary discrete
input signal \texttt{S} with the time-steps \texttt{T},
which do not have to agree completely with the \texttt{T}
used in the initialization, but should agree on the
start- and endpoints.
\par
During the fitting process, the B-splines of the
initialized basis are evaluated on all points in
\texttt{T} and a weighted least-squares system solved with
a set of low-weight smoothness-terms of second order
besides the interpolation terms for each given datapoint.
It returns the B-spline coefficients in \texttt{s} best fitting the
given discrete input data as a vector, which is the
standard way of handling functions within the toolbox.
Even though not explicitly expressed, all of the
coefficient vectors have length \texttt{n} found
in the ETHOS struct.
\subsection{Boundary Effects and Extension}\label{subsec:boundary_effects}
The boundary effects we have seen in the previous
examples opens up some questions that will be
addressed here. As you can, for example, see
in Figure~\ref{fig:example_emd-1-r-2}
the error goes up as it reaches the boundary.
A common countermeasure often (silently) employed
in the literature is to extend the signal beyond
the boundary, either by mirroring or other methods
(see \cite{wr10} for further reading).
This way, the \enquote{shock} the algorithm is exposed to
is moved into the mirrored section or dampened, not
as heavily affecting the interior part one actually cares about.
\par
When looking at this matter in an information
theoretical way this technique of extension is
rather dishonest about the performance of such
an algorithm and generates information where there
is none. It might be forgivable for applications
that care about a good represenation, but the real
challenge is to design robust algorithms and make them
comparable among each other without silent tricks like
this one. Moreover, there is not a canonical way to
extend beyond the boundary and it presents itself
more as its own field of research. This is the reason
why the author chose not to use boundary extension
methods for his examples and keep them
honest with regard to the boundary effects.
\par
Despite these ethical concerns, the ETHOS toolbox includes
\texttt{ethos\_extend\_boundary()} (see Listing~\ref{lst:ethos-ethosc})
which takes an arbitrary discrete input signal \texttt{S} with the
time-steps \texttt{T} and length \texttt{N} and calculates an extended signal
\texttt{Se} with the time-steps \texttt{Te} and length \texttt{Ne}
by mirroring the signal into the extended area.
This extension is parametrized by \texttt{ratio} between $0$ and $1$,
extending the signal by this fraction both on the left and right side.
\subsection{Decomposition}\label{subsec:decomposition}
The function to do the signal decomposition itself is
\texttt{ethos\_emd()} (see Listing~\ref{lst:ethos-ethosc}).
It takes a pointer to an ETHOS-struct \texttt{e},
B-spline coefficient vectors \texttt{u} of the output IMF,
\texttt{a} of the amplitude of the output IMF,
\texttt{freq} of the frequency of the output IMF
and \texttt{s} of the input signal
of length \texttt{n} and tolerance \texttt{eps}.
\par
The procedure aligns with Algorithm~\ref{alg:emd},
filling the input signal vector \texttt{s} with
the residual and \texttt{u}, \texttt{a}
and \texttt{freq} with the extracted IMF and its
amplitude and frequency respectively. The tolerance
\texttt{eps} is the tolerance for the iterative
slope algorithm (see Algorithm~\ref{alg:envelope_estimation-iterative_slope}).
\par
Subsequent invocations of \texttt{ethos\_emd()}
yield the complete decomposition.
\subsection{Plotting}
The ETHOS toolbox provides two ways of plotting data, either
as a CSV-output or output meant as input for the \texttt{graph(1)}
command of the GNU plotting utilities. The input to those
plotting functions can either be a set of discrete points or
a spline function, represented with a coefficient vector.
The former is implemented as \texttt{ethos\_plot\_points()},
the latter as \texttt{ethos\_plot\_spline()} (see
Listing~\ref{lst:ethos-ethosc}).
\subsection{Envelope Estimation}
The procedure to estimate the envelope is already used
in \texttt{ethos\_emd()} described in Subsection~\ref{subsec:decomposition}
and not directly part of the decomposition path, however, it might
be of interest to test the envelope estimation itself separately.
\par
This estimation is achieved with \texttt{ethos\_upper\_envelope()}
(see Listing~\ref{lst:ethos-ethosc}). It takes a pointer to an
ETHOS-struct \texttt{e}, B-spline coefficient vectors
\texttt{m} of the upper envelope and \texttt{s} of the input signal and
tolerance \texttt{eps}.
The procedure aligns with Algorithm~\ref{alg:envelope_estimation-iterative_slope}
and stores the upper envelope estimation of \texttt{s} in \texttt{m}
with the tolerance \texttt{eps}.
\par
It can also be used to estimate the envelope using the classic
method by setting \texttt{eps} to \texttt{INFINITY}
(see Remark~\ref{rem:iterative_slope-generalization}), defined
in \cite[\texttt{math.h}]{cie97} to represent infinity. This allows an easy comparison
of both methods, as done in Subsection~\ref{subsec:envelope_estimation_examples}.
\subsection{IMF Characteristic}\label{subsec:ethos-imf-characteristic}
When we defined intrinsic mode functions in Definition~\ref{def:imfs}
we parametrized the model with three parameters $(\mu_0,\mu_1,\mu_2)$ with
$\mu_0,\mu_1,\mu_2 > 0$, called the characteristic. We also established
the connection with the IMF accuracy presented in \cite[Definition 3.1]{dlw11} 
and the role and calculation of each parameter in Remark~\ref{rem:imf-characteristic}. The former especially underlines the relevance of these parameters and makes it interesting to further explore them instead of just
treating them as a theoretical tool.
\par
For the purpose of determining the characteristic for a given IMF, the ETHOS toolbox offers the \texttt{ethos\_characteristic()}
(see Listing~\ref{lst:ethos-ethosc}) function. It takes a pointer to an
ETHOS-struct \texttt{e}, vectors \texttt{a} and \texttt{freq}
of length \texttt{n} and a vector \texttt{mu} of length $3$,
filling \texttt{mu} with the three characteristic values.
The vectors \texttt{a} and \texttt{freq} are the B-spline
coefficients for the amplitude and frequency functions relative
to the current spline environment given with \texttt{e}.
\par
Even though it is not possible to control the characteristic of each
extracted IMF during the decomposition it is nevertheless possible
to ascertain the quality of the extraction afterwards using this tool.
\section{Examples}\label{sec:emd_examples}
These examples were implemented using the ETHOS-toolbox and can be found
in Listing~\ref{lst:examples-emdc}.
The parameters $(k,q,n,\varepsilon)$ given in the figure captions refer to the
spline order $k$, in-fill-count $q$ (see Section~\ref{sec:ethos_toolbox}),
number of B-spline basis functions $n$ and envelope extraction
tolerance $\varepsilon$ (see Algorithm~\ref{alg:emd}).
See Subsection~\ref{subsec:boundary_effects} for a discussion on the boundary
effects of these examples in the context of information theory and other literature.
\begin{example}\label{ex:emd-0}
	This example was inspired by \cite[Example~1]{hs11}.
	Consider the multicomponent signal
	\begin{equation}\label{eq:example_emd-0-s}
		s_0(t) := u_{0,0}(t) + u_{0,1}(t) + 20 \cdot (t + 1)
	\end{equation}
	with the first IMF component (characteristic
	$\sim(6.60\cdot 10^1, 1.52\cdot 10^{-2}, 1.43 \cdot 10^0)$)
	\begin{equation*}
		u_{0,0}(t) := a_{0,0} \cdot \cos(\phi_{0,0}(t))
		:= (t + 1) \cdot \cos((15 \cdot t + 21) \cdot \pi \cdot t)
	\end{equation*}
	and the second IMF component (characteristic
	$\sim(1.57\cdot 10^1,1.91 \cdot 10^{-1}, 1.88 \cdot 10^{-12})$)
	\begin{equation*}
		u_{0,1}(t) := a_{0,1} \cdot \cos(\phi_{0,1}(t))
		:= (3 \cdot t + 1) \cdot \cos(5 \cdot \pi \cdot t).
	\end{equation*}
	We calculate the instantaneous frequencies $\phi'_{0,0}$ and
	$\phi'_{0,1}$ of both IMF components $u_{0,0}$ and $u_{0,1}$ as
	\begin{equation*}
		\phi'_{0,0} = 15 \cdot \pi \cdot t + (15 \cdot t + 21) \cdot \pi
	\end{equation*}
	and
	\begin{equation*}
		\phi'_{0,1} = 5 \cdot \pi.
	\end{equation*}
	Our objective is to run a full EMD on this input signal $s_0$, which
	means that we, in each step, identify an IMF that we will further
	analyze to obtain its instantaneous amplitude and frequency.
	Due to the nature of our HOST-EMD
	algorithm we first extract high-frequency components only to continue
	to extract successively lower frequency components in subsequent steps,
	corresponding to the target in this example to extract $u_{0,0}$ first
	and then $u_{0,1}$.
	\par
	In the first step we find the IMF $\tilde{u}_{0,0}$ with instantaneous
	amplitude $\tilde{a}_{0,0}$ and frequency $\tilde{\phi}'_{0,0}$
	(see Figure~\ref{fig:example_emd-0-step_0}). Splitting $u_{0,0}$
	analytically from the input signal $s_0$ we obtain our first residual
	\begin{equation}\label{eq:example_emd-0-r-1}
		r_{0,1}(t) := s_0 - u_{0,0}
	\end{equation}
	(see Figure~\ref{fig:example_emd-0-r-1}), whose calculated form
	$\tilde{r}_{0,1}$ we will use as the input signal for our second step.
	\par
	Analogously, we find the IMF $\tilde{u}_{0,1}$ with instantaneous
	amplitude $\tilde{a}_{0,1}$ and frequency $\tilde{\phi}'_{0,1}$
	in the second
	step (see Figure~\ref{fig:example_emd-0-step_1}) and further
	splitting $u_{0,1}$ from $r_{0,1}$ analytically yields the second
	residual
	\begin{equation}\label{eq:example_emd-0-r-2}
		r_{0,2}(t) := r_{0,1} - u_{0,1} = s_0 - u_{0,0} - u_{0,1}
	\end{equation}
	(see Figure~\ref{fig:example_emd-0-r-2}) with its calculated form
	$\tilde{r}_{0,2}$, which we identify
	as the last residual given it obviously contains no further
	IMF components.
	\par
	Discussing errors is more difficult than in the other examples
	presented in this thesis as we have two algorithms working in concert,
	namely the proposed iterative slope sifting and differential operator
	extraction algorithms. Nevertheless, they turn out to be working
	independently and what we can note is that the only significant errors
	are visible at the boundaries of the signal, which is to be expected
	as it is blind for the analytical nature of the input signal.
	In the \enquote{interior} of the signal, the relative errors range
	between $10^{-2}$ and $10^{-6}$.
	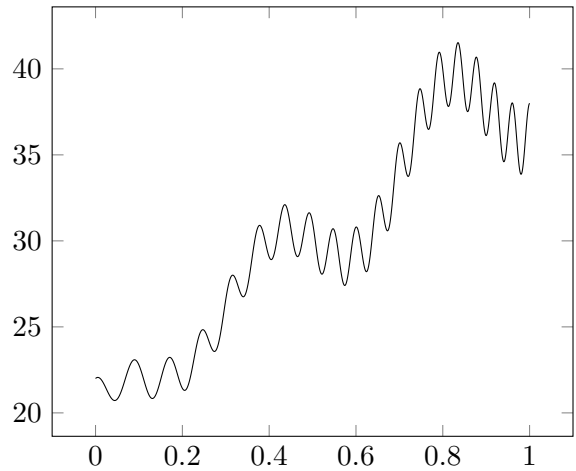
\begin{figure}[htbp]
		\centering
		\begin{tikzpicture}
			\begin{axis}
				\addplot [mark=none, smooth] table
					[x=x, y=y, col sep=comma]
					{examples/emd.data/0-s.csv};
			\end{axis}
		\end{tikzpicture}
		\caption{Plot of the multicomponent signal
		$s_{0}$ (see (\ref{eq:example_emd-0-s})) from
		Example~\ref{ex:emd-0}.}
		\label{fig:example_emd-0-s}
	\end{figure}
	\begin{figure}[htbp]
		\centering
		\begin{tikzpicture}
			\begin{axis}[
				name=plot1,
				height=4.5cm,width=4.5cm,
				title=$u_{0,0}(t)$,
			]
				\addplot [mark=none, smooth] table
					[x=x, y=y, col sep=comma]
					{examples/emd.data/0-u-0.csv};
				\addplot [mark=none, smooth, dotted] table
					[x=x, y=y, col sep=comma]
					{examples/emd.data/0-u-0-analytic.csv};
			\end{axis}
			\begin{axis}[
				name=plot2,
				at={($(plot1.east)+(1.2cm,0)$)},
				anchor=west,height=4.5cm,width=4.5cm,
				title=$a_{0,0}(t)$,
			]
				\addplot [mark=none, smooth] table
					[x=x, y=y, col sep=comma]
					{examples/emd.data/0-a-0.csv};
				\addplot [mark=none, smooth, dotted] table
					[x=x, y=y, col sep=comma]
					{examples/emd.data/0-a-0-analytic.csv};
			\end{axis}
			\begin{axis}[
				name=plot3,
				at={($(plot2.east)+(1.2cm,0)$)},
				anchor=west,height=4.5cm,width=4.5cm,
				title=$\phi'_{0,0}(t)$
			]
				\addplot [mark=none, smooth] table
					[x=x, y=y, col sep=comma]
					{examples/emd.data/0-freq-0.csv};
				\addplot [mark=none, smooth, dotted] table
					[x=x, y=y, col sep=comma]
					{examples/emd.data/0-freq-0-analytic.csv};
			\end{axis}
			\begin{semilogyaxis}[
				name=plot4,
				at={($(plot1.south)-(0,0.8cm)$)},
				anchor=north,height=4.5cm,width=4.5cm,
				xlabel=$t$
			]
				\addplot [mark=none, smooth] table
					[x=x, y=y, col sep=comma]
					{examples/emd.data/0-u-0-abserr.csv};
			\end{semilogyaxis}
			\begin{semilogyaxis}[
				name=plot5,
				at={($(plot4.east)+(1.2cm,0)$)},
				anchor=west,height=4.5cm,width=4.5cm,
				xlabel=$t$
			]
				\addplot [mark=none, smooth] table
					[x=x, y=y, col sep=comma]
					{examples/emd.data/0-a-0-relerr.csv};
			\end{semilogyaxis}
			\begin{semilogyaxis}[
				name=plot6,
				at={($(plot5.east)+(1.2cm,0)$)},
				anchor=west,height=4.5cm,width=4.5cm,
				xlabel=$t$
			]
				\addplot [mark=none, smooth] table
					[x=x, y=y, col sep=comma]
					{examples/emd.data/0-freq-0-relerr.csv};
			\end{semilogyaxis}
		\end{tikzpicture}
		\caption{Plots of the first extracted
		IMF-component $\tilde{u}_{0,0}$ of $s_0$
		(see (\ref{eq:example_emd-0-s}))
		and its respective instantaneous amplitude $\tilde{a}_{0,0}$ and
		frequency $\tilde{\phi}'_{0,0}$ in the first row (with
		analytical solutions $u_{0,0}, a_{0,0}$ and
		$\phi'_{0,0}$ (dotted)) with
		semi-log plots of the absolute (for $u_{0,0}(t)$) and
		relative errors (for $a_{0,0}(t)$ and $\phi'_{0,0}(t)$) compared to the
		analytical solutions in the second row for
		$(k, q, n, \varepsilon) = (4, 4, 180, 0.01)$
		from Example~\ref{ex:emd-0}.}
		\label{fig:example_emd-0-step_0}
	\end{figure}
	\begin{figure}[htbp]
		\centering
		\begin{tikzpicture}
			\begin{axis}	[
				name=plot1,
				height=7cm,width=7cm,
			]
				\addplot [mark=none, smooth] table
					[x=x, y=y, col sep=comma]
					{examples/emd.data/0-r-1.csv};
				\addplot [mark=none, smooth, dotted] table
					[x=x, y=y, col sep=comma]
					{examples/emd.data/0-r-1-analytic.csv};
			\end{axis}
			\begin{semilogyaxis}	[
				name=plot2,
				height=7cm,width=7cm,
				at={($(plot1.east)+(1.5cm,0)$)},
				anchor=west,
			]
				\addplot [mark=none, smooth] table
					[x=x, y=y, col sep=comma]
					{examples/emd.data/0-r-1-relerr.csv};
			\end{semilogyaxis}
		\end{tikzpicture}
		\caption{Plots of the calculated first residual $\tilde{r}_{0,1}$
		(see (\ref{eq:example_emd-0-r-1})) and its analytical solution
		$r_{0,1}$ (dotted) on
		the left and a semi-log plot of the relative error between both
		on the right for $(k, q, n, \varepsilon) = (4, 4, 180, 0.01)$
		from Example~\ref{ex:emd-0}.}
		\label{fig:example_emd-0-r-1}
	\end{figure}
	\begin{figure}[htbp]
		\centering
		\begin{tikzpicture}
			\begin{axis}[
				name=plot1,
				height=4.5cm,width=4.5cm,
				title=$u_{0,1}(t)$,
			]
				\addplot [mark=none, smooth] table
					[x=x, y=y, col sep=comma]
					{examples/emd.data/0-u-1.csv};
				\addplot [mark=none, smooth, dotted] table
					[x=x, y=y, col sep=comma]
					{examples/emd.data/0-u-1-analytic.csv};
			\end{axis}
			\begin{axis}[
				name=plot2,
				at={($(plot1.east)+(1.2cm,0)$)},
				anchor=west,height=4.5cm,width=4.5cm,
				title=$a_{0,1}(t)$,
			]
				\addplot [mark=none, smooth] table
					[x=x, y=y, col sep=comma]
					{examples/emd.data/0-a-1.csv};
				\addplot [mark=none, smooth, dotted] table
					[x=x, y=y, col sep=comma]
					{examples/emd.data/0-a-1-analytic.csv};
			\end{axis}
			\begin{axis}[
				name=plot3,
				at={($(plot2.east)+(1.2cm,0)$)},
				anchor=west,height=4.5cm,width=4.5cm,
				title=$\phi'_{0,1}(t)$
			]
				\addplot [mark=none, smooth] table
					[x=x, y=y, col sep=comma]
					{examples/emd.data/0-freq-1.csv};
				\addplot [mark=none, smooth, dotted] table
					[x=x, y=y, col sep=comma]
					{examples/emd.data/0-freq-1-analytic.csv};
			\end{axis}
			\begin{semilogyaxis}[
				name=plot4,
				at={($(plot1.south)-(0,0.8cm)$)},
				anchor=north,height=4.5cm,width=4.5cm,
				xlabel=$t$
			]
				\addplot [mark=none, smooth] table
					[x=x, y=y, col sep=comma]
					{examples/emd.data/0-u-1-abserr.csv};
			\end{semilogyaxis}
			\begin{semilogyaxis}[
				name=plot5,
				at={($(plot4.east)+(1.2cm,0)$)},
				anchor=west,height=4.5cm,width=4.5cm,
				xlabel=$t$
			]
				\addplot [mark=none, smooth] table
					[x=x, y=y, col sep=comma]
					{examples/emd.data/0-a-1-relerr.csv};
			\end{semilogyaxis}
			\begin{semilogyaxis}[
				name=plot6,
				at={($(plot5.east)+(1.2cm,0)$)},
				anchor=west,height=4.5cm,width=4.5cm,
				xlabel=$t$
			]
				\addplot [mark=none, smooth] table
					[x=x, y=y, col sep=comma]
					{examples/emd.data/0-freq-1-relerr.csv};
			\end{semilogyaxis}
		\end{tikzpicture}
		\caption{Plots of the second extracted
		IMF-component $\tilde{u}_{0,1}$ of $s_0$
		(see (\ref{eq:example_emd-0-s}))
		and its respective instantaneous amplitude $\tilde{a}_{0,1}$ and
		frequency $\tilde{\phi}'_{0,1}$ in the first row (with
		analytical solutions $u_{0,1}, a_{0,1}$ and
		$\phi'_{0,1}$ (dotted)) with
		semi-log plots of the absolute (for $u_{0,1}(t)$) and
		relative errors (for $a_{0,1}(t)$ and $\phi'_{0,1}(t)$) compared to the
		analytical solutions in the second row for
		$(k, q, n, \varepsilon) = (4, 4, 180, 0.01)$
		from Example~\ref{ex:emd-0}.}
		\label{fig:example_emd-0-step_1}
	\end{figure}
	\begin{figure}[htbp]
		\centering
		\begin{tikzpicture}
			\begin{axis}	[
				name=plot1,
				height=7cm,width=7cm,
			]
				\addplot [mark=none, smooth] table
					[x=x, y=y, col sep=comma]
					{examples/emd.data/0-r-2.csv};
				\addplot [mark=none, smooth, dotted] table
					[x=x, y=y, col sep=comma]
					{examples/emd.data/0-r-2-analytic.csv};
			\end{axis}
			\begin{semilogyaxis}	[
				name=plot2,
				height=7cm,width=7cm,
				at={($(plot1.east)+(1.5cm,0)$)},
				anchor=west,
			]
				\addplot [mark=none, smooth] table
					[x=x, y=y, col sep=comma]
					{examples/emd.data/0-r-2-relerr.csv};
			\end{semilogyaxis}
		\end{tikzpicture}
		\caption{Plots of the calculated second residual $\tilde{r}_{0,2}$
		(see (\ref{eq:example_emd-0-r-2})) and its analytical solution
		 $r_{0,2}$ (dotted) on
		the left and a semi-log plot of the relative error between both
		on the right for $(k, q, n, \varepsilon) = (4, 4, 180, 0.01)$
		from Example~\ref{ex:emd-0}.}
		\label{fig:example_emd-0-r-2}
	\end{figure}
\end{example}
\begin{example}\label{ex:emd-1}
	This example was inspired by \cite[Example~2]{hs11}.
	Consider the multicomponent signal
	\begin{equation}\label{eq:example_emd-1-s}
		s_1(t) := u_{1,0}(t) + u_{1,1}(t) + 25 \cdot t^3
	\end{equation}
	with the first IMF component (characteristic
	$\sim(2.51\cdot 10^2, 1.19 \cdot 10^{-2}, 1.88 \cdot 10^{-12})$)
	\begin{equation*}
		u_{1,0}(t) := a_{1,0} \cdot \cos(\phi_{1,0}(t))
		:= (t + 1) \cdot \cos((15 \cdot t + 21) \cdot \pi \cdot t)
	\end{equation*}
	and the second IMF component (characteristic
	$\sim(4.71 \cdot 10^1, 4.24 \cdot 10^{-1}, 2.86 \cdot 10^{-12})$)
	\begin{equation*}
		u_{1,1}(t) := a_{1,1} \cdot \cos(\phi_{1,1}(t))
		:= (3 \cdot t + 1) \cdot \cos(5 \cdot \pi \cdot t).
	\end{equation*}
	We calculate the instantaneous frequencies $\phi'_{1,0}$ and
	$\phi'_{1,1}$ of both IMF components $u_{1,0}$ and $u_{1,1}$ as
	\begin{equation*}
		\phi'_{1,0} = 15 \cdot \pi \cdot t + (15 \cdot t + 21) \cdot \pi
	\end{equation*}
	and
	\begin{equation*}
		\phi'_{1,1} = 5 \cdot \pi.
	\end{equation*}
	Our objective is to run a full EMD on this input signal $s_0$, which
	means that we, in each step, identify an IMF that we will further
	analyze to obtain its instantaneous amplitude and frequency.
	Due to the nature of our HOST-EMD
	algorithm we first extract high-frequency components only to continue
	to extract successively lower frequency components in subsequent steps,
	corresponding to the target in this example to extract $u_{1,0}$ first
	and then $u_{1,1}$.
	\par
	In the first step we find the IMF $\tilde{u}_{1,0}$ with instantaneous
	amplitude $\tilde{a}_{1,0}$ and frequency $\tilde{\phi}'_{1,0}$
	(see Figure~\ref{fig:example_emd-1-step_0}). Splitting $u_{1,0}$
	analytically from the input signal $s_1$ we obtain our first residual
	\begin{equation}\label{eq:example_emd-1-r-1}
		r_{1,1}(t) := s_1 - u_{1,0}
	\end{equation}
	(see Figure~\ref{fig:example_emd-1-r-1}), whose calculated form
	$\tilde{r}_{1,1}$ we will use as the input signal for our second step.
	\par
	Analogously, we find the IMF $\tilde{u}_{1,1}$ with instantaneous
	amplitude $\tilde{a}_{1,1}$ and frequency $\tilde{\phi}'_{1,1}$
	in the second
	step (see Figure~\ref{fig:example_emd-1-step_1}) and further
	splitting $u_{1,1}$ from $r_{1,1}$ analytically yields the second
	residual
	\begin{equation}\label{eq:example_emd-1-r-2}
		r_{1,2}(t) := r_{1,1} - u_{1,1} = s_1 - u_{1,0} - u_{1,1}
	\end{equation}
	(see Figure~\ref{fig:example_emd-1-r-2}) with its calculated form
	$\tilde{r}_{1,2}$, which we identify
	as the last residual given it obviously contains no further
	IMF components.
	\par
	Discussing errors is more difficult than in the other examples
	presented in this thesis as we have two algorithms working in concert,
	namely the proposed iterative slope sifting and differential operator
	extraction algorithms. Nevertheless, they turn out to be working
	independently and what we can note is that the only significant errors
	are visible at the boundaries of the signal, which is to be expected
	as it is blind for the analytical nature of the input signal.
	In the \enquote{interior} of the signal, the relative errors range
	between $10^{-2}$ and $10^{-6}$.
	\begin{figure}[htbp]
		\centering
		\begin{tikzpicture}
			\begin{axis}
				\addplot [mark=none, smooth] table
					[x=x, y=y, col sep=comma]
					{examples/emd.data/1-s.csv};
			\end{axis}
		\end{tikzpicture}
		\caption{Plot of the multicomponent signal
		$s_{1}$ (see (\ref{eq:example_emd-1-s})) from
		Example~\ref{ex:emd-1}.}
		\label{fig:example_emd-1-s}
	\end{figure}
	\begin{figure}[htbp]
		\centering
		\begin{tikzpicture}
			\begin{axis}[
				name=plot1,
				height=4.5cm,width=4.5cm,
				title=$u_{1,0}(t)$,
			]
				\addplot [mark=none, smooth] table
					[x=x, y=y, col sep=comma]
					{examples/emd.data/1-u-0.csv};
				\addplot [mark=none, smooth, dotted] table
					[x=x, y=y, col sep=comma]
					{examples/emd.data/1-u-0-analytic.csv};
			\end{axis}
			\begin{axis}[
				name=plot2,
				at={($(plot1.east)+(1.2cm,0)$)},
				anchor=west,height=4.5cm,width=4.5cm,
				title=$a_{1,0}(t)$,
			]
				\addplot [mark=none, smooth] table
					[x=x, y=y, col sep=comma]
					{examples/emd.data/1-a-0.csv};
				\addplot [mark=none, smooth, dotted] table
					[x=x, y=y, col sep=comma]
					{examples/emd.data/1-a-0-analytic.csv};
			\end{axis}
			\begin{axis}[
				name=plot3,
				at={($(plot2.east)+(1.2cm,0)$)},
				anchor=west,height=4.5cm,width=4.5cm,
				title=$\phi'_{1,0}(t)$
			]
				\addplot [mark=none, smooth] table
					[x=x, y=y, col sep=comma]
					{examples/emd.data/1-freq-0.csv};
				\addplot [mark=none, smooth, dotted] table
					[x=x, y=y, col sep=comma]
					{examples/emd.data/1-freq-0-analytic.csv};
			\end{axis}
			\begin{semilogyaxis}[
				name=plot4,
				at={($(plot1.south)-(0,0.8cm)$)},
				anchor=north,height=4.5cm,width=4.5cm,
				xlabel=$t$
			]
				\addplot [mark=none, smooth] table
					[x=x, y=y, col sep=comma]
					{examples/emd.data/1-u-0-abserr.csv};
			\end{semilogyaxis}
			\begin{semilogyaxis}[
				name=plot5,
				at={($(plot4.east)+(1.2cm,0)$)},
				anchor=west,height=4.5cm,width=4.5cm,
				xlabel=$t$
			]
				\addplot [mark=none, smooth] table
					[x=x, y=y, col sep=comma]
					{examples/emd.data/1-a-0-relerr.csv};
			\end{semilogyaxis}
			\begin{semilogyaxis}[
				name=plot6,
				at={($(plot5.east)+(1.2cm,0)$)},
				anchor=west,height=4.5cm,width=4.5cm,
				xlabel=$t$
			]
				\addplot [mark=none, smooth] table
					[x=x, y=y, col sep=comma]
					{examples/emd.data/1-freq-0-relerr.csv};
			\end{semilogyaxis}
		\end{tikzpicture}
		\caption{Plots of the first extracted
		IMF-component $\tilde{u}_{1,0}$ of $s_1$
		(see (\ref{eq:example_emd-1-s}))
		and its respective instantaneous amplitude $\tilde{a}_{1,0}$ and
		frequency $\tilde{\phi}'_{1,0}$ in the first row (with
		analytical solutions $u_{1,0}, a_{1,0}$ and
		$\phi'_{1,0}$ (dotted)) with
		semi-log plots of the absolute (for $u_{1,0}(t)$) and
		relative errors (for $a_{1,0}(t)$ and $\phi'_{1,0}(t)$) compared to the
		analytical solutions in the second row for
		$(k, q, n, \varepsilon) = (4, 4, 180, 0.01)$
		from Example~\ref{ex:emd-1}.}
		\label{fig:example_emd-1-step_0}
	\end{figure}
	\begin{figure}[htbp]
		\centering
		\begin{tikzpicture}
			\begin{axis}	[
				name=plot1,
				height=7cm,width=7cm,
			]
				\addplot [mark=none, smooth] table
					[x=x, y=y, col sep=comma]
					{examples/emd.data/1-r-1.csv};
				\addplot [mark=none, smooth, dotted] table
					[x=x, y=y, col sep=comma]
					{examples/emd.data/1-r-1-analytic.csv};
			\end{axis}
			\begin{semilogyaxis}	[
				name=plot2,
				height=7cm,width=7cm,
				at={($(plot1.east)+(1.5cm,0)$)},
				anchor=west,
			]
				\addplot [mark=none, smooth] table
					[x=x, y=y, col sep=comma]
					{examples/emd.data/1-r-1-relerr.csv};
			\end{semilogyaxis}
		\end{tikzpicture}
		\caption{Plots of the calculated first residual $\tilde{r}_{1,1}$
		(see (\ref{eq:example_emd-1-r-1})) and its analytical solution
		$r_{1,1}$ (dotted) on
		the left and a semi-log plot of the relative error between both
		on the right for $(k, q, n, \varepsilon) = (4, 4, 180, 0.01)$ 
		from Example~\ref{ex:emd-1}.}
		\label{fig:example_emd-1-r-1}
	\end{figure}
	\begin{figure}[htbp]
		\centering
		\begin{tikzpicture}
			\begin{axis}[
				name=plot1,
				height=4.5cm,width=4.5cm,
				title=$u_{1,1}(t)$,
			]
				\addplot [mark=none, smooth] table
					[x=x, y=y, col sep=comma]
					{examples/emd.data/1-u-1.csv};
				\addplot [mark=none, smooth, dotted] table
					[x=x, y=y, col sep=comma]
					{examples/emd.data/1-u-1-analytic.csv};
			\end{axis}
			\begin{axis}[
				name=plot2,
				at={($(plot1.east)+(1.2cm,0)$)},
				anchor=west,height=4.5cm,width=4.5cm,
				title=$a_{1,1}(t)$,
			]
				\addplot [mark=none, smooth] table
					[x=x, y=y, col sep=comma]
					{examples/emd.data/1-a-1.csv};
				\addplot [mark=none, smooth, dotted] table
					[x=x, y=y, col sep=comma]
					{examples/emd.data/1-a-1-analytic.csv};
			\end{axis}
			\begin{axis}[
				name=plot3,
				at={($(plot2.east)+(1.2cm,0)$)},
				anchor=west,height=4.5cm,width=4.5cm,
				title=$\phi'_{1,1}(t)$
			]
				\addplot [mark=none, smooth] table
					[x=x, y=y, col sep=comma]
					{examples/emd.data/1-freq-1.csv};
				\addplot [mark=none, smooth, dotted] table
					[x=x, y=y, col sep=comma]
					{examples/emd.data/1-freq-1-analytic.csv};
			\end{axis}
			\begin{semilogyaxis}[
				name=plot4,
				at={($(plot1.south)-(0,0.8cm)$)},
				anchor=north,height=4.5cm,width=4.5cm,
				xlabel=$t$
			]
				\addplot [mark=none, smooth] table
					[x=x, y=y, col sep=comma]
					{examples/emd.data/1-u-1-abserr.csv};
			\end{semilogyaxis}
			\begin{semilogyaxis}[
				name=plot5,
				at={($(plot4.east)+(1.2cm,0)$)},
				anchor=west,height=4.5cm,width=4.5cm,
				xlabel=$t$
			]
				\addplot [mark=none, smooth] table
					[x=x, y=y, col sep=comma]
					{examples/emd.data/1-a-1-relerr.csv};
			\end{semilogyaxis}
			\begin{semilogyaxis}[
				name=plot6,
				at={($(plot5.east)+(1.2cm,0)$)},
				anchor=west,height=4.5cm,width=4.5cm,
				xlabel=$t$
			]
				\addplot [mark=none, smooth] table
					[x=x, y=y, col sep=comma]
					{examples/emd.data/1-freq-1-relerr.csv};
			\end{semilogyaxis}
		\end{tikzpicture}
		\caption{Plots of the second extracted
		IMF-component $\tilde{u}_{1,1}$ of $s_1$
		(see (\ref{eq:example_emd-1-s}))
		and its respective instantaneous amplitude $\tilde{a}_{1,1}$ and
		frequency $\tilde{\phi}'_{1,1}$ in the first row (with
		analytical solutions $u_{1,1}, a_{1,1}$ and
		$\phi'_{1,1}$ (dotted)) with
		semi-log plots of the absolute (for $u_{1,1}(t)$) and
		relative errors (for $a_{1,1}(t)$ and $\phi'_{1,1}(t)$) compared to the
		analytical solutions in the second row for
		$(k, q, n, \varepsilon) = (4, 4, 180, 0.01)$
		from Example~\ref{ex:emd-1}.}
		\label{fig:example_emd-1-step_1}
	\end{figure}
	\begin{figure}[htbp]
		\centering
		\begin{tikzpicture}
			\begin{axis}	[
				name=plot1,
				height=7cm,width=7cm,
			]
				\addplot [mark=none, smooth] table
					[x=x, y=y, col sep=comma]
					{examples/emd.data/1-r-2.csv};
				\addplot [mark=none, smooth, dotted] table
					[x=x, y=y, col sep=comma]
					{examples/emd.data/1-r-2-analytic.csv};
			\end{axis}
			\begin{semilogyaxis}	[
				name=plot2,
				height=7cm,width=7cm,
				at={($(plot1.east)+(1.5cm,0)$)},
				anchor=west,
			]
				\addplot [mark=none, smooth] table
					[x=x, y=y, col sep=comma]
					{examples/emd.data/1-r-2-relerr.csv};
			\end{semilogyaxis}
		\end{tikzpicture}
		\caption{Plots of the calculated second residual $\tilde{r}_{1,2}$
		(see (\ref{eq:example_emd-1-r-2})) and its analytical solution
		 $r_{1,2}$ (dotted) on
		the left and a semi-log plot of the relative error between both
		on the right for $(k, q, n, \varepsilon) = (4, 4, 180, 0.01)$
		from Example~\ref{ex:emd-1}.}
		\label{fig:example_emd-1-r-2}
	\end{figure}
\end{example}
\FloatBarrier
\chapter{Summary and Outlook}
In the course of this thesis we started off with the construction
of an analytical model of the empirical mode decomposition and showed
that it satisfies strong duality, namely due to being \textsc{Slater}
regular, using B-spline properties and the theory of convex-like
optimization. This strong duality yielded a theoretical justification for
reformulating the constrained optimization problem into an unconstrained
optimization problem with a regularization term that enforces the
constraints.
\par
In the context of EMD, we examined one possible modern approach to such a
regularization term: The operator-based signal-separation (OSS)
null-space-pursuit (NSP) method that makes use of adaptive differential
operators to determine instantaneous amplitude and phase from a given IMF.
We observed that the operator can only yield unique results when the
IMF that is to be regularized has constant amplitude $1$, which is
a strong limitation.
\par
We considered the classic EMD algorithm and noted the following:
Sifting, the process in which the signal is separated into an
IMF and a residual, is highly dependent on a good method for estimating
the upper envelope of a multicomponent signal. We identified the
weakness in the classic method that the envelope may intersect with
the signal itself, violating the definition of an envelope.
We presented a new approach called iterative slope envelope estimation
that solves this problem. Additionally, it is a generalization of the
classic envelope estimation method.
We also made the following observation: During sifting we also
obtain the amplitude of the IMF. This in turn meant that we can just
divide the IMF by its amplitude and obtain an IMF with constant
instantaneous amplitude $1$. This meant that we could use the
differential operator from the NSP method mentioned earlier and
be sure that it behaved properly.
\par
Using both the new envelope estimation method and the differential
operator, we have obtained an approach that is a mix of classic
and modern methods. Consequently, we defined a hybrid EMD method
and examined it using multiple examples.
These examples were implemented in a toolbox called ETHOS
using the GNU Scientific Library, which was explained and documented
subsequently. One newly discovered approach in this process was to
evaluate the quality of each extracted IMF by calculating its
characteristic.
\par
It is clear that only by building a strict theoretical foundation that
went further than previous works it was possible to obtain the results
laid out in this thesis. This foundation included convex constraints
instead of regularization terms, using B-splines for the theoretical and
practical modelling of functions and the theory of convex-like
optimization.
The author suspects that given the underlying EMD optimization-problem is not
convex, previous attempts to show regularity were not followed through.
This is because it is widely assumed that convexity is a requirement for showing
\textsc{Slater} regularity. This is wrong,
as convex-likeness is sufficient, but generally not well-known.
The attractiveness of \textsc{Slater} regularity is due to the fact
that it applies to the entire optimization problem instead of just specific
points, and showing it for the EMD optimization problem yields the
regularity for any scenario.
\par
A possible outlook for further works would be to expand the iterative slope
sifting algorithm to higher dimensions to explore their usefulness in
multidimensional EMD. In the context of the ETHOS toolbox, a possible
field of research would be to use more specific tools for
sparse optimization, possibly based on the modern GHOST
(General, Hybrid and Optimized Sparse Toolkit) sparse library (see \cite{ktrz+17}),
and refine the B-spline data fitting process, especially concerning
preventing overfitting and underfitting.
\par
In the end, what is easy to see is that signal analysis as a whole and
EMD in particular are complex topics with many open questions, not only
in the theoretical sense but also in practical terms. Examples include
problems like mode mixing and noise distortion of an input signal.
What remains to be seen is how these problems can be solved most
effectively: In a preprocessing step before applying the
EMD method or as a part of a newly devised EMD method. Or maybe they are
simply unsolvable in terms of information theory. No matter the outcome,
given the far-reaching applications it has in many different fields,
every little problem solved in signal analysis may have far-reaching
consequences.
\appendix
\chapter{Function Space Order and Operators}\label{ch:funcspaceopt}
This thesis makes use of operations on functions rather than scalars,
for instance as candidates in optimization problems. Because of that we
want to understand how function spaces work and how we can map them to
other vector spaces we can handle more easily. Even though the concepts
laid out as follows seem to be very intuitive and might not even need
further explanation beyond notation, the formal aspect of this topic
shall not be missed for completeness sake but also not unnecessarily
complicate the main matter, which is why this chapter is in the appendix.
\par
The functions we are dealing with are relatively smooth functions
$\R \to \R$.
Formally speaking, we have a $p \geq 0$ such that our function can be
differentiated $p$ times and the $p$th derivative is continuous.
This set is commonly denoted as $\mathcal{C}^p(\R,\R)$, the set of
$p$-continuously differentiable functions. It is easy to see
that $\mathcal{C}^0(\R,\R) \supseteq \mathcal{C}^1(\R,\R) \supseteq
\mathcal{C}^2(\R,\R) \supseteq \dots \supseteq \mathcal{C}^\infty(\R,\R)$,
so if we define something for $\mathcal{C}^0(\R,\R)$ it automatically holds
for all functions from $\mathcal{C}^p(\R,\R)$ with $p \geq 0$. This is why,
as follows, we will only use the set of continuous functions
$\mathcal{C}^0(\R,\R)$ in our definitions to keep everything relatively
general.
\par
One aspect of interest is to be able to compare two functions in some
way. In other words, we want to find an order relation on
$\mathcal{C}^0(\R,\R)$. One way to do that is as follows: We say that
a function succeeds another function if and only if the former is
pointwise greater than or equal to the latter. We define
precedence respectively and note that, obviously, there are functions
which can not be compared this way. This concept
is illustrated in Figure~\ref{fig:order} and is the function space
order used in the course of this thesis.
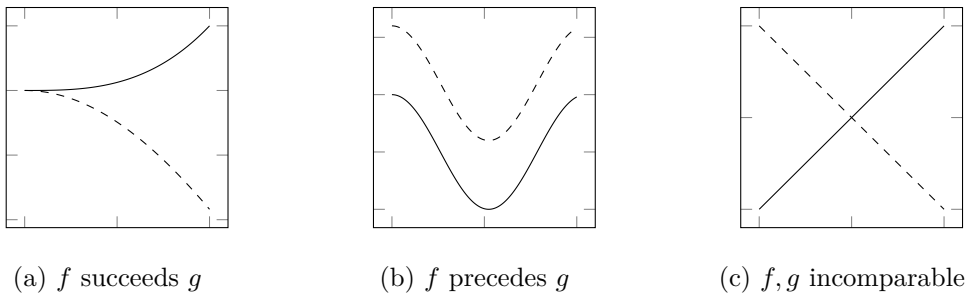
\begin{figure}[htbp]
	\begin{subfigure}[c]{0.32\textwidth}
		\centering
		\begin{tikzpicture}
			\begin{axis}[
				name=plot1,
				height=4.5cm,width=4.5cm,
				xticklabels={,,},
				yticklabels={,,},
			]
				\addplot [domain=0:1, samples=200]{1 + 0.2 * x^3};
				\addplot [dashed, domain=0:1, samples=200]
					{0.2 + 0.8 * cos(deg(x)))};
			\end{axis}
		\end{tikzpicture}
		\subcaption{$f$ succeeds $g$}
	\end{subfigure}
	\begin{subfigure}[c]{0.32\textwidth}
		\centering
		\begin{tikzpicture}
			\begin{axis}[
				name=plot1,
				height=4.5cm,width=4.5cm,
				xticklabels={,,},
				yticklabels={,,},
			]
				\addplot [domain=0:1, samples=200]{cos(6 * deg(x))};
				\addplot [dashed, domain=0:1, samples=200]
					{1.2 + cos(6 * deg(x))};
			\end{axis}
		\end{tikzpicture}
		\subcaption{$f$ precedes $g$}
	\end{subfigure}
	\begin{subfigure}[c]{0.32\textwidth}
		\centering
		\begin{tikzpicture}
			\begin{axis}[
				name=plot1,
				height=4.5cm,width=4.5cm,
				xticklabels={,,},
				yticklabels={,,},
			]
				\addplot [domain=0:1, samples=200]{x)};
				\addplot [dashed, domain=0:1, samples=200]
					{1.0-x};
			\end{axis}
		\end{tikzpicture}
		\subcaption{$f,g$ incomparable}
	\end{subfigure}
	\caption{Illustration of an intuitive concept for a
		partial function space order
		(see Proposition~\ref{prop:partial_order_on_C0})
		exemplified with two functions $f$ (solid)
		and $g$ (dashed).}
	\label{fig:order}
\end{figure}
\par
Before we are able to further formalize this idea, we first need
to introduce some preliminary definitions of what an order is and
how to define ordered vector spaces.
\begin{definition}[Preorder {\cite[Section~2.4]{cie97}}]
	Let $S$ be a set and $\le$ a binary relation on $S$. $\le$
	is a \emph{preorder on $S$} if and only if
	\begin{enumerate}
		\item{%
			\emph{Reflexivity}:
			\begin{equation*}
				\forall x \in S \colon
				x \le x,
			\end{equation*}
		}
		\item{%
			\emph{Transitivity}:
			\begin{equation*}
				\forall x,y,z \in S \colon
				a \le b \land b \le c \Rightarrow
				a \le c.
			\end{equation*}
		}
	\end{enumerate}
\end{definition}
A preorder is the weakest order we can find for a vector space.
If we can also show antisymmetry of a preorder, we obtain
a partial order, given in the following
\begin{definition}[Partial order {\cite[Section~2.4]{cie97}}]
	Let $S$ be a set and $\le$ a preorder on $S$. $\le$
	is a \emph{partial order on $S$} if and only if
	$\le$ is antisymmetric, i.e.
	\begin{equation*}
		\forall a,b \in S \colon
		a \le b \land b \le a \Rightarrow a = b.
	\end{equation*}
\end{definition}
One step higher would be a total order, which adds the
connex property meaning that all elements of the set are
comparable. As we've seen in Figure~\ref{fig:order} this
is not possible for vector spaces given the presence of
indeterminate relations.
\par
If we find a binary relation on a vector space that is
either a preorder or partial order we would, as the next
step, hope that the orders are compatible with vector
space operations. In other words, this means that addition
and scalar multiplication preserve relations intuitively.
This is reflected in the following
\begin{definition}[(Pre)ordered vector space {\cite[Chapter~II, {\S}2.5]{bou03}}]
	Let $V$ be a real vector space and $\le$ a binary relation on $V$.
	$V$ is a \emph{(pre)ordered vector space} if and only if
	\begin{enumerate}
		\item{%
			$\le$ is a preorder or partial order respectively,
		}
		\item{%
			\emph{Addition compatibility}:
			\begin{equation*}
				\forall x,y,z \in V \colon
				x \le y \Rightarrow x + z \le y + z,
			\end{equation*}
		}
		\item{%
			\emph{Scalar multiplication compatibility}:
			\begin{equation*}
				\forall x,y \in V \colon
				\forall \lambda \ge 0 \colon
				x \le y \Rightarrow \lambda \cdot x \le
				\lambda \cdot y.
			\end{equation*}
		}
	\end{enumerate}
\end{definition}
With the definitions in place we can now formalize what has
been discussed at the beginning of the section and illustrated in
Figure~\ref{fig:order}.
\begin{proposition}\label{prop:partial_order_on_C0}
	Let $a,b \in \mathcal{C}^0(\R,\R)$.
	$(\mathcal{C}^0(\R,\R), \preceq)$ is an ordered vector space
	with
	\begin{equation*}
		a \preceq b
		:\Leftrightarrow
		\forall t \in \R \colon a(t) \leq b(t).
	\end{equation*}
\end{proposition}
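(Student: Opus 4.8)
The plan is to verify the three defining properties of an ordered vector space in turn, exploiting the fact that $\preceq$ is defined pointwise and therefore inherits every relevant property directly from the standard order $\le$ on $\R$. First I would note (or take for granted) that $\mathcal{C}^0(\R,\R)$ is a real vector space, since sums and scalar multiples of continuous functions are continuous. The entire argument is then a routine ``lifting'' of the one-dimensional order, so I expect no genuine obstacle; the only thing to be attentive to is stating each pointwise quantifier cleanly.

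For the order axioms, I would check reflexivity, transitivity, and antisymmetry. Reflexivity follows because $a(t) \le a(t)$ holds for every $t \in \R$, giving $a \preceq a$. For transitivity, assume $a \preceq b$ and $b \preceq c$; then for each fixed $t$ we have $a(t) \le b(t)$ and $b(t) \le c(t)$, and transitivity of $\le$ on $\R$ yields $a(t) \le c(t)$, hence $a \preceq c$. Antisymmetry is the one step where the partial-order (rather than merely preorder) character is used: if $a \preceq b$ and $b \preceq a$, then $a(t) \le b(t)$ and $b(t) \le a(t)$ for every $t$, so $a(t) = b(t)$ pointwise, which is exactly the statement $a = b$ as functions. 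This establishes that $\preceq$ is a partial order.

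It then remains to confirm compatibility with the vector space operations. For addition compatibility, suppose $a \preceq b$ and let $c \in \mathcal{C}^0(\R,\R)$ be arbitrary; for each $t$ the inequality $a(t) \le b(t)$ gives $a(t) + c(t) \le b(t) + c(t)$, i.e.\ $(a+c)(t) \le (b+c)(t)$, so $a + c \preceq b + c$. For scalar multiplication compatibility, suppose $a \preceq b$ and $\lambda \ge 0$; then $a(t) \le b(t)$ implies $\lambda \cdot a(t) \le \lambda \cdot b(t)$ for every $t$ (using nonnegativity of $\lambda$), hence $\lambda \cdot a \preceq \lambda \cdot b$. Having verified the partial-order axioms together with both compatibility conditions, the proposition follows. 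The essential point throughout is that each property is a universally quantified pointwise statement, so it reduces immediately to the corresponding well-known property of $(\R,\le)$.
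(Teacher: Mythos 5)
Your proposal is correct and follows essentially the same route as the paper's own proof: verify reflexivity, transitivity, and antisymmetry pointwise to get a partial order, then check addition and nonnegative-scalar-multiplication compatibility pointwise, each step reducing to the corresponding property of $(\R,\le)$. Your explicit remark that $\mathcal{C}^0(\R,\R)$ is itself a real vector space is a small addition the paper leaves implicit, but otherwise the two arguments coincide.
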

\begin{proof}
	Let $a,b,c \in \mathcal{C}^0(\R,\R)$. We first show that $\preceq$
	is a partial order.
	\begin{enumerate}
		\item{%
			\emph{Reflexivity}:
			\begin{equation*}
				\forall t \in \R \colon a(t) \leq a(t)
				\Leftrightarrow
				a \preceq a,
			\end{equation*}
		}
		\item{%
			\emph{Transitivity}:
			\begin{align*}
				a \preceq b \land b \preceq c & \Leftrightarrow
					\forall t \in \R \colon (a(t) \leq b(t) \land b(t) \leq c(t)) \\
				& \Rightarrow
					\forall t \in \R \colon a(t) \leq c(t) \\
				& \Leftrightarrow
					a \preceq c,
			\end{align*}
		}
		\item{%
			\emph{Antisymmetry}:
			\begin{align*}
				a \preceq b \land b \preceq a & \Leftrightarrow
					\forall t \in \R \colon (a(t) \leq b(t) \land b(t) \leq a(t)) \\
				& \Leftrightarrow
					\forall t \in \R \colon a(t) = b(t) \\
				& \Leftrightarrow
					a = b.
			\end{align*}
		}
	\end{enumerate}
	Let $\lambda \ge 0$. We now show that $(\mathcal{C}^0(\R,\R), \preceq)$
	satisfies the two axioms
	of an ordered vector space such that the order is compatible
	with the vector space operations.
	\begin{enumerate}
		\item{%
			\emph{Addition compatibility}:
			\begin{align*}
				a \preceq b & \Leftrightarrow
					\forall t \in \R \colon a(t) \leq b(t) \\
				& \Leftrightarrow
					\forall t \in \R \colon a(t) + c(t) \leq b(t) + c(t) \\
				& \Leftrightarrow
					\forall t \in \R \colon (a + c)(t) \leq (b + c)(t) \\
				& \Leftrightarrow
					a + c \preceq b + c,
			\end{align*}
		}
		\item{%
			\emph{Scalar multiplication compatibility}:
			\begin{align*}
				a \preceq b & \Leftrightarrow
					\forall t \in \R \colon a(t) \leq b(t) \\
				& \Leftrightarrow
					\forall t \in \R \colon \lambda \cdot a(t)
					\leq \lambda \cdot b(t) \\
				& \Leftrightarrow
					\forall t \in \R \colon (\lambda \cdot a)(t)
					\leq (\lambda \cdot b)(t) \\
				& \Leftrightarrow
					\lambda \cdot a \preceq
					\lambda \cdot b. \qedhere
			\end{align*}
		}
	\end{enumerate}
\end{proof}
We have now shown that $\mathcal{C}^0(\R,\R)$ is an ordered vector
space with the order relation $\preceq$, but this is just one point of
interest. It is often useful to transform objects that are hard to work
with to a space where that is easy, perform operations on them and
then transform them back. One common example are polar coordinate transforms
which dramatically simplify many complicated integrals.
A requirement for such a transform in the general sense is that it is an
isomorphism and preserves the nature and/or relations of the objects.
In the context of optimization problems this means that a transform shall
preserve the relative order of objects. If a function precedes another its
transform shall do the same relative to the transform of the other function.
\par
The motivation for this approach in this thesis is found within the strong
link between spline functions and their B-spline base coefficients, as introduced in
Chapter~\ref{ch:b-splines}. The base coefficients of a spline function is
a vector in $\R^n$ and fully describes it. Due to the nature of the
B-splines the intuitive partial order $\leq$ on $\R^n$ with regard to the
base coefficients is equivalent to the partial order $\preceq$ on
the set of spline functions $\Sigma_k$. To put it differently, if
a spline function succeeds another, so do their base coefficients.
The advantage this brings is apparent: Vectors in $\R^n$ are much easier
to handle both theoretically and numerically, and any optimization problem
can be trivially transformed between both representations.
\par
Now that we have understood the motivation behind this approach, we
can formalize such a transform between ordered vector spaces in the
following
\begin{definition}[Order isomorphism of ordered vector-spaces
	{\cite[Section~4.1]{cie97}}]
	\label{def:oioovs}
	Let $(V,\le_V), (W,\le_W)$ be ordered vector spaces and
	$f\colon V \to W$. $f$ is an \emph{order isomorphism of
	ordered vector-spaces} if and only if
	\begin{enumerate}
		\item{%
			$f$ is an isomorphism,
		}
		\item{%
			$\forall x,y \in V \colon x \le_V y \Leftrightarrow
			f(x) \le_W f(y)$.
		}
	\end{enumerate}
\end{definition}
As we can see, preserving the order structure opens up new
possibilities with regard to optimization problems. In our case,
even though we formulate it over a function space, we can
transform our candidates into $\R^n$ and examine the equivalent
optimization problem over $\R^n$, which is much more accessible
than the function space in many aspects.
\par
As another remark we define the following operators on functions
that serve notational purposes but can be considered to be more
or less intuitive.
\begin{definition}[Modulus operator]\label{def:modulus_operator}
	Let $a \in \mathcal{C}^0(\R,\R)$.
	The \emph{modulus operator} $|\cdot| : \mathcal{C}^0(\R,\R) \to
	\mathcal{C}^0(\R,\R)$ is defined as
	\begin{equation*}
		|a|(t) := |a(t)|.
	\end{equation*}
\end{definition}
\begin{definition}[Power operator]\label{def:power_operator}
	Let $p \geq 0$ and $a \in \mathcal{C}^0(\R,\R)$.
	The \emph{power operator} $\cdot^p : \mathcal{C}^0(\R,\R) \to
	\mathcal{C}^0(\R,\R)$ is defined as
	\begin{equation*}
		a^p(t) := {\left(a(t)\right)}^p.
	\end{equation*}
\end{definition}
\begin{definition}[Differentiation operator]\label{def:differentiation_operator}
	Let $p \in \N_0$ and $a \in \mathcal{C}^0(\R,\R)$. The \emph{differentiation operator}
	$D^p: \mathcal{C}^0(\R,\R) \to \mathcal{C}^0(\R,\R)$ is defined as
	\begin{equation*}
		\left(D^p a\right)(t) :=
		\begin{cases}
			a(t) & p = 0 \\
			\frac{\mathrm{d}^p a}{\mathrm{d} t^p}(t) & p > 0
		\end{cases}
	\end{equation*}
\end{definition}
What shall be apparent from the results of this chapter is that
this approach saves us from a lot of cumbersome notation and complexity
down the road. The alternative of always thinking of a function as a
set of samples may also solve the discretization problem, but requires
much more care in terms of parametrization and is much harder to access
theoretically. The latter is especially apparent when it comes to derivatives,
which are a crucial part of the theory of this thesis.
\chapter{Convexity Theory}\label{ch:convexity_theory}
The theory of convex sets and functions plays a central role in this
thesis and we will introduce it in this chapter. Even though most
of the given definitions may be known to the reader, the resulting
Theorem~\ref{thm:gershgorin-hadamard} is used in all proofs
in the thesis for showing function convexity. Before we dive
in further, we introduce both concepts of convex sets and convex functions.
\begin{definition}[Convex set]\label{def:convex_set}
	Let $V$ be a real or complex vector space and $C \subseteq V$.
	$C$ is \emph{convex} if and only if
	\begin{equation*}
		\forall a,b \in C \colon \forall \lambda \in [0,1] \colon
		\lambda \cdot a + (1 - \lambda) \cdot b \in C.
	\end{equation*}
\end{definition}
\begin{definition}[(Strictly) convex function]\label{def:convex_function}
	Let $n \in \N$, $C \subseteq \R^n$ convex and $f \colon C \to \R$.
	$f$ is a \emph{strictly convex function} or \emph{convex function}
	if and only if
	\begin{equation*}
		\forall a,b \in C \colon \forall \lambda \in [0,1] \colon
		f(\lambda \cdot a + (1 - \lambda) \cdot b) <
		\lambda \cdot f(a) + (1 - \lambda) \cdot f(b)
	\end{equation*}
	or
	\begin{equation*}
		\forall a,b \in C \colon \forall \lambda \in [0,1] \colon
		f(\lambda \cdot a + (1 - \lambda) \cdot b) \le
		\lambda \cdot f(a) + (1 - \lambda) \cdot f(b)
	\end{equation*}
	respectively.
\end{definition}
For the purpose of determining convexity later the multidimensional
pendant of a second derivative is introduced as the Hessian matrix,
the matrix that contains all possible combinations of
mixed second partial derivatives.
\begin{definition}[Hessian matrix {\cite[Section~3.9]{np06}}]\label{def:hessian}
	Let $n \in \N$ and $f \in \mathcal{C}^2(\R^n,\R)$. The \emph{Hessian matrix}
	$H_f(x) \in \R^{n \times n}$ of $f$ in $x \in \R^n$ is defined as
	\begin{equation*}
		H_f(x) := {\left( \frac{\partial^2 f(x)}{\partial x_i \partial x_j} \right)}_
		{i,j \in \{1,\dots,n\}}.
	\end{equation*}
\end{definition}
Another important concept is the positive definiteness that
can more or less be imagined to be a generalization of
positivity of scalars into the realm of matrices.
\begin{definition}[Positive (semi)definite]
	Let $n \in \N$ and $A \in \R^{n \times n}$. $A$ is \emph{positive definite} or
	\emph{positive semidefinite} if and only if
	\begin{equation*}
		\forall v \in \R^n \colon v^T \cdot A \cdot v > 0
	\end{equation*} or
	\begin{equation*}
		\forall v \in \R^n \colon v^T \cdot A \cdot v \ge 0
	\end{equation*}
	respectively.
\end{definition}
One thing to note here though is that a matrix can in fact have
negative entries and be positive definite, but also have only
positive entries and still not be positive definite.
Using these definitions, we can give the following condition for a
sufficiently smooth function to be convex.
\begin{proposition}[Hessian (strict) convexity condition]
	\label{prop:hessian_convexity_condition}
	Let $n \in \N$ and $f \in \mathcal{C}^2(\R^n,\R)$. $f$ is
	strictly convex or convex if
	and only if for all $x \in \R^n$ the Hessian matrix $H_f(x)$ is
	positive definite or positive semidefinite respectively.
\end{proposition}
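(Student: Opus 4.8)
The plan is to reduce the multivariable statement to the one-dimensional case by restricting $f$ to line segments. For fixed $x,v \in \R^n$ define $g_{x,v}(t) := f(x + t \cdot v)$; since $f \in \mathcal{C}^2(\R^n,\R)$, each $g_{x,v}$ lies in $\mathcal{C}^2(\R,\R)$. First I would establish the standard equivalence that $f$ is convex on $\R^n$ (in the sense of Definition~\ref{def:convex_function}) if and only if every restriction $g_{x,v}$ is convex on $\R$. This is immediate once the convex-combination inequality for $f$ along the segment joining two points $a,b$ is rewritten, with $x = b$ and $v = a-b$, as the corresponding inequality for $g_{x,v}$ evaluated at $0$, $1$ and $\lambda$.

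Next I would apply the chain rule to obtain $g_{x,v}''(t) = v^T \cdot H_f(x + t \cdot v) \cdot v$, using the Hessian from Definition~\ref{def:hessian}, so that the sign of the one-dimensional second derivative is exactly the value of the quadratic form associated with $H_f$. The core is then the classical one-dimensional fact: a function $g \in \mathcal{C}^2(\R,\R)$ is convex if and only if $g'' \ge 0$ everywhere. For the "if" direction, $g'' \ge 0$ makes $g'$ nondecreasing, and a nondecreasing derivative gives convexity by applying the mean value theorem on the two subintervals of a chord. For the "only if" direction, convexity forces the symmetric second difference quotient to be nonnegative, and passing to the limit yields $g''(t) \ge 0$. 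Combining this with the line-restriction reduction, $f$ is convex if and only if $v^T \cdot H_f(x) \cdot v \ge 0$ for all $x,v \in \R^n$, which is precisely positive semidefiniteness of $H_f(x)$.

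For the strict case I would prove the direction that is genuinely valid, namely that positive definiteness of $H_f(x)$ at every $x$ implies strict convexity: positive definiteness gives $g_{x,v}'' > 0$ for $v \neq 0$, hence $g_{x,v}'$ is strictly increasing and $g_{x,v}$ strictly convex, which upgrades the chord inequality to a strict one along every segment.

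I expect the main obstacle to be the reverse implication of the strict case, that strict convexity implies positive definiteness. This is the delicate point, and in fact it fails: the function $f(x) := x^4$ is strictly convex on $\R$ yet has $f''(0) = 0$, so its Hessian is only positive semidefinite at the origin. Consequently the literal biconditional for the strict variant cannot be obtained by this route without an extra hypothesis, and I would either weaken the strict claim to the one-sided statement "$H_f$ positive definite everywhere $\Rightarrow f$ strictly convex" (which is the direction actually used for the cost-function arguments in this thesis) or defer the precise formulation to the cited source. The remaining steps are routine consequences of the one-dimensional calculus facts above.
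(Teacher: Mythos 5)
Your proof is correct and self-contained, whereas the paper does not prove this proposition at all: it simply cites \cite[Corollary~3.9.5]{np06}. Your route --- restricting $f$ to lines $g_{x,v}(t) := f(x + t \cdot v)$, computing $g_{x,v}''(t) = v^T \cdot H_f(x + t \cdot v) \cdot v$ via the chain rule, and invoking the one-dimensional second-derivative criterion --- is the standard textbook argument, and it establishes the non-strict equivalence (convex $\Leftrightarrow$ $H_f(x)$ positive semidefinite for all $x$) in full. What the citation buys the paper is brevity; what your argument buys is that the reduction to one dimension makes completely transparent where the strict and non-strict cases diverge.

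More importantly, you are right that the strict half of the proposition, as literally stated, is false: $f(x) := x^4$ is strictly convex on $\R$, yet $H_f(0) = (0)$ is not positive definite, so strict convexity does not imply positive definiteness of the Hessian everywhere. Only the implication \enquote{$H_f(x)$ positive definite for all $x$ $\Rightarrow$ $f$ strictly convex} holds, which is exactly the direction you prove. This is a defect of the paper's statement rather than a gap in your proof; the standard formulation in the literature asserts only the one-sided implication in the strict case. Fortunately, the thesis only ever uses the valid direction --- in Theorem~\ref{thm:imf-operator-convex} and Corollary~\ref{cor:modified-imf-diffop-convex} positive definiteness of the Hessian is established first and strict convexity is then concluded via Proposition~\ref{prop:hessian_convexity_condition} --- so nothing downstream is affected. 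Your decision to weaken the strict claim to the one-sided statement is the correct repair, and the proposition in the paper ought to be restated accordingly.
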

\begin{proof}
	See \cite[Corollary~3.9.5]{np06}.
\end{proof}
The result is that object of interest is the Hessian matrix of
a given function we want to examine and its definiteness. The
direct approach to check definiteness is complicated, especially
for large systems like the ones we are dealing with in this thesis.
One useful result to simplify this process is the
\textsc{Gershgorin}-\textsc{Hadamard}-theorem that gives conditions
which are easy to check, yielding a positive definite matrix.
To formulate the theorem we must first consider so-called strictly
diagonally dominant matrices where the absolute value of a diagonal
entry is strictly larger than the sum of the absolute values of all
other entries in that row.
\begin{definition}[Strictly diagonally dominant {\cite[Definition~6.1.9]{hj12}}]
	Let $n \in \N$ and $A \in \R^{n \times n}$. $A$ is
	\emph{strictly diagonally dominant} if	and only if
	\begin{equation*}
		\forall i \in \{1,\dots,n\} \colon |a_{ii}| >
		\sum_{j \in \{1,\dots,n\} \setminus \{i\}} |a_{ij}|.
	\end{equation*}
\end{definition}
\begin{theorem}[\textsc{Geršgorin}-\textsc{Hadamard} {\cite[Theorem~6.1.10]{hj12}}]
	\label{thm:gershgorin-hadamard}
	Let $n \in \N$ and $M \in \R^{n \times n}$ symmetric, diagonally
	dominant and $\forall i \in \{1,\dots,n\} \colon a_{ii} > 0$.
	Then $M$ is positive definite, non-singular and every eigenvalue of $M$
	is positive.
\end{theorem}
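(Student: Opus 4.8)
The plan is to prove positive definiteness directly from the quadratic form $v^T M v$, which is cleaner than routing through eigenvalue localization and immediately yields the non-singularity and eigenvalue claims as corollaries. Throughout I read the hypothesis as \emph{strict} diagonal dominance in the sense of the preceding definition (strictness is genuinely needed: a non-strictly dominant example such as the all-ones $2\times 2$ matrix is only semidefinite and singular). I abbreviate the $i$-th off-diagonal absolute row sum by $R_i := \sum_{j \neq i} |a_{ij}|$, so that the assumption reads $a_{ii} > R_i$ for every $i \in \{1,\dots,n\}$.

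First I would fix an arbitrary $v \in \R^n \setminus \{0\}$ and expand
\begin{equation*}
	v^T M v = \sum_{i=1}^n a_{ii} v_i^2 + \sum_{i \neq j} a_{ij} v_i v_j.
\end{equation*}
The off-diagonal part is bounded below using $a_{ij} v_i v_j \geq -|a_{ij}|\,|v_i|\,|v_j|$ together with the elementary inequality $|v_i|\,|v_j| \leq \tfrac{1}{2}(v_i^2 + v_j^2)$, giving
\begin{equation*}
	\sum_{i \neq j} a_{ij} v_i v_j \geq -\tfrac{1}{2}\sum_{i \neq j} |a_{ij}|\, (v_i^2 + v_j^2).
\end{equation*}

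The key step, and the only place the symmetry hypothesis enters, is to recognize that the right-hand sum collapses to $\sum_i R_i v_i^2$. Splitting it into a $v_i^2$-part and a $v_j^2$-part, the first equals $\tfrac{1}{2}\sum_i R_i v_i^2$ directly, and the second equals $\tfrac{1}{2}\sum_j R_j v_j^2$ precisely because $|a_{ij}| = |a_{ji}|$ lets me reindex the inner sum over $i \neq j$ as the full off-diagonal row sum $R_j$. Combining the two pieces yields
\begin{equation*}
	v^T M v \geq \sum_{i=1}^n (a_{ii} - R_i)\, v_i^2.
\end{equation*}
Since $a_{ii} - R_i > 0$ for all $i$ by strict diagonal dominance and $v \neq 0$ forces some $v_i \neq 0$, the right-hand side is strictly positive, so $M$ is positive definite.

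Finally I would harvest the two remaining assertions. Non-singularity is immediate, since $Mv = 0$ would give $v^T M v = 0$ and hence $v = 0$. For the eigenvalue claim I would use that real symmetry guarantees a real spectrum (spectral theorem), and for any eigenpair $(\lambda, v)$ with $v \neq 0$ the identity $v^T M v = \lambda \, {\| v \|}_2^2$ combined with positive definiteness gives $\lambda = v^T M v / {\| v \|}_2^2 > 0$. I expect the symmetrization of the cross terms to be the only delicate bookkeeping; an equally valid alternative, should one prefer eigenvalue localization, is to invoke the \textsc{Geršgorin} disc theorem, under which the real spectrum and the bound $a_{ii} > R_i$ confine every eigenvalue to an interval $[a_{ii} - R_i, a_{ii} + R_i] \subset (0,\infty)$.
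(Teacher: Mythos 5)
Your proof is correct. It is worth pointing out that the paper does not actually prove this theorem: the proof body is a bare citation to \cite[Theorem~6.1.10]{hj12}, where the result is obtained by eigenvalue localization — Geršgorin's disc theorem confines the spectrum (real, by symmetry) to $\bigcup_{i}\,[a_{ii}-R_i,\ a_{ii}+R_i] \subset (0,\infty)$, and positive definiteness then follows from the spectral theorem. Your argument runs in the opposite direction: you establish $v^T M v \ge \sum_{i}(a_{ii}-R_i)\,v_i^2 > 0$ directly, by symmetrizing the cross terms with $|v_i|\,|v_j| \le \tfrac{1}{2}(v_i^2+v_j^2)$ and using $|a_{ij}|=|a_{ji}|$ to collapse the double sum, then harvest non-singularity and eigenvalue positivity as corollaries. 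Both reindexing steps check out, and your ordering of deductions is sound. What each approach buys: your direct route is elementary and self-contained, and it verifies positive definiteness in exactly the quadratic-form sense in which the paper defines it, invoking the spectral theorem only for the final eigenvalue claim; the Geršgorin route of the cited reference generalizes better, since it delivers non-singularity and positive real parts of eigenvalues even for non-symmetric strictly dominant matrices, which is why it is the standard textbook proof (and it is the alternative you sketch in your closing sentence). Finally, your observation that \emph{strict} dominance is genuinely needed — e.g.\ the all-ones $2\times 2$ matrix is symmetric, weakly dominant, has positive diagonal, yet is singular and only semidefinite — is a worthwhile catch: the theorem statement says only \enquote{diagonally dominant}, but the only definition the paper supplies, and the only correct reading, is the strict one.
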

\begin{proof}
	See \cite[Theorem~6.1.10]{hj12}.
\end{proof}
Using this theorem it shall be possible to approach even large systems
in regard to positive definiteness. When we show positive definitness
of the Hessian matrix, convexity follows and we have obtained our desired
result.
\chapter{Notation Directory}
\section{Chapter~\ref*{ch:introduction}: \nameref*{ch:introduction}}
\begin{longtable}{p{2cm} p{10cm}}
	$\Phi(t)$ & 1-periodic function;
		see Equation~(\ref{eq:fourier-series}) \\
	$c_j$ & \textsc{Fourier} coefficients;
		see Equation~(\ref{eq:fourier-coefficients}) \\
	$(\mathcal{F}s)(f)$ & \textsc{Fourier} transform;
		see Equation~(\ref{eq:fourier-transform}) \\
	$\left\langle \cdot,\cdot \right\rangle$ & standard inner product;
		see Equation~(\ref{eq:standard_inner_product}) \\
	$\psi_{j,k}$ & wavelet basis function;
		see Equation~(\ref{eq:wavelet-basis}) \\
	$\psi$ & mother wavelet;
		see Equation~(\ref{eq:wavelet-basis}) \\
	$c_{j,k}$ & wavelet coefficients;
		see Equation~(\ref{eq:wavelet-coefficients}) \\
	${\| \cdot \|}_2$ & $2$-norm based on the standard inner product
\end{longtable}
\section{Chapter~\ref*{ch:b-splines}: \nameref*{ch:b-splines}}
\begin{longtable}{p{2cm} p{10cm}}
	$\Pi_p$ & set of polynomials of order $p$;
		see Definition~\ref{def:set_of_polynomials} \\
	$\Sigma_{k,T}$ & spline function space;
		see Definition~\ref{def:spline_function_space} \\
	$\Sigma_{k}$ & shorthand notation for $\Sigma_{k,T}$;
		see Remark~\ref{rem:spline-shorthand} \\
	$k$ & spline function order;
		see Definition~\ref{def:spline_function_space} \\
	$\ell$ & size of spline knot vector;
		see Definition~\ref{def:spline_function_space} \\
	$T$ & spline knot vector;
		see Definition~\ref{def:spline_function_space} \\
	$\indicator(A)$ & indicator function on set $A$;
		see Definition~\ref{def:indicator_function} \\
	$B_{i,k,T}$ & B-spline function;
		see Definition~\ref{def:b-spline} \\
	$B_{i,k}$ & shorthand notation for $B_{i,k,\Delta_k(T)}$;
		see Remark~\ref{rem:spline-shorthand} \\
	$\Delta_k(T)$ & extended knot vector;
		see Definition~\ref{def:extended_knot_vector} \\
	$n$ & number of B-spline functions ($k + \ell - 2$);
		see Definition~\ref{def:extended_knot_vector} \\
	$\B_{k,T}$ & coefficient spline mapping;
		see Definition~\ref{def:coefficient_spline_mapping} \\
	$\B_{k}$ & shorthand notation for $\B_{k,T}$;
	    see Remark~\ref{rem:spline-shorthand} \\
	$c_{k,\infty}$ & B-spline condition constant;
	    see Proposition~\ref{prop:well_conditioned_basis}
\end{longtable}
\section{Chapter~\ref*{ch:emd_analysis}: \nameref*{ch:emd_analysis}}
\begin{longtable}{p{2cm} p{10cm}}
	$a$ & instantaneous amplitude;
		see Defintion~\ref{def:imfs} \\
	$\phi$ & instantaneous phase;
		see Definition~\ref{def:imfs} \\
	$\phi'$ & instantaneous frequency;
		see Definition~\ref{def:imfs} \\
	${\mathcal{S}}_{\mu_0,\mu_1,\mu_2}$ & set of intrinsic mode function souls (IMFS)
		with characteristic $(\mu_0,\mu_1,\mu_2)$;
		see Definition~\ref{def:imfs} \\
	$\boldsymbol{\mathcal{S}}_{\mu_0,\mu_1,\mu_2}$ & set of intrinsic mode spline
		function souls (IMSpFS);
		see Definition~\ref{def:imspfs} \\
	$\I[a,\phi]$ & intrinsic mode function operator;
		see Definition~\ref{def:imf-operator} \\
	$c_1[s](a,\phi)$ & canonical EMD cost function;
		see Definition~\ref{def:ccf} \\
	$\boldsymbol{c}_1[\boldsymbol{s}](\boldsymbol{a},\boldsymbol{\phi})$ & canonical spline EMD cost function;
		see Definition~\ref{def:cscf} \\
	$c_\ell[s](a,\phi)$ & leakage factor EMD cost function;
		see Definition~\ref{def:lfecf} \\
	$\boldsymbol{c}_\ell[\boldsymbol{s}](\boldsymbol{a},\boldsymbol{\phi})$ & leakage factor spline EMD cost function;
		see Definition~\ref{def:lfsecf} \\
	$V^+$ & positive cone of cone $V$;
		see Definition~\ref{def:positive_cone} \\
	$V'$ & dual cone of cone $V$;
		see Definition~\ref{def:dual_cone} \\
	$\Lambda(x,\lambda)$ & \textsc{Lagrange} function;
		see Definition~\ref{def:lf} \\
	$\ul{\Lambda}(\lambda)$ & \textsc{Lagrange} dual function;
		see Definition~\ref{def:ldf}
\end{longtable}
\section{Chapter~\ref*{ch:oss}: \nameref*{ch:oss}}
\begin{longtable}{p{2cm} p{10cm}}
	$A[a]$ & instantaneous envelope derivation operator;
		see Definition~\ref{def:inst_envelope_deriv_operator} \\
	$\Omega[\phi]$ & inverse square continuous frequency operator;
		see Definition~\ref{def:inv_sq_cont_freq_operator} \\
	$\mathcal{D}_{(a,\phi)}$ & IMF differential operator;
		see Definition~\ref{def:imf-diffop} \\
	$\tilde{\mathcal{D}}_{(A,\Omega)}$ & modified IMF differential operator;
		see Definition~\ref{def:modified-imf-diffop}
\end{longtable}
\section{Chapter~\ref*{ch:funcspaceopt}: \nameref*{ch:funcspaceopt}}
\begin{longtable}{p{2cm} p{10cm}}
	$\N$ & set of natural numbers $\ge 1$ \\
	$\N_0$ & set of natural numbers $\ge 0$ \\
	$\mathcal{C}^p(A,B)$ & set of $p$-continuously differentiable
		functions $A \to B$ \\
	$\preceq$ & partial order on $\mathcal{C}^0(\R,\R)$;
		see Proposition~\ref{prop:partial_order_on_C0} \\
	$| \cdot |$ & modulus operator;
		see Definition~\ref{def:modulus_operator} \\
	${(\cdot)}^p$ & power operator;
		see Definition~\ref{def:power_operator} \\
	$D^p$ & differentiation operator;
		see Definition~\ref{def:differentiation_operator}
\end{longtable}
\section{Chapter~\ref*{ch:convexity_theory}: \nameref*{ch:convexity_theory}}
\begin{longtable}{p{2cm} p{10cm}}
	$H_f(x)$ & Hessian matrix of $f$ in $x$; see Definition~\ref{def:hessian}
\end{longtable}
\chapter{Code Listings}\label{ch:code}
\lstset{basicstyle=\ttfamily\normalsize,inputencoding=utf8/latin1,
	morekeywords={},float,numberstyle=\footnotesize\noncopynumber,
	breakatwhitespace=false,breaklines=false,escapeinside={\%*}{*)},
	columns=fullflexible,keepspaces=true,numbers=left,
	frame=single,showstringspaces=true,keywordstyle=\ttfamily}
\section{ETHOS Toolbox}\label{sec:code-ethos_toolbox}
\subsection{ethos.h}
\lstinputlisting[language=C,label=lst:ethos-ethosh]
	{ethos/ethos.h}
\subsection{ethos.c}
\lstinputlisting[language=C,label=lst:ethos-ethosc]
	{ethos/ethos.c}
\subsection{config.mk}
\lstinputlisting[language=make,label=lst:ethos-configmk]
	{ethos/config.mk}
\subsection{Makefile}
\lstinputlisting[language=make,label=lst:ethos-makefile]
	{ethos/Makefile}
\section{Examples}\label{sec:code-examples}
These programs expect libethos.a and ethos.h somewhere in
the environment. Set the variable \texttt{ETHOS} in config.mk
(see Listing~\ref{lst:examples-configmk}) to point to the
directory containing both files. This in turn will properly
set the \texttt{I}- and \texttt{L}-flags in the preprocessor flags
\texttt{CPPFLAGS} and the linker flags \texttt{LDFLAGS} respectively.
\subsection{emd.c}
\lstinputlisting[language=C,label=lst:examples-emdc]
	{examples/emd.c}
\subsection{envelope.c}
\lstinputlisting[language=C,label=lst:examples-envelopec]
	{examples/envelope.c}
\subsection{regop.c}
\lstinputlisting[language=C,label=lst:examples-regopc]
	{examples/regop.c}
\subsection{util.h}
\lstinputlisting[language=C,label=lst:examples-utilh]
	{examples/util.h}
\subsection{util.c}
\lstinputlisting[language=C,label=lst:examples-utilc]
	{examples/util.c}
\subsection{config.mk}
\lstinputlisting[language=make,label=lst:examples-configmk]
	{examples/config.mk}
\subsection{Makefile}
\lstinputlisting[language=make,label=lst:examples-makefile]
	{examples/Makefile}
\section{License}
This ISC license applies to all code listings in
Chapter~\ref{ch:code}.
\lstinputlisting[label=lst:license]{LICENSE}
\backmatter
\nocite{*}
\bibliography{laslo_hunhold-emd}
\selectlanguage{german}
\chapter{Eigenständigkeitserklärung}
Hiermit versichere ich an Eides statt, daß ich die vorliegende Arbeit
selbstständig und ohne die Benutzung anderer als der angegebenen
Hilfsmittel angefertigt habe. Alle Stellen, die wörtlich oder sinngemäß
aus veröffentlichten und nicht veröffentlichten Schriften entnommen wurden,
sind als solche kenntlich gemacht.
\newline
Die Arbeit ist in gleicher oder ähnlicher Form oder auszugsweise im
Rahmen einer anderen Prüfung noch nicht vorgelegt worden. Ich versichere,
daß die eingereichte elektronische Fassung der eingereichten Druckfassung
vollständig entspricht.
\\[3cm]
Laslo Hunhold
\end{document}